\documentclass[12pt]{amsart}


\usepackage{amsmath, amssymb, amscd, verbatim, xspace,amsthm}
\usepackage{latexsym, epsfig, color}
\usepackage[hidelinks]{hyperref}

\usepackage[OT2,T1]{fontenc}
\DeclareSymbolFont{cyrletters}{OT2}{wncyr}{m}{n}
\DeclareMathSymbol{\Sha}{\mathalpha}{cyrletters}{"58}

\newcommand\an{\operatorname{an}}

\newcommand{\A}{\ensuremath{{\mathbb{A}}}}
\newcommand{\C}{\ensuremath{{\mathbb{C}}}}
\newcommand{\Z}{\ensuremath{{\mathbb{Z}}}\xspace}
\renewcommand{\P}{\ensuremath{{\mathbb{P}}}}

\newcommand{\Q}{\ensuremath{{\mathbb{Q}}}}
\newcommand{\R}{\ensuremath{{\mathbb{R}}}}
\newcommand{\F}{\ensuremath{{\mathbb{F}}}}

\newcommand{\E}{\ensuremath{{\mathbb{E}}}}

\newcommand{\ra}{\rightarrow}

\newcommand\Conf{\operatorname{Conf}}
\newcommand\Hom{\operatorname{Hom}}

\newcommand\Aut{\operatorname{Aut}}

\newcommand\im{\operatorname{im}}

\newcommand\Gal{\operatorname{Gal}}

\newcommand\Sur{\operatorname{Sur}}

\newcommand\Tr{\operatorname{Tr}}

\newcommand\isom{\simeq}
\newcommand\sub{\subset}

\newcommand\Disc{\operatorname{Disc}}

\newcommand\Id{\ensuremath{\operatorname{Id}}\xspace}

\newcommand\Spec{\operatorname{Spec}}

\newcommand\Frob{\operatorname{Frob}}

\renewcommand\O{\mathcal{O}}

\newcommand\Pic{\operatorname{Pic}}

\newcommand{\bad}{\Delta}

\newcommand\bq{\begin{equation}}
\newcommand\eq{\end{equation}}

\numberwithin{equation}{section}
\newtheorem{proposition}[equation]{Proposition}
\newtheorem{theorem}[equation]{Theorem}
\newtheorem{corollary}[equation]{Corollary}

\newtheorem{lemma}[equation]{Lemma}

\newtheorem{conjecture}[equation]{Conjecture}

\newtheorem{definition}[equation]{Definition}
\newtheorem{notation}[equation]{Notation}

\theoremstyle{remark}
\newtheorem{remark}[equation]{Remark}

\usepackage{fullpage,pdfsync}
\usepackage[all,cmtip]{xy}
\usepackage{tikz-cd}
\usepackage{faktor}
\usepackage{bbm}



\newtheorem{nts}{Note to self}



\newcommand{\Hur}{\operatorname{Hur}}

\newcommand{\one}{\langle -1 \rangle}
\newcommand{\defi}[1]{{\emph{#1}}}

\newcommand{\id}{\operatorname{id}}
\newcommand{\sm}{\operatorname{sm}}

\newcommand{\Cl}{\operatorname{Cl}}
\newcommand{\rD}{\operatorname{rDisc}}
\newcommand{\nr}{\operatorname{nr}}
\newcommand{\chr}{\operatorname{char}}
\newcommand{\FF}{\mathcal{F}}

\newcommand{\Prob}{\operatorname{Prob}}
\newcommand{\calF}{\mathcal{F}}
\newcommand{\calC}{\mathcal{C}}

\newcommand{\calD}{\mathcal{D}}
\newcommand{\calA}{\mathcal{A}}
\newcommand{\calN}{\mathcal{N}}
\newcommand{\calE}{\mathcal{E}}
\newcommand{\calO}{\mathcal{O}}

\newcommand\Cen{\mathbf{C}}

\newcommand\Inn{\operatorname{Inn}}
\newcommand\CF{\mathcal{CF}}

\newcommand{\Res}{\operatorname{Res}}

\newcommand{\Rr}{S}
\newcommand{\cHur}{\mathsf{Hur}}


\title{A predicted distribution for Galois groups of maximal unramified extensions}

\author{Yuan Liu}
\address{Department of Mathematics\\
University of Michigan \\ 2074 East Hall
\\530 Church Street\\
Ann Arbor, MI 48109-1043 USA}  
\email{yyyliu@umich.edu}

\author{Melanie Matchett Wood}
\address{Department of Mathematics\\
Harvard University\\
Science Center Room 325\\
1 Oxford Street\\
Cambridge, MA 02138 USA}  
\email{mmwood@math.harvard.edu}

\author{David Zureick-Brown}
\address{Department of Mathematics\\
Emory University  \\ 400 Dowman Drive \\
Atlanta, GA 30322 USA}  
\email{dzb@mathcs.emory.edu}

\date{\today}

\begin{document}
\begin{abstract}
We consider the distribution of the Galois groups $\Gal(K^{\operatorname{un}}/K)$ of maximal unramified extensions as $K$ ranges over $\Gamma$-extensions of $\Q$ or $\F_q(t)$.  We prove two properties of 
$\Gal(K^{\operatorname{un}}/K)$ coming from number theory, which we use as motivation to build a probability distribution on profinite groups with these properties.  In Part I, we build such a distribution as a limit of distributions on $n$-generated profinite groups. In Part II, we prove as $q\ra\infty$, agreement of 
$\Gal(K^{\operatorname{un}}/K)$ as $K$ varies over totally real $\Gamma$-extensions of $\F_q(t)$ with our distribution from Part I, in the moments that are relatively prime to $q(q-1)|\Gamma|$.  In particular, we prove for every finite group $\Gamma$, in the $q\ra\infty$ limit, the prime-to-$q(q-1)|\Gamma|$-moments of the 
distribution of class groups of totally real $\Gamma$-extensions  of $\F_q(t)$
agree with the prediction of the Cohen--Lenstra--Martinet heuristics.
\end{abstract}

\maketitle

\section{Introduction}
\subsection{Overview}
Let $\Gamma$ be a finite group.  As $K$ varies over Galois $\Gamma$-extensions of $\Q$, the Cohen--Lenstra \cite{Cohen1984} and Cohen--Martinet \cite{Cohen1987} heuristics  give predictions for the distribution of the class groups $\Cl(K)$.  By class field theory,  $\Cl(K)$ is the Galois group of the maximal abelian, unramified extension of $K$.  We then naturally ask about the distribution of $\Gal(K^{\operatorname{un}}/K),$ where $K^{\operatorname{un}}$ is the maximal unramified extension of $K$.

The question of what the groups $\Gal(K^{\operatorname{un}}/K)$ can be has been studied for at least a century.  Golod and Shafarevich \cite{Golod1964} showed that these groups were sometimes infinite, answering a 40-year-old open question. In the years since,  many papers tried to understand under what conditions on $K$ can $\Gal(K^{\operatorname{un}}/K)$ be infinite; see \cite{Brumer1965,Schoof1986,Hajir2020} for some examples and other references.  The question of the possible structure of the $p$-class field tower group, the maximal pro-$p$ quotient of $\Gal(K^{\operatorname{un}}/K)$, has been an object of much study.  One central result is Shafarevich's \cite{Safarevic1963} proof of finite presentation and bound on the number of relators, but see also \cite{Taussky1937,Venkov1978,Boston2000a} for other examples and  references.  In  \cite{Mayer2016a} one can find a detailed review of the work  to determine the $p$-class field tower group for a given $K$.  Beyond the pro-$p$ quotient, the question of which finite groups can arise as quotients of $\Gal(K^{\operatorname{un}}/K)$ (and for what sort of $K$) has also attracted significant attention.  Fr\"{o}hlich \cite{Frohlich1962} proved that all $G$ arise as quotients for some $K$.
See \cite{Kim2015} for an overview of work in this area, including the papers \cite{Bhargava2016a,
Kedlaya2012, Uchida1970,Yamamoto1970a} finding infinitely many quadratic fields with unramified $A_n$ extensions for each $n$.  The papers \cite{Yamamura2001,Kim2017} give examples of and guides to the literature on determining the whole of $\Gal(K^{\operatorname{un}}/K)$ for given $K$.  

In this paper, we consider $\Gal(K^{\operatorname{un},2|\Gamma|'}/K)$, the maximal quotient of $\Gal(K^{\operatorname{un}}/K)$ whose order is relatively prime to $2|\Gamma|$ (in the profinite sense), as $K$ varies over totally real $\Gamma$-extensions.  
We show a few basic facts about $\Gal(K^{\operatorname{un},2|\Gamma|'}/K)$. These facts inspire a construction in the theory of random groups, and in the first part of our paper we prove that this construction gives a random group, i.e.,~a probability measure on the set of profinite groups.  We then conjecture that the groups $\Gal(K^{\operatorname{un},2|\Gamma|'}/K)$ equidistribute for this measure.  In the second part of the paper, we prove a theorem about these groups in the function field analog (where $\Q$ is replaced by $\F_q(t)$) as $q\ra\infty$, showing that all of the moments of the distribution on Galois groups match the moments of our conjectural distribution.

Since the publication of \cite{Cohen1984} and \cite{Cohen1987}, there have been several aspects of the Cohen--Lenstra--Martinet heuristics that have been found to be incorrect (or likely incorrect) \cite{Malle2008,Malle2010,Bartel2018}, and one naturally wonders whether there might be more issues.  However, our distribution on groups abelianizes to the Cohen--Lenstra--Martinet distribution, and thus our function field theorem provides good evidence that the Cohen--Lenstra--Martinet heuristics are correct when one avoids roots of unity in the base field, orders fields by the product of their ramified primes, and considers only the Sylow subgroups for finitely many primes at once (i.e.,~avoids or corrects all the known issues).

\subsection{Main results}
A \emph{$\Gamma$-group} is a profinite group with a continuous action of $\Gamma$.  A $\Gamma$-group $G$ is \emph{admissible} if it is generated topologically by the elements $\{g^{-1}\gamma(g) | g\in G, \gamma\in\Gamma \}$ and is of order prime to $|\Gamma|$.  We will see that $\Gal(K^{\operatorname{un},2|\Gamma|'}/K)$ is an admissible $\Gamma$-group (Definition~\ref{D:defG}, Proposition~\ref{prop:Galois_admissible}).  We will construct a group $\FF_n$,  \emph{the free admissible $\Gamma$-group on $n$ generators}.  Further, we will see that $\Gal(K^{\operatorname{un},2|\Gamma|'}/K)$ has what we call \emph{Property E}: for every odd prime $p\nmid |\Gamma|$ and every non-split central extension of $\Gamma$-groups
		\begin{equation}
			1 \to \Z/p\Z \to \widetilde{G} \to G \to 1,
		\end{equation}
		(where $\Z/p\Z$ has trivial $\Gamma$-action), any {$\Gamma$-equivariant} surjection $\Gal(K^{\operatorname{un},2|\Gamma|'}/K) \to G$ lifts to a {$\Gamma$-equivariant} surjection $\Gal(K^{\operatorname{un},2|\Gamma|'}/K) \to \widetilde{G}$ (see Section~\ref{S:PropE}). 
		A general admissible $\Gamma$-group, or quotient of $\FF_n$, will not always satisfy Property E. 
However, we show that a quotient of $\FF_n$ has Property E if and only if it is isomorphic to
$\FF_n/[r^{-1}\gamma(r)]_{r\in S,\gamma\in \Gamma }$ for some $S\subset \mathcal{F}_n$
(Proposition~\ref{P:GetE}).  We thus define a random group $X_{\Gamma,n}:=
\FF_n/[r^{-1}\gamma(r)]_{r\in S,\gamma\in \Gamma }$, where $(s_1,\dots,s_{n+1})$ is random from Haar measure on $\FF_n^{n+1}$
and $S=\{s_1,\dots,s_{n+1}\}$.  
(The choice to take $n+1$ elements in $S$ is mainly motivated by it being the choice that will give Theorem~\ref{T:MainFF}.)
Of course, no finite $n$ could possibly be sufficient, and the main theorem of the first part of our paper overcomes this difficulty.
\begin{theorem}\label{T:MainGroup}
For every finite group $\Gamma$, there is a probability measure $\mu_{\Gamma}$ on the set of isomorphism classes of admissible $\Gamma$-groups such that as $n\ra\infty$, the distributions of $X_{\Gamma,n}$ converge weakly to $\mu_\Gamma$.
\end{theorem}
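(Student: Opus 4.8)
The plan is to establish weak convergence by showing that the "moments" of $X_{\Gamma,n}$ — that is, the expected number of $\Gamma$-equivariant surjections $\E[\#\Sur_\Gamma(X_{\Gamma,n}, G)]$ for each finite admissible $\Gamma$-group $G$ — stabilize as $n \to \infty$, and then invoke a moment-problem result showing that these limiting moments determine a unique measure $\mu_\Gamma$ and force weak convergence. The first task is to compute $\E[\#\Sur_\Gamma(X_{\Gamma,n}, G)]$. Since $X_{\Gamma,n} = \FF_n/[r^{-1}\gamma(r)]_{r\in S, \gamma\in\Gamma}$ with $S$ drawn from Haar measure on $\FF_n^{n+1}$, a $\Gamma$-equivariant surjection $X_{\Gamma,n}\twoheadrightarrow G$ is the same as a $\Gamma$-equivariant surjection $\phi\colon \FF_n \twoheadrightarrow G$ such that $\phi(r^{-1}\gamma(r)) = 1$ for all $r\in S$, $\gamma\in\Gamma$. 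For fixed $\phi$, the condition $\phi(r^{-1}\gamma(r))=1$ for all $\gamma$ says exactly that $\phi(r)$ lies in the $\Gamma$-fixed part... no: it says $\phi(r)$ is fixed by $\Gamma$, i.e. $\phi(r) \in G^\Gamma$. Wait — more precisely it says $\gamma(\phi(r)) = \phi(r)$ for all $\gamma$, so $\phi(r)\in G^{\Gamma}$, the fixed subgroup. Since $r$ ranges over the $n+1$ independent Haar-random coordinates of $S$, and $\phi$ is surjective hence (by the structure of $\FF_n$ and admissibility — here I would use that $\FF_n$ is free admissible so $\Sur_\Gamma(\FF_n, -)$ behaves like a free object) equidistributes each $\phi(r)$ over $G$, the probability that a given $\phi$ descends is $(|G^\Gamma|/|G|)^{n+1}$. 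Hence
\[
\E\bigl[\#\Sur_\Gamma(X_{\Gamma,n}, G)\bigr] = \#\Sur_\Gamma(\FF_n, G)\cdot \left(\frac{|G^\Gamma|}{|G|}\right)^{n+1}.
\]
The main computational step is then to understand $\#\Sur_\Gamma(\FF_n, G)$ as $n\to\infty$ and to check that this product converges. I expect $\#\Sur_\Gamma(\FF_n,G)$ to grow like $c_G \cdot (|G|/|G^\Gamma|)^{n}$ times lower-order corrections coming from the admissibility constraint (the subgroup generated by commutator-twisted elements), so that the product telescopes to a finite limit $M_G$; extracting the precise asymptotics, likely via an inclusion–exclusion over $\Gamma$-subgroups of $G$ or a generating-function/recursive argument on the free admissible group, is where the real work lies.

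Once the limiting moments $M_G := \lim_{n\to\infty}\E[\#\Sur_\Gamma(X_{\Gamma,n},G)]$ are shown to exist and to be finite for every finite admissible $\Gamma$-group $G$, I would verify that they do not grow too fast — specifically that they satisfy a bound of the form $M_G \le |{\wedge^2 G}|$ or more simply $M_G \le |G|^{O(1)}$ or even just $M_G \leq |\Aut_\Gamma(G)|\cdot(\text{something summable})$ — guaranteeing that the moment problem is well-posed. The abstract input here is a theorem in the style of Sawin–Wood (or the earlier results of Wood and of Ellenberg–Venkatesh–Westerland in the relevant generality) asserting: if a sequence of random profinite groups in a suitable category has moments converging pointwise to a sequence $(M_G)$ satisfying such a growth bound, then there is a unique probability measure $\mu$ with those moments and the sequence converges weakly to $\mu$. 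I would cite that result and check its hypotheses in our category of admissible $\Gamma$-groups — this requires knowing the category has enough finiteness (countably many iso classes of finite objects, finite $\Sur_\Gamma$-sets, etc.), which follows from admissibility forcing order prime to $|\Gamma|$ and bounded generation.

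The main obstacle, I expect, is the exact asymptotic evaluation of $\#\Sur_\Gamma(\FF_n, G)$ and confirming the product $\#\Sur_\Gamma(\FF_n,G)\cdot(|G^\Gamma|/|G|)^{n+1}$ converges rather than drifting to $0$ or $\infty$ — in other words, proving that the "$n+1$" random relations are asymptotically exactly the right number to cut $\FF_n$ down to something with stable moments. This is the analog, in the $\Gamma$-equivariant admissible setting, of the delicate count in Friedman–Washington / Wood of surjections from a free (pro-$p$ or profinite) group modulo random relations, and the combinatorics of the $\Gamma$-action (fixed points, the twisted commutator subgroup, non-triviality of $H^2$) makes the bookkeeping substantially heavier. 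A secondary but real obstacle is checking the moment-bound hypothesis for the uniqueness/convergence theorem uniformly in $G$; if the clean bound fails one may need the finer version of the moment problem (e.g. the one allowing moments up to $|{\wedge^2 G}|$-type growth from Sawin–Wood) and to verify its more technical conditions in this category.
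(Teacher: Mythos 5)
Your moment computation is correct — indeed it matches the paper's Lemma~\ref{lem: lim-moments}, giving $\lim_n\E[\#\Sur_\Gamma(X_{\Gamma,n},G)]=[G:G^\Gamma]^{-1}$ — and you've correctly identified that $|\Sur_\Gamma(\calF_n,G)|\sim |Y(G)|^n=(|G|/|G^\Gamma|)^n$ with vanishing relative error (the paper deduces this from Corollary~\ref{C:FHoms} and admissibility of $G$). But the second half of your plan has a genuine gap: you need a theorem saying that, in the category of admissible $\Gamma$-groups for arbitrary finite $\Gamma$, a convergent sequence of (small) moments determines a unique probability measure and forces weak convergence of the random groups. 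No such theorem is available in this generality. The paper itself flags this explicitly: moment-determinacy is \emph{known} for finite abelian $\Gamma$-modules (Wang--Wood) and for pro-$p$ groups when $|\Gamma|=2$ (Boston--Wood), and beyond that the paper only \emph{conjectures} that moments determine the distribution (see the discussion surrounding Corollary~\ref{C:MainFF}). The Sawin--Wood moment machinery you gesture at postdates this paper, and even granting it one would need to verify its hypotheses in the admissible-$\Gamma$-group category, which is substantial work and not a secondary obstacle as you suggest. So as written the proposal cannot be completed.

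The paper takes a fundamentally different and more hands-on route: it proves directly that $\lim_n\Prob(X_{\Gamma,n}^{\calC}\isom H)$ exists for every finite $\calC$ and level-$\calC$ admissible $H$ (Theorem~\ref{thm:probability_u,n}), via an explicit computation of the multiplicities $m(\calC,n,H,G)$ of irreducible $F\rtimes\Gamma$-group factors in the kernel of the fundamental short exact sequence, and then proves countable additivity — i.e.\ no escape of mass, $\sum_H\mu_\Gamma(U_{\calC_\ell,H})=1$ — using a bound on these multiplicities controlled by $\Gamma$-chief factor pairs (Lemma~\ref{lem:bound_by_surj}, Corollary~\ref{cor:lower_bound_of_product}) together with Lebesgue's dominated convergence theorem (Lemma~\ref{lem:swaplim&sum}, Theorem~\ref{thm:countably_additivity}). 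This yields a Borel probability measure by Carath\'eodory extension, with explicit formulas for $\mu_\Gamma$ on each basic open, with no appeal to moment determinacy. The moment computation is then derived \emph{from} $\mu_\Gamma$ in Section~\ref{S:moments} rather than used to construct it. Your approach would be shorter if the requisite moment theorem were in hand, but constructing (or citing) that theorem is where the entire difficulty would move to; the paper's approach is self-contained and also produces the explicit measure formulas that the moment method would not.
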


In fact, we give $\mu_\Gamma$ very explicitly, including giving a formula in \eqref{E:measonbasic} for the measure on each basic open $U_{\calC,H}:=\{G\, |\, G^{\calC}\isom H^{\calC}\}$, where $\calC$ is a finite set of finite $\Gamma$-groups, and $G^{\calC}$ denotes the pro-$\calC$ completion of $G$ (see {\eqref{E:proC}}).  We will next precisely state our conjecture
that $\mu_\Gamma$ models the desired unramified Galois groups for a random $K$, in both the number field and function field cases.

Let $Q$ be either $\Q$ or $\F_q(t)$.  A \emph{$\Gamma$-extension} of $Q$ is a field extension $K/Q$ with a choice of isomorphism $\Gal(K/Q)\isom \Gamma$.  We say $K$ is \emph{totally real} if $K/Q$ is split completely over $\infty$.  
We define a ``bad'' integer $\bad$ such that $\bad=2|\Gamma|$ if $Q=\Q$ and $\bad=q(q-1)|\Gamma|$ if $Q=\F_q(t)$.  (The numbers $2$ and $q-1$ arise as the order of the  roots of unity in $Q$.)
Let $K^{\#}$ be the maximal unramified extension of $K$ such that
\begin{enumerate}
\item $K^\#$ is split completely at places of $K$ over $\infty$ (which happens by definition in the number field case), 
and 
\item all finite subextensions $L/K$ of $K^\#$ have degree relatively prime to $\bad$.
\end{enumerate}
We have that $\Gal(K^{\#}/K)$ is a profinite group with a continuous action of $\Gamma$ (see Definition~\ref{D:defG}).
We let $\rD K$ denote the norm of the radical of the ideal $\Disc(K/Q)$; when $Q=\Q$ this is the product of the ramified primes of $K/\Q$.  
Let $E_{\Gamma}(D,Q)$ be the set of isomorphism classes of totally real $\Gamma$-extensions of $Q$ with $\rD K=D$, which is finite by Hermite's Theorem.
We conjecture that as $K$ varies (ordered by $\rD K$), the $\Gamma$-groups $\Gal(K^{\#}/K)$ are  equidistributed according to the distribution $\mu_{\Gamma}$.  
 For concreteness, we specify the most important test functions for this equidistribution, the characteristic functions of basic opens and the functions whose averages are the moments.

\begin{conjecture}\label{C:Main}
Let $\Gamma$ be a finite group and $Q$ be either $\Q$ or $\F_q(t)$ for $q$ relatively prime to $|\Gamma|$.  Let $\calC$ be a finite set of finite $\Gamma$-groups all of whose orders are prime to $\bad$ (defined above), and $H$ a single such admissible $\Gamma$-group.  
Then
$$
\lim_{B\ra\infty}  \frac{\sum_{D\leq B} |\{K\in E_\Gamma(D,Q) \,|\, \Gal(K^{\#}/K)^{\calC} \isom H^{\calC}  \}  | }{\sum_{D\leq B} |E_\Gamma(D,Q)|} =\mu_\Gamma(U_{\calC,H})
$$
and
$$
\lim_{B\ra\infty} \frac{\sum_{D\leq B} \sum_{K\in E_\Gamma(D,Q)} |\Sur_\Gamma( \Gal(K^{\#}/K),H)|  }{\sum_{D\leq B} |E_\Gamma(D,Q)|} =\int_X 
|\Sur_\Gamma( X,H)| 
d\mu_\Gamma(X)=[H:H^\Gamma]^{-1}.
$$
\end{conjecture}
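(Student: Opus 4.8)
The displayed identity splits into two parts of very different character. The first equality in each line compares an arithmetic average to the measure $\mu_\Gamma$ and is, over $\Q$, an open conjecture; the second equality in the bottom line, $\int_X|\Sur_\Gamma(X,H)|\,d\mu_\Gamma(X)=[H:H^\Gamma]^{-1}$, and the evaluation of $\mu_\Gamma(U_{\calC,H})$ are statements purely about the measure $\mu_\Gamma$ produced in Theorem~\ref{T:MainGroup}, which I would prove first.

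For the moment identity, the plan is to compute the $H$-moment of the finite-level model $X_{\Gamma,n}$ and then pass to the limit. A $\Gamma$-equivariant surjection $X_{\Gamma,n}=\FF_n/[r^{-1}\gamma(r)]_{r\in S,\gamma\in\Gamma}\twoheadrightarrow H$ is the same as a $\Gamma$-equivariant surjection $\phi\colon\FF_n\twoheadrightarrow H$ with $\phi(s)^{-1}\gamma(\phi(s))=1$ for all $s\in S$ and $\gamma\in\Gamma$, i.e.\ with $\phi(s)\in H^\Gamma$ for every $s\in S$. Because $S$ consists of $n+1$ independent Haar-random elements of $\FF_n$, and the image of a Haar-random element of $\FF_n$ under a continuous surjection onto a finite group is uniform, we get
\[
\E_S\bigl[|\Sur_\Gamma(X_{\Gamma,n},H)|\bigr]=\sum_{\phi\in\Sur_\Gamma(\FF_n,H)}\Bigl(\tfrac{|H^\Gamma|}{|H|}\Bigr)^{n+1}=\frac{|\Sur_\Gamma(\FF_n,H)|}{[H:H^\Gamma]^{\,n+1}}.
\]
The next step is to evaluate $|\Sur_\Gamma(\FF_n,H)|$ from the universal property of the free admissible $\Gamma$-group: I expect $|\Hom_\Gamma(\FF_n,H)|=[H:H^\Gamma]^{n}$, so that already at finite level the $H$-moment equals $[H:H^\Gamma]^{-1}$ up to the non-surjective homomorphisms, which are removed by inclusion--exclusion over the proper $\Gamma$-subgroups $H'\subsetneq H$ (each contributing on the order of $[H':(H')^\Gamma]^{n}$) and hence are lower order as $n\to\infty$. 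This gives $\E_S[|\Sur_\Gamma(X_{\Gamma,n},H)|]\to[H:H^\Gamma]^{-1}$.

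To turn this into a statement about $\mu_\Gamma$, I would combine the weak convergence $X_{\Gamma,n}\Rightarrow\mu_\Gamma$ of Theorem~\ref{T:MainGroup} with a uniform integrability argument, since the test function $G\mapsto|\Sur_\Gamma(G,H)|$ is locally constant but unbounded. For this I would bound $\E_S[|\Sur_\Gamma(X_{\Gamma,n},H)|^2]$ uniformly in $n$ using the Goursat-type identity $\Sur_\Gamma(G,H)\times\Sur_\Gamma(G,H)=\bigsqcup_{H'}\Sur_\Gamma(G,H')$, the union over $\Gamma$-subgroups $H'\le H\times H$ surjecting onto both factors, together with the first-moment computation applied to each such $H'$ (each term being at most $[H':(H')^\Gamma]^{-1}$, independently of $n$). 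Uniform integrability then lets the limit pass through the integral, giving $\int_X|\Sur_\Gamma(X,H)|\,d\mu_\Gamma(X)=\lim_{n}\E_S[|\Sur_\Gamma(X_{\Gamma,n},H)|]=[H:H^\Gamma]^{-1}$; and since each $U_{\calC,H}$ is clopen, the same weak convergence identifies $\mu_\Gamma(U_{\calC,H})$ with $\lim_n\Prob_S[X_{\Gamma,n}^{\calC}\cong H^{\calC}]$, which is made explicit in the construction of $\mu_\Gamma$.

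The arithmetic equalities are the genuinely hard part. Over $\F_q(t)$, the route---carried out in Part~II in the regime $q\to\infty$---is to identify $\sum_{K\in E_\Gamma(D,Q)}|\Sur_\Gamma(\Gal(K^{\#}/K),H)|$ with a weighted count of $\F_q$-points on a Hurwitz-type moduli space parametrizing $\Gamma$-covers of $\P^1$ equipped with an everywhere-unramified $\Gamma$-equivariant $H$-cover, to count those points via Grothendieck--Lefschetz and the Weil bounds, and to control the relevant cohomology through homological stability for Hurwitz spaces in the spirit of Ellenberg--Venkatesh--Westerland; the stable part of the cohomology contributes exactly $[H:H^\Gamma]^{-1}$, and coprimality to $q(q-1)|\Gamma|$ is precisely what suppresses the contributions of roots of unity, wild ramification, and the $\Gamma$-twisting. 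The main obstacle throughout is this last input: proving homological stability with the correct non-abelian, $\Gamma$-equivariant coefficient system and matching its stable cohomology to the group-theoretic moment $[H:H^\Gamma]^{-1}$. The lack of any comparable stability statement over $\Q$ is exactly why that case remains conjectural.
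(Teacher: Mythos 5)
This statement is a conjecture, and your proposal rightly recognizes that the arithmetic equalities are open (and, for the function field case, only established in the paper in the additional $q\to\infty$ limit of Theorem~\ref{T:MainFF}). The piece of the statement that the paper actually proves is the measure-theoretic identity $\int_X|\Sur_\Gamma(X,H)|\,d\mu_\Gamma(X)=[H:H^\Gamma]^{-1}$, which is Theorem~\ref{thm:moments} with $u=1$, and it is there that you take a genuinely different route from the paper.

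Your finite-level computation matches the paper's Lemma~\ref{lem: lim-moments}: the identification of $\Gamma$-surjections $X_{\Gamma,n}\twoheadrightarrow H$ with surjections $\phi\colon\FF_n\twoheadrightarrow H$ killing $Y(\Rr)$, the uniformity of $\phi(s)$ and the resulting probability $[H:H^\Gamma]^{-1}$ per relator, and the count $|\Hom_\Gamma(\FF_n,H)|=|Y(H)|^n=[H:H^\Gamma]^n$ from Corollary~\ref{C:FHoms}, with the non-surjective maps going to proper $\Gamma$-subgroups $H'$ whose $|Y(H')|<|Y(H)|$ (by admissibility of $H$) and so contributing lower order. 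Where you diverge is in passing from $\lim_n\E[|\Sur_\Gamma(X_{\Gamma,n},H)|]$ to the $\mu_\Gamma$-expectation. You propose weak convergence plus uniform integrability, controlled by a second-moment bound via the Goursat decomposition of $\Sur_\Gamma(G,H)^2$ over $\Gamma$-subgroups of $H\times H$. That route is plausible and, written carefully (expanding both sides as sums over level-$\calC$ isomorphism types $H'$ weighted by $|\Sur_\Gamma(H',H)|\cdot\mu_{\bullet}(U_{\calC,H'})$, using tail-uniformity from the second moment to interchange the infinite sum with the $n$-limit), it avoids the need for any serious topology on $\mathcal{P}$. The paper instead proves Theorem~\ref{thm:moments} by a more hands-on exchange of limits and sums (Lemma~\ref{lem:swaplim&sum}), whose key input is the uniform lower bound on the product factors $P_{u,n}(U_{\calC_\ell,H})$ in terms only of $H^{\calC_{\ell-1}}$ (Corollary~\ref{cor:lower_bound_of_product}), which in turn rests on the chief-factor-pair bounds of Section~\ref{S:countadd}. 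The paper's machinery is heavier but also yields explicit formulas for $\mu_\Gamma(U_{\calC,H})$ and $\mu_\Gamma(V_{\calC,H})$ (Theorems~\ref{thm:probability_u,n} and~\ref{thm:11.3}), whereas the second-moment route gives only the moments. Two small imprecisions: Goursat subgroups $H'\le H\times H$ need not be admissible, so the bound for each is $\sum_{K\le_\Gamma H'}|Y(K)|^{-1}$ rather than $[H':(H')^\Gamma]^{-1}$; this is harmless since the sum over the finitely many such $H'$ is still uniformly bounded.

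Your final paragraph misdescribes the mechanism of Part~II. The paper does \emph{not} invoke EVW homological stability with a $\Gamma$-equivariant coefficient system; in fact a stated aim of the paper is to remove the technical restriction imposed by the homological-stability approach of \cite{Boston2017,Wood2017a}. Instead, for each $n$ the $q\to\infty$ limit isolates the top-degree compactly supported cohomology, i.e.\ the count of Frobenius-fixed components (Theorem~\ref{T:comp}), with the lower cohomology estimated via smoothness, Deligne's weight bounds, and the characteristic-$0$/$p$ comparison of Lemma~\ref{L:C}; and the factor $[H:H^\Gamma]^{-1}$ arises from the purely group-theoretic renormalization in Lemma~\ref{L:gettoN} (the $|Y(H)|$ choices of section), not from a stable cohomology class.
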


Now one may  naturally wonder if there are 
other facts about $\Gal(K^{\#}/K)$ besides admissibility and Property E that our model should take into account, or whether the enterprise of building a model that incorporates known facts should even provide a reasonable conjecture.  This is where our results in the function field analog provide the key insight.  In particular, given a $\Gamma$-group $H$ as above, we conjecture that the second statement of  Conjecture~\ref{C:Main} holds for every $Q=\F_q(t)$ with $q$ relatively prime to $|\Gamma||H|$ and $(q-1)$ relatively prime to $|H|$.  However, we will prove a modified statement
in which there is an additional limit as $q\ra\infty$ through allowable prime powers (where we write $D$ as $q^n$ and $B$ as $q^b$ below).

\begin{theorem}\label{T:MainFF}
Let $\Gamma$ be a finite group and $H$ be a finite admissible $\Gamma$-group.
Then
$$
\lim_{b\ra\infty} \lim_{\substack{q\ra\infty\\(q,|\Gamma||H|)=1\\(q-1,|H|)=1}} 
\frac{\sum_{n\leq b} \sum_{K\in E_\Gamma(q^n,\F_q(t))} |\Sur_\Gamma( \Gal(K^{\#}/K),H)|  }{\sum_{n\leq b} |E_\Gamma(q^n,\F_q(t))|}
 =\int_X 
|\Sur_\Gamma( X,H)| 
d\mu_{\Gamma}(X)=[H:H^\Gamma]^{-1},$$
where in the limit $q$ is always a prime power.
\end{theorem}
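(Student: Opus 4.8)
The plan is to follow the geometric method of Ellenberg--Venkatesh--Westerland and of Wood on nonabelian Cohen--Lenstra moments: identify numerator and denominator with point counts of Hurwitz spaces over $\F_q$, evaluate these via the Grothendieck--Lefschetz trace formula, and reduce to a statement about the geometric components of these Hurwitz spaces and the Frobenius action on them. First I would set up the moduli dictionary. A totally real $\Gamma$-extension $K$ of $\F_q(t)$ with $\rD K=q^n$ is the same datum as a connected tamely ramified $\Gamma$-Galois cover of $\P^1_{\F_q}$, branched along a reduced effective $\F_q$-divisor of degree $n$ supported in $\A^1$, together with the chosen identification of its Galois group with $\Gamma$ and a marked point in the (necessarily trivial) fiber over $\infty$; these are the $\F_q$-points of a smooth affine $\F_q$-variety $\calH_n$ of dimension $n$, a union of components of a Hurwitz scheme for $\Gamma$, and the $\Gamma$- and $\infty$-markings rigidify every object, so $|\calH_n(\F_q)|=|E_\Gamma(q^n,\F_q(t))|$. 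A $\Gamma$-equivariant surjection $\Gal(K^{\#}/K)\twoheadrightarrow H$ likewise corresponds to an unramified $H$-extension $L/K$ with $L/\F_q(t)$ Galois with group $H\rtimes\Gamma$ whose inertia subgroups map isomorphically to subgroups of $\Gamma$, and since $\gcd(|H|,|\Gamma|)=1$ this is the same datum as an $\F_q$-point of a Hurwitz variety $\calH'_n$ for $H\rtimes\Gamma$ lying over $\calH_n$, again smooth of dimension $n$, with a finite $\F_q$-morphism $\pi\colon\calH'_n\to\calH_n$. Hence for every allowable $q$ and every $n$,
$$
\sum_{K\in E_\Gamma(q^n,\F_q(t))}|\Sur_\Gamma(\Gal(K^{\#}/K),H)| = |\calH'_n(\F_q)|.
$$

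Next I would apply the trace formula. For fixed $n$, Grothendieck--Lefschetz together with the Lang--Weil estimate (or Deligne's bounds for the sharper error) gives $|\calH_n(\F_q)|=c_n(q)\,q^n+O_n(q^{n-1/2})$ and $|\calH'_n(\F_q)|=c'_n(q)\,q^n+O_n(q^{n-1/2})$, where $c_n(q)$ and $c'_n(q)$ count the geometrically irreducible components of $\calH_n$, resp.\ $\calH'_n$, fixed by $\Frob_q$; since $\pi$ is defined over $\F_q$, $c_n(q)=0$ forces $c'_n(q)=0$. In the quotient appearing in the theorem the summand of largest degree in $q$ dominates as $q\to\infty$, and for $N$ large (so that $\calH_N$ is nonempty with a Frobenius-fixed component) this is the $n=N$ term; thus the inner limit equals $\lim_q c'_N(q)/c_N(q)$, and it remains to show $\lim_{n\to\infty}c'_n/c_n=[H:H^\Gamma]^{-1}$ through $n$ in the range where these are defined, for $q$ in the allowable set.

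To compute the components I would invoke the Conway--Parker theorem in the refined form used by Ellenberg--Venkatesh--Westerland and Wood: for $n$ sufficiently large the geometric components of $\calH_n$ form a torsor under the reduced Schur multiplier attached to $\Gamma$ and the inertia data, those of $\calH'_n$ likewise for $H\rtimes\Gamma$, and $\Frob_q$ acts on each set through an explicit automorphism (raising inertia generators to the $q$-th power) composed with a translation; in particular $c_n(q),c'_n(q)$ are eventually constant in $n$, so the outer limit exists. Because $\gcd(|H|,|\Gamma|)=1$, the $H$-part of the lifting invariant --- which governs the components of $\calH'_n$ lying over a fixed component of $\calH_n$, equivalently the geometric-monodromy orbits on the set of $\Gamma$-equivariant surjections of the geometric fundamental group of the cover onto $H$ --- splits off cleanly from the $\Gamma$-part; the admissibility of $H$ (that $H$ is $\Gamma$-generated by the elements $g^{-1}\gamma(g)$) makes the relevant quotient of this $H$-part behave like $H/H^\Gamma$, and the hypotheses $\gcd(q,|\Gamma||H|)=1$ and $\gcd(q-1,|H|)=1$ are exactly what make the Frobenius (cyclotomic) twist on this piece act so that the number of $\Frob_q$-fixed components of $\calH'_n$ above a $\Frob_q$-fixed component of $\calH_n$ is, in the limit, $[H:H^\Gamma]^{-1}$. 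This gives $\lim_n c'_n/c_n=[H:H^\Gamma]^{-1}$, and the final equality $\int_X|\Sur_\Gamma(X,H)|\,d\mu_\Gamma(X)=[H:H^\Gamma]^{-1}$ follows from the explicit description of $\mu_\Gamma$ in Part~I.

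The main obstacle is this last step: carrying out the relative lifting-invariant (Conway--Parker) computation for $H\rtimes\Gamma$ over $\Gamma$ with an unramified $H$-level structure --- describing $H_2(H\rtimes\Gamma)$ through $H_2(H)$ and $H_2(\Gamma)$, identifying the classes that the inertia data annihilates, tracking the cyclotomic twist under the running hypotheses on $q$, and extracting precisely the value $[H:H^\Gamma]^{-1}$ --- and in verifying that the relevant component counts are genuinely eventually constant (not merely stable on average), so that the nested limit in the theorem is legitimate. A secondary, more routine, obstacle is making the moduli dictionary of the first step precise: constructing the Hurwitz spaces carrying a $\Gamma$-marking and an unramified $H$-level structure, imposing the split-completely-at-$\infty$ (\emph{totally real}) condition, checking smoothness and the dimension count, and verifying that rigidity makes the point counts literally equal to the arithmetic sums rather than only up to bounded factors.
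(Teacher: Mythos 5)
Your high-level plan---identify the numerator and denominator with $\F_q$-point counts on Hurwitz spaces for $H\rtimes\Gamma$ and for $\Gamma$, apply the Grothendieck--Lefschetz trace formula, and reduce to a comparison of Frobenius-fixed geometric components---is exactly the framework of the paper's proof. But there are two substantive gaps in the execution, and the first one would send anyone following your outline after a false statement.

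The main problem is your moduli dictionary and the resulting misattribution of where $[H:H^\Gamma]^{-1}$ comes from. A $\Gamma$-equivariant surjection $\Gal(K^\#/K)\twoheadrightarrow H$ is \emph{not} the same datum as an $\F_q$-point of the Hurwitz space for $H\rtimes\Gamma$: the Hurwitz point additionally carries an identification of $\Gal(L/\F_q(t))$ with $H\rtimes\Gamma$, which (after fixing the $\Gamma$-identification and the $H$-identification of $\Gal(L/K)$) requires a choice of section $\Gamma\to\Gal(L/\F_q(t))$. By Schur--Zassenhaus these sections form an $H$-conjugacy orbit of size $[H:H^\Gamma]$, and a careful bijective count (Lemma~\ref{L:gettoN}) shows that the number of surjections $\Gal(\overline{Q}/Q)\to H\rtimes\Gamma$ with the prescribed ramification is exactly $[H:H^\Gamma]\cdot\sum_K |\Sur_\Gamma(\Gal(K^\#/K),H)|$, not equal to it. Consequently the geometric content of the theorem is that the Frobenius-fixed component counts of the $H\rtimes\Gamma$- and $\Gamma$-Hurwitz spaces are asymptotically \emph{equal} (Theorem~\ref{T:comp}), not in ratio $[H:H^\Gamma]^{-1}$. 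The factor $[H:H^\Gamma]^{-1}$ is purely combinatorial bookkeeping in the Galois-theoretic translation, not a feature of the lifting invariant. So the "main obstacle" you identify---"extracting precisely the value $[H:H^\Gamma]^{-1}$" from the relative Conway--Parker computation---is aimed at the wrong target; the lifting-invariant comparison should deliver the value $1$.

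The second problem is your claim that the component counts $c_n(q),c'_n(q)$ "are eventually constant in $n$." This is true when $\Gamma\cong\Z/2\Z$ (the EVW/Wood setting, which is presumably the source of the intuition), but false for general $\Gamma$. The Conway--Parker lifting invariant has a component in $\Z^{c/G}$ recording the multiplicities of the conjugacy classes among the branch points, and when the number $d_{G,c}(q)$ of $q$-power orbits of conjugacy classes exceeds $1$, this lattice factor makes the Frobenius-fixed component count grow like $n^{d_{G,c}(q)-1}$ (Proposition~\ref{P:countcomp} and Corollary~\ref{C:countB}, which require a genuine Davenport-style lattice-point count, Lemma~\ref{L:lattice}). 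This also means individual values of $n$ can give empty Hurwitz spaces depending on the residue of $n$ modulo a fixed modulus. The outer $\lim_{N\to\infty}$ in the theorem is therefore not a formality; it is what makes the $O(n^{d-2})$ error from unstable components and from the inexplicit lattice constants negligible relative to the main term. Your proposal would need this polynomial-growth analysis rather than a stabilization statement.
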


For every finite admissible $\Gamma$-group $H$, the second statement of Conjecture~\ref{C:Main} predicts an exact real number average. For every $H$, we obtain exactly the predicted average in the function field analog as $q\ra\infty$.  We believe this provides very strong evidence for the conjecture (see also below on the extent to which these averages are known to determine a unique distribution).

For a finite extension $K/\F_q(t)$, we write $\O_K$ for the integral closure of $\F_q[t]$ in $K$.  
By class field theory
$\Gal(K^{\#}/K)^{{\operatorname{ab}}}$ is the prime to $q(q-1)|\Gamma|$ part of $\Cl(\O_K)$.  An abelian $\Gamma$-group is admissible if and only if $(|H|,|\Gamma|)=1$ and $H^{\Gamma}=1$ (see Section~\ref{S:CM}).
Thus we have the following corollary of Theorem~\ref{T:MainFF}.

\begin{corollary}\label{C:MainFF}
Let $\Gamma$ be a finite group.
Let $H$ be a finite  abelian $\Gamma$-group with $(|H|,|\Gamma|)=1$ and $H^{\Gamma}=1$.
Then
$$
\lim_{b\ra\infty} \lim_{\substack{q\ra\infty\\(q,|\Gamma||H|)=1\\(q-1,|H|)=1}} 
\frac{\sum_{n\leq b} \sum_{K\in E_\Gamma(q^n,\F_q(t))} |\Sur_\Gamma(\Cl(\O_K),H)|  }{\sum_{n\leq b} |E_\Gamma(q^n,\F_q(t))|}
 =|H|^{-1},$$
where in the limit $q$ is always a prime power.
\end{corollary}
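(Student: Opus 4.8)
The plan is to deduce the corollary directly from Theorem~\ref{T:MainFF} by specializing to abelian $H$ and translating the statement about $\Gamma$-equivariant surjections onto $H$ into a statement about class groups. First I would record the two facts that make this translation work, both asserted in the text immediately before the corollary: by class field theory $\Gal(K^{\#}/K)^{\mathrm{ab}}$ is exactly the prime-to-$q(q-1)|\Gamma|$ part of $\Cl(\O_K)$, carrying its natural $\Gamma$-action; and an abelian $\Gamma$-group $H$ is admissible if and only if $(|H|,|\Gamma|)=1$ and $H^\Gamma = 1$. So the hypotheses on $H$ in the corollary are precisely equivalent to ``$H$ is a finite admissible abelian $\Gamma$-group,'' which lets us invoke Theorem~\ref{T:MainFF} for this $H$.

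The key step is the identification of $\Hom$-sets. Since $H$ is abelian, any continuous homomorphism $\Gal(K^{\#}/K) \to H$ factors through the abelianization $\Gal(K^{\#}/K)^{\mathrm{ab}}$, and it is $\Gamma$-equivariant for the source iff the induced map on $\Gal(K^{\#}/K)^{\mathrm{ab}}$ is. Moreover $|H|$ is prime to $q(q-1)|\Gamma|$ — here I use $(|H|,|\Gamma|)=1$ from admissibility together with the constraints $(q,|\Gamma||H|)=1$ and $(q-1,|H|)=1$ appearing in the limit of Theorem~\ref{T:MainFF}, which force $(|H|, q(q-1)) = 1$ — so every homomorphism $\Gal(K^{\#}/K)^{\mathrm{ab}} \to H$ factors through the prime-to-$q(q-1)|\Gamma|$ part of $\Cl(\O_K)$, and conversely. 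Hence
\[
\Sur_\Gamma\bigl(\Gal(K^{\#}/K), H\bigr) = \Sur_\Gamma\bigl(\Cl(\O_K), H\bigr)
\]
as sets (note that since $(|H|, q(q-1)|\Gamma|)=1$, surjections onto $H$ from $\Cl(\O_K)$ automatically factor through its prime-to-$q(q-1)|\Gamma|$ part, so the right-hand side is unambiguous), and in particular the two cardinalities agree term by term in the sums over $K \in E_\Gamma(q^n, \F_q(t))$. Substituting this equality into the numerator of Theorem~\ref{T:MainFF} turns its left-hand side into the left-hand side of the corollary.

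It remains only to evaluate the right-hand side: Theorem~\ref{T:MainFF} gives the common value as $[H : H^\Gamma]^{-1}$, and since $H^\Gamma = 1$ by hypothesis this is simply $|H|^{-1}$, which is the claimed answer. I do not expect a genuine obstacle here — the corollary is a formal specialization — but the one point requiring care is making sure the ``prime-to-$q(q-1)|\Gamma|$'' truncation in the class-field-theory identification does not lose any surjections onto $H$; this is exactly where the arithmetic conditions $(q,|\Gamma||H|)=1$ and $(q-1,|H|)=1$ from the hypotheses of Theorem~\ref{T:MainFF}, combined with $(|H|,|\Gamma|)=1$, are used, and they guarantee $|H|$ is coprime to the part of $\Cl(\O_K)$ being discarded, so $\Sur_\Gamma(\Cl(\O_K), H)$ is unaffected by the truncation.
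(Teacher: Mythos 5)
Your proposal is correct and follows exactly the route the paper intends: the paper gives a one-line justification citing the same two facts (the class-field-theory identification of $\Gal(K^{\#}/K)^{\mathrm{ab}}$ with the prime-to-$q(q-1)|\Gamma|$ part of $\Cl(\O_K)$, and the characterization of admissibility for abelian $\Gamma$-groups), and you spell out the routine deduction cleanly. One small nit: the sentence ``every homomorphism $\Gal(K^{\#}/K)^{\mathrm{ab}} \to H$ factors through the prime-to-$q(q-1)|\Gamma|$ part of $\Cl(\O_K)$'' is vacuous as phrased, since $\Gal(K^{\#}/K)^{\mathrm{ab}}$ already \emph{is} that part; what you mean (and correctly say in the parenthetical that follows) is that every homomorphism $\Cl(\O_K) \to H$ factors through the prime-to-$q(q-1)|\Gamma|$ quotient because $|H|$ is coprime to $q(q-1)|\Gamma|$, so the two $\Sur_\Gamma$ sets are in canonical bijection.
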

These are the averages predicted by Cohen and Lenstra \cite{Cohen1984} for abelian $\Gamma$ and Cohen and Martinet \cite{Cohen1990} for general $\Gamma$ (see \cite[Theorem 1.2]{WW}),
 except that Cohen, Lenstra, and Martinet order fields by their discriminants (which is known in general  not to give the described predictions \cite[Theorem 6.2]{Bartel2018}).  We have avoided roots of unity in the base field of order sharing a common factor with $H$, as Malle \cite{Malle2008,Malle2010} has suggested necessary (and one can see in our proof how this is necessary).  Our result only sees the part of the class group from finitely many primes at once, avoiding the issue raised by Bartel and Lenstra \cite[Section 4]{Bartel2018}.  Thus our result makes the strong suggestion that aside from these issues with the Cohen--Lenstra--Martinet heuristics, we should otherwise believe their predictions at primes not dividing $|\Gamma|$. 

We emphasize that in the abelian case, these averages (moments) are known to determine a unique distribution.
That is, if $\mu$ is a probability measure on finite admissible abelian $\Gamma$-groups, and for all such groups $H$ we have
$$
\sum_A \mu(A) |\Sur_\Gamma(A,H)|=|H|^{-1},
$$
then it has been shown by Wang and the second author \cite{WW} that $\mu$ must be equal to the Cohen--Lenstra--Martinet prediction for the distribution of the prime-to-$|\Gamma|$ part of the class groups of totally real $\Gamma$-fields.  In the non-abelian setting, Boston and the second author \cite[Theorem~1.4]{Boston2017} have shown that the moments determine a unique distribution in the case $|\Gamma|=2$ when we replace abelian groups with pro-$p$ groups.  We conjecture that this phenomenon of the moments determining the distribution extends even further into the non-abelian setting.  The results so far in this direction and our further conjectures highlight the strong extent to which Theorem~\ref{T:MainFF} provides  evidence for Conjecture~\ref{C:Main}.

When $|\Gamma|=2$, if we take the maximal pro-$p$ quotients of our distribution on random $\Gamma$-groups, we obtain the distribution that Boston, Bush, and Hajir \cite{Boston2018} predict for $p$-class field tower groups of real quadratic fields and thus our conjecture implies theirs.  Their predictions are based on theoretical considerations on the $p$-class field tower groups as well as extensive numerical computations.  In the imaginary quadratic case, as $q\ra\infty$, Boston and the second author \cite[Theorem 1.3]{Boston2017} have  proved a function field theorem for moments of $\Gal(K^\#/K)$, and the current paper for the first time provides a distribution on groups that has the moments from that theorem (see Theorem~\ref{thm:moments} in the case $u=0$).

As a final piece of evidence for Conjecture~\ref{C:Main}, we prove in Proposition~\ref{prop:pClassGroup} that every $p$-class field tower group actually occurs as the maximal pro-$p$ quotient of one of the groups we use in building the distribution $\mu_{\Gamma}$.

For every finite group $G$ of order relatively prime to $2|\Gamma|$, we conjecture a positive density of $\Gamma$-number fields have an unramified $G$-extension.  This is because we can always take the $\Gamma$-subgroup of $G^{|\Gamma|}$ (with $\Gamma$ acting in its regular representation on the factors) that is generated by elements with coordinates $g$ in the identity factor and $g^{-1}$ in the $\gamma$ factor (and $1$ in all other factors) for each $g\in G$ and $\gamma\in\Gamma$.  This subgroup is an admissible $\Gamma$-group 
(using Lemma~\ref{L:YYs}) and has quotient $G$ by projection onto the first factor.

\subsection{Previous work}
Conjecture~\ref{C:Main} generalizes the conjectures of Cohen--Lenstra \cite{Cohen1984} and Cohen--Martinet
 \cite{Cohen1990} on the distribution of class groups of number fields and of Boston, Bush, and Hajir on $p$-class field tower groups of quadratic fields \cite{Boston2017a,Boston2018}.
The only examples of proven non-trivial averages predicted by these conjectures are for the average $3$-part of the class groups of quadratic fields due to Davenport and Heilbronn \cite{Davenport1971} 
(and Datskovsky and Wright \cite{Datskovsky1988} over general global fields) and the average $2$-part of the class groups of cubic fields due to Bhargava \cite{Bhargava2005}.

Achter \cite{Achter2006} proved a $q\ra\infty$ function field analog of the Cohen--Lenstra heuristics for quadratic fields. Ellenberg, Venkatesh, and Westerland \cite{Ellenberg2016} proved new theorems on the homological stability of Hurwitz spaces, which allowed them to let $n\ra \infty$ before $q\ra\infty$ in proving a function field analog of the Cohen--Lenstra heuristics for imaginary quadratic fields, a major theoretical improvement. Work of Boston and the second author \cite{Boston2017,Wood2018} applied that homological stability to  prove analogous results for the Boston--Bush--Hajir heuristics for imaginary quadratic fields and the Cohen--Lenstra heuristics for real quadratic fields.  The paper  \cite{Boston2017} proves moments not just for the $p$-class field tower group but for the pro-odd quotient, but before this paper there was no conjectural distribution giving those moments.
The second author \cite{Wood2017a} has proven $q\ra\infty$ function field moments for $\Gal(K^{\operatorname{un}}/K)$ for quadratic $K/\F_q(t)$ without the restriction of taking the pro-odd quotient.  The papers \cite{Boston2017,Wood2017a} had a technical restriction on which moments they proved, and in this paper we overcome that sort of  restriction by proving new theorems on the existence of the necessary Hurwitz schemes.  Bhargava \cite{Bhargava2014b} has also proven the $A_4,A_5,S_3,S_4,S_5$  moments for $\Gal(K^{\operatorname{un}}/K)$ for quadratic number fields $K$ using parametrizations by prehomogenous vector spaces.

However, except for the paper \cite{Bhargava2005} on cubic extensions, all of the above results above are about $\Gal(K^{\operatorname{un}}/K)$ for quadratic extensions $K$.  Our paper addresses $\Gamma$-extensions $K/\F_q(t)$ for any finite group $\Gamma$.  This has required new ideas on the conjectural side to understand the possible kinds of $\Gamma$-action, and the development of new techniques for our function field proofs to understand precisely the components of the relevant Hurwitz schemes for all finite groups $\Gamma$.

When $|\Gamma|=p$ and one considers the maximal pro-$p$ quotient of $\Gal(K^{\operatorname{un}}/K)$, orthogonally to the work in this paper, there are additional tools available, starting with genus theory, and  more is known, including Smith's \cite{Smith2017} recent remarkable work proving  
Gerth's \cite{GerthIII1987a} extension of the Cohen--Lenstra heuristics to the $2$-primary part of the class group, after Fouvry and Kl\"{u}ners \cite{Fouvry2006a} had proven the distribution on the $4$-ranks.  See \cite{Alberts2016,Alberts2016a,Klys2017,Wood2017a} for some known $2$-group moments of $\Gal(K^{\operatorname{un}}/K)$ for quadratic fields $K$, and \cite{Klys2016a,Koymans2018} on the $p$-primary parts of the class groups of  $\Z/p\Z$-fields.

\subsection{Methods and outline of the paper}
In Part I of the paper, we prove Theorem~\ref{T:MainGroup}, find the moments of our limiting distribution, and relate it to other known distributions.  In Part II of the paper, we prove Theorem~\ref{T:MainFF}.  

In Section~\ref{S:NTfacts}, we use number theoretic methods to show the facts, admissibility and Property E, about $\Gal(K^\#/K)$ that are the motivation for the model and conjecture of our paper.   In Section~\ref{S:MakeGroup}, we construct the \emph{free admissible group on $n$ generators} and the $n$-generated versions of our random groups, and show that $p$-class field tower groups arise in our model.

In Sections~\ref{sect:defofmu} and \ref{S:countadd}, we prove the limit of these $n$-generated random groups exist.  The methods of these sections build upon earlier work \cite{LiuWood2017} of the first two authors on the limit of $F_n/[ r_1,\dots,r_{n+u} ]$ (the quotient of the profinite free group by Haar random relations) as $n\ra\infty$.  Essentially all of the ideas in that paper are necessary for this paper, and we give a very brief treatment of the methods originating in \cite{LiuWood2017} so that we are able to focus on the new ideas that arise from the $\Gamma$-action, admissibility, and taking relations in a more delicate way.
We reduce our probabilities of interest to questions of certain multiplicities of generalized irreducible representations occurring inside $\calF_n$.  These \emph{generalized irreducible representations}
are irreducible actions of groups $G\rtimes \Gamma$ on groups $T^m$, where $T$ is a simple group (abelian or non-abelian).  We relate these multiplicities to a count of certain maps, which we then count another way to eventually obtain an explicit expression for the multiplicities from which we can prove there is limiting behavior as $n\ra\infty$ in Section~\ref{sect:defofmu}. 
 Then, it remains to show that there is a limiting probability distribution, i.e.,~that there is no escape of mass.  The critical observation for this is that the multiplicities above can be controlled in terms of $\Gamma$-chief factor pairs of groups, a concept we introduce generalizing the composition series of a group.  This fact gives us the analytic power necessary to prove there is no escape of mass in Section~\ref{S:countadd}  and to determine the moments in Section~\ref{S:moments}. In Section~\ref{S:other}, we show that our random group has abelian quotient with the Cohen--Lenstra--Martinet distribution and pro-$p$ quotient with the Boston--Bush--Hajir distribution.  

In Section~\ref{S:relation}, we reframe Conjecture~\ref{C:Main} in terms of counting certain extensions of $Q$.  In Section~\ref{S:FFC}, we show how the existence of Hurwitz schemes with certain properties (including statements about their Frobenius fixed components) would imply Theorem~\ref{T:MainFF}.
In Section~\ref{S:Construction}, building on work of Wewers \cite{Wewers1998}, we prove the existence of the required Hurwitz schemes in algebraic, analytic, and topological versions.  
Then in Section~\ref{S:countcomp}, we prove the necessary statement on the Frobenius fixed components of these Hurwitz schemes to imply Theorem~\ref{T:MainFF}.  We use a component invariant introduced by Ellenberg, Venkatesh, and Westerland, and the majority of the work in Section~\ref{S:countcomp} is to understand the possible values of this component invariant.  We eventually use geometry of numbers to count the possible values of the component invariant, obtaining an expression with an unevaluated constant.  This gives us asymptotics for the numerator and denominator of   
Theorem~\ref{T:MainFF}, both with unevaluated constants.  Even though we do not obtain the exact constant in either the numerator or the denominator, we are able to prove an exact relationship between those constants to give Theorem~\ref{T:MainFF}.

\subsection{Further directions}
For every finite set $\calC$ of $\Gamma$-groups (of order relatively prime to $|\Gamma|$), the pro-$\calC$ completion
$\Gal(K^{\operatorname{un}}/K)^{\calC}$ is a finite admissible $\Gamma$-group and
 Conjecture~\ref{C:Main} gives a concrete prediction (e.g.~in terms of their presentations) for which $\Gamma$-groups can arise as  $\Gal(K^{\operatorname{un}}/K)^{\calC}$ for a positive proportion of $\Gamma$-extensions $K/\Q$.  It would be interesting to know if any  groups outside this list can be shown to be $\Gal(K^{\operatorname{un}}/K)^{\calC}$ for some $K$, or whether one can prove that they can never occur.  

There are other known properties of $\Gal(K^{\operatorname{un}}/K)$, e.g.~that every finite index subgroup has finite abelianization, that our random model does not take as input.  Our measure $\mu_{\Gamma}$ is supported
on groups with finite abelianization (see Section~\ref{S:CM}).  It would be interesting to know if
$\mu_{\Gamma}$ is supported on groups all of whose finite index subgroups have finite abelianization.
See \cite[Section 3.1]{Boston2000a} for a sample list of other known properties for which one could ask an analogous question. One could similarly ask about conjectural properties, e.g.~whether $\mu_\Gamma$ is supported on groups satisfying the unramified Fontaine--Mazur conjecture \cite{Fontaine1995}.  These questions are in the domain of random groups.

In this paper, we avoid two parts of $\Gal(K^{\operatorname{un}}/K)$---the part related to primes dividing $|\Gamma|$, and the part related to primes dividing the roots of unity in the base field.  Our function field techniques do not require either of these restrictions, but without the restrictions the moments obtained are different.  For example, in \cite{Wood2017a}, the second author finds the double limit in  Theorem~\ref{T:MainFF} when $|\Gamma|=2$ for many $H$ including $|H|$ even.  It remains, however, to find a probability distribution on groups that has these moments.  This obstacle is one in the theory of random groups, and we plan to address this in future work.

There are many further directions in which one could generalize our conjectures and theorems, including other signatures, other base fields, allowing ramification at a fixed set of primes.  We hope that
this paper provides a framework for such generalizations.

As far as extending the conjectures to replace our $Q$ with a general global field $K_0$  as a base field, we see there as being three regimes.  
The first regime would be the cases in which we would expect our same conjectures to hold.  This includes when $K_0$ is an imaginary quadratic field with trivial class group (all known to have no unramified extensions),  or a  function field with one place designated as ``infinity'' and no unramified extensions where infinity is split completely.  In these cases, the admissibility and Property E arguments in our paper go through as written.  In these number field cases, our conjectured distribution would still abelianize to the Cohen--Martinet predictions (Cohen and Martinet did not include function fields in their conjectures).  Though, we currently have significantly less evidence in these cases---no function field theorem, and no number field experiments.

The second regime would be cases in which the base field $K_0$ has no unramified extensions that are split completely at the infinite places.
In the function field case, we get to arbitrarily mark certain places as infinite and we use these to model archimedean places by requiring, when we require an extension to be unramified, to also require these places to split completely.  Our work leads naturally to a (different) conjectured distribution for these cases.  In this case, our proof of admissibility still holds, but the proof of Property E fails.  However, just as we force selected places in the function field case to behave like archimedean places, we could actually do a similar thing in the number field case.  If we pick $k$ rational primes, and also call them ``pseudo-infinite,''  then, for a $\Gamma$-extension $K$ of $\Q$, we could ask for the distribution of $\Gal(K^{\sharp}/K)$, where $K^{\sharp}$ is the maximal extension of $K$ unramified everywhere, split completely at pseudo-infinite places, and of degree prime to $2|\Gamma|$.

On the one hand, there is a natural conjectural distribution for $\Gal(K^{\sharp}/K)$, because $\Gal(K^{\sharp}/K)$ is  the quotient of $\Gal(K^{\#}/K)$ by the Frobenius at our $k$ pseudo-infinite places.  The conjecture here would follow from assuming Frobenius elements should be modeled by a Haar random element of $\Gal(K^{\#}/K)$.  In the abelianization (class group case), there are conjectures and theoretical evidence for this conjecture in \cite{Klagsbrun2017a,Klagsbrun2017, Wood2018}.   
On the other hand, $\Q$ with $k$ pseudo-infinite places is a natural analog of a number field with $k+1$ infinite places. 
So in conclusion, one naturally conjectures, for base fields $K_0$ with $k+1$ infinite places and with no unramified extensions split completely at the infinite places, that,  for ``totally real'' $\Gamma$-extensions $K/K_0$, the $\Gamma$-group $\Gal(K^{\#}/K)$ is distributed like the quotient of the random $\Gamma$-group with distribution $\mu_\Gamma$  by $k$ Haar random elements.  (Note ``totally real''  just means split completely at infinite places, which may be complex.)

In general, such groups will not have Property E, and it is just as well because in these cases we expect that $\Gal(K^{\#}/K)$ does not generally have Property E.
However, one can prove that in these cases $\Gal(K^{\#}/K)$ is the quotient of a Property E group by $k$ elements 
 at every level $\mathcal{C}$, which provides theoretical motivation for this conjecture.  

The third regime would be base fields $K_0$ that have unramified extensions themselves.  Of course, for an extension $K/K_0,$ this usually provides 
automatic unramified extensions of $K$, that would be essentially constant even as $K$ varies.  In the class group situation, Cohen and Martinet handle this by only making conjectures of the kernel of the norm map $\Cl_K \ra \Cl_{K_0}$.  
Let $B= \Gal(K_0^{{\#}}/K_0)$.
We would like to see conjectures on $\Gal(K^{{\#}}/K)$ not simply as a random group with an action of $\Gamma$, but as a random extension of $B$ with an action of $\Gamma$
(where $\Gamma$ acts trivially on $B$). 
Now the property of admissibility 
 is replaced by $B$-admissibility, which would require that the largest 
$\Gamma$-invariant quotient of the extension of $B$ is in fact $B$.  It will require further work to determine what should replace Property E in these cases and how exactly the distribution should be built. However, this paper and the perspective above provide a natural check on this work as follows.  
If you have a conjectured distribution, and then take $k$ non-infinite places whose Frobenius elements generate $B$, and call them pseudo-infinite, then you will now be in the second regime above where there is a natural conjecture that follows from this paper.  So the random groups for the third regime should likely have the property that if you take such a random quotient (where each Frobenius is modeled by a Haar random element in the appropriate fiber over $B$) of it, the result agrees with the distribution given above in the second regime.

Another important further direction is the collection of numerical evidence for Conjecture~\ref{C:Main}.
  The already known numerical evidence includes the current numerical evidence for Cohen--Martinet over $\Q$, and the work of Boston, Bush, and Hajir \cite{Boston2018} (which includes good numerical evidence for the pro-$p$ quotient when $\Gamma=\Z/2\Z$).  
It would be useful to have  more published
  numerical evidence for Cohen--Martinet over $\Q$.  Some of this is included in Malle's work \cite{Malle2008, Malle2010}. In general these numerics are useful not just to test to conjectures but also to give a sense of what one should expect in terms of speed of convergence of the numerics
   (and also see the next paragraph).  

One of the smallest cases in which  we predict a moment that is not predicted by Cohen--Martinet or Boston--Bush--Hajir is when $\Gamma=\Z/4\Z$ and $H$ is the Heisenberg group mod $3$ (SmallGroup(27,3)).    Here we predict the $H$-moment (i.e., the average of $\Sur_{\Gamma}(\Gal(K^{\operatorname{un}}/K),H)$ over totally real $\Gamma$ fields $K$) should be $[H:H^\Gamma]^{-1}=1/9.$  However, $H^{\operatorname{ab}}$ is a $2$-dimensional irreducible representation $V$ of  $\Z/4\Z$ over $\F_3$.  Since $1\ra\Z/3\Z \ra H\ra V\ra 1$ is a non-split central extension of $\Z/4\Z$ groups, by our proof that $\Gal(K^\#/K)$ has Property E, it follows that if $\Cl_K$ has a quotient isomorphic to $V^m$, then 
$\Gal(K^{\#}/K)$ has $H^m$ as a quotient.  More precisely, if $G$ is an admissible group with Property E, one can prove that 
$|\Sur_{\Z/4\Z}(G,H)|=|\Sur_{\Z/4\Z}(G,V)|$.  This means that the Cohen--Martinet $V$-moment plus our proof of Property E for $\Gal(K^{\#}/K)$ implies the non-abelian $H$-moment.  So even the Cohen--Martinet numerics here would give evidence for a non-abelian moment.   This example also gives a sense of the power of Property E.

Another of the smallest cases to consider is when $\Gamma=(\Z/2\Z)^2$ with the same $H$ (Heisenberg group mod 3) (with the faithful admissible action).  We predict the $H$-moment is $1/27$, and this is a nice target for computation.  
If one wants to consider non-abelian $\Gamma$, for  $\Gamma$ the dihedral group of order $8$, there is an admissible action of the same $H$, and  the predicted moment is $1/9$.  

Another interesting computational target would be the smallest non-nilpotent moment, that is for the group $H=(\Z/5\Z)^2\rtimes \Z/3\Z$ (SmallGroup(75,2)) with $\Gamma=\Z/2\Z$, in which case our predicted moment is $1/15$.  
The computation might be able to proceed along similar lines to that in the appendix of \cite{Wood2017a}.
For the smallest example where $\Gamma$ is not a $2$-group, for $\Gamma=S_3$, there is an admissible action on the Heisenberg group modulo $5$, and the predicted moment is $1/125$.

\subsection{Other remarks}

For $K/\F_q(t)$,
  if we define
$K^{\operatorname{un},\infty}$ to be the maximal extension of $K$ unramified everywhere and split completely at all places above $\infty$, then $\Gal(K^{\operatorname{un},\infty}/K)^{\operatorname{ab}}\isom\Cl(\O_K)$, and $\Gal(K^{\operatorname{un}}/K)^{\operatorname{ab}}\isom\Pic(C_K)(\F_q)$ (where $C_K$ is the smooth projective curve over $\F_q$ associated to $K$).  Hence when generalizing class group heuristics, it makes sense to consider the former, though the latter is also of interest and is possibly approached via similar methods.

The Cohen--Lenstra--Martinet conjectures were originally made with fields ordered by discriminant, and here we order them by the radical of their discriminant.  The generally better statistical behavior of the latter order was noticed by the second author in \cite{Wood2010}.  Bartel and Lenstra \cite[Theorem 1.2]{Bartel2018} found that the Cohen--Lenstra--Martinet prediction for the $3$-part of class groups of $\Z/4\Z$-fields is incorrect for fields ordered by discriminant, and based on \cite{Wood2010} they suggested instead ordering fields by the radical of their discriminant.

It is a subtle question what test functions should be allowed in a conjecture like Conjecture~\ref{C:Main} that makes precise the conjectured equidistribution.  Cohen, Lenstra, and Martinet, in their conjectures on the abelianization \cite{Cohen1984,Cohen1990}, say that one can use any ``reasonable'' function ``probably including all non-negative functions.''   However, observations made by Bartel, Lenstra and Poonen \cite[Theorem 1.7]{Bartel2018} suggest that this is probably too broad a class.
Bartel and Lenstra \cite[Theorem 1.1]{Bartel2018} also showed, as a consequence of the Iwasawa Main Conjecture, that the Cohen--Martinet conjecture does not hold when one considers the part of the class groups from all primes not dividing $|\Gamma|$.  Bhargava, Kane, Lenstra, Poonen, and Rains \cite[Section 5.6]{Bhargava2015b}, on a related problem on statistics of elliptic curves, also suggest that one should not consider all primes at once.  We have erred on the side of caution by stating 
Conjecture~\ref{C:Main} for the two simplest kinds of test functions involving only finitely many primes, but we expect it to hold more broadly.  See \cite[Section 5.6]{Bhargava2015b} and 
\cite[Section 7]{Bartel2018} for some possible precise notions for the class of allowable test functions.

\subsection{Basic definitions and notation}
Throughout the paper $\Gamma$ is a finite group.

A \emph{$\Gamma$-group} is a profinite group with a continuous action of $\Gamma$.  
In this paper, whenever we talk about homomorphisms of profinite groups or $\Gamma$-groups, we always mean continuous homomorphisms.  By an action  of a group $\Gamma$ on a profinite group $G$, we always mean a continuous action, i.e., an action such that $\Gamma \times G \ra G$ is continuous.   When we say a profinite group is generated by some elements, we always mean topologically generated.
When we refer to $|G|$ or the order of a profinite group $G$, we mean the supernatural number that is the order in the profinite sense.

 A morphism of $\Gamma$-groups $H\ra H'$  is a homomorphism of profinite groups $\phi: H\ra H'$ such that for all $h\in H$ and $g\in \Gamma$ we have $g(\phi(h))=\phi(g(h))$.
A $\Gamma$-subgroup of a $\Gamma$-group is a subgroup closed under the $\Gamma$ action.  A $\Gamma$-quotient of a $\Gamma$-group $H$ is the image of a surjective morphism
$H\ra H'$ of $\Gamma$-groups.
We write $\Hom_\Gamma$, $\Sur_\Gamma$ and $\Aut_\Gamma$ to represent the sets of $\Gamma$-equivariant homomorphisms, surjections and automorphisms respectively.
When $H$ is a $\Gamma$-group, we say that $H$ is an \emph{irreducible} $\Gamma$-group if it is non-trivial and has no proper, non-trivial $\Gamma$-subgroups.  
If $x_1,\dots$ are elements of a $\Gamma$-group  $F$,  we say that $F$ is \emph{$\Gamma$-generated} by 
 $x_1,\dots$ if $F$ is generated by the elements $\{ \gamma(x_i)\}_{\gamma\in \Gamma, i\geq 1}$.
 If $x_1,\dots$ are elements of a $\Gamma$-group $F$, we write $[x_1,\dots]$ for the normal closed $\Gamma$-subgroup of $F$ generated by $x_1,\dots$, and when the relevant $\Gamma$ or $F$ might be unclear, we add them as subscripts to the notation. For a $\Gamma$-group $H$, we write $H\rtimes \Gamma$ to be the semidirect product induced by the given $\Gamma$-action on $H$ and let $h_\Gamma(H):=|\Hom_\Gamma(H,H)|.$
 
For a positive integer $n$, a \emph{pro-$n'$ group} (or \emph{profinite $n'$-group}) is a profinite group such that every finite quotient has order relatively prime to $n$.  A  \emph{$n'$-$\Gamma$-group} is a pro-$n'$ group with a continuous $\Gamma$-action.  

For a field $k$, we write $\overline{k}$ for a (fixed) choice of separable closure of $k$.  We let $\Gal_k:=\Gal(\bar{k}/k).$
A \emph{$\Gamma$-extension of $k$} is a Galois extension $K/k$ with a choice of isomorphism $\Gal(K/k)\isom \Gamma$.

Throughout the paper $q$  denotes a prime power and $\F_q$ the finite field of order $q$.
Also,  $Q$ denotes either $\Q$ or $\F_q(t)$,  and $\bad=2|\Gamma|$ if $Q=\Q$ and $\bad=q(q-1)|\Gamma|$ if $Q=\F_q(t)$.

\begin{center}
{\bf Part I: A random $\Gamma$-group}\end{center}

\section{Facts about $\Gal(K^\#/K)$}\label{S:NTfacts}

We recall that $K^\#$ is the maximal unramified everywhere, split completely at places above $\infty$, degree prime-to-$\bad$ extension of $K$.
In this section, we explain the facts from number theory that motivate our conjecture about $\Gal(K^\#/K)$,
 where $K/Q$ is a $\Gamma$-extension.  
Recall that a $\Gamma$-group $G$ is \emph{admissible} if it is generated topologically by the elements $\{g^{-1}\gamma(g) | g\in G, \gamma\in\Gamma \}$ and is of order prime to $|\Gamma|$.
  We show $\Gal(K^\#/K)$ has a $\Gamma$-action, is admissible, and satisfies Property E.

For a $\Gamma$-extension $K/Q$, 
and a pro-$|\Gamma|'$ extension $L/K$ that is Galois over $Q$,
by the Schur--Zassenhaus theorem \cite[Theorem 2.3.15]{Ribes2010}
we can choose a section $s \colon  \Gamma \ra \Gal(L/Q)$,
 which gives $\Gal(L/K)$ an action of $\Gamma$ (via $s$ and conjugation) and an isomorphism $\Gal(L/Q)\isom \Gal(L/K) \rtimes \Gal(K/Q)$.  
 Also by the Schur--Zassenhaus theorem,  all choices of such a section are conjugate, and thus all choices would lead to isomorphic $\Gamma$-groups $\Gal(L/K)$.  

\begin{definition}\label{D:defG}
For each $\Gamma$-extension $K/Q$, we choose an arbitrary section $s \colon  \Gamma \ra \Gal(K^\#/Q)$, and
thus define an action of $\Gamma$ on $\Gal(K^\#/K)$.
\end{definition}

\subsection{Admissibility}

\begin{proposition}\label{prop:Galois_admissible}
	Let $K$ be a $\Gamma$-extension of $Q$ and $L$ an unramified pro-$|\Gamma|'$ extension of $K$ that is Galois over $Q$ and split completely at all places above $\infty$. Then $\Gal(L/K)$ is an admissible $\Gamma$-group.
\end{proposition}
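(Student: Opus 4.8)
The plan is to check the two defining properties of an admissible $\Gamma$-group for $G := \Gal(L/K)$. The condition on the order is immediate, since $G$ is by hypothesis a pro-$|\Gamma|'$ group, so every finite quotient has order prime to $|\Gamma|$. For the generation condition, the key observation is that $[g^{-1}\gamma(g)]_{g\in G,\gamma\in\Gamma}$ is exactly the kernel of the maximal $\Gamma$-quotient of $G$ on which $\Gamma$ acts trivially: a $\Gamma$-quotient $G\twoheadrightarrow\bar G$ kills every $g^{-1}\gamma(g)$ precisely when $\Gamma$ acts trivially on $\bar G$. Hence $G$ is admissible if and only if (the order condition holds and) $G$ has no nontrivial finite $\Gamma$-quotient with trivial $\Gamma$-action, and the whole proof reduces to deriving a contradiction from the existence of such a quotient $\pi\colon G\twoheadrightarrow\bar G$.

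So suppose such a $\pi$ exists and set $H := \ker\pi$, a closed normal $\Gamma$-subgroup of $G$. Fixing the section $s\colon\Gamma\to\Gal(L/Q)$ of Definition~\ref{D:defG}, the $\Gamma$-action on $G$ is conjugation by $s(\Gamma)$, so $H$ is normalized by $s(\Gamma)$ and by $G$, hence $H\trianglelefteq\Gal(L/Q)=G\rtimes s(\Gamma)$. Put $L' := L^H$; then $\Gal(L'/Q)\cong\bar G\rtimes\Gamma$, and since $\Gamma$ acts trivially on $\bar G$ this is an internal direct product $\bar G\times\bar s(\Gamma)$ with $\bar G=\Gal(L'/K)$. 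I would then let $M := (L')^{\bar s(\Gamma)}$. Because $\bar s(\Gamma)$ is a direct factor it is normal, so $M/Q$ is Galois with $\Gal(M/Q)\cong\bar G$; in particular $[M:Q]=|\bar G|>1$. Moreover $M\cap K=Q$ and $MK=L'$ by the Galois theory of the direct product, so restriction gives an isomorphism $\Gal(MK/K)\cong\Gal(M/Q)$, and $MK/K$, being a subextension of $L/K$, is unramified and (when $Q=\F_q(t)$) split completely at every place above $\infty$.

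The heart of the argument is a local coprimality computation showing $M/Q$ is unramified at every finite place and, over $\F_q(t)$, split completely at $\infty$. Fix a finite place $\mathfrak p$ of $Q$, a place $\mathfrak P$ of $MK$ above it, and let $I\le\Gal(MK/Q)=\bar G\times\Gamma$ be its inertia group. Since $MK/K$ is unramified, $I\cap\Gal(MK/K)=I\cap\bar G=1$, so the projection to the $\Gamma$-factor is injective on $I$ and $|I|$ divides $|\Gamma|$. The inertia group of $M/Q$ at $\mathfrak P\cap M$ is the image of $I$ under $\bar G\times\Gamma\to\bar G$, hence a quotient of $I$, of order dividing $|\Gamma|$; but it also lies in $\bar G$, whose order is prime to $|\Gamma|$ since $\bar G$ is a finite quotient of the pro-$|\Gamma|'$ group $G$. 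Therefore it is trivial. Running the identical argument with the decomposition group in place of the inertia group, at a place of $MK$ over $\infty$, and using that $MK/K$ is split completely there, shows $\infty$ splits completely in $M/Q$.

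Thus $M/Q$ would be a nontrivial extension unramified at all finite places, and over $\F_q(t)$ in addition split completely (so unramified) at $\infty$. When $Q=\Q$ this contradicts Minkowski's theorem that every number field other than $\Q$ has a ramified prime. When $Q=\F_q(t)$, an extension of $\F_q(t)$ unramified at every place is a constant field extension $\F_{q^m}(t)$, in which the place $\infty$ has a single place above it of residue degree $m$ and so fails to split completely unless $m=1$; again $M=Q$, a contradiction. Hence no such $\bar G$ exists and $G$ is admissible. I expect the only mildly delicate point to be the bookkeeping around the $\Gamma$-action — verifying that $H$ is genuinely normal in $\Gal(L/Q)$ and that $M$ and $MK$ have the claimed relation to $K$ — but this is routine once the splitting $\Gal(L'/Q)=\bar G\times\bar s(\Gamma)$ is in hand; the real content is the coprimality squeeze on the inertia and decomposition groups, combined with the nonexistence of unramified extensions of $\Q$ and of unramified-and-split-at-$\infty$ extensions of $\F_q(t)$.
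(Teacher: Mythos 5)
Your proof is correct and follows essentially the same route as the paper's: reduce admissibility to showing that the maximal quotient on which $\Gamma$ acts trivially (i.e.\ $G/[G,\Gamma]$) is trivial, observe that this quotient produces an intermediate field with Galois group a direct product $\bar G\times\Gamma$ over $Q$, extract a $\bar G$-extension $M/Q$, and conclude by the nonexistence of nontrivial unramified (and split-at-$\infty$) extensions of $\Q$ or $\F_q(t)$. The only difference is one of exposition: the paper compresses the step ``$M/Q$ is unramified and split at $\infty$'' into the remark that $|\bar G|$ is prime to $|\Gamma|$, whereas you spell out the inertia/decomposition group squeeze explicitly, which is exactly the computation the paper is silently invoking.
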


\begin{proof}
	Let $G=\Gal(L/K)$. Since $G$ is a pro-$|\Gamma|'$ group, the Galois group of $L/Q$ is isomorphic to $G\rtimes \Gamma$. 
	Let $N$ be the closed subgroup of $G$ generated by elements $\{g^{-1}\sigma(g) \,\mid\, g \in G, \sigma \in \Gamma\}$. Then $N$ is a normal $\Gamma$-subgroup of $G$, because for any $h \in G$ we have $h^{-1} g^{-1}\sigma(g) h = (gh)^{-1} \sigma(gh) \cdot (h^{-1} \sigma(h))^{-1}$, and for any $\gamma \in \Gamma$ we have $\gamma(g^{-1}\sigma(g))=(g^{-1}\gamma(g))^{-1} (g^{-1}\gamma\sigma(g))$.
	 Let $\overline{G}$ be the quotient $G/N$. Then the induced action of $\Gamma$  on $\overline{G}$ is trivial, so the quotient of $G\rtimes \Gamma$ by $N$ is the direct product $\overline{G}\times \Gamma$.  Thus the fixed field $L^N$ has $\Gal(L^N/Q)=\overline{G}\times \Gamma$.
Every inertia group of $L/Q$, as well as the decomposition group at $\infty$, has order dividing $|\Gamma|$, and hence relatively prime to  $|G|$.
Thus the fixed field $L^{N\Gamma}$  is an unramified $\overline{G}$-extension of $Q$ that is also split completely at $\infty$.
	 Therefore, $\overline{G}$ has to be trivial which implies that $G=N$, the closed subgroup generated by the elements $\{g^{-1}\sigma(g) \,\mid\, g\in G, \sigma \in \Gamma\}$.
\end{proof}

\subsection{Property E}\label{S:PropE}

\begin{definition}
Let $C$ be a full subcategory of the category of  $\Gamma$-groups.   A $\Gamma$-group $H$ (in $C$) has \emph{Property E} (for $C$) if,
for every non-split central extension 
		\begin{equation}\label{eq:extension}
			1 \to \Z/p\Z \to \widetilde{G} \to G \to 1,
		\end{equation}
		in $C$, where $p\in \Z$ is prime and $\Z/p\Z$ has trivial $\Gamma$-action, any $\Gamma$-equivariant surjection $H \to G$ lifts to a $\Gamma$-equivariant surjection $H \to \widetilde{G}$.
\end{definition}

\begin{theorem}\label{T:PropE}
	Let $K/Q$ be a $\Gamma$-extension. Then $\Gal(K^\#/K)$
	satisfies Property E in the category of  
		$\bad'$-$\Gamma$-groups.
\end{theorem}

The authors discovered the importance of Property E from a result of Nigel Boston and Michael Bush
 that $(\Z/4\Z)^2$
 never occurs as a  $2$-class field tower group of a $\Z/3\Z$-extension of $\Q$.  
 Since $\Q$ has $2$ roots of unity,  this example is not  covered by Theorem~\ref{T:PropE}, but trying to understand their result led us to the realization that Property E
 was  a fundamental fact about $\Gal(K^\#/K)$.

 To prove Theorem~\ref{T:PropE}, we first need the following facts on Galois characters.

\begin{lemma} \label{lem:Galois-character}
Let $p\in \Z$ be a prime not dividing  the order of the roots of unity in $Q$.
Given, for each place $v$ of $Q$, a homomorphism $\phi_{v}\colon  \Gal_{Q_v} \to \Z/p\Z$, such that only finitely many are ramified, there exists a homomorphism $\phi\colon  \Gal_{Q} \to \Z/p\Z$ agreeing with $\phi_v$ on the inertia subgroup for each finite place $v$
and agreeing with $\phi_\infty$  on $\Gal_{Q_\infty}$.
\end{lemma}

\begin{proof}
The statement
 of the following lemma  in the case $Q=\Q$ is  \cite[Proposition~2.1.6]{Serre1992},  and the proof is similar in the case $Q=\F_q(t).$
\end{proof}

\begin{lemma}\label{lem:embedding}
	Let $G$ be a  $|\Gamma|'$-$\Gamma$-group.  Let $L/Q$ be a $(G\rtimes \Gamma)$-extension and let $K:=L^G$.  Assume that $L/K$ is unramified everywhere and split completely at all places over $\infty$.  
	Let $p\in \Z$ be a  prime such that $p\nmid \bad$.
If there is a non-split central  extension of $\Gamma$-groups as in \eqref{eq:extension}, 
	 then there exists a field extension $\widetilde{L}$ of $L$ such that $\Gal(\widetilde{L}/Q)\simeq \widetilde{G}\rtimes \Gamma$, and $\Gal(\widetilde{L}/Q) \to \Gal(L/Q)$ agrees with the map in \eqref{eq:extension} and the identity map of $\Gamma$,  and $\widetilde{L}/K$ is unramified everywhere and split completely at places over $\infty$.  
\end{lemma}

\begin{proof} 
First we consider the case when $G$ is finite.
Let $N=\Z/p\Z$.
	The lemma describes an embedding problem
	\begin{equation}\label{eq:global_embedding}
	\begin{tikzcd}
		 & & & \Gal_{Q} \ar["\pi", two heads]{d} \ar[dashed]{dl} & \\
		1 \ar{r} & N \ar{r} & \widetilde{G}\rtimes \Gamma \ar["\varphi"]{r} & G \rtimes \Gamma \ar{r} & 1,
	\end{tikzcd}
	\end{equation}
	where the 
	 field fixed by $\ker \pi$ is $L$. Let $v$ be a nonarchimedean place of $Q$. Then we fix an injection $\Gal_{Q_v} \hookrightarrow \Gal_{Q}$, and consider the local embedding problem associated to $v$. 
	
Let $S:=\pi(\Gal_{Q_v})$, and let $G':=G\cap S$. We claim that there exists a subgroup $\Gamma'$ of $S$ such that $S = G' \times \Gamma'$. Since $G'=\ker(S\ra \Gamma)$, we have that $G'$ is a Hall subgroup of $S$.
Since $L/K$ is unramified,  any inertia subgroup of $\Gal(L/Q)$ at $v$ has trivial intersection with $G$.
 Thus the inertia subgroup $I$ of $S$ is a normal subgroup such that $\gcd(|I|, |G'|)=1$, hence $G'$ injects into $S/I$. Since $S/I$ is cyclic, so is $G'$.  Moreover, since $G'$ is a Hall subgroup of $S/I$, and we have $S/I \simeq G' \times C$, where $C$ is a cyclic group.  Let $\Gamma'$ be the kernel of the projection of $S$ onto the $G'$ factor after the projection onto $S/I$.  So $\Gamma'$ is a normal subgroup of $S$ that is a complement to $G'$ and hence $S= G'\times \Gamma'$.

	So from \eqref{eq:global_embedding}  we obtain an embedding problem of the local field $Q_{v}$:
	\begin{equation}\label{eq:local_embedding}
	\begin{tikzcd}
		 & & & \Gal_{Q_v} \ar["\pi' ", two heads]{d} \ar[dashed]{dl} & \\
		1 \ar{r} & N \ar{r} & E \ar["\varphi' "]{r} & G' \times \Gamma' \ar{r} & 1,
	\end{tikzcd}
	\end{equation} 
where $\pi'$ is the restriction of $\pi$ to $\Gal_{Q_v}$, $E=\varphi^{-1} (S)$ and $\varphi'= \varphi|_E$. 
The short exact sequence in \eqref{eq:local_embedding} is central because the one in \eqref{eq:global_embedding} is central. Since $p\nmid |\Gamma'|$ and the $\Gamma'$ action on $N=\Z/p\Z$ induced by conjugation in $E$ is trivial, the preimage $\varphi'^{-1}(\Gamma')$ is isomorphic to $N \times \Gamma'$, so $\Gamma'$ lifts to a subgroup (also denoted by $\Gamma'$) that is a normal Hall subgroup of $\varphi'^{-1}(\Gamma')$, and hence is a characteristic subgroup. Then $\Gamma'$ is normal in $E$ and $E = \varphi'^{-1}(G')\times \Gamma'$. By composing $\pi'$ with projection maps to $G'$ and $\Gamma'$ we obtain $\pi_1\colon  \Gal_{Q_v} \to G'$ and $\pi_2\colon  \Gal_{Q_v} \to \Gamma'$.  Since $\pi_1$ is unramified,  $\pi_1$ can lift to a homomorphism $\widetilde{\pi}_1\colon  \Gal_{Q_v}\to \varphi'^{-1}(G')$. So $\widetilde{\pi}_1 \times \pi_2$ defines a solution of the embedding problem \eqref{eq:local_embedding}.
	
	In the case that $Q=\Q$, it follows by $N\simeq \Z/p\Z$ for odd $p$ that the embedding problem \eqref{eq:global_embedding} is solvable at the infinite place. By the assumption $\chr{Q}\neq p$, the local-global principle of central embedding problems \cite[Corollary~10.2]{Malle-Matzat} implies that \eqref{eq:global_embedding} has a solution. We denote one solution by $\rho\colon  \Gal_Q \to \widetilde{G} \rtimes \Gamma$, and for each place $v$ we let $\rho_v:=\rho|_{\Gal_{Q_v}}$. By \cite[\S~2]{Neukirch1979}, for a fixed $v$, all the solutions of the local embedding problem at $v$ form a principal homogeneous space over $\Hom(\Gal_{Q_v}, N)$, so there exists a character $\phi_v\colon \Gal_{Q_v} \to N$ such that the homomorphism $\rho_v \phi_v\colon  \Gal_{Q_v} \to \widetilde{G} \rtimes \Gamma$, defined as $\rho_v \phi_v(x)=\rho_v(x) \phi_v(x)$, is the map $\widetilde{\pi}_1 \times \pi_2$ described in the previous paragraph for finite $v$, and is the trivial map for infinite $v$. Then the image of the inertia (and the decomposition group at infinity) under $\rho_v \phi_v$ does not meet $N$. Then using Lemma~\ref{lem:Galois-character} we obtain a global character $\phi\colon  \Gal_{Q} \to N$ such that the map $\rho \phi$, which has to be surjective by the nonsplitness of \eqref{eq:extension}, satisfies the desired conditions in the lemma.
	Note if we have two homomorphisms $\phi_1,\phi_2 : \Gal_{Q} \ra \widetilde{G}\rtimes \Gamma$ satisfying the conditions of the lemma, then they differ by a homomorphism $\Gal_{Q} \ra N$ that is unramified at all places, and in particular, there are only finitely many choices for lifts 
	$\Gal_{Q} \ra \widetilde{G}\rtimes \Gamma$ satisfying our conditions.

If $G$ is profinite, then for every open subgroup $U$, the above argument gives a lift of $\Gal_Q\ra \widetilde{G}/(U N)\rtimes \Gamma$ to 
$\Gal_Q\ra \widetilde{G}/U \rtimes \Gamma$ satisfying the conditions on ramification and at $\infty$.  
These lifts form an inverse system of non-empty finite sets, and thus have non-empty inverse limit,  which gives a lift
of $\Gal_Q\ra G \rtimes \Gamma$ to 
$\Gal_Q\ra \widetilde{G} \rtimes \Gamma$ satisfying the lemma's conditions on ramification and at $\infty$.
\end{proof}

Theorem~\ref{T:PropE}  then follows from Lemma~\ref{lem:embedding}.

\section{A random admissible group with Property E}\label{S:MakeGroup}
In this section, we will define natural random $\Gamma$-groups that are admissible and satisfy Property E, with the aim of eventually constructing one that has a distribution that we would conjecture as the distribution for $\Gal(K^{\#}/K).$

\subsection{Definition of free admissible $\Gamma$-groups}\label{sect:Definition-Free-Admissible}

We define the \emph{free profinite $\Gamma$-group on $n$ generators}, $F_n(\Gamma)$, to be the free pro-$|\Gamma|'$ group on $\{x_{i,\gamma} \,\mid\, i=1, \dots, n \text{ and } \gamma\in \Gamma\}$ 
where $\sigma\in \Gamma$ acts on $F_n(\Gamma)$ by $\sigma(x_{i,\gamma})=x_{i,\sigma \gamma}$.
We fix a generating set $\{\gamma_1, \dots, \gamma_d\}$ of the finite group $\Gamma$ throughout the paper.
We let $x_i:=x_{i,\Id_{\Gamma}}$ and define $\calF_n(\Gamma)$ to be the closed $\Gamma$-subgroup of $F_n(\Gamma)$ that is generated (as a closed $\Gamma$-subgroup) by the elements
\[
	\{x_{i}^{-1}\gamma_j (x_{i})\, \mid\, i=1, \dots, n \text{ and } j=1, \dots, d\}.
\]
When the choice of the group $\Gamma$ is clear, we will denote $F_n(\Gamma)$ and $\calF_n(\Gamma)$ by $F_n$ and $\calF_n$ respectively.
For the purposes of this paper, one may take the generating set $\{\gamma_1, \dots, \gamma_d\}$ to be all of the elements of $\Gamma$, but we expect the flexibility of allowing any generating set will be useful in further applications, e.g.  for computations when $\Gamma$ is cyclic.

Since $x_i\not\in \calF_n$, 
it is not obvious that $\calF_n$ is admissible, but we will see that in fact it is, and we call $\calF_n$ \emph{the free admissible $\Gamma$-group on $n$ generators} (see  Remark~\ref{R:gens} and Corollary~\ref{C:FHoms}  for some motivation for the name).

\begin{lemma}\label{L:admiss}
For any finite group $\Gamma$, the $\Gamma$-group $\calF_n$ is admissible.  Moreover, there is a quotient map $\pi: F_n \ra \calF_n$ of $\Gamma$-groups, such that the composition of the inclusion $i\colon  \calF_n\sub  F_n$ with $\pi$ is the identity map on $\calF_n$.
\end{lemma}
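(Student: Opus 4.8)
The plan is to exhibit an explicit $\Gamma$-equivariant retraction $\pi\colon F_n \to \calF_n$ and then deduce admissibility of $\calF_n$ from the fact that it is a retract of $F_n$. First I would construct $\pi$ using the universal property of the free pro-$|\Gamma|'$ group $F_n$: it suffices to specify the images of the generators $x_{i,\gamma}$ in $\calF_n$ in a way that is compatible with the $\Gamma$-action $\sigma(x_{i,\gamma}) = x_{i,\sigma\gamma}$. The natural choice is to send $x_i = x_{i,\Id}$ to $1$, and more generally to send $x_{i,\gamma}$ to an element $w_{i,\gamma}\in\calF_n$ determined recursively so that $w_{i,\Id}=1$ and $w_{i,\gamma_j\gamma} = (x_i^{-1}\gamma_j(x_i))\cdot$(something) — concretely, since $x_{i,\gamma_j\gamma} = \gamma_j(x_{i,\gamma})$ does not literally hold as $\gamma$ need not be a single generator, I would instead think of it this way: $F_n$ is the free product (in the pro-$|\Gamma|'$ category) of the $n$ copies of the $\Gamma$-group $\Z_{|\Gamma|'}[\Gamma]$-analog, i.e. the free pro-$|\Gamma|'$ group on the $\Gamma$-set $\{x_{i,\gamma}\}$, so a $\Gamma$-equivariant map out of it is the same as a choice, for each $i$, of an arbitrary element of any $\Gamma$-group as the image of $x_i$ (the images of the other $x_{i,\gamma}$ being forced by equivariance). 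Thus $\pi$ is uniquely determined by declaring $\pi(x_i) = 1 \in \calF_n$ for each $i$; then $\pi(x_{i,\gamma})$ is forced, and one checks $\pi$ lands in $\calF_n$.

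The key steps, in order: (1) verify that the assignment $x_i \mapsto 1$ extends to a $\Gamma$-equivariant homomorphism $\pi\colon F_n \to \calF_n$ — here the main subtlety is that the target must be $\calF_n$, not just $F_n$, so I need to see that $\pi(x_{i,\gamma})\in\calF_n$ for all $\gamma$; since $\calF_n$ is a $\Gamma$-subgroup and is generated as such by the $x_i^{-1}\gamma_j(x_i)$, and since $\pi$ restricted to $\calF_n$ (viewed inside $F_n$) must satisfy $\pi(x_i^{-1}\gamma_j(x_i)) = \pi(x_i)^{-1}\gamma_j(\pi(x_i))$ — wait, this forces $\pi(x_i^{-1}\gamma_j(x_i)) = 1$ if $\pi(x_i)=1$, which would kill $\calF_n$. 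So the correct construction is different: I should instead define $\pi$ by $\pi(x_{i,\gamma}) = x_{i,\gamma}x_i^{-1}$, i.e. "normalize by the identity component." Let me reconsider: set $\pi(x_{i,\gamma}) := x_{i,\gamma}\,x_{i,\Id}^{-1}$ for all $i,\gamma$. This is $\Gamma$-equivariant? Check: $\sigma(\pi(x_{i,\gamma})) = \sigma(x_{i,\gamma})\sigma(x_i)^{-1} = x_{i,\sigma\gamma}x_{i,\sigma}^{-1}$, whereas $\pi(\sigma(x_{i,\gamma})) = \pi(x_{i,\sigma\gamma}) = x_{i,\sigma\gamma}x_{i,\Id}^{-1}$ — these differ. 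So equivariance fails too. The honest approach: define $\pi$ on generators by $\pi(x_i) := 1$, and more generally $\pi(x_{i,\gamma})$ to be a word in the $x_j^{-1}\gamma_k(x_j)$ that "telescopes" $x_{i,\gamma}$; for $\gamma = \gamma_{k_1}\cdots\gamma_{k_m}$ one wants $\pi(x_{i,\gamma}) = (x_i^{-1}\gamma_{k_1}(x_i))\cdot \gamma_{k_1}(x_i^{-1}\gamma_{k_2}(x_i))\cdots$, designed so that formally $\pi(x_{i,\gamma}) = x_i^{-1}x_{i,\gamma}$ in $F_n$ — and then independence of the chosen word for $\gamma$ must be checked, or avoided by instead defining $\pi$ directly as the composite of the inclusion $F_n \to F_n$ and the retraction onto $\calF_n$ coming from the semidirect-product splitting.

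I think the cleanest route, and the one I would actually write, is: (1) Observe $F_n \rtimes \Gamma$ is the free pro-$|\Gamma|'$ group on $x_1,\dots,x_n$ with $\Gamma$ adjoined — more precisely, there is a split surjection $F_n \rtimes \Gamma \to (\text{free pro-}|\Gamma|'\text{ on }x_1,\dots,x_n) \rtimes \Gamma$? No. Rather: (1') Let $\overline{F}$ be the free pro-$|\Gamma|'$ group on $x_1,\dots,x_n$; then $F_n \to \overline{F}$, $x_{i,\gamma}\mapsto x_i$ (for all $\gamma$), is $\Gamma$-equivariant when $\Gamma$ acts trivially on $\overline{F}$, and its kernel is exactly $\calF_n$ (this should follow from the structure of $F_n$ as a free object and is essentially the content of the lemma's first half, reformulated — the elements $x_i^{-1}\gamma_j(x_i)$ normally generate the kernel). (2) The map $\overline{F}\to F_n$, $x_i \mapsto x_{i,\Id}$, is a $\Gamma$-equivariant (trivial action on source) splitting, giving $F_n \cong \calF_n \rtimes \overline{F}$ as $\Gamma$-groups. (3) Projection $F_n = \calF_n\rtimes\overline{F} \to \calF_n$ is the desired $\pi$, $\Gamma$-equivariant, retracting the inclusion $i$. (4) Finally, admissibility of $\calF_n$: it has order prime to $|\Gamma|$ since it is pro-$|\Gamma|'$ (being a closed subgroup of $F_n$); and it is $\Gamma$-generated by the $\{g^{-1}\gamma(g)\}$ because it contains the $x_i^{-1}\gamma_j(x_i)$ which generate it as a closed $\Gamma$-subgroup, and these are manifestly of the form $g^{-1}\gamma(g)$ — I should double-check that "$\Gamma$-generated by commutators $g^{-1}\gamma(g)$" as in the definition of admissible is implied by "generated as a closed $\Gamma$-subgroup by some specific such commutators," which is immediate since the latter generating set is a subset of $\{g^{-1}\gamma(g)\mid g\in\calF_n,\gamma\in\Gamma\}$ provided $\gamma_j(x_i)\in\calF_n$... but $x_i\notin\calF_n$! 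This is exactly the flagged subtlety ("Since $x_i\not\in\calF_n$, it is not obvious that $\calF_n$ is admissible"). The resolution: $x_i^{-1}\gamma_j(x_i)$ with $x_i\in F_n$ is a priori only a "$\Gamma$-commutator in $F_n$"; to see $\calF_n$ is admissible I must exhibit these, or a $\Gamma$-generating set, as $h^{-1}\gamma(h)$ with $h\in\calF_n$. The main obstacle is precisely this last point, and I expect to handle it via Lemma~\ref{L:YYs} (referenced in the introduction for exactly this purpose) or by a direct computation showing each $x_i^{-1}\gamma_j(x_i)$ lies in the closed $\Gamma$-subgroup generated by elements $h^{-1}\gamma(h)$, $h\in\calF_n$ — for instance by an approximation/induction on finite quotients using that in $\overline{G}=\calF_n/[\calF_n,\Gamma]$ everything dies. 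In short: steps (1')–(3) are formal manipulations with free profinite $\Gamma$-groups and semidirect products, step (4) for the order is trivial, and the genuine content — the part I'd budget the most care for — is showing the commutator-generation condition in the definition of \emph{admissible} holds despite the generators $x_i$ themselves not lying in $\calF_n$.
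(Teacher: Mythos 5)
Your proposed ``clean route'' via a decomposition $F_n \cong \calF_n \rtimes \overline{F}$ does not work, and the error already occurs at step (1$'$): $\calF_n$ is \emph{not} a normal subgroup of $F_n$, so it cannot be the kernel of the map $F_n \to \overline{F}$, $x_{i,\gamma}\mapsto x_i$. By Remark~\ref{R:gens}, as a pro-$|\Gamma|'$ group $\calF_n$ is free of rank $n(|\Gamma|-1)$; whereas the kernel of $F_n\to\overline{F}$ is the closed \emph{normal} $\Gamma$-subgroup generated by the $x_i^{-1}\gamma_j(x_i)$, which is a normal closed subgroup of infinite index-type rank inside the free group $F_n$ (Nielsen--Schreier) and strictly contains $\calF_n$ (e.g.\ for $\Gamma=\Z/2\Z$, $n=1$, the element $x_1(x_1^{-1}x_{1,\sigma})x_1^{-1}$ lies in the kernel but not in $\calF_1 = \overline{\langle x_1^{-1}x_{1,\sigma}\rangle}$). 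Step (2) has the dual problem: the map $\overline{F}\to F_n$, $x_i\mapsto x_{i,\Id}$, is not $\Gamma$-equivariant when $\overline{F}$ carries the trivial action, since $x_{i,\Id}$ is not $\Gamma$-fixed in $F_n$. And even if you did have a semidirect product $N\rtimes\overline{F}$, the ``projection onto $N$'' is not a group homomorphism in general, so step (3) would not produce a map.

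What you correctly recognize in step (4) is, in fact, essentially the whole proof; it is not merely the admissibility half. Applying Lemma~\ref{L:YYs} with $G=F_n$, $E=\calF_n$, and observing that $Y(x_i)\in Y(F_n)\cap\calF_n^d$ (the latter by definition of $\calF_n$), one obtains elements $y_i\in\calF_n$ with $Y(y_i)=Y(x_i)$, i.e.\ $y_i^{-1}\gamma_j(y_i)=x_i^{-1}\gamma_j(x_i)$ for all $i,j$. Admissibility is then immediate, since $\calF_n$ is $\Gamma$-generated by elements $h^{-1}\gamma(h)$ with $h\in\calF_n$. The retraction is the $\Gamma$-equivariant homomorphism $\pi\colon F_n\to\calF_n$ determined (via freeness of $F_n$ on the $\Gamma$-set $\{x_{i,\gamma}\}$) by $\pi(x_i)=y_i$; it fixes each generator $x_i^{-1}\gamma_j(x_i)=y_i^{-1}\gamma_j(y_i)$, hence is the identity on $\calF_n$, and it is surjective because it hits those generators. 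Note that $\pi$ is not canonical and is not given by any of the explicit formulas you tried at the start; the existence of the $y_i$ rests on Schur--Zassenhaus through Lemma~\ref{L:YYs}, which is precisely why the retraction cannot be written down by a na\"ive formula on the $x_{i,\gamma}$.
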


\begin{remark}\label{R:gens}

\begin{enumerate}
\item	As a pro-$|\Gamma|'$-group (forgetting the $\Gamma$-action), it is clear from the definition that ${\calF}_n$ is free on generators 
	 $\{x_i^{-1} \sigma(x_i) \,\mid\, i=1, \dots, n \text{ and } \sigma \in \Gamma \backslash\{1\}\}$.
\item Since ${\calF}_n$ is a $\Gamma$-group quotient of $F_n$ by Lemma~\ref{L:admiss},
	 ${\calF}_n$ can be topologically generated as a $\Gamma$-group by $n$ elements. 
\item	 If $H$ is a $\Gamma$-group generated by elements $h_i^{-1}\gamma(h_i)$ with $h_i\in H$ (for $1\leq i\leq n$ and all $\gamma \in \Gamma$), then we see that the map $F_n\ra H$ sending $x_i\mapsto h_i$ gives a surjection  $\mathcal{F}_n\ra H$.

\end{enumerate}

\end{remark}

Now we prove some basic properties of $\calF_n$ including Lemma~\ref{L:admiss}.
For any $\Gamma$-group $G$, we define the following map of sets
\begin{eqnarray}\label{eq:map_alpha}
	Y\colon  G &\to& G^d\\
	g &\mapsto& (g^{-1}\gamma_1(g), g^{-1}\gamma_2(g), \dots, g^{-1}\gamma_d(g)).\nonumber
\end{eqnarray}
Consider the map $\Phi$ of sets from $G$ to homomorphic sections of $G\rtimes \Gamma \ra \Gamma$ defined by sending
$g$ to the conjugate, by $g$,  of the trivial section. By the Schur--Zassenhaus theorem, $\Phi$ is surjective.
Since $(g^{-1},1)(1, \gamma_i)(g,1)=(g^{-1}\gamma_i(g), \gamma_i)$, the $i$th coordinate of $Y(G)$ gives the $G$ coordinate of $\Phi(g)(\gamma_i)$.
 Since $\gamma_1, \dots, \gamma_d$ generate $\Gamma$,  we have $\Phi(g_1)=\Phi(g_2)$ if and only if $Y(g_1)=Y(g_2)$. 
 Thus we also have a natural bijection between $Y(G)$ and the sections of $G\rtimes \Gamma \to \Gamma$.
 
 \begin{lemma}\label{L:YYs}
Let $G$ be a  $|\Gamma|'$-$\Gamma$-group, and $E$ a $\Gamma$-subgroup of $G$. 
Then $Y(E)=Y(G)\cap E^d$.
\end{lemma}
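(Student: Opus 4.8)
The plan is to prove the inclusion $Y(E)\subseteq Y(G)\cap E^d$ by inspection and then do real work only on the reverse inclusion, which I will reduce to the finite case and settle using non-abelian $H^1$. For the easy inclusion: if $g\in E$ then $\gamma_i(g)\in E$ since $E$ is a $\Gamma$-subgroup, so each $g^{-1}\gamma_i(g)\in E$ and $Y(g)\in E^d$; of course $Y(g)\in Y(G)$. For the reduction: $Y(E)$ is closed in $G^d$ (it is the continuous image of the compact set $E$) and so is $Y(G)\cap E^d$, so it suffices to produce, for each $y\in Y(G)\cap E^d$, an element $e\in E$ with $Y(e)=y$. Assuming the statement in the finite case, for every open normal $\Gamma$-subgroup $U\trianglelefteq G$ the image of $y$ in $(G/U)^d$ lies in $Y(G/U)\cap(EU/U)^d$, hence in $Y(EU/U)$, so $S_U:=\{e\in E\mid Y(e)\equiv y \pmod U\}$ is a nonempty closed subset of $E$. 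Since the $U$ form a neighborhood basis of the identity, the $S_U$ form a downward-directed family of nonempty compact sets, so $\bigcap_U S_U\neq\emptyset$ and any element of the intersection satisfies $Y(e)=y$.

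For the finite case, write $y=Y(g)$ with $g\in G$, so $y_i=g^{-1}\gamma_i(g)\in E$. Consider the map $z\colon\Gamma\to G$, $z(\gamma)=g^{-1}\gamma(g)$, which is a $1$-cocycle for the $\Gamma$-action on $G$; inside $G\rtimes\Gamma$ the associated set $\{z(\gamma)\gamma\mid\gamma\in\Gamma\}$ equals the conjugate subgroup $g^{-1}\Gamma g$, and it is generated by the elements $g^{-1}\gamma_i g=z(\gamma_i)\gamma_i=y_i\gamma_i$, each of which lies in $E\rtimes\Gamma$ because $y_i\in E$. Hence $g^{-1}\Gamma g\subseteq E\rtimes\Gamma$; equivalently $z$ takes all its values in $E$, and $g^{-1}\Gamma g$ is a complement to the normal Hall subgroup $E$ of $E\rtimes\Gamma$ (it meets $E$ trivially and has the right order since $\gcd(|E|,|\Gamma|)=1$). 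The standard copy of $\Gamma$ is another such complement, and by the Schur--Zassenhaus theorem — in the form $H^1(\Gamma,E)=0$, exactly as invoked in the proof of Lemma~\ref{lem:XYproperties} — these two complements are conjugate by an element $e\in E$. Tracing this through, $eg^{-1}$ normalizes the standard $\Gamma$ in $G\rtimes\Gamma$ while lying in $G$, so $eg^{-1}\in N_{G\rtimes\Gamma}(\Gamma)\cap G=G^{\Gamma}$; thus $e\in G^{\Gamma}g$, which by Lemma~\ref{lem:XYproperties}\eqref{item:2} is precisely the fiber $Y^{-1}(y)$. So $Y(e)=y$ with $e\in E$, as desired.

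The only place the hypotheses enter is the coprimality of $|G|$ (hence of $|E|$) with $|\Gamma|$, which is exactly what makes $H^1(\Gamma,E)=0$; without it the statement fails already for $\Gamma=\Z/2\Z$ acting on $(\Z/2\Z)^2$ by swapping the two factors, with $E$ the diagonal. I expect the only mildly delicate points to be the bookkeeping in $G\rtimes\Gamma$ that identifies $g^{-1}\Gamma g$ as a complement to $E$ and converts its $E$-conjugacy to the standard $\Gamma$ into the membership $e\in G^{\Gamma}g=Y^{-1}(y)$, and the routine profinite passage to the limit; neither presents a genuine obstacle.
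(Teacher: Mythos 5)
Your proof is correct, and its engine is the same as the paper's: Schur--Zassenhaus conjugacy of complements of the normal Hall subgroup $E$ in $E\rtimes\Gamma$ (equivalently, $H^1(\Gamma,E)=0$). The paper's proof is shorter because it invokes the profinite Schur--Zassenhaus theorem directly, packaging the key step as a bijection $\Phi$ between $Y(G)$ and the sections of $G\rtimes\Gamma\to\Gamma$, and then simply observes that a section whose image lands in $E\rtimes\Gamma$ is a section of $E\rtimes\Gamma\to\Gamma$; this sidesteps both your explicit reduction to finite quotients and your normalizer computation $N_{G\rtimes\Gamma}(\Gamma)\cap G=G^\Gamma$, which the paper's framing never needs. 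Nothing in your version is wrong; it is a spelled-out form of the paper's one-line observation, and the compactness reduction, while unnecessary here, is a reasonable safety net if one only has the finite Schur--Zassenhaus theorem in hand.
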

\begin{proof}
Consider the  bijection described above between $Y(G)$ and the set of all homomorphic sections of $G\rtimes \Gamma \to \Gamma$.  
Since a section of $G\rtimes \Gamma \ra \Gamma$ that has image in $E\rtimes \Gamma$ is a section of $E\rtimes \Gamma\ra \Gamma$ and conversely, the lemma follows.
\end{proof}

\begin{proof}[Proof of Lemma~\ref{L:admiss}]
We use Lemma~\ref{L:YYs} with $G=F_n$ and $E=\calF_n$ to choose $y_i\in \calF_n$ such that
 $Y(x_i)=Y(y_i)$.  By comparing coordinates of $Y(x_i)$ and $Y(y_i)$, it is clear that $\calF_n$ is admissible.  We take $\pi(x_i)=y_i.$
\end{proof}

\begin{lemma}\label{L:rightcoset}
Let $G$ be a finite $\Gamma$-group.   For $g\in G$ and $y=Y(g)$, the fiber $Y^{-1}(y)$ is the right coset $G^{\Gamma}g$ of $G^{\Gamma}$ in $G$, and so $|G|=|G^{\Gamma}||Y(G)|$.
\end{lemma}
\begin{proof}
An element $h \in G$ satisfies $h^{-1}\gamma_i(h)=g^{-1}\gamma_i(g)$ if and only if $(hg^{-1})^{-1} \gamma_i(hg^{-1})=1$, so $h\in Y^{-1}(y)$ if and only if $hg^{-1}$ is fixed by $\gamma_i$ for all $i$.
\end{proof}

\begin{lemma}\label{lem:XYproperties}

	Let $G$ be a finite $|\Gamma|'$-$\Gamma$-group, and $N$ a  normal $\Gamma$-subgroup of $G$.  This defines an exact sequence
	\begin{equation}\label{eq:es}
		1 \to N \to G \overset{\pi}{\to} H \to 1.
	\end{equation}
	Then the quotient $H$ naturally obtains a $\Gamma$-action and
	\begin{enumerate}
		\item\label{item:1} $1\ra N^{\Gamma} \ra G^{\Gamma} \ra H^{\Gamma}\ra 1 $ is exact;
		\item\label{item:3} $|Y(N)|=|Y(G)|/|Y(H)|$;
		\item\label{item:4} $|Y(G) \cap \pi^{-1}(x)|=|Y(G)|/|Y(H)|$ for any $x \in Y(H)$, where we use $\pi$ to also denote the map $G^d\ra H^d$ induced by $\pi: G \ra H$.

	\end{enumerate}

\end{lemma}

\begin{proof}

We obtain a long exact sequence from \eqref{eq:es} of {pointed sets}
	\[
		1\ra N^{\Gamma} \ra G^{\Gamma} \ra H^{\Gamma}\ra H^1(\Gamma, N) \to \cdots.
	\]
	Elements of $H^1(\Gamma,N)$ correspond to $N$-conjugacy classes of splittings of $N\rtimes \Gamma\ra \Gamma$. By the Schur--Zassenhaus theorem, we have $H^1(\Gamma, N)=0$, thus \eqref{item:1} follows,
and then \eqref{item:3} follows from Lemma~\ref{L:rightcoset} and \eqref{item:1}.

	Let $x\in Y(H)$ and fix $h\in H$ and $g\in G$ such that $x=Y(h)$ and $g\in \pi^{-1}(h)$. 
The self-bijection on $G^d$ such that $z\mapsto (g,\dots,g)z(\gamma_1(g)^{-1},\dots,\gamma_d^{-1}(g))$ sends $Y(G)$ to $Y(G)$ and $\pi^{-1}(x)$ to $N^d$.  Thus we have that $|Y(G)\cap \pi^{-1}(x)|=|Y(G)\cap N^d|$. Then because $\pi$ maps every element in $Y(G)$ to an element in $Y(H)$, we have $|Y(G)|=\sum_{x\in Y(H)} |Y(G) \cap \pi^{-1}(x)|=|Y(H)| |Y(G) \cap N^d|$, proving \eqref{item:4}.
\end{proof}

\begin{corollary}\label{C:FHoms}
For any $\Gamma$-group $G$, the map
$$
Y(G)^n \ra \Hom_{\Gamma}(\calF_n,G)
$$
taking $(Y(g_1),\dots,Y(g_n))$ to the restriction of the map $F_n\ra G$ with $x_i\mapsto g_i$ is well-defined and bijective.
\end{corollary}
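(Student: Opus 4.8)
The plan is to deduce this purely from the universal property of the free $\Gamma$-group $F_n$, together with the $\Gamma$-equivariant retraction $\pi\colon F_n\to\calF_n$ produced in Lemma~\ref{L:admiss}.

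\textbf{Step 1 (universal property of $F_n$).} First I would record that since $F_n$ is the free pro-$|\Gamma|'$ group on the set $\{x_{i,\gamma}\}$ with $\Gamma$ acting through its permutation action on the second index, $\Gamma$-equivariance forces any $\Gamma$-equivariant homomorphism $F_n\to G$ to send $x_{i,\gamma}=\gamma(x_i)$ to $\gamma$ applied to the image of $x_i$; conversely, for any $\mathbf g=(g_1,\dots,g_n)\in G^n$ the assignment $x_{i,\gamma}\mapsto\gamma(g_i)$ is compatible with the two $\Gamma$-actions and hence extends uniquely to a $\Gamma$-equivariant homomorphism $\phi_{\mathbf g}\colon F_n\to G$. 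Thus $\mathbf g\mapsto\phi_{\mathbf g}$ is a bijection $G^n\xrightarrow{\sim}\Hom_\Gamma(F_n,G)$, and the map in the statement sends $(Y(g_1),\dots,Y(g_n))$ to $\phi_{\mathbf g}|_{\calF_n}$.

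\textbf{Step 2 (well-definedness and injectivity).} Next I would use that $\calF_n$ is topologically generated, as a closed $\Gamma$-subgroup of $F_n$, by the elements $x_i^{-1}\gamma_j(x_i)$ for $1\le i\le n$ and $1\le j\le d$. Hence $\phi_{\mathbf g}|_{\calF_n}$ is determined by the images $\phi_{\mathbf g}(x_i^{-1}\gamma_j(x_i))=g_i^{-1}\gamma_j(g_i)$, which are precisely the coordinates of $Y(g_i)$. Consequently $\phi_{\mathbf g}|_{\calF_n}$ depends only on the tuple $(Y(g_1),\dots,Y(g_n))$, so the map of the corollary is well defined; and if $\phi_{\mathbf g}|_{\calF_n}=\phi_{\mathbf g'}|_{\calF_n}$ then $g_i^{-1}\gamma_j(g_i)=(g'_i)^{-1}\gamma_j(g'_i)$ for all $i,j$, i.e.\ $Y(g_i)=Y(g'_i)$ for all $i$, which gives injectivity.

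\textbf{Step 3 (surjectivity).} Finally, given $\psi\in\Hom_\Gamma(\calF_n,G)$, I would set $g_i:=\psi(\pi(x_i))$ and $\mathbf g=(g_1,\dots,g_n)$. Then $\psi\circ\pi$ and $\phi_{\mathbf g}$ are two $\Gamma$-equivariant homomorphisms $F_n\to G$ both sending $x_i$ to $g_i$, so by Step 1 they coincide; restricting to $\calF_n$ and using $\pi\circ i=\id_{\calF_n}$ from Lemma~\ref{L:admiss} yields $\phi_{\mathbf g}|_{\calF_n}=\psi\circ\pi\circ i=\psi$. Since $Y(g_i)\in Y(G)$ automatically, the tuple $(Y(g_1),\dots,Y(g_n))$ is a preimage of $\psi$, completing surjectivity. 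I do not expect a genuine obstacle: the substantive input is the retraction $\pi$ of Lemma~\ref{L:admiss}, and the only point needing a moment's care is that the map is well defined precisely because $\calF_n$ records only $Y(g_i)$ rather than $g_i$ itself. (If one does not assume $G$ is pro-$|\Gamma|'$, one simply notes that every continuous homomorphism out of $\calF_n$ has pro-$|\Gamma|'$ image, so the argument applies verbatim.)
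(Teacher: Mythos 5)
Your argument is correct and matches the paper's own proof, with the universal-property bookkeeping made explicit: well-definedness and injectivity both reduce to the fact that $\calF_n$ is $\Gamma$-generated by the elements $x_i^{-1}\gamma_j(x_i)$, and surjectivity uses the retraction $\pi$ from Lemma~\ref{L:admiss}.

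One remark on the closing parenthetical, which is not correct. If $G$ is not pro-$|\Gamma|'$, then Step~1 already breaks down: the universal property of the free pro-$|\Gamma|'$ group $F_n$ produces the extension $\phi_{\mathbf g}$ only when the images generate a pro-$|\Gamma|'$-subgroup of $G$. Concretely, with $\Gamma=\Z/2\Z$ acting on $G=\Z/4\Z$ by inversion (and $\gamma_1$ the nontrivial element), $Y(G)=\{0,2\}$ has two elements, while $\Hom_\Gamma(\calF_n,G)$ is a singleton since $\calF_n$ is pro-odd; so the map of the corollary is not a bijection. In the paper the corollary is applied only to $|\Gamma|'$-$\Gamma$-groups, where this is a non-issue, but the statement does implicitly require $G$ to be pro-$|\Gamma|'$, and your parenthetical does not circumvent that.
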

\begin{proof}
The map is well-defined and injective because $Y(g_i)=Y(g'_i)$ for all $i$ if and only if the maps $F_n \ra G$ taking $x_i \mapsto g_i$ and $x_i\mapsto g'_i$ agree on $\calF_n$.
 The map is surjective because we can use Lemma~\ref{L:admiss} to lift any homomorphism in 
 $\Hom_{\Gamma}(\calF_n,G)$ to one in $\Hom_{\Gamma}(F_n,G).$
\end{proof}

\begin{lemma}\label{lem:admissible_Y}
	If $G$ is an admissible $\Gamma$-group, then $G$ is $\Gamma$-generated by the coordinates of elements in $Y(G)$.  Thus, $\calF_n$ does not depend on the choice of the generating set $\{\gamma_1, \dots, \gamma_d\}$.
\end{lemma}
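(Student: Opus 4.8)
The plan is to show that the closed $\Gamma$-subgroup $H$ of $G$ generated by the coordinates $\{g^{-1}\gamma_i(g)\mid g\in G,\ i=1,\dots,d\}$ of the elements of $Y(G)$ already contains every element of the form $g^{-1}\gamma(g)$ with $\gamma\in\Gamma$ \emph{arbitrary}. Once this is known, admissibility of $G$ (which says precisely that $\{g^{-1}\gamma(g)\mid g\in G,\ \gamma\in\Gamma\}$ topologically $\Gamma$-generates $G$) forces $H=G$, which is the first claim.

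The key tool is the cocycle identity: for any $\Gamma$-group, any element $g$, and any $\gamma,\delta\in\Gamma$,
$$g^{-1}(\gamma\delta)(g)=\bigl(g^{-1}\gamma(g)\bigr)\cdot\gamma\bigl(g^{-1}\delta(g)\bigr),$$
which is a one-line computation. Writing $c(g,\gamma):=g^{-1}\gamma(g)$, this says $\gamma\mapsto c(g,\gamma)$ is a $1$-cocycle; taking $\delta=\gamma^{-1}$ gives $c(g,\gamma^{-1})=\gamma^{-1}\bigl(c(g,\gamma)^{-1}\bigr)$. Since $H$ is $\Gamma$-stable and contains $c(g,\gamma_i)$ for each $i$ and each $g\in G$, it also contains each $c(g,\gamma_i^{-1})$. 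Then, writing an arbitrary $\gamma\in\Gamma$ as a word in $\gamma_1^{\pm1},\dots,\gamma_d^{\pm1}$ and inducting on its length, the cocycle identity together with $\Gamma$-stability of $H$ shows at each step that $c(g,\gamma)\in H$ for all $g\in G$. Hence $H$ contains all of $\{g^{-1}\gamma(g)\mid g\in G,\ \gamma\in\Gamma\}$, and we conclude $H=G$.

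For the final assertion about $\calF_n$, I would run exactly the same cocycle argument inside $F_n$ with $g=x_i$ and $H=\calF_n$: since $\calF_n$ is a closed $\Gamma$-subgroup of $F_n$ containing $x_i^{-1}\gamma_j(x_i)$ for all $i$ and $j$, it contains $x_i^{-1}\gamma(x_i)$ for every $\gamma\in\Gamma$. Therefore $\calF_n$ coincides with the closed $\Gamma$-subgroup of $F_n$ generated by $\{x_i^{-1}\gamma(x_i)\mid i=1,\dots,n,\ \gamma\in\Gamma\}$, a description that makes no reference to the chosen generating set $\{\gamma_1,\dots,\gamma_d\}$; so $\calF_n$ does not depend on that choice.

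I do not expect a genuine obstacle here: the only thing requiring care is the bookkeeping in the word-length induction — in particular correctly handling inverses of the generators $\gamma_i$ via the formula for $c(g,\gamma^{-1})$ — and noting that every manipulation involves only finitely many group operations and applications of elements of the finite group $\Gamma$, so everything stays inside the relevant closed $\Gamma$-subgroup. The hypothesis that $|G|$ is prime to $|\Gamma|$ plays no role in this particular lemma; only the $\Gamma$-generation part of admissibility is used.
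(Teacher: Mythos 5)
Your proof is correct and takes essentially the same route as the paper's: both rely on the telescoping cocycle identity $g^{-1}(\sigma_1\cdots\sigma_s)(g)=c(g,\sigma_1)\cdot\sigma_1\bigl(c(g,\sigma_2)\bigr)\cdots\sigma_1\cdots\sigma_{s-1}\bigl(c(g,\sigma_s)\bigr)$ together with $\Gamma$-stability of the subgroup in question. The only cosmetic difference is that the paper writes $\sigma$ as a \emph{positive} word in $\gamma_1,\dots,\gamma_d$ (always possible since $\Gamma$ is finite, so $\gamma_i^{-1}$ is a positive power of $\gamma_i$), which makes your separate step computing $c(g,\gamma^{-1})$ unnecessary, though harmless.
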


\begin{proof}
	Let $g\in G$ and $\sigma\in \Gamma$. Then $\sigma$ can be written as a word $\sigma_1 \sigma_2 \cdots \sigma_s$ where $\sigma_i \in \{\gamma_1, \dots, \gamma_d\}$ for each $i$, and so $g^{-1}\sigma(g)$ can be written as
	\[
		g^{-1}\sigma_1(g) [\sigma_1(g^{-1}\sigma_2(g))] \cdots [\sigma_1\sigma_2 \cdots \sigma_{s-1}(g^{-1}\sigma_s(g))],
	\]
	which finishes the proof.
\end{proof}

\begin{corollary}\label{cor:abelianizationCalF}
Let $Z$ be the pro-$|\Gamma|'$ completion of $\Z$, i.e., $Z=\prod_{p \nmid |\Gamma|} \Z_p$.  Then $\calF_n^{\operatorname{ab}}$ is isomorphic to 
$(Z[\Gamma]/(\sum_{g\in \Gamma} g))^n$,  as a   $\Gamma$-group. In particular, $(\calF_n^{\operatorname{ab}})^{\Gamma}=1$.
\end{corollary}

\begin{proof}
By Lemma~\ref{L:admiss}, we have that $\calF_n^{\operatorname{ab}}$ injects into $F_n^{\operatorname{ab}}$ (since $x,y\in F_n$ with $[x,y]\in \calF_n$ implies $[x,y]=\pi([x,y])=[\pi(x),\pi(y)]\in [\calF_n, \calF_n]$). Then 
$\calF_n^{\operatorname{ab}}$ is $I^n$ in $F_n^{\operatorname{ab}}=Z[\Gamma]^n$, where $I$ is the augmentation ideal of $Z[\Gamma]$.  Since  $|\Gamma|$ is invertible in $Z$, the group algebra $Z[\Gamma]$ can be decomposed as a $\Gamma$-module as $I \oplus (\sum_{g \in \Gamma} g)$, and the first statement in the corollary follows.   Any element of $I^\Gamma$ is fixed under left multiplication by $|\Gamma|^{-1} \sum_{g \in \Gamma} g$, and hence $I^\Gamma=0$, which implies the second statement of the corollary.
\end{proof}

\subsection{A random quotient}\label{S:randq}

Now by construction, any quotient of $\FF_n$ will be admissible.  However, not every quotient will satisfy Property E.  For example, letting $H=G\simeq (\Z/2\Z)^2$ with the unique (up to isomorphism) non-trivial $\Z/3\Z$-action and letting $\widetilde{G}$ be the quaternion group of order $8$ with the unique non-trivial $\Z/3\Z$-action, the identity map $H=G$ can not lift to a surjection $H \to \widetilde{G}$ because $|H|=4$ and $|\widetilde{G}|=8$.

We will next show that quotients of a certain form do satisfy Property E.
By a slight abuse of notation, for a $\Gamma$-group $G$ and a subset $\Rr$ of elements of $G$, we write $[ Y(\Rr) ]$ for the closed, normal $\Gamma$-subgroup of $G$ generated by the coordinates of all of the  $Y(r)$ for $r\in \Rr$.

Let $\calC$ be a set of isomorphism classes of finite $\Gamma$-groups, and $\overline{\calC}$ the smallest set of isomorphism classes of $\Gamma$-groups containing $\calC$ that is closed under taking finite direct product, $\Gamma$-quotients and $\Gamma$-subgroups. For a given $\Gamma$-group $G$, define the pro-$\calC$ completion of $G$ to be
\begin{equation}\label{E:proC}
	G^{\calC} =\varprojlim_{M} G/M,
\end{equation}
where the inverse limit runs over all closed normal $\Gamma$-subgroups $M$ of $G$ such that the $\Gamma$-group $G/M$ is contained in  $\overline{\calC}$. 
We call a $\Gamma$-group $H$ \emph{level $\calC$} if $H^\mathcal{C}=H$. 
One can see from this definition that taking pro-$\calC$ completion is a  functor from the category of $\Gamma$-groups to the category of $\Gamma$-groups of level $\calC$,  and a standard argument shows that if $F\ra H$ is a surjection then $F^\calC\ra H^\calC$ is also a surjection.

\begin{lemma}\label{lem:kernel-is-simple}
	Let $\widetilde{G}$ be an admissible $\Gamma$-group, and let $N$ be a normal sub-$\Gamma$-group of $\widetilde{G}$ such that $\Gamma$ acts trivially on $N$.
	Then $\widetilde{G}$ acts trivially on $N$ by conjugation.
\end{lemma}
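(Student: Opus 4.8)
\textbf{Proof proposal for Lemma~\ref{lem:kernel-is-simple}.}

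The plan is to show that the conjugation action of $\widetilde G$ on $N$ is trivial by exploiting admissibility, namely that $\widetilde G$ is $\Gamma$-generated by elements of the form $g^{-1}\gamma(g)$. First I would observe that since $N$ is normal in $\widetilde G$, conjugation gives a homomorphism $c\colon \widetilde G\to \Aut(N)$, and it suffices to show that every generator $g^{-1}\gamma(g)$ (with $g\in\widetilde G$, $\gamma\in\Gamma$) acts trivially on $N$. Fix $n\in N$. Because $\Gamma$ acts trivially on $N$, for any $h\in\widetilde G$ and $\gamma\in\Gamma$ the element $\gamma(h)\,n\,\gamma(h)^{-1}$ can be rewritten using $\Gamma$-equivariance: $\gamma(h)\,n\,\gamma(h)^{-1} = \gamma(h)\,\gamma(n)\,\gamma(h)^{-1} = \gamma(h n h^{-1})$. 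Applying this with $h=g$, I would compute
\[
(g^{-1}\gamma(g))\, n\, (g^{-1}\gamma(g))^{-1} = g^{-1}\,\big(\gamma(g)\, n\, \gamma(g)^{-1}\big)\, g = g^{-1}\,\gamma(g n g^{-1})\, g.
\]
Now $g n g^{-1}\in N$ since $N$ is normal, and $\gamma$ acts trivially on $N$, so $\gamma(gng^{-1}) = gng^{-1}$; substituting back gives $g^{-1}(gng^{-1})g = n$. Hence each such generator centralizes $N$.

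Since the elements $g^{-1}\gamma(g)$ with $\gamma$ ranging over a generating set of $\Gamma$ together with the closure operations $\Gamma$-generate $\widetilde G$, and the centralizer of $N$ in $\widetilde G$ is a closed normal $\Gamma$-subgroup (it is clearly a subgroup, it is closed, it is normalized by $\widetilde G$ because $N$ is normal, and it is $\Gamma$-stable because $\Gamma$ acts trivially on $N$: if $x$ centralizes $N$ then for $m\in N$, $\gamma(x)m\gamma(x)^{-1}=\gamma(x)\gamma(m)\gamma(x)^{-1}=\gamma(xmx^{-1})=\gamma(m)=m$), it follows that the centralizer is all of $\widetilde G$. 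Strictly, I should check that a subgroup containing all the $g^{-1}\gamma_j(g)$ and closed under the relevant operations must equal $\widetilde G$; this is exactly the content of Lemma~\ref{lem:admissible_Y}, which says an admissible $\Gamma$-group is $\Gamma$-generated by the coordinates of elements of $Y(G)$, i.e.\ by the $g^{-1}\gamma_j(g)$. So invoking that lemma, the centralizer of $N$, being a closed $\Gamma$-subgroup containing all these generators, is $\widetilde G$, and conjugation on $N$ is trivial.

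I do not anticipate a serious obstacle here; the only point requiring a little care is making sure the centralizer $\Cen_{\widetilde G}(N)$ is genuinely $\Gamma$-stable so that "$\Gamma$-generated by these elements" applies, and the short computation above handles that. An alternative, essentially equivalent, phrasing would be to pass to $\widetilde G\rtimes\Gamma$ and note $N$ is normal there with $\Gamma\le \widetilde G\rtimes\Gamma$ centralizing it, then argue the commutator subgroup $[\widetilde G,\Gamma]$ centralizes $N$ and equals $\widetilde G$ by admissibility; but the direct computation is cleaner and avoids introducing the semidirect product.
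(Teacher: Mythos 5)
Your proof is correct and follows essentially the same approach as the paper's: the key computation $g^{-1}\gamma(g)\,n\,\gamma(g)^{-1}g = g^{-1}\gamma(gng^{-1})g = n$ is identical, and both arguments then invoke admissibility to conclude the generators $g^{-1}\gamma(g)$ span $\widetilde G$. You spell out more carefully than the paper that the centralizer of $N$ is a closed $\Gamma$-subgroup so that Lemma~\ref{lem:admissible_Y} applies, but this is the same idea made explicit rather than a different route.
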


\begin{proof}
Because $\Gamma$ acts trivially on $N$, we have $g^{-1} \gamma(g) n \gamma(g^{-1}) g = g^{-1} \gamma(g n g^{-1}) g = g^{-1} g n g^{-1} g=n$, for any $n\in N$, $\gamma\in \Gamma$ and $g\in \widetilde{G}$. Since $\widetilde{G}$ is admissible, this shows that $\widetilde{G}$ also acts trivially on $N$. 
\end{proof}

\begin{proposition}\label{P:GetE}
	Let $n\geq 1$ be an integer, $\calC$ a set of finite $|\Gamma|'$-$\Gamma$-groups, and $\phi: (\calF_n)^{\calC} \to H$ a $\Gamma$-equivariant surjection. Then $\ker\phi = [Y(\ker \phi)]_{(\calF_n)^{\calC}}$ if and only if $H$ satisfies Property E for the category of $\Gamma$-groups of level $\calC$.
\end{proposition}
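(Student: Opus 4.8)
The plan is to prove both implications by carefully analyzing what it means for a quotient $\phi\colon (\calF_n)^{\calC}\to H$ to have the "relations come from $Y$" shape, and matching it up with lifting surjections along central extensions $1\to\Z/p\Z\to\widetilde{G}\to G\to 1$ in the level-$\calC$ category. Write $R=(\calF_n)^{\calC}$, $M=\ker\phi$, and let $N=[Y(M)]_R$; note $N\subseteq M$ always, since each $Y(r)$ with $r\in M$ has coordinates $r^{-1}\gamma_j(r)\in M$ (as $M$ is a normal $\Gamma$-subgroup). So the content is the reverse inclusion $M\subseteq N$ versus Property E.

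For the direction "Property E $\Rightarrow$ $M=[Y(M)]_R$": suppose $N\subsetneq M$. I want to produce a non-split central extension in the level-$\calC$ category witnessing a failure of Property E. The quotient $M/N$ is a nontrivial $\Gamma$-group on which $\Gamma$ acts trivially — this is exactly the point of forming $[Y(M)]$: modulo $N$, every element $r^{-1}\gamma(r)$ of $M$ is killed, and by Lemma~\ref{lem:admissible_Y} (applied in $R/N$, which is admissible as a quotient of $\calF_n$) the image of $M$ in $R/N$ is $\Gamma$-generated by the coordinates of $Y(R/N)$-elements lying in that image; one checks this forces the $\Gamma$-action on $M/N$ to be trivial. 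Choose a maximal proper $R$-invariant (equivalently, by Lemma~\ref{lem:kernel-is-simple}, normal-in-$R\rtimes\Gamma$) subgroup $N'$ of $M$ containing $N$ with $M/N'$ central in $(R/N')\rtimes\Gamma$ — since $M/N$ is a trivial $\Gamma$-module and $(\calF_n)^{\calC}$-conjugation on it is trivial by Lemma~\ref{lem:kernel-is-simple}, such $N'$ exists and $M/N'\cong\Z/p\Z$ for some prime $p\nmid|\Gamma|$ after passing to a further quotient dictated by the pro-$\calC$ structure. Then $1\to M/N'\to R/N'\to H\to 1$ is a central extension; if it is non-split it lies in the level-$\calC$ category and the identity-type surjection $\phi$ does \emph{not} lift (any lift would give a section up to conjugacy, contradicting non-splitness modulo the subtlety that $H$ itself sits below), contradicting Property E. If every such extension splits, one argues $N'=M$, i.e. $M=N$. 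The main obstacle is exactly here: making sure the extension one builds is genuinely \emph{non-split} and genuinely \emph{level $\calC$}, and handling the $\Gamma$-semidirect structure so that "lifts of $\phi$" correspond correctly to "splittings of the extension." This is where the hypotheses $p\nmid|\Gamma|$ and Schur--Zassenhaus (used repeatedly above, e.g. Lemma~\ref{lem:XYproperties}) do the work: a $\Gamma$-equivariant lift $R\to\widetilde{G}$ exists iff the extension $1\to\Z/p\Z\to\widetilde{G}\rtimes\Gamma\to G\rtimes\Gamma\to 1$ is split as a $\Gamma$-group extension, and I would phrase the whole argument at the level of $R\rtimes\Gamma$ to avoid sign errors.

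For the direction "$M=[Y(M)]_R\Rightarrow$ Property E": take a non-split central extension $1\to\Z/p\Z\to\widetilde{G}\to G\to 1$ of level-$\calC$ $\Gamma$-groups with trivial $\Gamma$-action on $\Z/p\Z$, and a $\Gamma$-equivariant surjection $H\to G$; composing with $\phi$ gives a $\Gamma$-surjection $\psi\colon R\to G$. By Corollary~\ref{C:FHoms} and Lemma~\ref{L:admiss}, lift $\psi$ through $F_n\to R$ to get $\widetilde{\psi}\colon F_n\to G$ determined by images $g_i$ of the $x_i$; since $F_n$ is free pro-$|\Gamma|'$, choose arbitrary lifts $\widetilde{g_i}\in\widetilde{G}$ and get $\Theta\colon F_n\to\widetilde{G}$ lifting $\widetilde\psi$, hence a $\Gamma$-equivariant $\theta=\Theta|_{\calF_n}$... but this need not factor through $R=(\calF_n)^{\calC}$ or be surjective. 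The fix: $\theta$ is automatically surjective because $\widetilde{G}$ is admissible (its image $\Gamma$-generates via the $Y$-coordinates of the $\widetilde g_i$, which surject onto $G$ and hit the central $\Z/p\Z$ by non-splitness — exactly the standard Frattini-type argument, using Remark~\ref{R:gens}(3) and that $Y(\widetilde g_i)$ reduces to $Y(g_i)$). It remains to show $\theta$ kills the relations defining $R$ \emph{and} kills $M=\ker(R\to H)$, i.e. descends to $\widetilde{G}$ along $R\to H$. Since $\widetilde{G}$ is level $\calC$, $\theta\colon\calF_n\to\widetilde{G}$ factors through $(\calF_n)^{\calC}=R$. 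Finally, because $M=[Y(M)]_R$ and $\theta$ sends each generator $Y(r)$, $r\in M$, to $Y$ of an element of $\ker(\widetilde G\to G)=\Z/p\Z$, which is central with trivial $\Gamma$-action so $Y$ of it is trivial — hence $\theta(M)=1$ — we conclude $\theta$ descends to the desired $\Gamma$-surjection $H\to\widetilde{G}$ lifting $H\to G$. The key computational point to check carefully is $Y(z)=1$ for $z$ in a central trivial-$\Gamma$-action $\Z/p\Z$: indeed $Y(z)=(z^{-1}\gamma_1(z),\dots)=(z^{-1}z,\dots)=(1,\dots,1)$, so $[Y(M)]_R$ maps into $[Y(\ker(\widetilde G\to G))]=1$; combined with $M=[Y(M)]_R$ this gives $\theta(M)=1$. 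That completes the plan; the asymmetry between the two directions — constructive (build a bad extension) versus verificational (lift and check) — is what makes the forward direction the genuinely harder half.
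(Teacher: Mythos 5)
Your treatment of the implication ``$\ker\phi=[Y(\ker\phi)]_{(\calF_n)^{\calC}}\Rightarrow$ Property E'' is correct and matches the paper's strategy: compose the given surjection with $\phi$, lift generators $g_i$ to $\widetilde{g}_i\in\widetilde{G}$, use nonsplitness plus admissibility of $G$ to get surjectivity of $\theta$, and factor through $R$ by levelness. Your final step is in fact slightly cleaner than the paper's: you observe directly that for $r\in\ker\phi$ one has $\theta(r)\in\ker(\widetilde{G}\to G)\cong\Z/p\Z$, which is central with trivial $\Gamma$-action, so $\theta(r^{-1}\gamma_j(r))=\theta(r)^{-1}\gamma_j(\theta(r))=1$; hence $\theta$ annihilates the generators of $[Y(\ker\phi)]=\ker\phi$. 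The paper instead routes through the auxiliary identity $Y(\ker\varpi)=Y(\ker\rho)$ via Lemma~\ref{lem:XYproperties}\eqref{item:3} and Lemma~\ref{L:YYs}; both arguments are valid, but your shortcut is worth keeping.

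The other implication has a genuine gap, which you yourself flag (``making sure the extension one builds is genuinely \emph{non-split} \ldots is the main obstacle'') but do not resolve. Your ``If every such extension splits, one argues $N'=M$'' is not an argument. The paper closes this by exploiting admissibility one more time: $\widetilde{H}=E/D$ is a quotient of $(\calF_n)^{\calC}$ and hence admissible; but if the extension $1\to\Z/p\Z\to\widetilde{H}\to H\to 1$ split, then $\widetilde{H}\cong\Z/p\Z\times H$ (as a $\Gamma$-group, since $\Z/p\Z$ is central with trivial $\Gamma$-action), and a nontrivial direct factor with trivial $\Gamma$-action is inconsistent with $\widetilde{H}$ being $\Gamma$-generated by elements $g^{-1}\gamma(g)$. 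So the split case is impossible, not a case in which one ``argues $M=N$.'' Once nonsplitness is established, the contradiction with Property E is immediate (and cleaner than your hedging about ``sections up to conjugacy''): a Property E lift of $\id_H$ to a surjection $H\to\widetilde{H}$ would be a section of $\widetilde{H}\to H$, hence injective, hence an isomorphism, forcing $\Z/p\Z=1$. Finally, your aside that ``a $\Gamma$-equivariant lift $R\to\widetilde{G}$ exists iff the extension $1\to\Z/p\Z\to\widetilde{G}\rtimes\Gamma\to G\rtimes\Gamma\to 1$ is split'' is not correct as stated and points to a real confusion: Property E is about lifting a given surjection, which is weaker than splitting the extension; conflating the two is precisely what leads to the unresolved gap.

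Two smaller points: the appeal to Lemma~\ref{lem:admissible_Y} to show $\Gamma$ acts trivially on $\ker\phi/[Y(\ker\phi)]$ is unnecessary (it follows directly from $r^{-1}\gamma_j(r)$ lying in $[Y(\ker\phi)]$ and the $\gamma_j$ generating $\Gamma$), and ``after passing to a further quotient dictated by the pro-$\calC$ structure'' is not needed---once Lemma~\ref{lem:kernel-is-simple} gives that $E$ acts trivially on $M$, $M$ is abelian pro-$|\Gamma|'$ and a maximal proper (open) subgroup $D$ already yields $M/D\cong\Z/p\Z$ with $p\nmid|\Gamma|$.
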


This proposition shows that having relations generated by elements in $Y({(\calF_n)^{\calC}})$ is equivalent to Property E.
If desired, we can take $\calC$ to be the set of all finite $|\Gamma|'$-$\Gamma$-groups in Proposition~\ref{P:GetE}.

\begin{proof}
	``$\Longrightarrow$'': 
Suppose $\ker\phi = [Y(\ker \phi)]_{(\calF_n)^{\calC}}$, and we will show $H$ satisfies Property E.	
Let $\theta \colon H \to G$ be a surjective $\Gamma$-equivariant homomorphism, where $G$ has a non-split central $\Gamma$-group extension $\widetilde{G}$ as in \eqref{eq:extension}, and let  $\varpi=\theta \circ \phi $. We choose a $\pi \colon (F_n)^{\calC} \to (\calF_n)^{\calC}$ as given in Lemma~\ref{L:admiss}, and let $g_i= \varpi \circ \pi(x_i)$.
 Then we fix $\widetilde{g}_i \in \widetilde{G}$ which maps to $g_i$. Because \eqref{eq:extension} is non-split, elements $g_i^{-1}\gamma_j(g_i)$ generating $G$ implies that $\widetilde{G}$ is generated by elements $\widetilde{g}_i^{-1}\gamma_j(\widetilde{g}_i)$.
Therefore the map $(F_n)^{\calC} \to \widetilde{G}$ mapping $x_i$ to $\widetilde{g}_i$ induces a surjection $\rho\colon  (\FF_n)^{\calC} \to \widetilde{G}$. 
\begin{center}
	\begin{tikzcd}
		(\calF_n)^{\calC} \arrow["\phi", two heads]{r} \arrow["\rho", two heads]{d} \arrow["\varpi", two heads]{dr} & H \arrow["\theta", two heads]{d} \\
		\widetilde{G} \arrow[two heads]{r} & G
	\end{tikzcd}
\end{center}
Considering the short exact sequence $1\to \ker \rho \to \ker \varpi \to \Z/p\Z \to 1$, by $|Y(\Z/p\Z)|=1$ (since $\Gamma$ acts trivially on $\Z/p\Z$) and Lemma~\ref{lem:XYproperties}~\eqref{item:3}, we have $Y(\ker \varpi)=Y(\ker \rho)$.
We have that $Y(\ker\phi)\sub Y((\calF_n)^{\calC}) \cap (\ker \varpi)^d$, which is $Y(\ker \varpi)$ by Lemma~\ref{L:YYs}.
Thus  $Y(\ker\phi)\sub Y(\ker \rho)\sub (\ker \rho)^d$, and so $\rho$ factors through $H=(\FF_n)^{\calC}/[Y(\ker \phi)]$ and  the forward direction of the proposition follows.

	``$\Longleftarrow$'': Conversely, we suppose $H$ satisfies Property E for the category of level $\calC$ groups and $[Y(\ker \phi)] \subsetneq \ker \phi$. Define $M:=\ker \phi/[Y(\ker \phi)]$ and $E:=(\calF_n)^{\calC}/[Y(\ker \phi)]$. The group $\Gamma$ acts trivially on $M$. Since $E$ is a quotient of $(\calF_n)^{\calC}$, we have that $E$ is admissible and thus by 
	Lemma~\ref{lem:kernel-is-simple}, it follows that $E$ acts trivially on its subgroup $M$ by conjugation.  
	
	We choose a maximal proper subgroup $D$ of $M$, then by taking quotients modulo $D$ we have an exact sequence of $\Gamma$-groups
	\begin{equation}\label{eq:ses-PropE-H}
		1\to N \to \widetilde{H} \to H \to 1,
	\end{equation}
	where $N:=M/D$ and $\widetilde{H}:=E/D$. 
By choice of $D$, we have that $N$ is a simple group. Since $E$ acts trivially on $M$ by conjugation, $\widetilde{H}$ acts trivially on $N$ by conjugation and therefore $N$ is abelian.  
	Thus $N$ is isomorphic to $\Z/p\Z$ with the trivial $\widetilde{H}\rtimes \Gamma$-action for some prime $p$, so \eqref{eq:ses-PropE-H} is a central extension. If \eqref{eq:ses-PropE-H} is split, then $\widetilde{H}=N\times H$ contradicts the admissibility of $\widetilde{H}$; otherwise, $\eqref{eq:ses-PropE-H}$ is non-split, which contradicts the assumption that $H$ has Property E. Therefore we proved the other direction of the proposition.
\end{proof}

We will thus build random groups of the form $\FF_n/[Y(\Rr) ]$ with $S \subset \calF_n$. First, however, we need a topology on the set of isomorphism classes of  $\Gamma$-groups. We consider the set $\mathcal{P}$ of isomorphism classes of admissible  $|\Gamma|'$-$\Gamma$-groups $G$ such that $G^{\calC}$ is finite for all finite sets $\calC$ of finite $|\Gamma|'$-$\Gamma$-groups.
We define a topology on $\mathcal{P}$ in which the basic opens are
$U_{\calC, H}:=\{ X\in \mathcal{P} \,\mid\, X^{\calC}\isom H \text{ as $\Gamma$-groups}\}$, 
for each finite set $\calC$ of finite $|\Gamma|'$-$\Gamma$-groups and each finite  $|\Gamma|'$-$\Gamma$-group $H$.  

For $\calC$ a finite set of finite  $|\Gamma|'$-$\Gamma$-groups, we can see that $(\calF_n)^\calC$ is finite in the following way.   Let $W$ be the normal $\Gamma$-subgroup of elements of $\calF_n$ that vanish in every map to a $\Gamma$-group in $\calC$. The subgroup $W$ is of finite index since there are only finitely many maps to such groups from $\calF_n$.  Then as we enlarge $\calC$ by taking finite direct products, $\Gamma$-subgroups and $\Gamma$-quotients, elements of $W$ will also continue to vanish in maps to all those groups. Thus $W \sub \ker (\calF_n\ra (\calF_n)^\calC )$,  and in fact they are equal.
(To see elements of $W$ vanish in maps to a quotient of a group in $\mathcal{C}$, we use Lemma~\ref{L:admiss} to lift the map so that it factors through a map to a group in $\mathcal{C}$.)

\begin{definition}[Main definition of random groups for each $n$]
	For a positive integer $n$ and an integer $u>-n$, we define the random  $\Gamma$-group $X_{\Gamma,n,u}:=\FF_n/[Y(\Rr) ]$  where $\Rr$ is a random (multi)-set of $n+u$ elements of $\FF_n$ chosen independently from Haar measure.   
	We generally consider only a single $\Gamma$ at a time, and thus sometimes omit it from the notation.  When we omit $u$ from the notation, we are taking the case $u=1$.
\end{definition}

For integers $n\geq 1$ and $u>-n$, we have a measure $\mu_{u,n}$ on the $\sigma$-algebra of Borel sets of $\mathcal{P}$ such that
\[
	\mu_{n,u}(A)=\Prob(X_{u,n} \in A).
\]
However,  given  $n$, the groups in the support of $\mu_{u,n}$ have a bounded number of generators, which is not a feature we expect to be true of $\Gal(K^{\#}/K)$ as $K$ varies.
So for each integer $u$, we will define a measure $\mu_u$, at first as a measure on the algebra $\calA$ of sets generated by basic opens $U_{\calC, H}$. For $A\in \calA$, we will define
\begin{equation}\label{E:defmualgebra}
	\mu_u(A):=\lim_{n\to \infty}\mu_{n,u}(A).
\end{equation}
We will prove, in the next two sections, that the limit defining $\mu_u$ exists, and that $\mu_u$ extends uniquely to a Borel probability measure on $\mathcal{P}$.  This  (in the case $u=1$) is the distribution on $\Gamma$-groups that we conjecture models the distribution of $\Gal(K^{\#}/K)$ in Conjecture~\ref{C:Main}.
We remark that, when $|\Gamma|$ is odd,  our random groups are not necessarily pro-solvable.

\subsection{On the $p$-class field tower groups} 

We cannot prove  that every $\Gal(K^{\#}/K)$ can be expressed as $\FF_n/[ Y(\Rr) ]$ for some $n$ and $\Rr$, since in particular we do not know if $\Gal(K^{\#}/K)$ is finitely generated
(which is a question of Shafarevich, see \cite[Section 3]{Safarevic1963a} 
and \cite[Example 16.10.9 (c)]{Fried2008}).  (Our conjecture does not, at least immediately, imply that $\Gal(K^{\#}/K)$ is finitely generated because of the behavior of limits of measures.)  However, we will prove in Proposition~\ref{prop:pClassGroup} that the pro-$p$ quotients of $\Gal(K^{\#}/K)$, the $p$-class field tower groups, can always be expressed by groups that occur in our model. 

We now define notation that will be used for the lemma and proposition in this section and their proofs.
For a prime $p\nmid \bad$,
let $G_{K,p}$ be the Galois group of the maximal pro-$p$ extension of $K$ that is unramified everywhere and split completely at the places of $K$ over $\infty$, i.e.,~the maximal pro-$p$ quotient of $\Gal(K^{\#}/K)$. 
We define $\widetilde{G}_{K,p}$ to be the Galois group of the maximal unramified pro-$p$ extension of $K$, so $\widetilde{G}_{K,p}$ surjects onto $G_{K,p}$.
 
Recall that $G_{K,p}$ is finitely presented (its generator rank and relator rank can be explicitly computed \cite[Chapter~11]{Koch2002}) and admissible (see Proposition~\ref{prop:Galois_admissible}). When $n$ is sufficiently large, we can find $h_1, \dots, h_n\in G_{K,p}$ such that $G_{K,p}=[Y(\{h_1, \dots, h_{n}\})]$.

Consider a fixed $n$ as above. Let $(F_n)_p$ and $(\calF_n)_p$ denote the pro-$p$ completion of $F_n$ and $\calF_n$ respectively. We define $\varpi\colon  (\calF_n)_p \to G_{K,p}$ to be the $\Gamma$-equivariant surjection given by the map $F_n \to G_{K, p}$ mapping $x_i \mapsto h_i$.
\begin{enumerate}
	\item If $K$ is a number field, then we write $\alpha\colon  (F_n)_p \to (\calF_n)_p$ to be the quotient map induced by $\pi: F_n \to \calF_n$ in Lemma~\ref{L:admiss}, and we define $\rho: (F_n)_p \to \widetilde{G}_{K,p}$ to be the composition $\varpi \circ \alpha$.
	\item If $K$ is a function field, then we define $\alpha\colon  (F_{n+1})_p \to (\calF_n)_p$ to be induced by the composition of $\Gamma$-equivariant quotient maps $F_{n+1}\xrightarrow{x_{n+1}\mapsto 1} F_n \xrightarrow{\pi} \calF_n$. We fix $g_i \in \widetilde{G}_{K,p}$ that maps to $\varpi\circ \alpha(x_i)$ in $G_{K,p}$ for each $i=1, \dots, n$, and fix a generator $g_{n+1}$ of a decomposition subgroup at one infinite place of $K$. We define $\rho: (F_{n+1})_p \to \widetilde{G}_{K,p}$ by $x_i \mapsto g_i$ for $i=1, \dots, n+1$.
\end{enumerate}
In either case, we have $\alpha \circ \varpi$ is the composition of $\rho$ and the natural quotient map $\widetilde{G}_{K,p} \to G_{K,p}$. We let $F:=(F_{n})_p$ in the number field case and $F:=(F_{n+1})_p$ in the function field case.
Both $F$ and $(\calF_n)_p$ are free pro-$p$ groups, so $\rho$ and $\varpi$ are presentations of pro-$p$ groups $\widetilde{G}_{K,p}$ and $G_{K,p}$ respectively. We write
\[	N:=\ker \rho, \quad R:=\ker \varpi,	\]
\[	\text{and}\quad N^*:=[N,F]N^p, \quad R^*:=[R, (\calF_n)_p]R^p.	\]
Using the fact that $\rho|_{(\calF_n)_p}=\varpi$, we have $\alpha(N)=R$ and hence $\alpha(N^*)=R^*$. 

By the pro-$p$ group generation theory \cite[Corollary~3.9.3]{NSW2008}, we have several facts about $R/R^*$ and $N/N^*$.
A finite set of elements of $(\calF_n)_p$ generates $R$ as a normal, closed subgroup if and only if their images generate $R/R^*$. Since $R$ is $\Gamma$-closed, $R^*$ is preserved by the $\Gamma$-action by definition, so $R/R^*$ obtains a $\Gamma$-group structure from $R$, and $\Hom(R/R^*,\F_p)\simeq H^1(R,\F_p)^{(\calF_n)_p}$ as $\Gamma$-groups. The analogous statements in this paragraph are also true for $F$ and $N$ in place of $(\calF_n)_p$ and $R$.

\begin{lemma}\label{lem:N-structure}
Using the definitions and notation above, if $K$ does not contain any non-trivial $p$th roots of unity, then there is a surjection of $\F_p[\Gamma]$-modules $\F_p[\Gamma]^{n+1} \ra N/N^*$.
\end{lemma}

\begin{proof}
For an $\F_p[\Gamma]$-module $A$, we let $A^{\vee}$ denote the $\F_p[\Gamma]$-module $\Hom(A, \F_p)$.
So we have $(N/N^*)^{\vee} \simeq H^1(N, \F_p)^F$.
	Applying the Hochschild--Serre spectral sequence, we have 
	\begin{equation}\label{eq:H-S}
		H^1(F, \F_p) \xrightarrow{\Res} H^1(N, \F_p)^{F} \to H^2(\widetilde{G}_{K,p}, \F_p) \to H^2(F, \F_p) =1,
	\end{equation}
	where the last equality uses that $F$ is free as a pro-$p$ group.  The group $\Gamma$ naturally acts on every cohomology group in \eqref{eq:H-S} by acting on groups $F$, $N$ and $\widetilde{G}_{K,p}$. One can check that 
	\eqref{eq:H-S} is an exact sequence of $\F_p[\Gamma]$-modules. As $\gcd(p, |\Gamma|)=1$, the ring $\F_p[\Gamma]$ is semisimple, so every $\F_p[\Gamma]$-module is projective, and we have the following isomorphism of $\F_p[\Gamma]$-modules
	\begin{equation}\label{eq:decomposition}
		H^1(N, \F_p)^{F} \simeq \im \Res \oplus H^2(\widetilde{G}_{K,p}, \F_p).
	\end{equation}
	 By \cite[Theorem~11.3]{Koch2002},  we have an injection of $\F_p[\Gamma]$-modules
	\[
		H^2(\widetilde{G}_{K,p}, \F_p) \hookrightarrow (V/K^{\times p})^{\vee},
	\]
	where $V=\{\alpha\in K^{\times}\,\mid\, (\alpha)=\mathfrak{a}^p \text{ for some ideal $\mathfrak{a}$}\}$. 
	(We remark that $V/K^{\times p}$ is the $p$-Selmer group of a number field, see, e.g. \cite{Dummit2018}.)
	The homomorphism mapping $\alpha\in V$ to the ideal $\mathfrak{a}$ with $(\alpha)=\mathfrak{a}^p$ induces a surjection from $V/K^{\times p}$ to the $p$-torsion subgroup $\Cl(\calO_K)[p]$ of the class group whose kernel is isomorphic to $\mathcal{O}_K^{\times}/ \mathcal{O}_K^{\times p}$. Since $p \nmid |\Gamma|$,  we have that $\F_p[\Gamma]$ is a semisimple ring, so $V/K^{\times p}\simeq \calO^{\times}_K/\calO_K^{\times p} \oplus \Cl(\calO_K)[p] \simeq \calO^{\times}_K/\calO_K^{\times p} \oplus \Cl(\calO_K)/p\Cl(\calO_K)$.
	 Therefore we have a (noncanonical) $\Gamma$-equivariant injection
		\begin{equation}\label{eq:H2}
		H^2(\widetilde{G}_{K,p}, \F_p) \hookrightarrow (\mathcal{O}_K^{\times} / \mathcal{O}_K^{\times p} \oplus \Cl(\calO_K)/p\Cl(\calO_K))^{\vee}.
	\end{equation}
	
	For every $\varphi \in H^1(F, \F_p)$, the map $\Res$ takes $\varphi$ to $\varphi |_N$. Since the Frattini subgroup $\Phi(F):=[F,F]F^p$ of $F$ is contained in $\ker \varphi$ for every $\varphi$, one can show that $\Res \varphi \in (N/ (N\cap \Phi(F)))^{\vee}$. On the other hand, since $N/( N\cap \Phi(F))$ is a sub-$\F_p[\Gamma]$-module of $F/ \Phi(F)$, there exists a complement $M$ of $N/(N \cap \Phi(F))$, i.e.,
	\[
		F/\Phi(F) = N/(N\cap \Phi(F)) \oplus M,
	\]
We know that $M$ is isomorphic to the maximal $p$-elementary abelian quotient of $\widetilde{G}_{K,p}$.
		Writing the preimage of $M$ under the map $F\to F/\Phi(F)$ by $\widetilde{M}$, then every $\phi \in (N/ (N\cap \Phi(F)))^{\vee}$ lifts to $\widetilde{\phi} \in F^{\vee}$ with $\ker \widetilde{\phi}\supset \widetilde{M}$. 
	So we showed 
	\[
		\im \Res = (N/(N\cap \Phi(F)))^{\vee}.
	\]
So we have
	\begin{eqnarray}
		(N/N^*)^\vee &\hookrightarrow& (N/ (N\cap \Phi(F)) \oplus \calO_K^{\times}/ \calO_K^{\times p} \oplus \Cl(\calO_K)/ p\Cl(\calO_K))^{\vee}  \nonumber\\
		&\hookrightarrow& (N/ (N\cap \Phi(F)) \oplus \calO_K^{\times} / \calO_K^{\times p} \oplus  \widetilde{G}_{K,p}/ \Phi(\widetilde{G}_{K,p}))^{\vee}  \nonumber\\
		&\simeq& (F/ \Phi(F) \oplus \calO_K^{\times}/ \calO_K^{\times p})^{\vee}.\label{eq:decomposition-1} 
	\end{eqnarray}
	Because $\Cl(\calO_K)_p$ is the abelianization of $G_{K,p}$, the surjection $\widetilde{G}_{K,p} \to G_{K,p}$ induces a surjection $\widetilde{G}_{K,p}/ \Phi(\widetilde{G}_{K,p}) \to \Cl(\calO_K) / p\Cl(\calO_K)$ of $\F_p[\Gamma]$-modules, which implies the second injection in \eqref{eq:decomposition-1}.  
	When $K$ is a number field containing no non-trivial $p$th roots of unity, we have the injection of $\F_p[\Gamma]$-modules
	$\calO_K^{\times}/\calO_K^{\times p} \hookrightarrow \F_p[\Gamma]/(\sum_{g\in \Gamma} g)$
	by \cite[Cor.~8.7.3]{NSW2008}, and hence an injection
	\[
		(\calO_K^{\times}/ \calO_K^{\times p})^{\vee} \hookrightarrow (\F_p[\Gamma]/(\sum_{g\in \Gamma} g))^{\vee} \hookrightarrow 
	\F_p[\Gamma]	^{\vee}.
	\]
	When $K$ is a function field, we have $\calO_K^{\times}=\F^{\times}_{q^r}$ (where $\F_{q^r}^{\times}$ is the constant field of $K$) and using that fact that $K$ does not contain non-trivial $p$th roots of unity, it follows that $\calO_K^{\times}/\calO_K^{\times p}=1$. By \eqref{eq:decomposition-1}, and the fact that $F/\Phi(F)=\F_p[\Gamma]^m$, where $m$ is $n$ in the number field case and $n+1$ in the function field case, we prove the lemma.
\end{proof}

\begin{proposition}\label{prop:pClassGroup}
Let $K$ be a $\Gamma$-extension of $Q$ and $p \nmid \bad$ a prime, such that
	 $K$ does not contain any non-trivial $p$th roots of unity. Assume that $n$ is sufficiently large such that there exist $h_1, \dots, h_n \in G_{K,p}$ satisfying $G_{K,p}=[Y(\{h_1, \dots, h_n\})]$. Then there is a subset $S$ of $(\calF_n)_p$ of size $n+1$ such that $G_{K,p}\simeq (\calF_n)_p/ [Y(S)]$.
\end{proposition}

\begin{proof}
	Recall $\alpha(N)=R$ and $\alpha(N^*)=R^*$. 
	By Theorem~\ref{T:PropE}, we have that $G_{K,p}$ satisfies Property E in the category of pro-$p$ $\Gamma$-groups, and hence by
	 Proposition~\ref{P:GetE}, we conclude that $R=[Y(R)]$.  It follows that the $\Gamma$-invariant subgroup of $R/R^*$ is trivial.
	Then $\alpha$ induces a quotient map $N/N^* \to R/R^*$ of $\F_p[\Gamma]$-modules, and hence a quotient map $(\F_p[\Gamma]/(\sum_{g\in \Gamma} g))^{n+1} \to R/R^*$ by Lemma~\ref{lem:N-structure}. Let $\Id_{\Gamma}$ denote the image of the multiplicative identity of $\F_p[\Gamma]$ in $\F_p[\Gamma]/(\sum_{g \in \Gamma} g)$. Because $\F_p[\Gamma]/(\sum_{g\in \Gamma} g)$ is admissible and $\Gamma$-generated by coordinates of the image of $Y(\Id_{\Gamma})$, we have $R/R^*$ is $\Gamma$-generated by coordinates of $Y(\{r_1, \dots, r_{n+1}\})$ for $r_i \in R/R^*$, and hence $R$ is generated (as a closed normal $\Gamma$-subgroup of $(\calF_n)_p$) by coordinates of $Y(\{s_1, \dots, s_{n+1}\})$ where $s_i \in R$ is a chosen preimage of $r_i$ for each $i$.
\end{proof}

\section{Determination of $\mu_{u,n}$ and $\mu_{u}$}\label{sect:defofmu}
In this section and the next, we will prove the convergence of $\mu_{u,n}$ as $n\ra\infty$ to a probability measure $\mu_u$.  In this section, we prove that there is limiting behavior (specifically in Theorem~\ref{thm:probability_u,n} that the limits of measures of basic opens exist).  In the next section, we show that the limit distribution has total measure $1$, i.e.,~there is no escape of mass.  

In order to prove the limit in $n$ exists of the measures of basic opens, we will express these measures very explicitly in Theorem~\ref{thm:probability_u,n}.  Our approach builds on the strategy of the first two authors in \cite{LiuWood2017}, in which they consider the limit in $n$ of the quotient of the profinite free group on $n$ generators by $n+u$ independent Haar random relations.  Even though that is not a special case of what we do in this paper, it could be considered a toy model for our construction
and several of the key features of our argument are already present there.  The new features of the current problem include the $\Gamma$-action, and more importantly that we start with $\calF_n$ instead of the  free $\Gamma$-group $F_n(\Gamma)$, and we do not take Haar relations from $\calF_n$ but rather take our relations in a more subtle way.  These new features require new ideas, and in the proofs below we emphasize those ideas and treat more briefly the aspects of the argument that are similar to those in 
\cite{LiuWood2017}.

Let $n$ be a positive integer, $\calC$ a finite set of finite  $|\Gamma|'$-$\Gamma$-groups, and $H$  a finite $\Gamma$-group of level $\calC$.  Recall that $(\calF_n)^\calC$ is finite.
We consider a $\Gamma$-equivariant surjection $\varpi\colon (\calF_n)^\calC \to H$ and let $N:=\ker \varpi$.
Let $M$ be the intersection of all maximal proper $(\calF_n)^{\calC}$-normal $\Gamma$-subgroups of $N$, and let $F:=(\calF_n)^{\calC}/M$ and $R:=N/M$, which are naturally $\Gamma$-groups. Then we obtain a short exact sequence of $\Gamma$-groups
\begin{equation}\label{eq:ses}
	1\to R\to F \to H \to 1,
\end{equation}
which we will call the \emph{fundamental short exact sequence} associated to $\calC$, $n$, the $\Gamma$-group $H$, and the surjection $\varpi$.
The short exact sequence \eqref{eq:ses} is induced by
\begin{equation}\label{eq:ses2}
	1\to R \to F\rtimes \Gamma \to H \rtimes \Gamma \to 1.
\end{equation}
So one can check by \cite[Lemma~5.11]{LiuWood2017} that $R$ is a direct product of irreducible $F\rtimes \Gamma$-groups.  A large part of our work will be to compute which irreducible $F\rtimes \Gamma$-groups appear in $R$ and with what multiplicities.

We are now going to give notation for the multiplicities of the various irreducible $(F\rtimes \Gamma)$-groups in $R$, which we do in slightly different ways in the abelian and non-abelian cases (as it will be more convenient for the proofs, which require slightly different arguments).
Let $\calA_H$ be the set of isomorphism classes of non-trivial finite abelian irreducible $|\Gamma|'$-$(H\rtimes \Gamma)$-groups. Let $\calN$ be the set of isomorphism classes of finite $|\Gamma|'$-$\Gamma$-groups, that are isomorphic to $G^j$ for some finite simple non-abelian group $G$ and a positive integer $j$. 
For any $G\in \calA_H$, we define $m(\calC, n, \varpi, G)$ to be the multiplicity of $G$ in $R$ as an $H\rtimes \Gamma$-group under conjugation. For $G\in \calN$, let $G_i$ be the irreducible $F\rtimes \Gamma$-group structures one can put on $G$ that are compatible with the $\Gamma$-action on $G$. 
Then we define $m(\calC, n, \varpi, G)$ to be the sum (over $i$) of the multiplicity of the $G_i$ in $R$ as an $F\rtimes \Gamma$-group under conjugation. 
(This $G_i$ notation is only used in this paragraph.)
Later in this section, by Remark~\ref{rmk:R_not_depend_surj}, we will show that for fixed $\Gamma$-group $H$, the multiplicities $m(\calC, n, \varpi, G)$ do not depend on the choice of the $\Gamma$-equivariant surjection $\varpi\colon  (\calF_n)^\calC \to H$. So after that, we will denote $m(\calC, n, \varpi, G)$ by $m(\calC, n, H, G)$.

\subsection{$\mu_{u,n}$ and $\mu_u$ on basic open sets}\label{SS:41}
We will now consider the general situation (of which \eqref{eq:ses2} is a special case)
 where $F$ is a $\Gamma$-group and $R$ is a finite direct product of irreducible $|\Gamma|'$-$(F\rtimes \Gamma)$-groups, and express a probability that $R$ is generated by certain random elements in terms of the multiplicities of the irreducible $F\rtimes \Gamma$-groups in $R$.  This motivates us to understand these multiplicities.
If $H$ is a $G$-group, we define $h_G(H):=|\Hom_G(H,H)|.$

\begin{proposition}\label{prop:formula_probability}
	Let $F$ be a $\Gamma$-group. Let $G_i$ be finite irreducible $F\rtimes \Gamma$-groups for $i=1, \dots, k$, such that for $i\neq j$, we have that $G_i$ and $G_j$ are not isomorphic as $F\rtimes \Gamma$-groups. Let $R=\prod_{i=1}^k G_i^{m_i}$ for non-negative integers $m_i$. Then for integers $n,u$ with $n+u\geq 1$,
	\begin{eqnarray*}
		&& \Prob([ Y(\{r_1, \dots, r_{n+u}\}) ]_{F\rtimes \Gamma} =R)\\
		&=& \prod_{\substack{1\leq i \leq k \\ G_i \text{ abelian}}} \prod_{j=0}^{m_i-1} (1-h_{F\rtimes \Gamma}(G_i)^j |Y(G_i)|^{-n-u}) \prod_{\substack{1\leq i \leq k \\ G_i \text{ non-abelian}}} (1-|Y(G_i)|^{-n-u})^{m_i},
	\end{eqnarray*}
	where $r_1, \dots, r_{n+u}$ are independent, uniform random elements of $R$.
\end{proposition}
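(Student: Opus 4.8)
The plan is to factor the probability along the decomposition $R=\prod_{i=1}^k G_i^{m_i}$ and then to handle a single factor $G^m$ (with $G$ irreducible and $m=m_i$), splitting into the abelian and non-abelian cases. Throughout write $N:=n+u$. For the block reduction I would first show that every normal $F\rtimes\Gamma$-subgroup of $R$ has the form $\prod_i K_i$ with $K_i\trianglelefteq_{F\rtimes\Gamma}G_i^{m_i}$; equivalently, that every maximal proper normal $F\rtimes\Gamma$-subgroup is $M\times\prod_{i\neq i_0}G_i^{m_i}$ for some maximal proper $M\trianglelefteq_{F\rtimes\Gamma}G_{i_0}^{m_{i_0}}$. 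This is a Goursat/Remak-type argument: if $R/M'$ is irreducible and the images of two distinct blocks $G_i^{m_i}$ and $G_j^{m_j}$ were both nontrivial, they would each equal $R/M'$ and commute, forcing $R/M'$ abelian and simultaneously a quotient of $G_i^{m_i}$ and of $G_j^{m_j}$, hence $G_i\cong R/M'\cong G_j$, contradicting non-isomorphism. Since the block projections commute with $Y$, and the blocks of a Haar-random $r\in R$ are independent and Haar-random, one gets $\Prob([Y(\{r_1,\dots,r_N\})]_{F\rtimes\Gamma}=R)=\prod_i\Prob([Y(\{s_1,\dots,s_N\})]_{F\rtimes\Gamma}=G_i^{m_i})$, reducing to the per-factor computation.

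For the non-abelian case, let $G$ be a non-abelian irreducible $F\rtimes\Gamma$-group. An $F\rtimes\Gamma$-surjection of $G^m$ onto an irreducible group must be trivial on all but one coordinate factor (two distinct coordinate factors commute, so their images, each normal and hence $1$ or everything, cannot both be everything in a non-abelian group), so it is a coordinate projection $p_i$ composed with an element of $\Aut_{F\rtimes\Gamma}(G)$; thus the maximal proper normal $F\rtimes\Gamma$-subgroups of $G^m$ are exactly $M_i:=\prod_{i'\neq i}G_{i'}$, $i=1,\dots,m$. Since $p_i(Y(r))=Y(p_i(r))$ and $Y(g)=1$ iff $g\in G^\Gamma$, we have $[Y(\{r_j\})]\subseteq M_i$ iff $p_i(r_j)\in G^\Gamma$ for all $j$; these events over varying $i$ are independent, each of probability $(|G^\Gamma|/|G|)^N=|Y(G)|^{-N}$ by Lemma~\ref{lem:XYproperties}\eqref{item:2}. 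Hence $\Prob([Y(\{r_j\})]_{F\rtimes\Gamma}=G^m)=(1-|Y(G)|^{-N})^m$.

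For the abelian case, let $G$ be abelian irreducible, so $G$ is an $\F_p$-vector space and a simple $A:=\F_p[F\rtimes\Gamma]$-module, with $D:=\End_A(G)$ a finite field of order $h:=h_{F\rtimes\Gamma}(G)$ by Schur. Put $e:=\frac1{|\Gamma|}\sum_{\gamma\in\Gamma}\gamma\in A$ (an idempotent with $e\gamma=e$; here $p\nmid|\Gamma|$), so $eG=G^\Gamma$ and $|(1-e)G|=[G:G^\Gamma]=|Y(G)|=:h^{s_0}$. The computation of Lemma~\ref{lem:admissible_Y} shows $[Y(r)]_{F\rtimes\Gamma}$ is the $A$-submodule generated by $\{(\gamma-1)r:\gamma\in\Gamma\}$, and since $(\gamma-1)=(\gamma-1)(1-e)$ and $1-e=-\frac1{|\Gamma|}\sum_\gamma(\gamma-1)$, this equals $A(1-e)r$. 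Under the identification $\Hom_A(A(1-e),G^m)\cong(1-e)G^m$, the tuple $((1-e)r_1,\dots,(1-e)r_N)$ corresponds to $f\colon(A(1-e))^N\to G^m$ with $\im f=\sum_jA(1-e)r_j=[Y(\{r_j\})]$, and $f$ is Haar-random in $\Hom_A((A(1-e))^N,G^m)$. So the probability equals $\#\Sur_A((A(1-e))^N,G^m)/|(1-e)G^m|^N$. Writing $W:=\Hom_A((A(1-e))^N,G)\cong((1-e)G)^N\cong D^{s_0N}$ as a left $D$-module and decomposing a homomorphism along the $m$ projections $G^m\to G$, it is surjective iff its $m$ components are left-$D$-linearly independent in $W$ (using that $G^m$ is semisimple isotypic of length $m$); counting such $m$-tuples gives $\#\Sur_A((A(1-e))^N,G^m)=\prod_{j=0}^{m-1}(h^{s_0N}-h^j)$, and dividing by $|(1-e)G^m|^N=h^{s_0mN}$ yields $\prod_{j=0}^{m-1}(1-h^j|Y(G)|^{-N})$. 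Assembling the factors over $i$ gives the stated formula; note the degenerate case $|Y(G)|=1$ is automatically handled since then the $j=0$ factor vanishes.

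I expect the main obstacle to be the abelian case: setting up the module-theoretic dictionary correctly — recognizing that the relevant projective module is $A(1-e)$ rather than $A$ itself (this is precisely where working with $\calF_n$ in place of $F_n$ enters), invoking Schur to obtain the field $D$, and matching ``surjection onto a semisimple isotypic module'' with ``injective $D$-linear map'' to produce the $\prod_{j=0}^{m-1}(h^{s_0N}-h^j)$ count. The non-abelian case and the block reduction are comparatively routine Goursat-type arguments, and the overall architecture closely parallels the surjection-counting method of \cite{Liu2017}, the new ingredient being the $(1-e)$-twist forced by the definition of $\calF_n$.
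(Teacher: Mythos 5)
Your proof is correct, and the overall architecture (block reduction to a single isotypic factor $G^{m}$, then separate abelian and non-abelian per-factor counts) is the same one the paper uses; the paper simply compresses the block reduction and the characterization of generating $N$-tuples of $G^m$ into citations of \cite[Lem.~5.5, Lem.~5.8]{LiuWood2017}, whereas you write out the Remak/Goursat decomposition and the maximal-normal-subgroup argument directly. Where you genuinely diverge is the abelian per-factor count: the paper (following the peel-off of \cite[Cor.~5.10]{LiuWood2017}) conditions on surjectivity onto the first $m-1$ coordinates and counts the $h^{m-1}$ ways the last-coordinate images can factor through, while you first observe that, with $e=\frac{1}{|\Gamma|}\sum_{\gamma\in\Gamma}\gamma$ and $A=\F_p[F\rtimes\Gamma]$, one has $[Y(\{r_j\})]_{F\rtimes\Gamma}=\sum_j A(1-e)r_j$, then reinterpret the event as surjectivity of a Haar-random $A$-module map $(A(1-e))^{n+u}\to G^m$, and count surjections by $D$-linear independence with $D=\End_A(G)$ and $|Y(G)|=|(1-e)G|=h^{s_0}$. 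The two counts of course coincide (peeling off one $D$-independent vector at a time gives the same product $\prod_{j=0}^{m-1}(h^{s_0(n+u)}-h^j)$), but your reformulation makes the role of the projective $A(1-e)$ --- i.e., the $\calF_n$-versus-$F_n$ twist --- explicit, and it yields the identity of Remark~\ref{rmk:5.2} ($\mathfrak{m}=s_0=\dim_D(1-e)G$, $h^{\mathfrak{m}}=|Y(G)|$) as a byproduct. You also correctly handle the degenerate case $|Y(G)|=1$, where the $j=0$ factor vanishes.
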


\begin{remark}\label{rmk:5.2}
	By Proposition~\ref{prop:formula_probability}, given a finite abelian irreducible $F\rtimes \Gamma$-group $G$, if we let $\mathfrak{m}$ be the maximal integer such that $G^{\mathfrak{m}}$ can be generated as an $F\rtimes\Gamma$-group by coordinates of $Y(g)$ for one element $g\in G^{\mathfrak{m}}$, then we have $h_{F\rtimes \Gamma}(G)^{\mathfrak{m}}=|Y(G)|$.
\end{remark}

\begin{proof}
A normal $F\rtimes \Gamma$-subgroup of $R$ is a direct product of normal subgroups of the $G_i^{m_i}$ \cite[Lem.~5.5]{LiuWood2017}.  Thus it remains to show that for a finite irreducible $F\rtimes \Gamma$-group $G$, an integer $m\geq 1$, and independent, uniform random elements $y_1, \dots, y_{n+u}$ of $G^m$,
	\begin{equation}\label{eq:formula_prob}
		\Prob([ Y(\{y_1, \dots, y_{n+u}\}) ] = G^m) = \begin{cases}
			\prod_{j=0}^{m-1} (1-h_{F\rtimes\Gamma}(G)^j |Y(G)|^{-n-u}) & \text{ if $G$ is abelian}\\
			(1-|Y(G)|^{-n-u})^m & \text{ if $G$ is non-abelian}.
		\end{cases}
	\end{equation}
For $y\in G$,  the tuple $Y(y)$ (as an element of $G^d$) has all coordinates trivial if and only if $y \in G^{\Gamma}$.   For a uniform random $y\in G$, this happens with probability $|Y(G)|^{-1}$ by Lemma~\ref{L:rightcoset}.
	For a non-abelian $G$, we know from \cite[Lem.~5.8]{LiuWood2017} that $G^m$ is normally generated by elements $x_1, \dots, x_k$ if and only if under the projection from $G^m$ onto each of the $m$ factors of $G$, at least one $x_i$ has non-trivial image, and the formula in \eqref{eq:formula_prob} follows.

	 For abelian $G$, we have that elements  generate $G^m$ if and only if they generate the projection onto the first $m-1$ factors $G^{m-1}$ and further that the projection (of the subgroup they generate) onto the last $G$ factor does not factor through the projection onto the first $m-1$ factors $G^{m-1}$ \cite[Lem.~5.8]{LiuWood2017}. Choices of $y_1,\dots,y_{n+u} \in G^m$ give a map $\mathcal{F}_n \ra G^m$ whose image is the subgroup generated the $y_i^{-1}\gamma(y_i)$, for
$\gamma\in 	F\rtimes \Gamma$.
Thus,  conditionally on $Y[(\{y_1, \dots, y_{n+u}\}) ]$ surjecting onto the first $m-1$ factors $G^{m-1}$ with a specific choice of  associated map $\mathcal{F}_n\ra G^{m-1}$, there are $h_{F\rtimes\Gamma}(G)^{m-1}$ choices of maps $\mathcal{F}_n\ra G$ that will factor through $G^{m-1}$.
 Using Lemma~\ref{L:rightcoset} and Corollary~\ref{C:FHoms} to count the choices of $y_i$ giving such maps, the formula in \eqref{eq:formula_prob} follows.	(See the proof of	\cite[Cor.~5.10]{LiuWood2017} for a more  detailed version of a similar argument.)
\end{proof}

Now to find our desired multiplicities $m(\calC, n, \varpi, G)$, we will count the same thing in two different ways.  
We require some definitions and a supporting lemma to describe what we will count.

For a given $\Gamma$-group $H$, an \emph{$H$-extension} is a $\Gamma$-group $E$ with a $\Gamma$-equivariant surjection $\pi\colon  E\to H$.  
For an admissible $\Gamma$-group $H$, we call an $H$-extension $(E, \pi)$ \emph{admissible} if $E$ is admissible.
A morphism between two $H$-extensions $(E, \pi)$ and $(E', \pi')$ is a $\Gamma$-equivariant isomorphism $f\colon E\to E'$ such that $\pi' \circ f = \pi$, and we write $\Sur_{\Gamma,H}(\pi,\pi')$ or $\Sur_{\Gamma,H}(E,E')$ for the surjective morphisms (and similarly for $\Aut_{\Gamma,H}$). 
If $(E,\pi)$ is an $H$-extension, a \emph{sub-$H$-extension} of $(E, \pi)$ is a $\Gamma$-subgroup $E'$ of $E$ such that $\pi|_{E'}$ is surjective.
For an $H$-extension $E$ (with the map to $H$ implicit),  let $\calE^E_H$ be the poset of sub-$H$-extensions of $E$, where $(E', \pi') \leq (E'', \pi'')$ if $(E' ,\pi')$ is a sub-$H$-extension of $(E'', \pi'')$.
We let $\nu(D,E)$ be the M\"obius function of this poset of $H$-extensions, i.e.,~for  an $H$-extension $E$, and a sub-$H$-extension $D$, we have
\begin{eqnarray*}
	\nu(E, E)&=& 1\\
	\nu(D, E) &=& -\sum_{\substack{D' \in \calE^E_H \\ D'\ne D}} \nu(D', E) \quad \text{if } D\neq E.
\end{eqnarray*}

\begin{lemma}\label{lem:count_surj}
	Let $n$ be a positive integer, $\calC$ a finite set of finite $|\Gamma|'$-$\Gamma$-groups, and $H$ a finite $\Gamma$-group of level $\calC$. Let $(\calF_n)^{\calC} \overset{\rho}{\to} H$ be an $H$-extension structure on $(\calF_n)^{\calC}$. Let $(E,\pi)$ be an $H$-extension. We have
	\[
		|\Sur_{\Gamma,H}(\rho, \pi)|=\begin{cases}
			\sum_{D\in \calE^E_H} \nu(D,E) \left(\frac{|Y(D)|}{|Y(H)|}\right)^n & \text{ if $E$ is level $\calC$ and admissible}\\
			0 & \text{otherwise}.
		\end{cases}
	\]
\end{lemma}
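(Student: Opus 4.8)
The plan is to count the larger set $\Hom_{\Gamma,H}(\rho,\pi)$ of \emph{all} $\Gamma$-equivariant homomorphisms lying over $H$ (not merely the surjective ones), then stratify it by image and recover $|\Sur_{\Gamma,H}(\rho,\pi)|$ by M\"obius inversion over $\calE_H$. Before that I would dispose of the ``otherwise'' case: as noted above $(\calF_n)^{\calC}$ is finite and level $\calC$, and it is admissible because it is a $\Gamma$-quotient of $\calF_n$ (which is admissible by Lemma~\ref{L:admiss}) and admissibility passes to $\Gamma$-quotients; hence any $\Gamma$-group admitting a surjection from $(\calF_n)^{\calC}$ is itself finite, level $\calC$, and admissible. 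So if $E$ is not level $\calC$ or not admissible, then $\Sur_{\Gamma,H}(\rho,\pi)=\emptyset$ and both sides vanish. From now on assume $E$ is finite, level $\calC$, and admissible.

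The key input is that for every sub-$H$-extension $D\leq E$ one has $|\Hom_{\Gamma,H}(\rho,\pi|_D)|=(|Y(D)|/|Y(H)|)^n$. Indeed $D$ is a $\Gamma$-subgroup of the level-$\calC$ group $E$, hence itself level $\calC$ (the class $\overline{\calC}$ is closed under $\Gamma$-subgroups), so every $\Gamma$-equivariant map $\calF_n\to D$ kills the kernel of $\calF_n\to(\calF_n)^{\calC}$ and thus factors through $(\calF_n)^{\calC}$; by Corollary~\ref{C:FHoms} this identifies $\Hom_{\Gamma}((\calF_n)^{\calC},D)$ with $Y(D)^n$ and $\Hom_{\Gamma}((\calF_n)^{\calC},H)$ with $Y(H)^n$, compatibly, and since $Y$ is natural in $\Gamma$-equivariant maps, postcomposition with $\pi|_D$ becomes the coordinatewise map $Y(D)^n\to Y(H)^n$ induced by $Y(d)\mapsto Y(\pi(d))$. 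Applying Lemma~\ref{lem:XYproperties}\eqref{item:3} and \eqref{item:4} to $1\to\ker(\pi|_D)\to D\to H\to 1$ (using that $\pi|_D$ is onto and $D$ is finite), every fiber of $Y(D)\to Y(H)$ has size $|Y(D)|/|Y(H)|$, so the fiber of $Y(D)^n\to Y(H)^n$ over the tuple corresponding to $\rho$ has size $(|Y(D)|/|Y(H)|)^n$; this is exactly $|\Hom_{\Gamma,H}(\rho,\pi|_D)|$.

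Now stratify by image: each $\varphi\in\Hom_{\Gamma,H}(\rho,\pi|_{D'})$ has image a sub-$H$-extension $D\leq D'$ (its image surjects onto $H$ because $\pi\circ\varphi=\rho$ is onto), and $\varphi$ is then a surjection onto that $D$; conversely any surjection onto a sub-$H$-extension $D\leq D'$ arises this way. Hence for every $D'\in\calE_H$ with $D'\leq E$,
\[
	\left(\frac{|Y(D')|}{|Y(H)|}\right)^{n}=|\Hom_{\Gamma,H}(\rho,\pi|_{D'})|=\sum_{\substack{D\in\calE_H\\ D\leq D'}}|\Sur_{\Gamma,H}(\rho,\pi|_D)|.
\]
Since $E$ is finite, the set $\{D\in\calE_H:D\leq E\}$ is a finite poset, so M\"obius inversion with the function $\nu$ applies and yields $|\Sur_{\Gamma,H}(\rho,\pi)|=\sum_{D\leq E}\nu(D,E)(|Y(D)|/|Y(H)|)^n$, the claimed formula.

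I expect the main obstacle to be not conceptual but a matter of bookkeeping around the pro-$\calC$ completion: one must check that $\Hom$ out of $(\calF_n)^{\calC}$ into a finite level-$\calC$ $\Gamma$-group agrees with $\Hom$ out of $\calF_n$, that sub-$H$-extensions of a level-$\calC$ group are again level $\calC$, and that the bijection of Corollary~\ref{C:FHoms} is natural enough that postcomposing a homomorphism with $\pi|_D$ corresponds to applying $Y\circ\pi$ coordinatewise. Once these compatibilities are pinned down, the equidistribution statement Lemma~\ref{lem:XYproperties}\eqref{item:4} supplies the fiber sizes and the M\"obius inversion is purely formal.
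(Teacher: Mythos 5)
Your proposal is correct and takes essentially the same route as the paper: both dispose of the ``otherwise'' case by noting $(\calF_n)^{\calC}$ is admissible and level $\calC$ so any quotient is too, both compute $|\Hom_{\Gamma,H}(\rho,\pi|_D)|=(|Y(D)|/|Y(H)|)^n$ via Corollary~\ref{C:FHoms} together with the equidistribution statement Lemma~\ref{lem:XYproperties}\eqref{item:4}, and both finish by stratifying according to image and applying M\"obius inversion over $\calE_H$.
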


\begin{proof}
	By Lemma~\ref{L:admiss}, $(\calF_n)^\calC$ is admissible, so $|\Sur_{\Gamma,H}(\rho, \pi)|=0$ if $E$ is not of level $\calC$ or not admissible. If $E$ is level $\calC$ and $(D, \psi) \leq (E, \pi)$, then $\Gamma$-equivariant homomorphisms $(\calF_n)^\calC \to D$ exactly correspond to $\Gamma$-equivariant homomorphisms $\calF_n \to D$.  By Corollary~\ref{C:FHoms}, these correspond to elements of $Y(D)^n$. For a homomorphism $\calF_n \to D$ to be compatible with the maps to $H$, we need the element of $Y(D)^n$ to map to the element of $Y(H)^n$ that corresponds to $\rho$ via Corollary~\ref{C:FHoms}.
	 By Lemma~\ref{lem:XYproperties} \eqref{item:4}, we have that the number of homomorphisms $\calF_n \to D$ compatible with the maps to $H$ is $(|Y(D)|/|Y(H)|)^n$. 
So we have
	\[
		\left(\frac{|Y(D)|}{|Y(H)|}\right)^n = \sum_{\substack{(E', \phi)\in \calE^{(D, \psi)}_H 
		}} |\Sur_{\Gamma,H}(\rho, \phi)|.
	\]
	We then use M\"obius inversion to obtain the proposition.
\end{proof}

Next, for an irreducible $F\rtimes \Gamma$-group $G$, we will count $|\Sur_{F\rtimes \Gamma}(R,G)|$ in two ways.  First we will count it in the following proposition in terms of $H$-extensions with kernel $G$, and then in the proof of Corollary~\ref{cor:multiplicities} we will use a count of $|\Sur_{F\rtimes \Gamma}(R,G)|$ in terms of multiplicities $m(\calC, n, \varpi, G)$ to determine the multiplicities in terms of an expression involving counting $H$-extensions.
We now give some notation for the set of $H$-extensions that will arise.  For a $\Gamma$-group $H$, 
and an irreducible abelian $H\rtimes \Gamma$-group $G$, let 
  $\mathcal{E}(G,H)$ denote the set of isomorphism classes of admissible $H$-extensions $(E,\pi)$ such that $\ker \pi \isom G$ as $(H\rtimes \Gamma)$-groups.
  For a $\Gamma$-group $H$, 
and a finite non-abelian $\Gamma$-group $G$, let 
  $\mathcal{E}(G,H)$ denote the set of isomorphism classes of admissible $H$-extensions $(E,\pi)$ such that $\ker \pi \isom G$ as $\Gamma$-groups
and $\ker \pi$ is also an irreducible $E\rtimes \Gamma$-group.  In either case, let   $\mathcal{E}_\mathcal{C}(G,H)$ be the subset of $(E,\pi)\in\mathcal{E}(G,H)$
  such that $E$ is level $\mathcal{C}$.

\begin{proposition}\label{prop:count_surj_RtoG}
	Let $1\to R\to F\to H\to 1$ be a fundamental short exact sequence associated to a finite set $\calC$ of finite $|\Gamma|'$-$\Gamma$-groups, a positive integer $n$, an admissible $\Gamma$-group $H$, and a $\Gamma$-equivariant surjection $\varpi\colon  (\calF_n)^\calC \to H$.
	\begin{enumerate}
		\item\label{item:count_surj_ab} Let $G$ be an irreducible abelian $F\rtimes \Gamma$-group. Then
			\[
				|\Sur_{F\rtimes \Gamma}(R,G)| = |\Aut_{F\rtimes \Gamma}(G)| \sum_{(E, \pi)\in \mathcal{E}_\mathcal{C}(G,H)}
				\frac{\sum_{D\in \calE^E_H} \nu(D,E)\left(\frac{|Y(D)|}{|Y(H)|}\right)^n}{|\Aut_{\Gamma,H}(E,\pi)| }
			\]
			if the action of $F\rtimes \Gamma$ on $G$ factors through $F\rtimes \Gamma \to H\rtimes \Gamma$ and $|\Sur_{F\rtimes\Gamma}(R,G)|=0$ otherwise. 
		\item\label{item:count_surj_nonab} Let $G$ be a finite non-abelian $\Gamma$-group in $\calN$. Let $G_i$ be the pairwise non-isomorphic irreducible $F\rtimes\Gamma$-group structures on $G$, that are compatible with the $\Gamma$-action on $G$, for $1\leq i \leq k$ ($k$ may be 0). Then
			\[
				\sum_{i=1}^k \frac{|\Sur_{F\rtimes \Gamma}(R, G_i)|}{|\Aut_{F\rtimes \Gamma}(G_i)|}=\sum_{(E, \pi)\in \mathcal{E}_\mathcal{C}(G,H)}
				 \frac{\sum_{D\in \calE^E_H} \nu(D,E) \left(\frac{|Y(D)|}{|Y(H)|}\right)^n}{|\Aut_{\Gamma,H}(E, \pi)|}.
			\]
	\end{enumerate}
\end{proposition}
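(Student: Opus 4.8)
The plan is to prove both parts simultaneously by setting up a bijection between surjections $R \to G$ (as $F\rtimes\Gamma$-groups) and certain decorated $H$-extensions, and then applying Lemma~\ref{lem:count_surj} to evaluate the relevant count. The key observation is that a surjection $\psi \colon R \to G$ onto an irreducible $F\rtimes\Gamma$-group, together with the short exact sequence $1\to R\to F\to H\to 1$, yields a pushout $H$-extension: let $R' = \ker\psi$ (a normal $\Gamma$-subgroup of $F$, since $G$ is irreducible so $R'$ is $F\rtimes\Gamma$-normal), and set $E := F/R'$. Then $1 \to G \to E \to H \to 1$ is an $H$-extension with $\ker\pi \cong G$, and $E$ is admissible (a $\Gamma$-quotient of $F$, which is a quotient of $(\calF_n)^{\calC}$, hence admissible by Lemma~\ref{L:admiss}) and level $\calC$ (since $F$, being $(\calF_n)^{\calC}/M$, is level $\calC$ — one should check $M$ is chosen so this holds, which follows from the construction of the fundamental short exact sequence). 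Conversely, given such an $(E,\pi)$ together with a choice of $\Gamma$-equivariant surjection $(\calF_n)^{\calC} \to E$ lifting $\varpi$, composing with $F \to E$ and restricting to $R$ recovers a surjection $R \to \ker\pi \cong G$. So the first step is to make this correspondence precise and count, for each isomorphism class of admissible level-$\calC$ $H$-extension $(E,\pi)$ with $\ker\pi\cong G$, how many surjections $R \to G$ give rise to it.

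The second step is the bookkeeping for that count. Fixing $(E,\pi)$, the surjections $R \to G$ realizing it correspond to: a surjection $(\calF_n)^{\calC}\to E$ compatible with $\varpi$ (equivalently, by the universal property of $F = (\calF_n)^{\calC}/M$, a surjection $F \to E$ compatible with the maps to $H$), modulo the action of $\Aut_{\Gamma,H}(E,\pi)$, and then composed with an identification $\ker\pi \cong G$, which introduces a factor of $|\Aut_{F\rtimes\Gamma}(G)|$ in the abelian case (where the $F\rtimes\Gamma$-structure on $G$ is rigid enough) and requires summing over the $G_i$ in the non-abelian case. Here is where the hypothesis enters that the $F\rtimes\Gamma$-action on an abelian $G$ must factor through $H\rtimes\Gamma$ for $\Sur$ to be nonzero: if $\psi\colon R\to G$ is $F\rtimes\Gamma$-equivariant, then $R$ acts trivially on itself only if $G$ is a quotient on which... — more precisely, since $R$ maps onto $G$ and the conjugation action of $R$ on abelian $G$ is inner-trivial, the action factors through $F/R = H$. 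By Lemma~\ref{lem:count_surj}, the number of $\Gamma$-equivariant surjections $(\calF_n)^{\calC}\to E$ over $H$ is exactly $\sum_{D\in\calE_H, D\leq E}\nu(D,E)\bigl(|Y(D)|/|Y(H)|\bigr)^n$, which is the numerator appearing in the claimed formula; dividing by $|\Aut_{\Gamma,H}(E,\pi)|$ passes from surjections onto $E$ (as a fixed group) to isomorphism classes of $H$-extensions structures.

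The third step is to assemble these pieces. In the abelian case (part \eqref{item:count_surj_ab}), every surjection $R\to G$ has $\ker\pi$ honestly isomorphic to $G$ as an $H\rtimes\Gamma$-group (the $F\rtimes\Gamma$-structure having been shown to factor through $H\rtimes\Gamma$), so we sum over admissible level-$\calC$ $H$-extensions $(E,\pi)$ with $\ker\pi\cong G$ as $H\rtimes\Gamma$-groups, with the $|\Aut_{F\rtimes\Gamma}(G)|$ factor out front. In the non-abelian case (part \eqref{item:count_surj_nonab}), one instead sums over $H$-extensions whose kernel is an irreducible $E\rtimes\Gamma$-group abstractly $\Gamma$-isomorphic to $G$, and the different $F\rtimes\Gamma$-structures $G_i$ correspond to the different ways this kernel sits inside the extension; dividing each $|\Sur_{F\rtimes\Gamma}(R,G_i)|$ by $|\Aut_{F\rtimes\Gamma}(G_i)|$ and summing exactly accounts for the overcounting, giving the stated identity without the leading automorphism factor.

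The main obstacle I expect is the precise bookkeeping in the non-abelian case: keeping straight the distinction between (i) the abstract $\Gamma$-group $G$, (ii) its various $F\rtimes\Gamma$-structures $G_i$ compatible with the $\Gamma$-action, and (iii) the kernel of a given $H$-extension $(E,\pi)$ as an $E\rtimes\Gamma$-group — and verifying that the orbit-counting (sum over $\Aut$-orbits, weighted by inverse stabilizer sizes) matches up cleanly on both sides. A secondary subtlety is confirming that the construction of the fundamental short exact sequence really does force $F$ to be level $\calC$ and that $M$ being the intersection of maximal $\Gamma$-normal subgroups guarantees every surjection $F\to E$ with $\ker\pi$ irreducible factors appropriately; this is where one invokes \cite[Lemma~5.3]{LiuWood2017} to know $R$ is a product of irreducibles, so that surjections onto a single irreducible $G$ are controlled.
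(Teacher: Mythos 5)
Your proposal is correct and follows essentially the same route as the paper's proof: both reduce $|\Sur_{F\rtimes\Gamma}(R,G)|$ via orbit-counting to a count of kernels $M\subset R$ with $R/M\cong G$, reorganize that count by isomorphism class of the induced $H$-extension $(E,\pi) := (F/M \to H)$, use the maximality property of $M$ (irreducibility of $G$, via the construction of the fundamental short exact sequence and \cite[Prop.~8.7]{LiuWood2017}) to identify surjections $(\calF_n)^\calC\to E$ over $H$ with surjections $F\to E$ over $H$, and then evaluate via Lemma~\ref{lem:count_surj}. One minor inaccuracy in your parenthetical: $R'=\ker\psi$ is $F\rtimes\Gamma$-normal simply because $\psi$ is an $F\rtimes\Gamma$-equivariant map, not because $G$ is irreducible — irreducibility is instead what forces $R'$ to contain $M$ so that the surjection factors through $F$.
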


\begin{proof}
We have that 	$|\Sur_{F\rtimes \Gamma}(R,G)|$ is $|\Aut_{F\rtimes \Gamma}(G)|$ times the number of $F\rtimes \Gamma$-subgroups $M$ of $R$ such that $R/M$  is isomorphic to $G$ as an $F\rtimes \Gamma$-group. If $R/M$ is abelian, then the action of $F\rtimes \Gamma$ on $R/M$ factors through $H\rtimes\Gamma$ (because conjugation by elements from $R$ is trivial in $R/M$ as $R/M$ is abelian). 
We then have the number of $F\rtimes \Gamma$-subgroups $M$ of $R$ such that $R/M$ is isomorphic to $G$ as an $F\rtimes \Gamma$-group is
\begin{eqnarray*}
  && \sum_{(E, \pi)\in \mathcal{E}(G,H)}
   \#\left\{ F\rtimes\Gamma\text{-subgroups $M$ of $R$}\,\Bigg|\, 
  \begin{aligned}
  	&(F/M \ra H) \simeq (E,\pi)\\
	& \text{ as $H$-extensions}, 
  \end{aligned}\right\} \\
\end{eqnarray*}
where we recall that $H$ is a $\Gamma$-group, and ``$H$-extensions'' above refers to extensions of $\Gamma$-groups.
By the definition of $F$ and the irreducibility of $G$ as an $F\rtimes \Gamma$-group, we have that 
any surjection of $H$-extensions $(\calF_n)^{\calC}\ra H$ to $E\ra H$ factors through $F\ra H$
(as in \cite[Prop~8.7]{LiuWood2017}).
Thus the above sum is equal to
  \begin{align*}
\sum_{(E, \pi)\in \mathcal{E}(G,H)}
\frac{|\Sur_{\Gamma}((\calF_n)^{\calC}\ra H,\pi)|}{|\Aut_{\Gamma,H}(E,\pi)|}.
\end{align*}
 Then \eqref{item:count_surj_ab} now follows from applying Lemma~\ref{lem:count_surj} above.
 The proof of \eqref{item:count_surj_nonab} is similar, except we  put together all $H$-extensions $(E,\pi)$ whose kernel has the same $\Gamma$-group structure.
\end{proof}

\begin{corollary}\label{cor:multiplicities}
	Let $H$, $F$ and $R$ be as in Proposition~\ref{prop:count_surj_RtoG}. If $G\in \calA_{H}$, then
	\[
		\frac{h_{H\rtimes \Gamma}(G)^{m(\calC, n, \varpi, G)}-1}{h_{H\rtimes \Gamma}(G)-1}
		=\sum_{(E, \pi)\in \mathcal{E}_\mathcal{C}(G,H)}
		 \frac{\sum_{D\in \calE^E_H} \nu(D,E) \left(\frac{|Y(D)|}{|Y(H)|}\right)^n}{|\Aut_{\Gamma,H}(E,\pi)|}.
	\]
	If $G\in \calN$, then
	\[
		m(\calC, n, \varpi, G)
		=\sum_{(E, \pi)\in \mathcal{E}_\mathcal{C}(G,H)}
		\frac{\sum_{D\in \calE^E_H} \nu(D,E) \left(\frac{|Y(D)|}{|Y(H)|}\right)^n}{|\Aut_{\Gamma,H}(E,\pi)|}.
	\]
\end{corollary}

\begin{remark}\label{rmk:R_not_depend_surj}
	This corollary shows that $m(\calC, n, \varpi, G)$ does not depend on the choice of the $\Gamma$-equivariant surjection $\varpi\colon  (\calF_n)^{\calC} \to H$. Thus we will denote $m(\calC, n, \varpi, G)$ by $m(\calC, n, H, G)$.
\end{remark}

\begin{proof}
	We have that $R$ as an $F\rtimes \Gamma$-group can be written as $\prod_{i=1}^k G_i^{m_i}$ where $G_i$ are pairwise non-isomorphic irreducible $F\rtimes \Gamma$-groups. By \cite[Thm.~7.1]{LiuWood2017}, 
	\[
		|\Sur_{F\rtimes \Gamma}(R, G_i)|=
		\begin{cases}
			h_{F\rtimes \Gamma}(G_i)^{m_i} -1 & \text{ if $G_i$ is abelian}\\
			m_i |\Aut_{F\rtimes \Gamma}(G_i)| & \text{ if $G_i$ is non-abelian.}
		\end{cases}
	\]
	Then the corollary follows by Proposition~\ref{prop:count_surj_RtoG} and the definition of $m(\calC, n, \varpi, G)$ for $G\in \calA_{H}$ and $G\in \calN$.
\end{proof}

Now we are ready to put together the above results to give the explicit measures of basic opens.
Recall $\mathcal{E}_\mathcal{C}(G,H)$ was defined just before Proposition~\ref{prop:count_surj_RtoG} as a certain set of minimal extensions of $H$ by $G$.
We define for $G\in \calA_H$
\begin{equation}\label{E:lam1}
	\lambda(\calC, H, G):=
		 (h_{H\rtimes \Gamma}(G)-1) \sum_{(E, \pi)\in \mathcal{E}_\mathcal{C}(G,H)}
		 \frac{1}{|\Aut_{\Gamma,H}(E,\pi)|},
\end{equation}
and for $G\in \calN$
\begin{equation}\label{E:lam2}
	\lambda(\calC, H, G):=\sum_{(E, \pi)\in \mathcal{E}_\mathcal{C}(G,H)}
	\frac{1}{|\Aut_{\Gamma,H}(E,\pi)|}.
\end{equation}

\begin{theorem}\label{thm:probability_u,n}
	Let $\calC$ be a finite set of finite $|\Gamma|'$-$\Gamma$-groups and $H$ a finite level $\calC$ admissible $\Gamma$-group. Let $n\geq 1$ and $u>-n$ be integers. Then 
	\begin{eqnarray}
		&& \Prob((X_{u,n})^\calC \isom H \text{ as $\Gamma$-groups}) \label{eq:probability_u,n} \label{eq:formula_u,n} \\
		&=& \frac{|\Sur_{\Gamma}(\calF_n, H)|}{|\Aut_{\Gamma}(H)| |Y(H)|^{n+u}}\prod_{G\in \calA_H} \prod_{k=0}^{m(\calC, n, H, G)-1} (1-\frac{h_{H\rtimes \Gamma}(G)^k}{|Y(G)|^{n+u}}) \prod_{G\in \calN} (1-|Y(G)|^{-n-u})^{m(\calC, n, H, G)} \nonumber
	\end{eqnarray}
	and we have
	\begin{eqnarray}
		&& \lim_{n\to \infty} \Prob((X_{u,n})^\calC\isom H \text{ as $\Gamma$-groups})\label{eq:lim_formula_u,n}\\
		&=& \frac{1}{|\Aut_{\Gamma}(H)| |Y(H)|^u} \prod_{G\in \calA_H} \prod_{i=1}^{\infty}(1-\lambda(\calC, H, G) \frac{h_{H\rtimes \Gamma}(G)^{-i}}{|Y(G)|^u}) \prod_{G\in \calN} e^{-|Y(G)|^{-u} \lambda(\calC, H,G)}.\nonumber
	\end{eqnarray}
\end{theorem}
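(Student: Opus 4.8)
The plan is to prove the exact formula \eqref{eq:formula_u,n} first and then obtain \eqref{eq:lim_formula_u,n} by passing to the limit $n\to\infty$. Write $P:=(\calF_n)^\calC$, which is finite by the discussion above, and let $\bar\Rr$ be the image in $P$ of the random tuple $\Rr$, so that $(X_{u,n})^\calC = P/[Y(\bar\Rr)]_P$. Since this quotient is pinned down by the single normal $\Gamma$-subgroup $[Y(\bar\Rr)]_P$, the event $\{(X_{u,n})^\calC\isom H\}$ is the \textbf{disjoint} union, over the normal $\Gamma$-subgroups $N\lhd P$ with $P/N\isom H$, of the events $\calE_N:=\{[Y(\bar\Rr)]_P=N\}$; there are $|\Sur_\Gamma(\calF_n,H)|/|\Aut_\Gamma(H)|$ such $N$, using that $H$ is level $\calC$ so that $\Sur_\Gamma(\calF_n,H)=\Sur_\Gamma(P,H)$. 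Thus \eqref{eq:formula_u,n} will follow once I show that $\Prob(\calE_N)$ equals $|Y(H)|^{-n-u}$ times the advertised product over $\calA_H\cup\calN$, independently of $N$.

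To compute $\Prob(\calE_N)$, fix $N$, form the fundamental short exact sequence $1\to R\to F\to H\to1$ with $F=P/M$, $R=N/M$, and write $R=\prod_i G_i^{m_i}$ with the $G_i$ pairwise non-isomorphic irreducible $F\rtimes\Gamma$-groups. Conditioning on the (i.i.d.\ Haar) images $h_i\in H$ of the $r_i$: the inclusion $[Y(\bar\Rr)]_P\subseteq N$ holds iff every $h_i\in H^\Gamma$, an event of probability $(|H^\Gamma|/|H|)^{n+u}=|Y(H)|^{-n-u}$ by Lemma~\ref{lem:XYproperties}\eqref{item:2}. Given this, because $M$ is the intersection of the maximal proper $P$-normal $\Gamma$-subgroups of the finite group $N$, a closed normal $\Gamma$-subgroup of $P$ contained in $N$ equals $N$ iff its image in $R$ is all of $R$ (the usual Frattini/Nakayama argument), so $\calE_N$ holds iff $[Y(\tilde\Rr)]_{F\rtimes\Gamma}=R$, where $\tilde r_i$ is the image of $\bar r_i$ in $F$. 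I then claim that, conditioned on all $h_i\in H^\Gamma$, the tuples $Y(\tilde r_i)$ are i.i.d.\ uniform on $Y(R)$: each $\tilde r_i$ is Haar on $F$, so $Y(\tilde r_i)$ is uniform on $Y(F)$ by Lemma~\ref{lem:XYproperties}\eqref{item:2}; the map $Y(F)\to Y(H)$ is equidistributed by Lemma~\ref{lem:XYproperties}\eqref{item:4}, its fiber over the identity is $Y(F)\cap R^d=Y(R)$ by Lemma~\ref{L:YYs}, and the conditioning event $h_i\in H^\Gamma$ is exactly $Y(\tilde r_i)\in Y(R)$. Since $[Y(\tilde\Rr)]_{F\rtimes\Gamma}$ depends only on the $Y(\tilde r_i)$, and $Y$ of a Haar element of $R$ is again uniform on $Y(R)$, this conditional probability equals $\Prob([Y(\{r_1,\dots,r_{n+u}\})]_{F\rtimes\Gamma}=R)$ for i.i.d.\ uniform $r_i\in R$, which Proposition~\ref{prop:formula_probability} evaluates. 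Regrouping the $G_i$ finishes: an abelian $G_i$ has $F\rtimes\Gamma$-action factoring through $H\rtimes\Gamma$, so these run over the members of $\calA_H$ occurring in $R$, with $h_{F\rtimes\Gamma}(G_i)=h_{H\rtimes\Gamma}(G_i)$ and multiplicity $m(\calC,n,H,G_i)$; the non-abelian $G_i$ lying over a common underlying $\Gamma$-group $G\in\calN$ all share $|Y(G_i)|=|Y(G)|$, so their factors coalesce into $(1-|Y(G)|^{-n-u})^{m(\calC,n,H,G)}$. Summing $\Prob(\calE_N)$ over the $N$ gives \eqref{eq:formula_u,n}.

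For \eqref{eq:lim_formula_u,n} I would take $n\to\infty$ in \eqref{eq:formula_u,n} factor by factor. By Corollary~\ref{C:FHoms} $|\Hom_\Gamma(\calF_n,H)|=|Y(H)|^n$, and every proper $\Gamma$-subgroup $H'\subsetneq H$ has $|Y(H')|<|Y(H)|$ (equality would force $H'=H$ by Lemma~\ref{lem:admissible_Y}), so inclusion–exclusion over maximal proper $\Gamma$-subgroups gives $|\Sur_\Gamma(\calF_n,H)|/|Y(H)|^n\to1$ and the prefactor tends to $(|\Aut_\Gamma(H)|\,|Y(H)|^u)^{-1}$. For $G\in\calA_H$, Corollary~\ref{cor:multiplicities} together with $|Y(D)|<|Y(E)|=|Y(G)|\,|Y(H)|$ for proper sub-$H$-extensions $D<E$ gives $h_{H\rtimes\Gamma}(G)^{m(\calC,n,H,G)}/|Y(G)|^{n+u}\to\lambda(\calC,H,G)\,|Y(G)|^{-u}$, and re-indexing the inner product by $k\mapsto m(\calC,n,H,G)-i$ turns it into $\prod_{i\ge1}\bigl(1-\lambda(\calC,H,G)\,h_{H\rtimes\Gamma}(G)^{-i}/|Y(G)|^u\bigr)$; likewise, for $G\in\calN$, Corollary~\ref{cor:multiplicities} gives $m(\calC,n,H,G)/|Y(G)|^n\to\lambda(\calC,H,G)$, hence $(1-|Y(G)|^{-n-u})^{m(\calC,n,H,G)}\to e^{-\lambda(\calC,H,G)|Y(G)|^{-u}}$.

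The main obstacle is interchanging $\lim_n$ with the (a priori infinite) product over $\calA_H\cup\calN$. The bound ``$\limsup_n\le$ product'' is immediate, since every factor lies in $[0,1]$ and so the full product is at most any finite truncation, whose limit I have just computed. For the matching lower bound — equivalently, for convergence of the infinite product and legitimacy of the interchange — I would extract from Corollary~\ref{cor:multiplicities} an \emph{$n$-independent} bound showing that the factor indexed by $G$ in \eqref{eq:formula_u,n} differs from $1$ by at most $C_{\calC,H,G}\,|Y(G)|^{-u}$ (bounding $h_{H\rtimes\Gamma}(G)^{m(\calC,n,H,G)}-1$, resp.\ $m(\calC,n,H,G)$, by $|Y(G)|^{n}$ times a M\"obius-and-automorphism sum not involving $n$), and then reduce everything to the summability $\sum_{G\in\calA_H\cup\calN}C_{\calC,H,G}\,|Y(G)|^{-u}<\infty$. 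Proving this summability amounts to controlling the number of admissible level-$\calC$ $H$-extensions with a prescribed irreducible kernel $G$ as $|G|$ grows; this is the one genuinely non-formal input, it is exactly the count later organized via $\Gamma$-chief factor pairs, and it is the same estimate that drives the no-escape-of-mass argument of the next section. Granting it, a dominated-convergence argument for the products completes \eqref{eq:lim_formula_u,n}.
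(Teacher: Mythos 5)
Your proof of \eqref{eq:formula_u,n} matches the paper's argument essentially line for line: the disjoint decomposition over the finitely many $N\lhd (\calF_n)^\calC$ with quotient $H$, the two-step conditioning using Lemmas~\ref{lem:XYproperties} and \ref{L:YYs} to reduce to $\Prob([Y(\tilde\Rr)]_F=R)$ for uniform relations in $R$, and Proposition~\ref{prop:formula_probability}. The regrouping of abelian and non-abelian $G_i$ is also correct. So the first half is sound.

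The gap is in the second half. You treat the product over $\calA_H\cup\calN$ as ``a priori infinite'' and try to handle the limit interchange by a dominated-convergence argument, postponing to a claimed summability $\sum_G C_{\calC,H,G}|Y(G)|^{-u}<\infty$ that you do not prove and describe as ``the one genuinely non-formal input.'' This misses the key structural observation that the paper makes and proves: for a \emph{finite} set $\calC$, only finitely many $G\in\calA_H\cup\calN$ can have $m(\calC,n,H,G)\ne0$, and this finite set is \emph{uniform in $n$}. The argument is: each irreducible $F\rtimes\Gamma$-factor of $R$ is characteristically simple hence a power $T^k$ of a finite simple group $T$; by \cite[Cor.~6.12]{LiuWood2017} the simple group $T$ lies in the (finite) closure of $\calC$ under subgroups and quotients, and by \cite[Lem.~6.1]{LiuWood2017} applied to the quotient of \eqref{eq:ses2} by the other irreducible factors, the power $k$ is bounded by $|H\rtimes\Gamma|$. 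Neither bound depends on $n$, so the product in \eqref{eq:formula_u,n} is a finite product over a fixed finite index set, and one may pass to the limit factor by factor with no interchange issue at all. Your per-factor limit computations (prefactor via Corollary~\ref{C:FHoms} and the admissibility argument $|Y(H')|<|Y(H)|$; abelian factors via re-indexing and \eqref{eq:lambda_m}; non-abelian factors giving the exponential) are then exactly what the paper does, using \cite[Lem.~8.11]{LiuWood2017}.

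Two further cautions about your proposed detour. First, your uniform bound ``factor differs from $1$ by at most $C_{\calC,H,G}|Y(G)|^{-u}$'' is asserted from Corollary~\ref{cor:multiplicities} but the M\"obius sum there has signs, so extracting a one-sided bound of the shape you want needs more care than you give. Second, the chief-factor-pair machinery you invoke for the summability is indeed in the paper, but it appears in Section~\ref{S:countadd} to control $P_{u,n}$ across \emph{varying} $H$ at a fixed level (for countable additivity of $\mu_u$), not to prove this theorem; for Theorem~\ref{thm:probability_u,n} with $\calC$ finite, the finiteness of the index set is all one needs, and it is cleaner and already available.
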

\begin{proof}
We have
	\begin{eqnarray}
		&& \Prob((X_{u,n})^{\calC} \isom H \text{ as $\Gamma$-groups}) \label{eq:sum_N}\\
		&=& \sum_{\substack{N \unlhd (\calF_n)^{\calC}, \text{ s.t. $N$ is closed} 
		\\ (\calF_n)^{\calC}/N \isom H \text{ as $\Gamma$-groups}}} \Prob([ Y(\{r_1, \dots, r_{n+u}\})]_{(\calF_n)^{\calC}} = N),\nonumber
	\end{eqnarray}
	where $r_1, \dots, r_{n+u}$ are independent random with respect to the Haar measure on $(\calF_n)^{\calC}$.
	For each closed normal $\Gamma$-subgroup $N$ of $(\calF_n)^{\calC}$,
by 
Lemma~\ref{L:rightcoset},
Lemma~\ref{lem:XYproperties} \eqref{item:3}
and Lemma~\ref{L:YYs}, 
it follows that
	\begin{equation}\label{eq:Prob-Y}
		\Prob(Y(\{r_1, \dots, r_{n+u}\}) \subset N^d) = |Y(H)|^{-n-u}.
	\end{equation}
	Let $1\to R\to F \to H \to 1$ be the fundamental short exact sequence induced by $(\calF_n)^\calC \to (\calF_n)^\calC/N\isom H$.  We claim
	\begin{eqnarray*}
		&& \Prob([ Y(\{r_1, \dots, r_{n+u}\})]_{(\calF_n)^{\calC}}=N \mid Y(\{r_1, \dots, r_{n+u}\}) \subset N^d)\\
		&=& \Prob([ Y(\{r'_1, \dots, r'_{n+u}\})]_F  =R),
	\end{eqnarray*}
	where $r'_i$ are independent uniform random elements of $R$. 
In the case we are considering $[ Y(\{r_1, \dots, r_{n+u}\})]_{(\calF_n)^{\calC}}$ is a 	$(\calF_n)^{\calC}$-normal $\Gamma$-subgroup of $N$, and thus is $N$ if and only if its quotient mod $M$ is $R$ (by the definition of the fundamental exact sequence \eqref{eq:fundshort}).  We can  use 
	Lemma~\ref{L:rightcoset} 
and Lemma~\ref{L:YYs} to see that for $r$ Haar random from $(\calF_n)^{\calC}$, the distribution of $Y(r)$ mod $M$ conditional on $Y(r)\sub N^d$ is the same as the distribution of $Y(r')$ for $r'$ uniform random from $R$. 
	Then by Proposition~\ref{prop:formula_probability} and Remark~\ref{rmk:R_not_depend_surj}, Equality~\eqref{eq:formula_u,n} follows from the fact that the number of subgroups $N$  that the sum \eqref{eq:sum_N} is taken over equals  $|\Sur_{\Gamma}(\calF_n, H)| |\Aut_{\Gamma}(H)|^{-1}$.
	
	We claim that there are only finitely many $G\in \calA_H \cup \calN$ such that $m(\calC, n, H, G)$ is nonzero. First, since any irreducible $F\rtimes \Gamma$-group factor of $R$	is characteristically simple, as a group it is isomorphic to a power of a simple group.  The factor, as a $\Gamma$-group, is in $\overline{\calC}$
and
 \cite[Cor.~6.12]{LiuWood2017} shows the underlying simple group is in the closure of the set of groups in $\mathcal{C}$ (forgetting the $\Gamma$-structure) under taking subgroups and quotients (and this is a finite set of groups).
	Then \cite[Lem.~6.1]{LiuWood2017}, applied to the quotient of \eqref{eq:ses2} by all \emph{other} 
	irreducible $F\rtimes \Gamma$-group factors of $R$, shows that the power of the simple group is
	 bounded by $|H\rtimes \Gamma|$, proving the claim. Therefore, to establish Equality~\eqref{eq:lim_formula_u,n}, it suffices to take the limit of a factor in \eqref{eq:formula_u,n} corresponding to each $G\in \calA_H \cup \calN$. 
	 For each $E,D$ in the sum in Corollary~\ref{cor:multiplicities}, since $E$ is admissible, we have that $Y(D)=Y(E)$ implies $E=D$.  Thus we have $|Y(D)|/|Y(H)|\leq |Y(E)|/|Y(H)| = |Y(G)|$ where the equality holds exactly when $D=E$.
	 Then by the definition of $\lambda(\calC, H, G)$ and Corollary~\ref{cor:multiplicities}, we see that $\lambda(\calC, H, G)$ is related to $m(\calC, n, H, G)$ as follows
	\begin{equation}\label{eq:lambda_m}
		\lambda(\calC, H, G) = \begin{cases}
			\lim\limits_{n\to \infty} h_{H\rtimes \Gamma}(G)^{m(\calC, n, H, G)}|Y(G)|^{-n} & \text{if }G\in \calA_H\\
			\lim\limits_{n\to \infty} m(\calC, n, H, G)|Y(G)|^{-n} & \text{if }G\in \calN.
		\end{cases}
	\end{equation}
	
	If there are no extensions $E$ in the sum in Corollary~\ref{cor:multiplicities}, then $m(\calC, n, H, G)=0$ for any $n$ and $\lambda(\calC, H, G)=0$. Otherwise $\lambda(\calC, H, G)>0$, and it follows by \eqref{eq:lambda_m} that $m(\calC, n, H, G)\to\infty$ as $n\to \infty$. So using \eqref{eq:lambda_m} and \cite[Lem.~8.11]{LiuWood2017}, we obtain the limit in Equality~\eqref{eq:lim_formula_u,n} for the factor corresponding to each $G\in \calA_H \cup \calN$.  Then \eqref{eq:lim_formula_u,n} will follow from the claim that 
	\begin{equation}\label{eq:lim-Y}
		\lim_{n\ra\infty} |\Sur_{\Gamma}(\calF_n, H)| |Y(H)|^{-n}=1.
	\end{equation}
	By Corollary~\ref{C:FHoms},
$|\Hom_{\Gamma}(\calF_n, H)|=|Y(H)|^n$, and the number of non-surjective homomorphisms is bounded by the sum of $|Y(H')|^n$ over all proper $\Gamma$-subgroups $H'$ of $H$.  Since $H$ is admissible, as above we have that $Y(H')=Y(H)$ implies $H'=H$, and thus the claim and theorem follow.
\end{proof}

\section{Countable additivity of $\mu_u$}\label{S:countadd}

In the last section we proved that the limit defining $\mu_u$ in \eqref{E:defmualgebra} exists.  In this section we will show that $\mu_u$ extends to a Borel probability measure, and in particular that there is no escape of mass.  They key ingredient will be to control the size of $\mu_{u,n}$ on the basic opens, and make sure the measures are not decreasing too fast in $n$.

\subsection{Chief factor pairs}
In this section, we will introduce a notion of $\Gamma$-chief factor pairs, which will be used in the main technical result to control the size of $\mu_{u,n}$. 
We first recall the definition of chief factor pairs from \cite{LiuWood2017}. A \emph{chief factor pair} is a pair $(M,A)$ such that $M$ is an irreducible $A$-group and the $A$-action on $M$ is faithful. Two chief factor pairs $(M_1, A_1)$ and $(M_2, A_2)$ are isomorphic if there is an isomorphism $\omega\colon  M_1 \to M_2$ such that the induced isomorphism $\omega^*\colon \Aut(M_1) \to \Aut(M_2)$ maps $A_1$ to $A_2$. If $M$ is a minimal normal subgroup of a finite group $G$, then we denote by $\rho_M$ the homomorphism 
\begin{eqnarray*}
	\rho_M\colon  G & \to & \Aut(M)\\
	g & \mapsto & (x\mapsto gxg^{-1})_{x\in M}.
\end{eqnarray*}
Let $G$ be a finite group and 
\[
	1=G_0 \lhd G_1 \lhd \cdots \lhd G_r=G
\]
a chief series of $G$ (a chain of normal subgroups such that $G_{i+1}/G_i$ is a minimal normal subgroup of $G/G_i$ for each $0\leq i\leq r-1$). Then $\CF(G)$ is defined to be the set consisting of all isomorphism classes of chief factor pairs $\left\{\left(G_{i+1}/G_i, \rho_{G_{i+1}/G_i}(G/G_i)\right)_{i=0}^{r-1} \right\}$. By \cite[Lem.~6.3]{LiuWood2017}, this set does not depend on the choice of the chief series of $G$. For a set $T$ of finite groups, we define $\CF(T):=\cup_{G\in T} \CF(G)$. 

\begin{definition}
	For a finite $|\Gamma|'$-$\Gamma$-group $G$, we define $\CF_{\Gamma}(G)$  to be the set of isomorphism classes of chief factor pairs as follows
	\[
		\CF_{\Gamma}(G):= \left\{ (M,A)\in \CF(G\rtimes \Gamma) \,\mid\, \gcd(|M|,|\Gamma|)=1\right\}.
	\]
	In other words, the set $\CF_{\Gamma}(G)$ is the set of chief factor pairs in $\CF(G\rtimes \Gamma)$ in which the first term comes from the normal subgroup $G$ of $G\rtimes \Gamma$.
	Chief factor pairs belonging to $\CF_{\Gamma}(G)$ are called \emph{$\Gamma$-chief factor pairs}.
	Let $\calC$ be a set of finite $|\Gamma|'$-$\Gamma$-groups. We define
	\[
		\CF_{\Gamma}(\calC):=\bigcup_{G\in \calC} \CF_{\Gamma}(G).
	\]
\end{definition}

\begin{lemma}\label{lem:CF_barC}
	If $\calC$ is a set of finite $|\Gamma|'$-$\Gamma$-groups and $\calC$ is closed under taking $\Gamma$-subgroups and $\Gamma$-quotients, then $\CF_{\Gamma}(\overline{\calC})=\CF_{\Gamma}(\calC)$.
\end{lemma}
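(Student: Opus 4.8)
The plan is to prove the nontrivial inclusion $\CF_{\Gamma}(\overline{\calC})\subseteq\CF_{\Gamma}(\calC)$, the reverse being immediate from $\calC\subseteq\overline{\calC}$. First I would note that, for \emph{any} set $\calC$, the collection of all $\Gamma$-subquotients of finite direct products $C_1\times\cdots\times C_k$ with each $C_i\in\calC$ is itself closed under finite direct products, $\Gamma$-subgroups and $\Gamma$-quotients and contains $\calC$, hence equals $\overline{\calC}$. So it is enough to show $\CF_{\Gamma}(G)\subseteq\CF_{\Gamma}(\calC)$ whenever $G$ is a $\Gamma$-subquotient of such a product.

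Two reductions come first. The first is monotonicity of $\CF_{\Gamma}$ under $\Gamma$-quotients: if $Q$ is a $\Gamma$-quotient of $H$ with kernel $N$, then since $Q\rtimes\Gamma\isom(H\rtimes\Gamma)/N$, a chief series of $Q\rtimes\Gamma$ passing through $Q$ lifts by preimages to a partial chief series of $H\rtimes\Gamma$ running from $N$ to $H$ with the same chief factor pairs; extending it to a full chief series of $H\rtimes\Gamma$ and using independence of $\CF$ from the chosen chief series (\cite[Lem.~6.3]{LiuWood2017}) gives $\CF_{\Gamma}(Q)\subseteq\CF_{\Gamma}(H)$. This lets me assume $G=H$ is a $\Gamma$-subgroup of $P=C_1\times\cdots\times C_k$. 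The second reduction is to induct on $k$: the base case $k\le1$ is immediate, since a $\Gamma$-subgroup of $C_1\in\calC$ again lies in $\calC$.

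For the inductive step, write $P=C_1\times P'$ with $P'=C_2\times\cdots\times C_k$, identify $C_1$ and $P'$ with $C_1\times1$ and $1\times P'$ inside $P$, and set $H_1=H\cap C_1$, $H'=H\cap P'$, $D_1=\pi_1(H)\le_{\Gamma}C_1$, $D'=\pi'(H)\le_{\Gamma}P'$ for the two projections $\pi_1,\pi'$. These are all $\Gamma$-stable; $H_1$ and $H'$ are normal in $H\rtimes\Gamma$; $H_1\cap H'=1$, so $\pi_1$ gives a $\Gamma$-isomorphism of $H_1$ onto a normal subgroup of $D_1$; and $H/H_1\isom D'$, $H/H'\isom D_1$ as $\Gamma$-groups. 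Choose a chief series of $H\rtimes\Gamma$ passing through $H_1$ and $H$. By the description of $\CF_{\Gamma}$ above, the $\Gamma$-chief factor pairs of $H$ are the chief factor pairs occurring strictly inside $H$, and these split into those inside $H_1$ and those in the segment from $H_1$ up to $H$. The latter segment maps isomorphically onto a partial chief series of $(H\rtimes\Gamma)/H_1=D'\rtimes\Gamma$ contained in $D'$, with the same factor pairs, so those pairs lie in $\CF_{\Gamma}(D')$, which is contained in $\CF_{\Gamma}(\calC)$ by the induction hypothesis. For the segment inside $H_1$, note that $H'\le P'$ commutes with $C_1\supseteq H_1$, so the conjugation action of $H\rtimes\Gamma$ on every subquotient of $H_1$ factors through $(H\rtimes\Gamma)/H'=D_1\rtimes\Gamma$; hence $\pi_1$ carries this segment of the chief series onto a partial chief series of $D_1\rtimes\Gamma$ inside $\pi_1(H_1)$ with identical factor pairs, and these lie in $\CF_{\Gamma}(D_1)\subseteq\CF_{\Gamma}(\calC)$ since $D_1\in\calC$. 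This closes the induction.

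The main obstacle is this inductive step, and specifically the segment inside $H_1$. One cannot short-cut it, because $\CF_{\Gamma}$ is genuinely not monotone under $\Gamma$-subgroups — for instance $\CF(C_3)=\{(C_3,1)\}$ is not contained in $\CF(S_3)=\{(C_3,C_2),(C_2,1)\}$ — so the hypothesis that $\calC$ is closed under $\Gamma$-subquotients must really be used, by recognizing each chief factor inside $H_1$ as a $\Gamma$-chief factor pair of the specific member $D_1$ of $\calC$ rather than of $P$. The one technical point that needs care is that $\pi_1$ sends a chief series of $H\rtimes\Gamma$ inside $H_1$ to a chief series of $D_1\rtimes\Gamma$ inside $\pi_1(H_1)$ — i.e.\ that minimal normality is preserved — which holds because a subgroup of a factor $G_{i+1}/G_i\le H_1/G_i$ is invariant under $(H\rtimes\Gamma)/G_i$ if and only if it is invariant under the induced action of $(D_1\rtimes\Gamma)/\pi_1(G_i)$, the two actions having the same image in the automorphism group.
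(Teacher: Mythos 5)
Your proof is correct, and it takes a genuinely different route from the paper. Both arguments reduce the problem to the subgroup case — showing that if $H$ is a $\Gamma$-subgroup of something already under control, the $\Gamma$-chief factor pairs of $H$ still land in $\CF_\Gamma(\calC)$ — and both do so by peeling off the top of a chief series (where monotonicity under quotients applies) to isolate a bottom segment. Where the two diverge is in handling that bottom segment. The paper works with an abstract $G\in\overline{\calC}$ satisfying $\CF_\Gamma(G)\subseteq\CF_\Gamma(\calC)$ and argues indirectly: it picks a $G'\in\calC$ realizing an isomorphic bottom chief factor pair $(G_1',\rho_{G_1'}(G'\rtimes\Gamma))$, then pulls $J$ across this isomorphism, invoking Schur–Zassenhaus and order-uniqueness of Hall subgroups to build a $\Gamma$-subgroup $J'$ of $G'$ with a compatible (possibly re-twisted) $\Gamma$-action whose bottom pairs match those of $J$; closure of $\calC$ under $\Gamma$-subgroups then gives $J'\in\calC$. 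You instead make the structure $\overline{\calC}=\{\Gamma\text{-subquotients of finite products of members of }\calC\}$ explicit up front, reduce to $H\leq_\Gamma C_1\times\cdots\times C_k$, induct on $k$, and handle the bottom segment by projecting onto $C_1$: the key observation that $H'=H\cap P'$ centralizes $H_1=H\cap C_1$ makes the action on the bottom factors visibly factor through $D_1\rtimes\Gamma$ with $D_1=\pi_1(H)\in\calC$. Your approach buys transparency — the identification with a member of $\calC$ is a concrete projection rather than an abstractly chosen isomorphism of chief factor pairs — at the cost of first establishing the subquotient-of-products description of $\overline{\calC}$ and proving quotient-monotonicity of $\CF_\Gamma$ as explicit preliminary lemmas (both of which the paper treats as clear). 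It also dispenses with the Schur–Zassenhaus re-twisting of the $\Gamma$-action that the paper needs, since your $D_1$ carries the restriction of the original $\Gamma$-action on $C_1$ from the start.
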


\begin{proof}
	Since $\overline{\calC}$ is the closure of $\calC$ under taking finite direct products, $\Gamma$-subgroups and $\Gamma$-quotients, it suffices to show that none of these three actions creates new $\Gamma$-chief factor pairs not belonging to $\CF_{\Gamma}(\calC)$. First, it is clear that taking $\Gamma$-quotients and taking finite direct products do not create new chief factor pairs.
	
	Assume $J$ is a $\Gamma$-subgroup of $G$ for $G\in \overline{\calC}$ such that $\CF_{\Gamma}(G) \subseteq \CF_{\Gamma}(\calC)$. We want to prove $\CF_{\Gamma}(J) \subseteq \CF_{\Gamma}(\calC)$. Let
	\[
		1=G_0 \lhd G_1 \lhd \cdots \lhd G_r =G \lhd \cdots  \lhd G \rtimes \Gamma
	\]
	be a chief series of $G\rtimes \Gamma$ passing through the normal subgroup $G$. We can construct a chief series of $J\rtimes \Gamma$ that passes through $G_i\cap J$ for every $i=0, \dots, r$. The chief factor pairs achieved from the elements between $G_i\cap J$ and $G_{i+1}\cap J$ are achieved from the group $J/(G_i \cap J) \simeq (J G_i)/G_i$, which is a $\Gamma$-subgroup of $G/G_i$. Thus it is enough to consider the chief series at the positions between $1$ and $G_1 \cap J$.
Since $(G_1, \rho_{G_1}(G\rtimes \Gamma))\in \CF_{\Gamma}(G)\subseteq \CF_{\Gamma}(\calC)$, there is a $\Gamma$-group $G' \in \calC$ and a minimal subgroup $G_1'$ of $G'$ such that the chief factor pairs $(G_1, \rho_{G_1}(G\rtimes \Gamma))$ and $(G_1', \rho_{G_1'} (G'\rtimes \Gamma))$ are isomorphic, i.e., there exists $  \alpha\colon  G_1 \overset{\sim}{\to} G_1'$ such that $\alpha^*\colon  \Aut(G_1) \overset{\sim}{\to} \Aut(G_1')$ maps $\rho_{G_1}(G\rtimes \Gamma)$ to $\rho_{G_1'}(G'\rtimes\Gamma)$. Define $A:=\rho_{G_1}(J\rtimes \Gamma) = ((J\rtimes \Gamma) \cdot \Cen_{G\rtimes \Gamma}(G_1))/\Cen_{G\rtimes \Gamma}(G_1)$, 
where the notation $\Cen_{G\rtimes \Gamma}(G_1)$ represents the centralizer of the subgroup $G_1$ in $G\rtimes \Gamma$. 
By $\gcd(|G|,|\Gamma|)=\gcd(|G'|, |\Gamma|)=1$, we have $\alpha^*(\rho_{G_1}(G))=\rho_{G'_1}(G')$ because normal Hall subgroups are order-unique. 
 Then it follows by the Schur--Zassenhaus Theorem that $\alpha^*(\rho_{G_1}(\Gamma))$ is conjugate to $\rho_{G'_1}(\Gamma)$ in $\rho_{G'_1}(G'\rtimes \Gamma)$, and so by taking the preimages under $\rho_{G'_1}$, we have $\Gamma$ is conjugate to a subgroup $\Gamma'$ of $\rho_{G'_1}^{-1}(\alpha^*(\rho_{G_1}(\Gamma)))$ such that $\rho_{G'_1}(\Gamma')=\alpha^*(\rho_{G_1}(\Gamma))$.
We let $J':= \rho_{G_1'}^{-1} (\alpha^*(A)) \cap G'$,
and since $\alpha^*(\rho_{G_1}(J))\sub \rho_{G'_1}(G')$, we have
$\rho_{G'_1}^{-1}( \alpha^*(\rho_{G_1}(J)) )\sub J'$ and thus
  $\Gamma'$ is a complement of the normal subgroup $J'$ of $\rho_{G'_1}^{-1}(\alpha^*(A))$.
We then have $\rho_{G'_1}^{-1}(\alpha^*(A))=J'\rtimes \Gamma'$ and the  short exact sequence
\[
	1 \to \Cen_{G'\rtimes \Gamma}(G_1') \to J'\rtimes \Gamma' \to \alpha^*(A) \to 1.
\]
So the action of $J'\rtimes \Gamma'$ via conjugation on  $G_1'$, and hence on $G_1' \cap J'$, factors through $\alpha^*(A)$. We define $J'_1:=\alpha(G_1 \cap J)$. Because the conjugation action of $J\rtimes \Gamma$ on $G\rtimes \Gamma$ stabilizes $J$, we have the action of $A$ on $G_1$ stabilizes $G_1 \cap J$. So $\alpha$ maps $G_1\cap J$ with the action of $A$ isomorphically to $J'_1$ with the action of $\alpha^*(A)$. Then by $\rho_{G_1}(G_1\cap J) \subset A$, we have $\rho_{G'_1}(J'_1) \subset \alpha^*(A)$ and therefore $J'_1$ is a subgroup of $J'$. Every chief factor pair of $J\rtimes \Gamma$ achieved from positions between $1$ and $G_1 \cap J$ is also a chief factor pair of $J'\rtimes \Gamma'$ achieved via a series passing through $J_1'$. 
The group $G'$ has a second action of $\Gamma$ defined by the conjugation morphism $c_g$ (by an element $g\in G'\rtimes \Gamma$) taking $\Gamma \ra \Gamma'$ and then the conjugation action of $\Gamma'$ in $G'\rtimes \Gamma$.  
Further the inner automorphism $c_g$ on $G'$ gives a group automorphism of $G'$ taking the first $\Gamma$-action to the second $\Gamma$-action, and so $G'$ with the second $\Gamma$ action is in $\calC$.
Thus
 $J'$ as a $\Gamma'$-subgroup of $G'$ belongs to $\calC$, so $\CF_{\Gamma}(J) \subseteq \CF_{\Gamma}(\calC)$ and we prove the lemma.
\end{proof}

Lemma~\ref{lem:CF_barC} establishes the analog of \cite[Lem.~6.11]{LiuWood2017} in the category of $\Gamma$-groups. By applying the proof of \cite[Lem.~6.13]{LiuWood2017}, we have the following corollary.

\begin{corollary}\label{cor:bound_A}
Let $\calC$ be a  set of  $|\Gamma|'$-$\Gamma$-groups, each of order at most $\ell$ for some integer $\ell$.
Then, for each pair $(M,A)\in \CF_{\Gamma}(\overline{\calC})$, the maximal divisor of $|A/\Inn(M)|$ that is relatively prime to $|\Gamma|$ is strictly less than $\ell$. Furthermore, $\CF_{\Gamma}(\overline{\calC})$ is a finite set.
\end{corollary}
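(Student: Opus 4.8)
The plan is to reduce the statement from $\overline{\calC}$ to $\calC$ using Lemma~\ref{lem:CF_barC}, and then to run a short centralizer computation (this parallels \cite[Lem.~6.13]{LiuWood2017} in the $\Gamma$-equivariant setting). First I would replace $\calC$ by its closure under taking $\Gamma$-subgroups and $\Gamma$-quotients: this does not change $\overline{\calC}$, and it does not increase the supremum $\ell$ of the orders of the groups in $\calC$, since $\Gamma$-subgroups and $\Gamma$-quotients of a group $G$ have order dividing $|G|$. Now $\calC$ is closed under $\Gamma$-subgroups and $\Gamma$-quotients, so Lemma~\ref{lem:CF_barC} gives $\CF_{\Gamma}(\overline{\calC})=\CF_{\Gamma}(\calC)=\bigcup_{G\in\calC}\CF_{\Gamma}(G)$. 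This is the crucial step, since $\overline{\calC}$ itself contains groups of unbounded order (e.g.\ arbitrary direct powers) and so carries no direct bound on chief factor pairs, whereas Lemma~\ref{lem:CF_barC} lets us realize every pair in $\CF_{\Gamma}(\overline{\calC})$ from a single $G\in\calC$ with $|G|\le\ell$.

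So fix $G\in\calC$, a chief series of $G\rtimes\Gamma$ passing through the normal subgroup $G$, and a $\Gamma$-chief factor pair $(M,A)=(G_{i+1}/G_i,\ \rho_{G_{i+1}/G_i}((G\rtimes\Gamma)/G_i))$ coming from below $G$, so $G_i\subseteq G_{i+1}\subseteq G$. Set $\widetilde{G}:=(G\rtimes\Gamma)/G_i$ and $\widetilde{N}:=G/G_i$, a normal subgroup of $\widetilde{G}$ with $\widetilde{G}/\widetilde{N}\cong\Gamma$; then $M=G_{i+1}/G_i$ is a nontrivial minimal normal subgroup of $\widetilde{G}$ contained in $\widetilde{N}$, and $A=\rho_M(\widetilde{G})$. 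Let $C:=\Cen_{\widetilde{N}}(M)$. I would then record three facts: (i) $\Inn(M)=\rho_M(M)\le\rho_M(\widetilde{N})=:B\le A$; (ii) $A/B\cong\widetilde{G}/(\widetilde{N}\cdot\Cen_{\widetilde{G}}(M))$ is a quotient of $\widetilde{G}/\widetilde{N}\cong\Gamma$, so $|A/B|$ divides $|\Gamma|$; and (iii) $\rho_M$ identifies $B$ with $\widetilde{N}/C$ and $\Inn(M)$ with $MC/C$, whence $B/\Inn(M)\cong\widetilde{N}/MC$ and $|B/\Inn(M)|=[\widetilde{N}:MC]$, which divides $|\widetilde{N}|\le|G|\le\ell$ and, since $|M|>1$, satisfies $[\widetilde{N}:MC]\le|\widetilde{N}|/|M|<\ell$. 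Putting these together, the largest divisor of $|A/\Inn(M)|$ coprime to $|\Gamma|$ is the product of the corresponding divisors of $|B/\Inn(M)|$ and of $|A/B|$; the latter is $1$ because $|A/B|$ divides $|\Gamma|$, and the former is at most $|B/\Inn(M)|=[\widetilde{N}:MC]<\ell$. Hence the prime-to-$|\Gamma|$ part of $|A/\Inn(M)|$ is strictly less than $\ell$.

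For finiteness, there are only finitely many isomorphism classes of finite groups of order at most $\ell$ and, since $\Gamma$ is finite, only finitely many $\Gamma$-actions on each; so $\calC$ is finite up to isomorphism, each $\CF_{\Gamma}(G)$ is a finite set of isomorphism classes of chief factor pairs, and therefore $\CF_{\Gamma}(\overline{\calC})=\bigcup_{G\in\calC}\CF_{\Gamma}(G)$ is finite. The only real obstacle is bookkeeping: carefully verifying (iii) through the isomorphism theorems (using $Z(M)=M\cap C$ and $\ker(\rho_M|_{\widetilde{N}})=C$), and checking that the reduction in the first paragraph is harmless, i.e.\ that passing to the $\Gamma$-sub/quotient closure of $\calC$ affects neither $\ell$ nor $\overline{\calC}$ and leaves Lemma~\ref{lem:CF_barC} applicable.
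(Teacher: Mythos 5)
Your proof is correct and follows the same route as the paper, which simply invokes the proof of \cite[Lem.~6.13]{LiuWood2017} after Lemma~\ref{lem:CF_barC}; your reduction to the closed case, the identifications $\Inn(M)=\rho_M(M)$, $B=\rho_M(\widetilde N)\cong\widetilde N/C$, $A/B\cong\widetilde G/\widetilde N\Cen_{\widetilde G}(M)$, and the resulting split of the prime-to-$|\Gamma|$ part of $|A/\Inn(M)|$ between a divisor of $|\Gamma|$ and $[\widetilde N:MC]<\ell$ are exactly the intended argument. The finiteness conclusion via finitely many $\Gamma$-actions on finitely many groups of order $\le\ell$ is also the standard step.
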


\subsection{Countably additivity of $\mu_u$}
In this section, we will use Corollary~\ref{cor:bound_A} to control the size of $\mu_{u,n}$ and deduce countable additivity as a result.
Let $\calC_{\ell}$ be the set of (isomorphism classes of) all $\Gamma$-groups of order $\leq \ell$.
The multiplicities appearing in the formula for $\mu_{u,n}$ are the key part to be bounded, and those will be controlled in terms of $\Gamma$-chief factor pairs by the result below.  The key aspect of the bound is that the bound for multiplicities involving $H$ only depends on the quotient of $H$ at a smaller level.

\begin{lemma}\label{lem:bound_by_surj}
	Let $H$ be a $\Gamma$-group of level $\calC_{\ell}$ 
	and let $\widetilde{H}:=H^{\calC_{\ell-1}}$. 
	If $M$ is a direct product of isomorphic abelian simple groups, then 
	\begin{equation}\label{eq:abelian}
		\#\left\{G\in \calA_H \,\Bigg|\, 
		\begin{aligned}
			&G\simeq M \text{ as groups, and}\\
			&m(\calC_\ell, n, H, G) \neq 0 \text{ for some }n
		\end{aligned}\right\} \leq \sum_{(M,A)\in \CF_{\Gamma}(\overline{\calC_\ell})} |\Sur(\widetilde{H}\rtimes \Gamma, A)|.
	\end{equation}
	If $M$ is a direct product of isomorphic non-abelian simple groups, then
	\begin{eqnarray}
		&& \#\left\{\text{isom. classes of $H$-extensions } (E,\pi) \,\Bigg|\, \begin{aligned}
			&\ker \pi \simeq M \text{ is irred. $E$-group} \label{eq:non-abelian}\\
			&E \text{ is level }\calC_\ell
		\end{aligned} \right\}\\
		&\leq& \sum_{(M,A)\in \CF_{\Gamma}(\overline{\calC_\ell})} |\Sur(\widetilde{H}\rtimes \Gamma, A/\Inn(M))|. \nonumber
	\end{eqnarray}
\end{lemma}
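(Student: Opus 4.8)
The plan is to attach, to each object counted on the left of \eqref{eq:abelian} and \eqref{eq:non-abelian}, a $\Gamma$-chief factor pair in $\CF_\Gamma(\overline{\calC_\ell})$ together with a surjection out of $\widetilde H\rtimes\Gamma$, and to do this injectively up to isomorphism; the bounds then follow immediately, the improvement from $H\rtimes\Gamma$ to $\widetilde H\rtimes\Gamma$ being exactly what the order bound of Corollary~\ref{cor:bound_A} buys us. I would first record the common setup. By Lemma~\ref{lem:CF_barC}, $\CF_\Gamma(\overline{\calC_\ell})=\CF_\Gamma(\calC_\ell)$ is a finite set. If $(E,\pi)$ is a finite $H$-extension with $E$ of level $\calC_\ell$, then $E\in\overline{\calC_\ell}$ (for a finite group, being of level $\calC$ is the same as lying in $\overline{\calC}$), and if $M:=\ker\pi$ is an irreducible $E$-group (resp., in the abelian case, an irreducible $H\rtimes\Gamma$-module) then $M$ is a minimal normal subgroup of $E\rtimes\Gamma$; refining $1\lhd M\lhd E\rtimes\Gamma$ to a chief series exhibits $(M,A)\in\CF(E\rtimes\Gamma)\subseteq\CF(\overline{\calC_\ell})$, where $A:=\rho_M(E\rtimes\Gamma)\le\Aut(M)$, and since $M$ is a $|\Gamma|'$-group this pair lies in $\CF_\Gamma(\overline{\calC_\ell})$.

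For \eqref{eq:abelian}: if $G\in\calA_H$ satisfies $G\simeq M$ and $m(\calC_\ell,n,H,G)\ne 0$ for some $n$, then Corollary~\ref{cor:multiplicities} produces an admissible level-$\calC_\ell$ $H$-extension $(E,\pi)$ with $\ker\pi\simeq G$ as $H\rtimes\Gamma$-groups, and I would apply the setup above. Because $G$ is abelian it acts trivially on itself, so $A$ is already the image of the induced surjection $H\rtimes\Gamma=(E\rtimes\Gamma)/G\twoheadrightarrow A$, and composing with $A\hookrightarrow\Aut(G)$ returns the $H\rtimes\Gamma$-action on $G$. Letting $K_H$ be the intersection of $H$ with the kernel of $H\rtimes\Gamma\to A$ — a normal $\Gamma$-subgroup of $H$ — the group $H/K_H$ embeds in $A$; it is a $|\Gamma|'$-group, so its order divides the prime-to-$|\Gamma|$ part of $|A|=|A/\Inn(M)|$, which is $<\ell$ by Corollary~\ref{cor:bound_A}. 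Hence $H/K_H\in\calC_{\ell-1}$, so $H\rtimes\Gamma\to A$ factors through a surjection $\psi\colon\widetilde H\rtimes\Gamma\to A$. Since $G$ is recovered up to isomorphism as the $\Gamma$-group $M$ with action $H\rtimes\Gamma\to\widetilde H\rtimes\Gamma\xrightarrow{\psi}A\hookrightarrow\Aut(M)$, and non-isomorphic chief factor pairs force non-isomorphic $G$, the assignment $G\mapsto\bigl((M,A),\psi\bigr)$ is injective up to isomorphism, which gives \eqref{eq:abelian}.

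For \eqref{eq:non-abelian}: given an $H$-extension $(E,\pi)$ with $\ker\pi\simeq M$ an irreducible $E$-group and $E$ of level $\calC_\ell$, form $(M,A)\in\CF_\Gamma(\overline{\calC_\ell})$ as above. Now $M$ is centerless, so $\Inn(M)\le A$, and since $M=\ker\pi$ maps onto $\Inn(M)$ under $\rho_M$, the quotient $A/\Inn(M)$ is the image of the outer action $(E\rtimes\Gamma)/M=H\rtimes\Gamma\to\Out(M)$; exactly as before, the image of $H$ there is a $|\Gamma|'$-group whose order divides the prime-to-$|\Gamma|$ part of $|A/\Inn(M)|<\ell$ (Corollary~\ref{cor:bound_A}), so we obtain a surjection $\psi\colon\widetilde H\rtimes\Gamma\to A/\Inn(M)$. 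The crucial point is that, $M$ being centerless, there is up to isomorphism exactly one group extension $1\to M\to E\rtimes\Gamma\to H\rtimes\Gamma\to 1$ inducing a given outer action $H\rtimes\Gamma\to\Out(M)$ (the obstruction and the classifying torsor both lie in cohomology with coefficients in $Z(M)=1$), and from such an extension the $H$-extension $(E,\pi)$ of $\Gamma$-groups is determined up to isomorphism, the complement $\Gamma$ being unique up to conjugacy by Schur--Zassenhaus. Therefore $(E,\pi)\mapsto\bigl((M,A),\psi\bigr)$ with $\psi\in\Sur(\widetilde H\rtimes\Gamma,A/\Inn(M))$ is injective up to isomorphism, and summing over $\CF_\Gamma(\overline{\calC_\ell})$ yields \eqref{eq:non-abelian}.

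The step I expect to be the main obstacle is the non-abelian recovery: making precise that the isomorphism type of the $H$-extension $(E,\pi)$ — equivalently of the group extension $E\rtimes\Gamma$ — is pinned down by the induced map $H\rtimes\Gamma\to\Out(M)$, and verifying that passing to $A/\Inn(M)$ rather than $A$ is exactly the normalization needed to invoke Corollary~\ref{cor:bound_A}; the abelian case is the degenerate instance $\Inn(M)=1$, where the only extension datum is the module structure and the recovery is immediate. A minor point to check along the way is that, in the $|\Gamma|'$-group universe implicit throughout, any $M$ arising on the left of either inequality is automatically a $|\Gamma|'$-group, so that the right-hand sums over $\CF_\Gamma(\overline{\calC_\ell})$ genuinely range over the relevant pairs.
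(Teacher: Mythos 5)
Your plan is correct and is essentially the paper's own argument: extract the chief factor pair $(M,A)$ from the ambient extension $E\rtimes\Gamma$, observe that the induced surjection $H\rtimes\Gamma\to A$ (resp. $\to A/\Inn(M)$) factors through $\widetilde H\rtimes\Gamma$ because of the order bound in Corollary~\ref{cor:bound_A}, and check that the resulting assignment is injective. The only real difference is in how you establish the non-abelian injectivity: the paper observes directly that $E\rtimes\Gamma$ must equal the fiber product of $H\rtimes\Gamma\to A/\Inn(M)$ and $A\to A/\Inn(M)$ (by injecting into it and comparing orders), whereas you invoke the general fact that a centerless kernel gives a unique group extension for a given outer action; these are the same statement phrased in different language, and your Schur--Zassenhaus remark is the right way to pass from uniqueness of the group extension $E\rtimes\Gamma$ back to uniqueness of the $H$-extension $(E,\pi)$ of $\Gamma$-groups. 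Two small points worth tightening up: in the abelian case the assignment is naturally a map on pairs $(G,\phi)$ rather than on $G$ alone (one must fix an isomorphism $G\simeq M$ to even produce $A\le\Aut(M)$), and the phrase ``non-isomorphic chief factor pairs force non-isomorphic $G$'' states the wrong implication --- what injectivity requires, and what you in fact show by the recovery argument, is that equal images force isomorphic $G$'s. Neither affects the substance.
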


\begin{proof}
	Let $G\in \calA_H$ and $\phi: G\simeq M$ an isomorphism of groups such that $m(\calC_\ell, n, H, G)\neq 0$ for some $n$. Let 
	\[
		1\to R \to F \rtimes \Gamma \to H\rtimes \Gamma \to 1
	\]
	be the short exact sequence associated to $\calC_\ell$, $n$ and $H$. Then $G$ appears as an irreducible factor of $R$ and $(G, \rho_G(F\rtimes \Gamma)) \in \CF_{\Gamma}(\overline{\calC_\ell})$. After applying $\phi: G\simeq M$, the quotient $\rho_G(F\rtimes \Gamma)$ of $H\rtimes \Gamma$ acts on $M$, and hence $(M, \rho_G(F\rtimes \Gamma))\in \CF_{\Gamma}(\overline{\calC_{\ell}})$. We let $\rho$ denote the quotient map from $H\rtimes \Gamma$ to $\rho_G(F\rtimes \Gamma)$, so we obtain the following map between sets
	\begin{equation}\label{eq:map_abelian}
		\left\{ G\in \calA_H, \phi \,\Bigg|\, \begin{aligned}
			&\phi: G\simeq M \text{ and}\\
			& m(\calC_\ell, n, H, G)\neq 0 \text{ for some }n
		\end{aligned} \right\}
		\to \{(M,A)\in \CF(\overline{\calC_\ell}), \rho \in \Sur (H\rtimes \Gamma, A)\}.
	\end{equation}
If $(G_1, \phi_1)$ and $(G_2, \phi_2)$ have the same image on the right-hand side, then $\phi_2^{-1}\circ \phi_1$ gives an isomorphism of the $H\rtimes \Gamma$-groups $G_1$ and $G_2$ taking $\phi_1$ to $\phi_2$.
	  So it follows that the map \eqref{eq:map_abelian} is injective. 
		
	To establish \eqref{eq:non-abelian}, we will similarly construct a map 
	\begin{eqnarray}
		&& \left\{ \text{isom. classes of $H$-extensions }(E,\pi) \,\Bigg|\, \begin{aligned}
			&\ker \pi \simeq M\text{ is irred. $E$-group}\\
			& E \text{ is level }\calC_\ell
		\end{aligned} \right\} \label{eq:map_non-abelian}\\
		&\to& \{(M, A) \in \CF_{\Gamma}(\overline{\calC_\ell}), \phi \in \Sur(H\rtimes \Gamma, A/\Inn(M))\}. \nonumber
	\end{eqnarray}
	Let $(E, \pi)$ be an $H$-extension described on the left hand side in \eqref{eq:map_non-abelian}. Then we define the image of $(E,\pi)$ to be $(M,A), \phi$, where $M=\ker \pi$, and $A=\rho_{\ker \pi} (E\rtimes \Gamma)$, and $\phi\in \Sur(H\rtimes \Gamma, A/\Inn(M))$ is a surjection on quotient groups induced by $\rho_{\ker\pi}\in \Sur(E\rtimes \Gamma, A)$ (we have that $\rho_{\ker\pi}$ maps $\ker\pi$ isomorphically to $\Inn(M)$ since $M$ is a direct product of isomorphic non-abelian simple groups). If $(E_1, \pi_1)$ and $(E_2, \pi_2)$ have the same image $(M,A)$, $\phi$ under the map \eqref{eq:map_non-abelian}, then they are isomorphic as $H$-extensions since $E_1 \rtimes \Gamma$ and $E_2 \rtimes \Gamma$ are both the fiber product of $\phi\colon  H\rtimes \Gamma \to A/\Inn(M)$ and $A\to A/\Inn(M)$.
	(This can be checked because they inject into the fiber product and have the same order.)
	
	Finally, by Corollary~\ref{cor:bound_A}, in both abelian case and non-abelian case, $\Sur(H\rtimes \Gamma, A/\Inn(M))$ is in one-to-one correspondence with $\Sur(\widetilde{H} \rtimes \Gamma, A/\Inn(M))$, and the lemma follows.
\end{proof}

Given a finite $\Gamma$-group $H$ of level $\calC_\ell$, we let $P_{u,n}(U_{\calC_{\ell}, H})$ denote the product in \eqref{eq:probability_u,n}, i.e.,
\[
	P_{u,n}(U_{\calC_{\ell}, H}) = \prod_{G\in \calA_H} \prod_{k=0}^{m(\calC_{\ell}, n, H, G)-1} (1-\frac{h_{H\rtimes \Gamma}(G)^k}{|Y(G)|^{n+u}}) \prod_{G\in \calN} (1-|Y(G)|^{-n-u})^{m(\calC_{\ell}, n, H, G)}.
\]
We have now give the key consequence of the work above, which is that the $P_{u,n}(U_{C_\ell,H})$ can be controlled in terms of  $H^{\calC_{\ell-1}}$.
\begin{corollary}\label{cor:lower_bound_of_product}
	Suppose $\ell>1$, $n\geq 1$ and $u>-n$ are integers and $\widetilde{H}$ is a finite $\Gamma$-group of level $\calC_{\ell-1}$. Then there exists a nonzero constant $c(u, \ell, \widetilde{H})$ depending on $u$, $\ell$ and $\widetilde{H}$ such that, for each finite $\Gamma$-group $H$ of level $\calC_\ell$ with $H^{\calC_{\ell-1}}\simeq \widetilde{H}$ as $\Gamma$-groups, the probability $P_{u,n}(U_{\calC_{\ell},H})$ is either zero or not less than $c(u, \ell, \widetilde{H})$.
\end{corollary}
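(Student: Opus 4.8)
The plan is to bound $P_{u,n}(U_{\calC_\ell,H})$ from below factor by factor, controlling both the number of factors and the size of each entirely in terms of $\widetilde H$ and $\ell$. If $P_{u,n}(U_{\calC_\ell,H})=0$ there is nothing to prove, so assume it is positive; then every factor occurring in \eqref{eq:probability_u,n} is strictly positive. In particular, for each $G$ with $m(\calC_\ell,n,H,G)\neq 0$ we must have $|Y(G)|\ge 2$ (otherwise $\Gamma$ acts trivially on $G$, so $|Y(G)|=1$ and the $k=0$ factor, resp.\ the non-abelian factor, vanishes), and for abelian such $G$ we have $1-h_{H\rtimes\Gamma}(G)^{m(\calC_\ell,n,H,G)-1}|Y(G)|^{-n-u}>0$. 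Exactly as in the proof of Lemma~\ref{lem:bound_by_surj}, each such $G$ occurs as an irreducible $F\rtimes\Gamma$-factor of the $R$ in the associated fundamental short exact sequence and gives a $\Gamma$-chief factor pair $(G,\rho_G(F\rtimes\Gamma))\in\CF_\Gamma(\overline{\calC_\ell})$, a finite set by Corollary~\ref{cor:bound_A}. Letting $\mathcal M$ be the (finite, $\ell$-dependent) set of isomorphism classes of groups arising as first coordinates of pairs in $\CF_\Gamma(\overline{\calC_\ell})$, we get $|G|\le\max_{M\in\mathcal M}|M|$ for all relevant $G$, so $h_{H\rtimes\Gamma}(G)=|\End_{H\rtimes\Gamma}(G)|$ and $|Y(G)|$ are bounded in terms of $\ell$ alone.

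Next I would handle the abelian blocks. Fix $G\in\calA_H$ with $m:=m(\calC_\ell,n,H,G)\ge 1$. By Remark~\ref{rmk:5.2}, $|Y(G)|=h_{H\rtimes\Gamma}(G)^{\mathfrak m}$ for a positive integer $\mathfrak m$, so the $k=m-1$ factor equals $1-h_{H\rtimes\Gamma}(G)^{\,m-1-\mathfrak m(n+u)}$; its positivity together with $h_{H\rtimes\Gamma}(G)\ge 2$ forces the integer exponent to be negative, i.e.\ $m\le\mathfrak m(n+u)$. Writing $j=m-k$, this gives $h_{H\rtimes\Gamma}(G)^{k}|Y(G)|^{-n-u}=h_{H\rtimes\Gamma}(G)^{-(j+\mathfrak m(n+u)-m)}\le 2^{-j}$ for $1\le j\le m$, so the block is at least $\prod_{j\ge 1}(1-2^{-j})=:c_0\in(0,1)$, a constant independent of $H$, $n$, and $m$. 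Thus only the \emph{number} of abelian $G$ with nonzero multiplicity matters, and by Lemma~\ref{lem:bound_by_surj}~\eqref{eq:abelian} (summed over $M\in\mathcal M$) this is at most $c_1(\ell,\widetilde H):=\sum_{M\in\mathcal M}\sum_{(M,A)\in\CF_\Gamma(\overline{\calC_\ell})}|\Sur(\widetilde H\rtimes\Gamma,A)|<\infty$. Hence $\prod_{G\in\calA_H}(\text{block})\ge c_0^{\,c_1(\ell,\widetilde H)}$.

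For the non-abelian blocks the positivity trick is vacuous, so there I would bound the multiplicity directly. Corollary~\ref{cor:multiplicities} together with Lemma~\ref{lem:count_surj} (which identifies $\sum_{D\le E}\nu(D,E)(|Y(D)|/|Y(H)|)^n$ with $|\Sur_{\Gamma,H}(\varpi,\pi)|\le(|Y(E)|/|Y(H)|)^n=|Y(G)|^n$) gives $m(\calC_\ell,n,H,G)\le |Y(G)|^n\,\lambda(\calC_\ell,H,G)$, and $\lambda(\calC_\ell,H,G)\le\sum_{(M,A)\in\CF_\Gamma(\overline{\calC_\ell}),\ M\cong G}|\Sur(\widetilde H\rtimes\Gamma,A/\Inn(M))|$ by Lemma~\ref{lem:bound_by_surj}~\eqref{eq:non-abelian}; hence $m(\calC_\ell,n,H,G)|Y(G)|^{-n-u}$ is bounded by a constant $c_2(u,\ell,\widetilde H,G)$. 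Since $|Y(G)|^{-n-u}\le\tfrac12$ and $\log(1-x)\ge -2x$ on $[0,\tfrac12]$, this yields $(1-|Y(G)|^{-n-u})^{m(\calC_\ell,n,H,G)}\ge e^{-2c_2(u,\ell,\widetilde H,G)}$, and multiplying over the finitely many relevant $G\in\calN$ gives a lower bound $c_3(u,\ell,\widetilde H)>0$. Altogether $P_{u,n}(U_{\calC_\ell,H})\ge c_0^{\,c_1(\ell,\widetilde H)}\,c_3(u,\ell,\widetilde H)=:c(u,\ell,\widetilde H)>0$.

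The hard part is the uniformity in $H$: the obvious bounds on $m(\calC_\ell,n,H,G)$ and on the number of relevant $H$-extensions grow with $|H|$, so one must route everything through the chief-factor-pair machinery of this subsection (Lemma~\ref{lem:bound_by_surj}, Corollary~\ref{cor:bound_A}) in order to replace these by quantities depending only on $\widetilde H=H^{\calC_{\ell-1}}$ and $\ell$. The subtle point is the abelian case, where Lemma~\ref{lem:bound_by_surj}~\eqref{eq:abelian} bounds only the number of module structures on each $M$, not the number of $H$-extensions with that kernel; the resolution is the observation above that positivity of the top factor plus integrality already pins each abelian block below by the absolute constant $c_0$, so that the count of module structures is exactly what is needed.
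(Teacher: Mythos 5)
Your proof is correct and follows essentially the same route the paper sketches: control the abelian blocks via Remark~\ref{rmk:5.2} together with the count of nontrivial factors from Lemma~\ref{lem:bound_by_surj}~\eqref{eq:abelian}, and control the non-abelian blocks by combining Corollary~\ref{cor:multiplicities}, Lemma~\ref{lem:count_surj}, and Lemma~\ref{lem:bound_by_surj}~\eqref{eq:non-abelian} to bound the multiplicity weighted by $|Y(G)|^{-n}$. Your only divergence from the paper's (terse) description is a matter of bookkeeping: you bound each non-abelian factor separately and then multiply over the finitely many relevant $G$, whereas the paper phrases it as bounding the weighted sum $\sum_G m(\calC_\ell,n,H,G)|Y(G)|^{-n}$; the two are equivalent since the number of $G\in\calN$ that can occur is already bounded in terms of $\Gamma$ and $\ell$ by Corollary~\ref{cor:bound_A}. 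The explicit rewriting of the abelian block as $\prod_{j\ge 1}(1-h^{-(j+\mathfrak m(n+u)-m)})$ using the integrality of $\mathfrak m$ from Remark~\ref{rmk:5.2}, and the observation that positivity of the $k=m-1$ factor forces $m\le\mathfrak m(n+u)$, is exactly the intended use of that remark, and you have made the uniformity-in-$H$ point (everything must go through $\widetilde H$ and $\ell$) cleanly.
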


Corollary~\ref{cor:lower_bound_of_product} can be proven by the same argument as in the proof of \cite[Lem.~9.5]{LiuWood2017}.  For the abelian factors in Theorem~\ref{thm:probability_u,n}, Remark~\ref{rmk:5.2} shows that it suffices to give an upper bound on the number of non-trivial factors depending only on $u$, $\ell$ and $\widetilde{H}$, which is given by Lemma~\ref{lem:bound_by_surj}.  For the non-abelian factors,  
we need an upper bound on the sum of  the multiplicities each weighted by $|Y(G)|^{-n}$.  Using the expression of Corollary~\ref{cor:multiplicities} for the multiplicities, and combining with Lemma~\ref{lem:count_surj}, Corollary~\ref{C:FHoms}, and Lemma~\ref{lem:bound_by_surj}, we obtain an upper bound depending only on $u$, $\ell$ and $\widetilde{H}$.

The following is a technical lemma that allows us to use our control of $P_{u,n}(U_{\calC_\ell,H})$ in terms of $H^{\calC_{\ell-1}}$ to obtain desired analytic results involving interchanging infinite sums and limits.

\begin{lemma}\label{lem:swaplim&sum}
	Suppose $f_n(H, \ell)$ is a nonnegative-valued function defined for  positive integers $n$, $\ell$ and every finite $\Gamma$-group $H$ of level $\calC_\ell$, such that 
	\begin{enumerate}
		\item \label{item:swap-2} There are functions $g_n(H,\ell)$ satisfying
		\[
			f_n(H, \ell)=g_n(H,\ell) P_{u,n}(U_{\calC_{\ell}, H}), \quad \forall n, \ell, H,
		\]
	
		\item \label{item:swap-3} For every $H$, the limit function $g(H,\ell)$ of $g_n(H,\ell)$ as $n\to\infty$ exists, and $g_n(H,\ell) \leq g(H, \ell)$ for any $n$.

		\item \label{item:swap-4} For any $n$, $\ell$ and any finite $\Gamma$-group $\widetilde{H}$ of level $\calC_{\ell-1}$, the following equality holds.
		\[
			\sum_{\substack{H :\text{ finite $\Gamma$-group of level $\calC_\ell$} \\ \text{s.t. $\widetilde{H} \simeq H^{\calC_{\ell-1}}$ as $\Gamma$-groups}}} f_n(H, \ell) = f_n(\widetilde{H}, \ell-1).
		\]
	\end{enumerate}
	Then for any integer $\ell\geq 1$, we have that $\lim_{n\to \infty} f_n(H, \ell)$ exists for any $\ell$ and any finite $\Gamma$-group $H$ of level $\calC_\ell$, and
	\[
		\sum_{\text{$H$: finite $\Gamma$-group of level $\calC_\ell$}} \lim_{n\to \infty}  f_n(H, \ell) = \lim_{n\to \infty} f_n(\text{trivial group},1).
	\]	
\end{lemma}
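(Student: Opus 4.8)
The plan is to prove the two conclusions simultaneously by induction on $\ell$, mirroring the structure of the hypotheses \eqref{item:swap-2}--\eqref{item:swap-4}. For the base case $\ell$ arbitrary but with the trivial summation collapsing, note that the existence of $\lim_{n\to\infty} f_n(H,\ell)$ is immediate: by \eqref{item:swap-2}, $f_n(H,\ell) = g_n(H,\ell) P_{u,n}(U_{\calC_\ell,H})$, and by Theorem~\ref{thm:probability_u,n} the factor $P_{u,n}(U_{\calC_\ell,H})$ converges as $n\to\infty$ (it is exactly the product whose limit appears in \eqref{eq:lim_formula_u,n}), while $g_n(H,\ell)$ converges by \eqref{item:swap-3}. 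So the only real content is the interchange of limit and sum, i.e.~the displayed equality, and this is where the induction and the control from Corollary~\ref{cor:lower_bound_of_product} enter.

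First I would set up the inductive step. Fix $\ell\geq 2$ and suppose the statement is known for $\ell-1$. Fix a finite $\Gamma$-group $\widetilde H$ of level $\calC_{\ell-1}$. I want to show
\[
\sum_{\substack{H\text{ of level }\calC_\ell\\ H^{\calC_{\ell-1}}\simeq\widetilde H}} \lim_{n\to\infty} f_n(H,\ell) = \lim_{n\to\infty} f_n(\widetilde H,\ell-1),
\]
and then summing this over all $\widetilde H$ of level $\calC_{\ell-1}$ and invoking the inductive hypothesis (applied to the function $\widetilde H \mapsto \lim_n f_n(\widetilde H,\ell-1)$, or rather re-running the $\ell-1$ case) gives the full claim. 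By hypothesis \eqref{item:swap-4}, for every fixed $n$ we have $\sum_{H^{\calC_{\ell-1}}\simeq\widetilde H} f_n(H,\ell) = f_n(\widetilde H,\ell-1)$. So it suffices to justify passing the limit $n\to\infty$ inside this sum, i.e.~to apply dominated convergence (the sum is over a countable index set — there are countably many isomorphism classes of finite $\Gamma$-groups of level $\calC_\ell$ with a fixed $\calC_{\ell-1}$-quotient, a fact one checks since each is a quotient of some $(\calF_m)^{\calC_\ell}$). The dominating function: by \eqref{item:swap-3}, $g_n(H,\ell)\leq g(H,\ell)$, so $f_n(H,\ell) \leq g(H,\ell) P_{u,n}(U_{\calC_\ell,H})$. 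I would then use Corollary~\ref{cor:lower_bound_of_product}: for each $H$ with $H^{\calC_{\ell-1}}\simeq\widetilde H$, either $P_{u,n}(U_{\calC_\ell,H})=0$ for all $n$ (contributing nothing), or $P_{u,n}(U_{\calC_\ell,H})\geq c(u,\ell,\widetilde H)>0$ uniformly in $n$; combined with the fact that $P_{u,n}\leq 1$ always, this shows that for each fixed $H$ the tail behavior of $f_n(H,\ell)$ is controlled, but more importantly I get a uniform-in-$n$ summable bound: $\sum_H f_n(H,\ell)\leq f_n(\widetilde H,\ell-1)$, and the partial sums $\sum_{H\in S} f_n(H,\ell)$ over any finite subset $S$ are bounded by $f_n(\widetilde H,\ell-1)$ which converges (by the base-case reasoning) and hence is bounded in $n$.

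The cleanest route to dominated convergence here is actually a Fatou/monotone-type squeeze. Taking $\liminf_{n\to\infty}$ on both sides of $\sum_{H} f_n(H,\ell) = f_n(\widetilde H,\ell-1)$ and using Fatou's lemma on the left gives $\sum_H \lim_n f_n(H,\ell) \leq \lim_n f_n(\widetilde H,\ell-1)$ (the individual limits exist by the base-case argument). For the reverse inequality, fix $\varepsilon>0$ and a finite subset $S$ of the index set with $\sum_{H\in S}\lim_n f_n(H,\ell) \geq \sum_H \lim_n f_n(H,\ell) - \varepsilon$; then $\lim_n \sum_{H\in S} f_n(H,\ell) = \sum_{H\in S}\lim_n f_n(H,\ell)$ (finite sum), while $\sum_{H\in S} f_n(H,\ell)\leq f_n(\widetilde H,\ell-1)$ for every $n$, so $\sum_{H\in S}\lim_n f_n(H,\ell)\leq \lim_n f_n(\widetilde H,\ell-1)$; let $\varepsilon\to 0$. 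This gives equality, completing the inductive step, and summing over $\widetilde H$ (using the inductive hypothesis for $\ell-1$ applied to the now-established limit functions) telescopes down to $\lim_n f_n(\text{trivial group},1)$. The main obstacle — really the only subtle point — is ensuring the Fatou argument is legitimate, i.e.~that we genuinely have the termwise limits existing and the inequality $\sum_{H\in S} f_n \leq f_n(\widetilde H,\ell-1)$ for finite $S$; both follow formally from \eqref{item:swap-2}, \eqref{item:swap-3}, \eqref{item:swap-4} together with the uniform positivity from Corollary~\ref{cor:lower_bound_of_product} that guarantees no mass escapes to infinitely many $H$ in a way that Fatou would miss. I do not expect any genuinely hard estimate beyond what Corollary~\ref{cor:lower_bound_of_product} already provides; the work is purely in organizing the induction and the limit interchange.
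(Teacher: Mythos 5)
Your overall strategy (induction on $\ell$, reduce to interchanging the limit in $n$ with the countable sum over $H$ lying above a fixed $\widetilde H$) matches the paper's, and your Fatou step is correct. But there is a genuine gap in your "reverse inequality" argument: it does not establish the reverse inequality. Both halves of your "squeeze" prove the same direction, $\sum_H \lim_n f_n(H,\ell) \leq \lim_n f_n(\widetilde H,\ell-1)$. In your second argument, you start from $\sum_{H\in S}\lim_n f_n(H,\ell) \geq \sum_H \lim_n f_n(H,\ell)-\varepsilon$, conclude $\sum_{H\in S}\lim_n f_n(H,\ell) \le \lim_n f_n(\widetilde H,\ell-1)$, and let $\varepsilon\to 0$; this again gives $\sum_H \lim_n f_n \le \lim_n f_n(\widetilde H,\ell-1)$, i.e.\ exactly what Fatou already gave. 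You never prove $\lim_n f_n(\widetilde H,\ell-1) \le \sum_H \lim_n f_n(H,\ell)$, which is the ``no escape of mass'' direction and is the actual content of the lemma.

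That direction is precisely where Corollary~\ref{cor:lower_bound_of_product} must be used substantively, not just invoked as intuition. The paper's route is dominated convergence rather than Fatou-plus-reverse: set $\Lambda$ to be the set of $H$ over $\widetilde H$ with $f_n(H,\ell)>0$ for some $n$, observe that $f_n(H,\ell)\le g(H,\ell)$ (since $g_n\le g$ and $P_{u,n}\le 1$) so $g(H,\ell)\mathbbm{1}_{H\in\Lambda}$ is a candidate dominating function, and then \emph{prove it is summable}. Summability is exactly what Corollary~\ref{cor:lower_bound_of_product} buys: for $n\ge i(H)$ (where $g_n(H,\ell)\ge \tfrac12 g(H,\ell)$) and $H\in\Lambda$, one has $f_n(H,\ell)\ge \tfrac12 c(u,\ell,\widetilde H)\, g(H,\ell)$, hence $\sum_{H\in\Lambda}g(H,\ell)\le \tfrac{2}{c(u,\ell,\widetilde H)}\lim_n f_n(\widetilde H,\ell-1)<\infty$. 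With that, Lebesgue dominated convergence gives both inequalities at once. Your proposal mentions the corollary but only to argue each $f_n(H,\ell)$ is individually controlled; it is never used to bound the \emph{sum} over all $H$ above $\widetilde H$, which is the crux. You should either (a) run the paper's dominated-convergence argument, establishing summability of the dominating function $g(\cdot,\ell)$ over $\Lambda$ via the uniform lower bound on $P_{u,n}$, or (b) directly prove the missing inequality $\lim_n f_n(\widetilde H,\ell-1) \le \sum_H \lim_n f_n(H,\ell)$ by showing that the tail $\sum_{H\notin S} f_n(H,\ell)$ is uniformly small in $n$ once $S$ is large enough — which again requires the uniform positivity bound to prevent the tail mass from hiding at large $n$ among groups $H$ with small individual limit.
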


\begin{proof}
	The existence of the limit $\lim_{n\to \infty} f_n(H, \ell)$  follows from
	\eqref{item:swap-2}, \eqref{item:swap-3}, and the proof of Theorem~\ref{thm:probability_u,n}, which shows $\lim_{n\ra\infty} P_{u,n}(U_{\calC_{\ell}, H})$ exists.
	We will prove the lemma by induction on $\ell$. Since the trivial group (with trivial $\Gamma$-action) is the only finite $\Gamma$-group of level 1, the statement in the lemma obvious holds for $\ell=1$. Now we assume 
	\[
		\sum_{\text{$\widetilde{H}$: finite $\Gamma$-group of level $\calC_{\ell-1}$}} \lim_{n\to\infty} f_n(\widetilde{H}, \ell-1) = \lim_{n\to\infty} f_n(\text{trivial group}, 1).
	\]
	Then using the condition \eqref{item:swap-4}, the inductive step is equivalent to the following identity for each $\widetilde{H}$ of level $\ell-1$.
	\begin{equation}\label{eq:inductive-swap}
		\sum_{\substack{\text{$H$: finite $\Gamma$-group of level $\calC_\ell$} \\ \text{s.t. $\widetilde{H}\simeq H^{\calC_{\ell-1}}$ as $\Gamma$-groups}}} \lim_{n\to\infty} f_n(H, \ell) = \lim_{n\to \infty} \sum_{\substack{\text{$H$: finite $\Gamma$-group of level $\calC_\ell$} \\ \text{s.t. $\widetilde{H}\simeq H^{\calC_{\ell-1}}$ as $\Gamma$-groups}}} f_n(H, \ell).
	\end{equation}
	We fix $\ell$ and $\widetilde{H}$. By  condition \eqref{item:swap-3}, for every $H$, there are positive numbers $i(H)$ such that $g_n(H,\ell) \geq \frac{1}{2} g(H, \ell)$ for $n\geq i(H)$. So by Corollary~\ref{cor:lower_bound_of_product}, when $n\geq i(H)$ we have either $f_n(H,\ell)=0$ or 
	\[
		f_n(H, \ell) \geq \frac{1}{2} c(u, \ell, \widetilde{H}) g(H, \ell).
	\]
	We define $\Lambda$ to be the set of all isomorphism classes of finite $\Gamma$-groups $H$ of level $\ell$ such that $H^{\calC_{\ell-1}}\simeq \widetilde{H}$ and $f_n(H, \ell)>0$ for some $n$. So for a $\Gamma$-group $H$ of level $\calC_\ell$ 
	  whose pro-$\calC_{\ell-1}$ completion is $\widetilde{H}$, the function $f_n(H, \ell)$ is dominated by $g(H, \ell)$ when $H\in \Lambda$ and 0 otherwise. The sum of dominated functions
	\begin{eqnarray*}
		\sum_{H\in \Lambda} g(H, \ell) &=& \lim_{n\to \infty} \sum_{H \in \Lambda \text{ s.t. } i(H) \leq n} g(H, \ell)\\
		&\leq & \lim_{n\to \infty} \sum_{H \in \Lambda \text{ s.t. } i(H) \leq n} \frac{2}{c(u, \ell, \widetilde{H})} f_n(H, \ell)\\
		&\leq & \frac{2}{c(u, \ell, \widetilde{H})}\lim_{n\to \infty} \sum_{\substack{\text{$H$: finite $\Gamma$-group of level $\calC_\ell$} \\ \text{s.t. $\widetilde{H}\simeq H^{\calC_{\ell-1}}$ as $\Gamma$-groups}}} f_n(H, \ell)\\
		&= & \frac{2}{c(u, \ell, \widetilde{H})} \lim_{n\to \infty} f_n(\widetilde{H}, \ell-1).
	\end{eqnarray*}
	is finite. So by Lebesgue's Dominated Convergence Theorem, we obtain \eqref{eq:inductive-swap}
\end{proof}

Finally, we are able to deduce countable additivity of $\mu_u$.
\begin{theorem}\label{thm:countably_additivity}
	Let $u\geq 0$ be an integer. Then $\mu_u$ is countably additive on the algebra $\calA$ generated by the basic open sets $U_{\calC, H}$ for $H$ a finite $\Gamma$-group and $\calC$ a finite set of finite $|\Gamma|'$-$\Gamma$-groups.
\end{theorem}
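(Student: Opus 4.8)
The plan is to reduce countable additivity of $\mu_u$ on $\calA$ to a ``no escape of mass'' statement, prove that statement using the analytic machinery assembled in Sections~\ref{sect:defofmu} and~\ref{S:countadd}, and then bootstrap to countable additivity using the projective-limit structure of $\mathcal{P}$. First, since each $\mu_{n,u}$ is a genuine Borel probability measure on $\mathcal{P}$ and $\calA$ is the (Boolean) algebra generated by the basic opens, $\mu_u$ is a well-defined, $[0,1]$-valued, finitely additive set function on $\calA$ with $\mu_u(\mathcal{P})=1$; here one uses that every $A\in\calA$ is a finite Boolean combination of basic opens, for which $\mu_{n,u}(A)$ converges by Theorem~\ref{thm:probability_u,n}. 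A finitely additive probability measure on an algebra is countably additive if and only if it is continuous from above at $\emptyset$, so it suffices to prove: whenever $B_N\in\calA$ with $B_{N+1}\subseteq B_N$ and $\bigcap_N B_N=\emptyset$, one has $\mu_u(B_N)\to 0$.

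The crucial input is the \emph{no escape of mass}: for every integer $\ell\ge 1$,
\[
\sum_{H}\mu_u(U_{\calC_\ell,H})=1,
\]
the sum running over all finite $\Gamma$-groups $H$ of level $\calC_\ell$. I would deduce this from Lemma~\ref{lem:swaplim&sum}, applied to $f_n(H,\ell):=\mu_{n,u}(U_{\calC_\ell,H})=\Prob\big((X_{u,n})^{\calC_\ell}\isom H\big)$, with $g_n(H,\ell):=|\Sur_\Gamma(\calF_n,H)|\,\big(|\Aut_\Gamma(H)|\,|Y(H)|^{n+u}\big)^{-1}$ and $P_{u,n}(U_{\calC_\ell,H})$ the product appearing in Theorem~\ref{thm:probability_u,n}. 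The factorization $f_n=g_n\,P_{u,n}$ is exactly the first displayed formula of Theorem~\ref{thm:probability_u,n}. The limit $g(H,\ell)=\big(|\Aut_\Gamma(H)|\,|Y(H)|^{u}\big)^{-1}$ exists because $|\Sur_\Gamma(\calF_n,H)|\,|Y(H)|^{-n}\to 1$ (established inside the proof of Theorem~\ref{thm:probability_u,n}), and $g_n(H,\ell)\le g(H,\ell)$ holds because $\Sur_\Gamma(\calF_n,H)\subseteq\Hom_\Gamma(\calF_n,H)$, which has cardinality $|Y(H)|^n$ by Corollary~\ref{C:FHoms}. The summation-compatibility hypothesis of the lemma holds because, for $\widetilde H$ of level $\calC_{\ell-1}$, the event $(X_{u,n})^{\calC_{\ell-1}}\isom\widetilde H$ is the disjoint union, over the finitely many $H$ of level $\calC_\ell$ with $H^{\calC_{\ell-1}}\isom\widetilde H$, of the events $(X_{u,n})^{\calC_\ell}\isom H$ (using $(G^{\calC_\ell})^{\calC_{\ell-1}}=G^{\calC_{\ell-1}}$). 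Lemma~\ref{lem:swaplim&sum} then yields $\sum_{H}\mu_u(U_{\calC_\ell,H})=\lim_n f_n(\text{trivial group},1)=1$, since $\calC_1$ consists only of the trivial group and hence $(X_{u,n})^{\calC_1}$ is always trivial.

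With no escape of mass in hand I would finish as follows. For each $\ell$ let $\mathcal{H}_\ell$ be the countable set of isomorphism classes of finite $\Gamma$-groups of level $\calC_\ell$, so that $\mathcal{P}=\varprojlim_\ell \mathcal{H}_\ell$ along the surjections $H\mapsto H^{\calC_{\ell-1}}$, and the basic opens are the cylinders $\{G:G^{\calC_\ell}\in T\}$ for $T\subseteq\mathcal{H}_\ell$. For any cylinder $A\in\calA$, applying Fatou's lemma to the series $\sum_{H\in T}\mu_{n,u}(U_{\calC_\ell,H})$ and also to $\sum_{H\notin T}\mu_{n,u}(U_{\calC_\ell,H})$ gives $\sum_{H\in T}\mu_u(U_{\calC_\ell,H})\le\mu_u(A)$ and $\sum_{H\notin T}\mu_u(U_{\calC_\ell,H})\le 1-\mu_u(A)$; since these two sums add up to $1$ by the no-escape identity, both inequalities are forced to be equalities, so $\mu_u(A)=\sum_{H\in T}\mu_u(U_{\calC_\ell,H})$. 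Hence $G\mapsto G^{\calC_\ell}$ pushes $\mu_u|_{\calA}$ forward to the discrete probability measure $\nu_\ell$ on $\mathcal{H}_\ell$ with $\nu_\ell(\{H\})=\mu_u(U_{\calC_\ell,H})$, and the $\nu_\ell$ are compatible under the transition maps (apply the displayed identity to $A=U_{\calC_{\ell-1},\widetilde H}$). Now take $B_N\downarrow\emptyset$ in $\calA$ and write $B_N=\{G:G^{\calC_{\ell_N}}\in T_N\}$ for a strictly increasing sequence $\ell_N$, so that $T_{N+1}$ maps into $T_N$ under the transition map and $\mu_u(B_N)=\nu_{\ell_N}(T_N)$. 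If $\mu_u(B_N)\ge\varepsilon>0$ for all $N$, a measure-theoretic K\"onig argument along this inverse system produces a point of $\bigcap_N B_N$: recursively choose a group in $\mathcal{H}_{\ell_k}$ lying above the previously chosen groups and carrying positive limiting mass $\lim_N\nu_{\ell_N}(T_N\cap(\text{cylinder over it}))$, which is possible by the dominated convergence theorem precisely because each $\nu_\ell$ is a probability measure; the resulting compatible sequence is a point of $\mathcal{P}$ belonging to every $B_N$, contradicting $\bigcap_N B_N=\emptyset$. Therefore $\mu_u(B_N)\to 0$, which proves countable additivity. (Equivalently, the compatibility together with the automatic tightness of the $\nu_\ell$ lets one invoke the standard projective-limit extension theorem for probability measures on the Polish space $\mathcal{P}$.)

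I expect the main obstacle to be the no-escape-of-mass step, i.e.\ verifying the three hypotheses of Lemma~\ref{lem:swaplim&sum}: this is where essentially all of the structural work of the paper enters, since the uniform lower bound on $P_{u,n}(U_{\calC_\ell,H})$ in terms of $H^{\calC_{\ell-1}}$ rests on Corollary~\ref{cor:lower_bound_of_product} and hence on the finiteness and size control of $\Gamma$-chief factor pairs (Corollary~\ref{cor:bound_A}). The projective-limit bookkeeping in the last step is routine by comparison.
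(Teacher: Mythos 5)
Your proposal is correct and takes essentially the same approach as the paper.  The paper's proof consists of exactly two moves: (i) invoke the argument of \cite[Cor.\ 9.7 and Thm.\ 9.1]{LiuWood2017} to reduce countable additivity on $\calA$ to the single identity $\sum_{H}\mu_u(U_{\calC_\ell,H})=1$; and (ii) establish that identity by applying Lemma~\ref{lem:swaplim&sum} with $f_n(H,\ell)=\mu_{u,n}(U_{\calC_\ell,H})$ and $g_n(H)=|\Sur_\Gamma(\calF_n,H)|/(|\Aut_\Gamma(H)||Y(H)|^{n+u})$, exactly as you do.  Where you differ is that you do not cite \cite{LiuWood2017} for step (i) but instead spell it out: you observe that $\mu_u$ is a finitely additive probability on $\calA$, reduce countable additivity to continuity from above at $\emptyset$, use Fatou's lemma together with the no-escape identity to show that the pushforwards $\nu_\ell$ are honest discrete probability measures compatible under the transition maps, and then run a measure-theoretic K\"onig argument (via dominated convergence on the fibered decomposition) through the inverse system $\mathcal{P}=\varprojlim_\ell\mathcal{H}_\ell$ to produce a point in $\bigcap_N B_N$ whenever the masses stay bounded away from zero.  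This is in substance the same reduction that \cite{LiuWood2017} carries out, so you have re-derived rather than replaced it; the benefit of your write-up is self-containment, not a new route.  The one place worth being a touch more careful is the assertion $\mathcal{P}=\varprojlim_\ell\mathcal{H}_\ell$: one should check that the limit $G$ of a compatible chain of admissible level-$\calC_\ell$ groups with positive $\nu_\ell$-mass is itself admissible (image of the closed subgroup $\overline{[G,\Gamma]}$ surjects onto each $H_\ell$, forcing $\overline{[G,\Gamma]}=G$) and that $G^{\calC_\ell}\isom H_\ell$ (any map from $G$ to a level-$\calC_\ell$ group factors through some $H_m$, $m\ge\ell$, and hence through $H_m^{\calC_\ell}=H_\ell$), so that the limit genuinely lands in $\mathcal{P}$ and in $\bigcap_N B_N$; once stated, both verifications are routine.
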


\begin{proof}
	By the same argument as in the proof of \cite[Cor.~9.7 and Thm.~9.1]{LiuWood2017}, it suffices to prove
	\[
		\sum_{\text{$H$: finite $\Gamma$-group of level $\ell$}} \mu_u(U_{\calC_{\ell}, H})=1.
	\]
	We will apply Lemma~\ref{lem:swaplim&sum} by setting 
	\[
		f_n(H, \ell)=\mu_{u,n}(U_{\calC_\ell, H}) \quad  \text{and} \quad g_n(H)=\frac{|\Sur_\Gamma(\calF_n, H)|}{|\Aut_{\Gamma}(H)| |Y(H)|^{n+u}}.
	\]
	The limit of $g_n$ as $n\to \infty$ is $g(H)=|\Aut_{\Gamma}(H)|^{-1} |Y(H)|^{-u}$ by Corollary~\ref{C:FHoms}. One can easily check that conditions \eqref{item:swap-2}, \eqref{item:swap-3}, and \eqref{item:swap-4} in Lemma~\ref{lem:swaplim&sum} hold, and hence we have
	\[
		\sum_{\text{$H$: finite $\Gamma$-group of level $\ell$}} \mu_u(U_{\calC_\ell, H}) = \mu_{u}(U_{\calC_1, \text{trivial group}}) =1.
	\]
\end{proof}

\subsection{Proof of Theorem~\ref{T:MainGroup}}
Applying Theorem~\ref{thm:countably_additivity}, it follows
 from Carath\'{e}odory's extension
theorem that $\mu_u$ can be uniquely extended to a measure on the Borel sets of $\mathcal{P}$. 
We can then let $\mu_\Gamma=\mu_1$.
Any open set in our topology is a disjoint union of basic opens. So, for any open $U$, by \eqref{E:defmualgebra}
(with Theorem~\ref{thm:probability_u,n} to prove the limit exists)
 and Fatou's lemma, we have
$\mu_\Gamma(U) \leq \lim_{n\ra\infty} \mu_{1,n}(U)$.  This proves
Theorem~\ref{T:MainGroup}, with a measure $\mu_\Gamma$ such that
\begin{equation}\label{E:measonbasic}
\mu_\Gamma(U_{\mathcal{C},H})=
\frac{1}{|\Aut_{\Gamma}(H)| [H:H^\Gamma]} \prod_{G\in \calA_H} \prod_{i=1}^{\infty}(1-\lambda(\calC, H, G) \frac{h_{H\rtimes \Gamma}(G)^{-i}}{[G:G^\Gamma]}) \prod_{G\in \calN} e^{- \lambda(\calC, H,G)/[G:G^\Gamma]},
\end{equation}
where recall that $\mathcal{A}_H$ and $\mathcal{N}$ are defined just before Section~\ref{SS:41},  $\lambda(\mathcal{C},H,G)$ is defined in \eqref{E:lam1}
and \eqref{E:lam2}, and  $h_G(H):=|\Hom_G(H,H)|.$

\subsection{The Borel probability measure $\mu_u$}
In this section, we let $\calC$ be an arbitrary (not necessarily finite) set of finite  $|\Gamma|'$-$\Gamma$-groups and give the value of $\mu_u$ on the specific type of Borel sets
\[
	V_{\calC, H}:=\{X\in \mathcal{P} \,\mid\, X^{\calC}\simeq H \text{ as $\Gamma$-groups}\}
\]
for a finite $\Gamma$-group $H$ of level $\calC$. The set $V_{\calC, H}$ is not
necessarily a basic open set, but is the intersection of a sequence of basic open sets. 

\begin{definition}\label{def:infiniteC}
	Let $\calC$ be a set of finite $|\Gamma|'$-$\Gamma$-groups, $n$ a positive integer, and $H$ a finite $\Gamma$-group of level $\calC$. Let $\calD_1 \subset \calD_2 \subset \cdots$ be finite sets of finite $\Gamma$-groups such that $\cup_{i\geq 1} \calD_i=\calC$. For any $G\in \calA_H \cup \calN$, we define $m(\calC, n, H, G)=\lim_{i\to \infty}m(\calD_i, n, H, G)$ and  $\lambda(\calC, H, G) =\lim_{i\to \infty} \lambda(\calD_i, H, G)$. One can see that these definitions do not depend on the choice of the increasing sequence $\calD_i$.
\end{definition}

\begin{theorem}\label{thm:11.3}
	Let $\calC$ be an arbitrary set of finite $|\Gamma|'$-$\Gamma$-groups.
	 Let $H$ be a finite admissible $\Gamma$-group of level $\calC$. Let $u$ be an integer. Then 
	\[
		\mu_u(V_{\calC, H})=\frac{1}{|\Aut_{\Gamma}(H)| |Y(H)|^u} \prod_{G\in \calA_H} \prod_{i=1}^{\infty} (1-\lambda(\calC, H, G) \frac{h_{H\rtimes \Gamma}(G)^{-i}}{|Y(G)|^u}) \prod_{G\in\calN} e^{-|Y(G)|^{-u} \lambda(\calC, H,G)}.
	\]
\end{theorem}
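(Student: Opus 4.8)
The strategy is to reduce to the finite-$\calC$ case already settled in Theorem~\ref{thm:probability_u,n}, by exhausting $\calC$ by finite sets and invoking continuity of the measure. Fix an increasing sequence $\calD_1 \subset \calD_2 \subset \cdots$ of finite sets of finite $|\Gamma|'$-$\Gamma$-groups with $\bigcup_i \calD_i = \calC$, as in Definition~\ref{def:infiniteC}. Since $\overline{\calC} = \bigcup_i \overline{\calD_i}$ and $H$ is finite with $H = H^\calC \in \overline{\calC}$, there is an index $i_0$ so that $H \in \overline{\calD_i}$, hence $H$ is of level $\calD_i$, for all $i \geq i_0$. The first step is to verify the set-theoretic identity
\[
	V_{\calC, H} \;=\; \bigcap_{i \geq i_0} U_{\calD_i, H},
\]
and that this intersection is decreasing. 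Indeed $X^\calC \simeq H$ gives $X^{\calD_i} = (X^\calC)^{\calD_i} \simeq H^{\calD_i} = H$ for $i\geq i_0$; conversely, using $X^\calC = \varprojlim_i X^{\calD_i}$ and the fact that the transition maps among the $X^{\calD_i}$ are surjections of a fixed finite group and hence isomorphisms, $X^{\calD_i}\simeq H$ for all large $i$ forces $X^\calC \simeq H$. The inclusions $U_{\calD_{i+1}, H} \subseteq U_{\calD_i, H}$ follow from $X^{\calD_i} = (X^{\calD_{i+1}})^{\calD_i}$.

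Since $\mu_u$ is (by Theorem~\ref{thm:countably_additivity} and the Carath\'eodory extension, together with $\mu_u(\mathcal{P}) = 1$) a Borel probability measure, continuity from above yields
\[
	\mu_u(V_{\calC, H}) \;=\; \lim_{i\to\infty} \mu_u(U_{\calD_i, H}).
\]
Each $\calD_i$ is finite, so Theorem~\ref{thm:probability_u,n}, specifically Equation~\eqref{eq:lim_formula_u,n}, evaluates $\mu_u(U_{\calD_i, H})$ as $\tfrac{1}{|\Aut_\Gamma(H)|\,|Y(H)|^u}$ (a prefactor independent of $i$) times a product over $G \in \calA_H \cup \calN$ built from the multiplicities $\lambda(\calD_i, H, G)$. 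It then remains to pass to the limit $i\to\infty$ inside this product.

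For that passage, the key observation is monotonicity: an admissible $H$-extension $(E,\pi)$ that is level $\calD_i$ is also level $\calD_{i+1}$ because $\overline{\calD_i}\subseteq\overline{\calD_{i+1}}$, so in the defining sum for $\lambda$ the set of contributing extensions only grows; thus $\lambda(\calD_i, H, G)$ is non-decreasing in $i$ with $\lim_i \lambda(\calD_i, H, G) = \lambda(\calC, H, G)$ by Definition~\ref{def:infiniteC}. Taking $-\log$ of $\mu_u(U_{\calD_i, H})\,|\Aut_\Gamma(H)|\,|Y(H)|^u$ turns the product into
\[
	\sum_{G\in\calA_H}\ \sum_{j\geq 1} -\log\Bigl(1 - \lambda(\calD_i, H, G)\,\tfrac{h_{H\rtimes\Gamma}(G)^{-j}}{|Y(G)|^u}\Bigr)\;+\;\sum_{G\in\calN} \frac{\lambda(\calD_i, H, G)}{|Y(G)|^{u}},
\]
a sum over a countable index set of nonnegative terms (each factor of the product lies in $(0,1]$ since $\mu_u(U_{\calD_i, H})$ is a probability and the product in \eqref{eq:lim_formula_u,n} converges), each term non-decreasing in $i$. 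By the monotone convergence theorem this sum converges to the same expression with $\lambda(\calD_i, H, G)$ replaced by $\lambda(\calC, H, G)$, which is precisely $-\log$ of the asserted value of $\mu_u(V_{\calC, H})$ divided by the prefactor, under the convention that if some $\lambda(\calC, H, G)$ is infinite or the resulting sum diverges, the product is read as $0$ (consistently, since then a factor of $\mu_u(U_{\calD_i, H})$ already tends to $0$). Exponentiating and combining with the continuity-from-above identity gives the theorem.

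I expect the only genuinely delicate point to be this interchange of $\lim_i$ with the infinite product; the monotone convergence argument after taking $-\log$ handles it cleanly, and the rest is routine bookkeeping with pro-$\calC$ completions and the finite-$\calC$ formula of Theorem~\ref{thm:probability_u,n}.
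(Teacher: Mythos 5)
Your proposal is correct and follows essentially the same route as the paper's brief sketch: express $V_{\calC,H}$ as a decreasing intersection of basic opens over a finite exhaustion, use continuity from above of the Borel probability measure $\mu_u$, and then interchange the limit in $i$ with the infinite product using the monotonicity of $\lambda(\calD_i,H,G)$ in $i$. The paper delegates the interchange to \cite[Lem.~8.11]{LiuWood2017}, whereas you carry it out directly via $-\log$ and monotone convergence; these are interchangeable techniques, though it would be slightly cleaner to justify that each factor lies in $[0,1]$ by noting it is the $n\to\infty$ limit of a conditional probability appearing in the derivation of \eqref{eq:formula_u,n}, rather than appealing to convergence of the product.
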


Theorem~\ref{thm:11.3} can be proven with the same argument as in the proof of \cite[Lemma~11.3]{LiuWood2017}.  Briefly,
$\mu_u(V_{\calC, H})$ is a limit in $i$ of $\mu_u(U_{\calD_i, H})$, each of which are a limit in $n$ by definition.  We use that each factor in $\mu_u(U_{\calD_i, H})$ is non-increasing in $i$ and 
\cite[Lemma~8.11]{LiuWood2017} to obtain Theorem~\ref{thm:11.3}.

\section{Moments}\label{S:moments}

In this section, we find the moments of our distributions $\mu_{u}$ on random $\Gamma$-groups.
It is these averages over the distribution that will we prove in Part II actually match the corresponding averages of $\Gal(K^\#/K)$ for function fields as $q\ra\infty$.  Given our definitions, it is simple to find the limit in $n$ of the moments of $\mu_{u,n},$ but it takes some work to prove that these agree with the moments of $\mu_u$.

\begin{lemma}\label{lem: lim-moments}
	Let  $A$ be a finite admissible $|\Gamma|'$-$\Gamma$-group. Then for any integer $u$ we have
	\[
		\lim_{n\to \infty} \E (\Sur_\Gamma (X_{u,n}, A)) = \frac{1}{[A:A^{\Gamma}]^{u}}.
	\]
\end{lemma}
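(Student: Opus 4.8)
The plan is to compute $\E\big(|\Sur_\Gamma(X_{u,n},A)|\big)$ exactly for each $n$ and then pass to the limit. First I would use the universal property of the quotient $X_{u,n}=\calF_n/[Y(S)]$ to identify a $\Gamma$-equivariant surjection $X_{u,n}\to A$ with a $\Gamma$-equivariant surjection $\phi\colon\calF_n\to A$ satisfying $[Y(S)]\subseteq\ker\phi$, where $S$ is the random multiset of $n+u$ elements of $\calF_n$ chosen independently from Haar measure. By linearity of expectation this gives
\[
	\E\big(|\Sur_\Gamma(X_{u,n},A)|\big)=\sum_{\phi\in\Sur_\Gamma(\calF_n,A)}\Prob\big([Y(S)]\subseteq\ker\phi\big).
\]

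The next step is to evaluate the probability for a fixed $\phi$. Since $\phi$ is a $\Gamma$-equivariant surjection, for any $r\in\calF_n$ we have $\phi([Y(r)])=[Y(\phi(r))]$, so $[Y(r)]\subseteq\ker\phi$ if and only if $Y(\phi(r))=1$ in $A^d$; as $\gamma_1,\dots,\gamma_d$ generate $\Gamma$, this holds if and only if $\phi(r)\in A^\Gamma$. Because $\phi$ is a continuous surjection of profinite groups onto the finite group $A$, every fiber $\phi^{-1}(a)$ is a coset of the open subgroup $\ker\phi$, so the pushforward of Haar measure under $\phi$ is uniform on $A$; hence $\Prob(\phi(r)\in A^\Gamma)=|A^\Gamma|/|A|=[A:A^\Gamma]^{-1}$ for a single Haar-random $r$. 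Since the $n+u$ elements of $S$ are independent and $[Y(S)]\subseteq\ker\phi$ if and only if $\phi(r)\in A^\Gamma$ for every $r\in S$, we get $\Prob([Y(S)]\subseteq\ker\phi)=[A:A^\Gamma]^{-(n+u)}$, independently of $\phi$, and therefore
\[
	\E\big(|\Sur_\Gamma(X_{u,n},A)|\big)=\frac{|\Sur_\Gamma(\calF_n,A)|}{[A:A^\Gamma]^{n+u}}.
\]

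Finally I would take $n\to\infty$. By Lemma~\ref{lem:XYproperties}~\eqref{item:2} we have $|Y(A)|=[A:A^\Gamma]$, and by Corollary~\ref{C:FHoms} $|\Hom_\Gamma(\calF_n,A)|=|Y(A)|^n$; since $A$ is admissible, the argument at the end of the proof of Theorem~\ref{thm:probability_u,n} (non-surjective homomorphisms are negligible because for a $\Gamma$-subgroup $A'\subseteq A$ the equality $Y(A')=Y(A)$ forces $A'=A$) shows $|\Sur_\Gamma(\calF_n,A)|\,|Y(A)|^{-n}\to 1$. Combining,
\[
	\E\big(|\Sur_\Gamma(X_{u,n},A)|\big)=\frac{|\Sur_\Gamma(\calF_n,A)|}{|Y(A)|^{n}}\cdot\frac{1}{[A:A^\Gamma]^{u}}\xrightarrow[n\to\infty]{}\frac{1}{[A:A^\Gamma]^{u}}.
\]
I do not expect a genuine obstacle here; the only points requiring a little care are the equivalence $[Y(r)]\subseteq\ker\phi\iff\phi(r)\in A^\Gamma$ and the uniformity of the pushforward of Haar measure under a surjection, both of which are routine, together with the input $|\Sur_\Gamma(\calF_n,A)|/|Y(A)|^n\to 1$, which is already established inside the proof of Theorem~\ref{thm:probability_u,n}.
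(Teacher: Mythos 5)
Your proposal is correct and takes essentially the same route as the paper: both expand $\E(|\Sur_\Gamma(X_{u,n},A)|)$ as a sum over $\phi\in\Sur_\Gamma(\calF_n,A)$ of $\Prob([Y(S)]\subseteq\ker\phi)$, compute that each term equals $|Y(A)|^{-(n+u)}=[A:A^\Gamma]^{-(n+u)}$, and then invoke $|\Sur_\Gamma(\calF_n,A)|/|Y(A)|^n\to 1$ from the end of the proof of Theorem~\ref{thm:probability_u,n}. The only cosmetic difference is that you compute the per-relator probability directly by translating $Y(r)\in(\ker\phi)^d$ into $\phi(r)\in A^\Gamma$ and using uniformity of the pushforward, while the paper cites the computation in the proof of Theorem~\ref{thm:probability_u,n} (which uses Lemmas~\ref{lem:XYproperties} and~\ref{L:YYs}); these are two phrasings of the same calculation.
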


\begin{proof}
We have
	\begin{eqnarray*}
		\E(\Sur_\Gamma (X_{u,n}, A))
		&=& \sum_{\varphi \in \Sur_{\Gamma}(\calF_n, A)} \Prob(Y(\Rr) \in (\ker \varphi)^d ) \\
		&=& \frac{|\Sur_{\Gamma}( \calF_n, A)|}{|Y(A)|^{n+u}} \\
		&\to& \frac{1}{|Y(A)|^{u}}, \quad \text{as }n \to \infty.
	\end{eqnarray*}
	The the second equality and the limit above follow respectively by \eqref{eq:Prob-Y} and \eqref{eq:lim-Y} in the proof Theorem~\ref{thm:probability_u,n}.  Then Lemma~\ref{L:rightcoset} implies the lemma.
\end{proof}

For the following we will again use our multiplicity bounds coming from chief factor pairs as captured by Lemma~\ref{lem:swaplim&sum}.  This will give us the analytic power to interchange the limit with the implicit infinite sum in the expected value. Using a  delicate choice of function in Lemma~\ref{lem:swaplim&sum}, we are able to obtain the moments of $\mu_u$. 

\begin{theorem}\label{thm:moments}
Let $X_u$ be a random  $\Gamma$-group with distribution $\mu_u$.
	For any integer $u$ and any finite admissible $\Gamma$-group  $A$ we have
	\[
		\E(\Sur_{\Gamma}(X_u, A))=\frac{1}{[A:A^{\Gamma}]^u}.
	\]
\end{theorem}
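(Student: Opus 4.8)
The plan is to bootstrap from Lemma~\ref{lem: lim-moments}, which already gives $\lim_{n\to\infty}\E(\Sur_\Gamma(X_{u,n},A))=[A:A^{\Gamma}]^{-u}$; what remains is to justify the interchange $\E(\Sur_\Gamma(X_u,A))=\lim_{n\to\infty}\E(\Sur_\Gamma(X_{u,n},A))$. This is an exchange of a limit with an implicit countable sum over finite quotients, which is precisely what Lemma~\ref{lem:swaplim&sum} is built to handle. I will apply it to the ``delicate'' function
\[
	f_n(H,\ell):=\E\bigl(|\Sur_\Gamma(X_{u,n},A)|\cdot\mathbf{1}[\,X_{u,n}^{\calC_\ell}\simeq H\,]\bigr),
\]
defined for each positive integer $\ell$ and each finite $\Gamma$-group $H$ of level $\calC_\ell$. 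Since $X_{u,n}$ and $X_u$ almost surely lie in $\mathcal{P}$, the events $\{X^{\calC_\ell}\simeq H\}$ partition the sample space as $H$ ranges over level-$\calC_\ell$ groups, so this is well defined and $f_n(\text{trivial group},1)=\E(\Sur_\Gamma(X_{u,n},A))$.

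The hypotheses of Lemma~\ref{lem:swaplim&sum} split into easy and hard. Hypothesis~\eqref{item:swap-4} is immediate from additivity of expectation, since $\{X^{\calC_\ell}\simeq H\}$ refines $\{X^{\calC_{\ell-1}}\simeq\widetilde H\}$. Hypothesis~\eqref{item:swap-2} I would satisfy by setting $g_n(H,\ell):=f_n(H,\ell)/P_{u,n}(U_{\calC_\ell,H})$ when the denominator is nonzero and $g_n(H,\ell):=0$ otherwise; this is consistent because $P_{u,n}(U_{\calC_\ell,H})=0$ forces $\mu_{u,n}(U_{\calC_\ell,H})=0$ and hence $f_n(H,\ell)=0$ by Theorem~\ref{thm:probability_u,n}.

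The substantive point is hypothesis~\eqref{item:swap-3}: that $g(H,\ell):=\lim_{n\to\infty}g_n(H,\ell)$ exists and $g_n(H,\ell)\le g(H,\ell)$ for all $n$. When $\ell\ge|A|$ the group $A$ is itself of level $\calC_\ell$, so $|\Sur_\Gamma(X,A)|$ is a function of $X^{\calC_\ell}$; then $f_n(H,\ell)=|\Sur_\Gamma(H,A)|\,\mu_{u,n}(U_{\calC_\ell,H})$, so by Theorem~\ref{thm:probability_u,n}
\[
	g_n(H,\ell)=|\Sur_\Gamma(H,A)|\cdot\frac{|\Sur_\Gamma(\calF_n,H)|}{|\Aut_\Gamma(H)|\,|Y(H)|^{n+u}},
\]
and since $|\Sur_\Gamma(\calF_n,H)|\le|\Hom_\Gamma(\calF_n,H)|=|Y(H)|^{n}$ (Corollary~\ref{C:FHoms}) with $|\Sur_\Gamma(\calF_n,H)|/|Y(H)|^{n}\to1$ because $H$, being a quotient of the admissible group $X_{u,n}$, is admissible (the estimate at the end of the proof of Theorem~\ref{thm:probability_u,n}), we get $g_n(H,\ell)\le g(H,\ell)=|\Sur_\Gamma(H,A)|/(|\Aut_\Gamma(H)|\,|Y(H)|^{u})$ and convergence. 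For the finitely many levels $\ell<|A|$, I would instead sort the surjections $X_{u,n}^{\calC_{|A|}}\twoheadrightarrow A$ by their induced quotient $X_{u,n}^{\calC_\ell}\simeq H$, expand $f_n(H,\ell)$ via Theorem~\ref{thm:probability_u,n}, Proposition~\ref{prop:count_surj_RtoG} and Corollary~\ref{cor:multiplicities}, and read off the factorization $f_n(H,\ell)=g_n(H,\ell)\,P_{u,n}(U_{\calC_\ell,H})$ with a cofactor assembled from ratios $|\Sur_\Gamma(\calF_n,-)|/|Y(-)|^{n}$ (each $\le1$ and $\to1$), following the moments computation in \cite{LiuWood2017}. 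I expect this last verification to be the main obstacle: the cofactor must be genuinely monotone-dominated by its limit, not merely convergent, which requires organizing the level-$\calC_{|A|}$ surjection count carefully and using the chief-factor multiplicity control of Corollaries~\ref{cor:bound_A} and~\ref{cor:multiplicities}.

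Granting \eqref{item:swap-2}--\eqref{item:swap-4}, Lemma~\ref{lem:swaplim&sum} (whose proof itself supplies, via Corollary~\ref{cor:lower_bound_of_product} and dominated convergence, the summability of the $g(H,\ell)$) gives $\sum_{H\text{ of level }\calC_\ell}\lim_n f_n(H,\ell)=\lim_n\E(\Sur_\Gamma(X_{u,n},A))$. Taking $\ell\ge|A|$, the left-hand side is $\sum_H|\Sur_\Gamma(H,A)|\,\mu_u(U_{\calC_\ell,H})$, which by Tonelli equals $\E(\Sur_\Gamma(X_u,A))$ (again since $|\Sur_\Gamma(X,A)|$ is a function of $X^{\calC_\ell}$ and the $U_{\calC_\ell,H}$ partition $\mathcal{P}$), while the right-hand side is $[A:A^{\Gamma}]^{-u}$ by Lemma~\ref{lem: lim-moments}, completing the proof.
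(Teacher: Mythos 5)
Your proposal uses the same strategy as the paper: the same test function $f_n(H,\ell)=\E\bigl(|\Sur_\Gamma(X_{u,n},A)|\cdot\mathbbm{1}_{X_{u,n}^{\calC_\ell}\simeq H}\bigr)$ fed into Lemma~\ref{lem:swaplim&sum}, with hypothesis~\eqref{item:swap-4} immediate and the $g_n$ defined by dividing out $P_{u,n}(U_{\calC_\ell,H})$. You also correctly observe that for $\ell\geq|A|$ the verification of hypothesis~\eqref{item:swap-3} is straightforward, since then $|\Sur_\Gamma(X,A)|$ depends only on $X^{\calC_\ell}$ and $f_n(H,\ell)=|\Sur_\Gamma(H,A)|\,\mu_{u,n}(U_{\calC_\ell,H})$ factors cleanly.

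However, there is a genuine gap, which you yourself flag: hypothesis~\eqref{item:swap-3} must hold for \emph{every} $\ell$, including $\ell<|A|$, because the induction inside Lemma~\ref{lem:swaplim&sum} starts at $\ell=1$, and you do not carry out the verification there. For $\ell<|A|$ the clean factorization fails and the dominating bound has to be extracted differently. The paper closes this by writing $f_n(H,\ell)$ as a sum over $\phi\in\Sur_\Gamma(\calF_n,A)$ (not over surjections from $X_{u,n}$, as you suggest), conditioning via Lemma~\ref{lem:XYproperties} on $Y(\overline{\Rr})\subseteq(\ker\overline\phi)^d$ to peel off the factor $(|Y(A^{\calC_\ell})|/|Y(A)|)^{n+u}$, and then counting the remaining surjections using Lemma~\ref{lem:count_surj}; the resulting cofactor
\[
	g_n(H,\ell)=\frac{|Y(A^{\calC_\ell})|^{n+u}\,|\Sur_\Gamma(\rho,\pi_A)|\,|\Sur_\Gamma((\calF_n)^{\calC_\ell},H)|\,|\Sur_\Gamma(H,A^{\calC_\ell})|}{|Y(A)|^{n+u}\,|Y(H)|^{n+u}\,|\Aut_\Gamma(H)|}
\]
is then dominated by its limit because $|\Sur_\Gamma((\calF_n)^{\calC_\ell},H)|/|Y(H)|^n\leq1$ (Corollary~\ref{C:FHoms}) and $|\Sur_\Gamma(\rho,\pi_A)|\leq|Y(A)|^n|Y(A^{\calC_\ell})|^{-n}$ (Lemma~\ref{lem:count_surj}), each term tending to $1$. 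In particular, the monotone-domination does not come from the chief-factor multiplicity bounds (Corollaries~\ref{cor:bound_A} and~\ref{cor:multiplicities}), contrary to what you anticipate; those are already built into Lemma~\ref{lem:swaplim&sum} itself via Corollary~\ref{cor:lower_bound_of_product}, whereas the per-$H$ domination falls out directly from the surjection counts once the expectation is expanded correctly. This conditional-probability expansion is the one nontrivial step of the proof, and it is exactly the step your proposal leaves open.
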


\begin{proof}
	We consider the following function defined for any positive integer $\ell$ and any finite $\Gamma$-group $H$ of level $\calC_\ell$, 
	\[
		f_n(H, \ell) = \E( |\Sur_{\Gamma}(X_{u,n}, A)| \times \mathbbm{1}_{X_{u,n}^{\calC_\ell} \simeq H}),
	\]
	where $\mathbbm{1}_{X_{u,n}^{\calC_\ell} \simeq H}$ is the indicator function of
 $X_{u,n}^{\calC_{\ell}} \simeq H$ as $\Gamma$-groups. 
 We let $\pi_{\calF_n}\colon  \calF_n \to (\calF_n)^{\calC_\ell}$ and $\pi_A: A \to A^{\calC_\ell}$ be the pro-$\calC_\ell$ completion maps. Each $\phi\in \Sur_{\Gamma}(\calF_n, A)$ induces a map $\overline{\phi} \in \Sur_{\Gamma}((\calF_n)^{\calC_\ell}, A^{\calC_\ell})$.
 By the definition of random group $X_{u,n}$, we have 
	\begin{eqnarray}
		&& \E(|\Sur_{\Gamma}(X_{u,n}, A)| \times \mathbbm{1}_{X_{u,n}^{\calC_\ell}\simeq H})\label{eq:long-1}\\
		&=& \sum_{\phi \in \Sur_{\Gamma}(\calF_n, A)} \Prob( Y(\Rr) \subseteq (\ker \phi)^d \text{ and } (\calF_n)^{\calC_\ell}/[Y(\overline{\Rr})]_{(\calF_n)^{\calC_\ell} \rtimes \Gamma} \simeq H)\nonumber 
	\end{eqnarray}
	where $\Rr$ is the set of $n+u$ independent Haar random elements of $\calF_n$, and $\overline{\Rr}$ is their set of images in $(\calF_n)^{\calC_\ell}$.
 Given $\phi\in \Sur_{\Gamma}(\calF_n, A)$, we have
	\[
		\Prob(Y(\Rr) \subseteq (\ker \phi)^d\mid 
Y(\overline{\Rr})		\subseteq (\ker \overline{\phi})^d)= \frac{|Y(A^{\calC_\ell})|^{n+u}}{|Y(A)|^{n+u}},
	\]
	by Lemma~\ref{lem:XYproperties}.  Further, we can use Lemma~\ref{lem:XYproperties} and basic properties of probability to see that for any set $y$ of $n+u$ elements of $(\ker \overline{\phi})^d$, we have that
		\[
		\Prob(Y(\Rr) \subseteq (\ker \phi)^d\mid 
Y(\overline{\Rr})=y)= \frac{|Y(A^{\calC_\ell})|^{n+u}}{|Y(A)|^{n+u}}.
	\]
	We can then consider all $y$ that give quotient $H$ to see  that 
\begin{align*}
&\Prob\left(
\begin{aligned}
&Y(\Rr) \subseteq (\ker \phi)^d \text{ and } \\
&(\calF_n)^{\calC_\ell}/[Y(\overline{\Rr})]_{(\calF_n)^{\calC_\ell} \rtimes \Gamma} \simeq H
\end{aligned}
 \,\Bigg|\,
  \begin{aligned}
&Y(\overline{\Rr}) \subseteq (\ker \overline{\phi})^d \text{ and } \\
&(\calF_n)^{\calC_\ell}/[Y(\overline{\Rr})]_{(\calF_n)^{\calC_\ell} \rtimes \Gamma} \simeq H
\end{aligned}
\right)\\
=&\Prob(Y(\Rr) \subseteq (\ker \phi)^d\mid 
Y(\overline{\Rr}) \subseteq (\ker \overline{\phi})^d \text{ and } 
(\calF_n)^{\calC_\ell}/[Y(\overline{\Rr})]_{(\calF_n)^{\calC_\ell} \rtimes \Gamma} \simeq H
)\\=&\frac{|Y(A^{\calC_\ell})|^{n+u}}{|Y(A)|^{n+u}}.
\end{align*}
Thus
	\eqref{eq:long-1} is equal to 
	\begin{eqnarray}
		&& \frac{|Y(A^{\calC_\ell})|^{n+u}}{|Y(A)|^{n+u}} \sum_{\phi \in \Sur_{\Gamma}(\calF_n, A)} \Prob\left( \begin{aligned}
		& Y(\overline{\Rr}) \subseteq (\ker \overline{\phi})^d \text{ and } \\
		&(\calF_n)^{\calC_\ell}/[Y(\overline{\Rr})]_{(\calF_n)^{\calC_\ell} \rtimes \Gamma} \simeq H
		\end{aligned}\right) \nonumber \\
		&=& \frac{|Y(A^{\calC_\ell})|^{n+u}}{|Y(A)|^{n+u}}  \sum_{\phi \in \Sur_{\Gamma}(\calF_n, A)} \frac{\#\left\{(\tau, \pi) \,\Bigg|\, \begin{aligned} 
		&\tau\in \Sur_{\Gamma}((\calF_n)^{\calC_\ell}, H)\\
		&\pi \in \Sur_{\Gamma}(H, A^{\calC_\ell}) \\
		& \text{ and } \pi \circ \tau = \overline{\phi}
		\end{aligned}\right\}}{|\Aut_{\Gamma}(H)| |Y(H)|^{n+u}} P_{u,n}(U_{\calC_\ell, H}). \label{eq:long-2}
	\end{eqnarray}
	On the other hand, let $\overline\phi \in \Sur_{\Gamma}((\calF_n)^{\calC_\ell}, A^{\calC_\ell})$. Then the composition map $\rho:= \overline{\phi} \circ \pi_{\calF_n}$ is a  surjection from $\calF_n\to A^{\calC_\ell}$. The number of $\phi\in \Sur_{\Gamma}(\calF_n, A)$ such that $\phi$ induces  $\overline{\phi}$ is the size of $\Sur_{\Gamma}(\rho, \pi_A)$ which is computed in Lemma~\ref{lem:count_surj}. Therefore,
	\begin{eqnarray*}
		&&\sum_{\phi \in \Sur_{\Gamma}(\calF_n, A)} \#\left\{(\tau, \pi) \,\Bigg|\, \begin{aligned} 
		&\tau\in \Sur_{\Gamma}((\calF_n)^{\calC_\ell}, H)\\
		&\pi \in \Sur_{\Gamma}(H, A^{\calC_\ell}) \\
		& \text{ and } \pi \circ \tau = \overline{\phi}
		\end{aligned}\right\}
		= |\Sur_{\Gamma}(\rho, \pi_A)| |\Sur_{\Gamma}((\calF_n)^{\calC_\ell}, H)| |\Sur_{\Gamma} (H, A^{\calC_\ell})|.
		\end{eqnarray*}
From Corollary~\ref{C:FHoms}, we have that 
		\[
			\frac{|\Sur_{\Gamma}((\calF_n)^{\calC_\ell}, H)|}{|Y(H)|^n}\leq1,\quad 
			\lim_{n\to \infty} \frac{|\Sur_{\Gamma}((\calF_n)^{\calC_\ell}, H)|}{|Y(H)|^n}=1. 
		\]
			Also using Corollary~\ref{C:FHoms} and Lemma~\ref{lem:count_surj}, we have
		\[
		\text{and} \quad \frac{|\Sur_{\Gamma}(\rho, \pi_A)|}{|Y(A)|^n |Y(A^{\calC_\ell})|^{-n}}\leq 1, \quad \lim_{n \to \infty} \frac{|\Sur_{\Gamma}(\rho, \pi_A)|}{|Y(A)|^n |Y(A^{\calC_\ell})|^{-n}}=1.
		\]
		Then by \eqref{eq:long-2}, we obtain that $f_n(H, \ell)=g_n(H, \ell) P_{u,n}(U_{\calC_\ell,H})$ with
		\[
		g_n(H, \ell)=\frac{|Y(A^{\calC_\ell})|^{n+u} |\Sur_{\Gamma}(\rho, \pi_A)| |\Sur_{\Gamma}((\calF_n)^{\calC_\ell}, H)| |\Sur_{\Gamma}(H, A^{\calC_\ell})|}{|Y(A)|^{n+u} |Y(H)|^{n+u} |\Aut_{\Gamma}(H)|}
		\]
		\[
			\text{and}\quad g(H,\ell):=\lim_{n\to \infty} g_n(H, \ell) =\frac{|Y(A^{\calC_\ell})|^u |\Sur_{\Gamma}(H, A^{\calC_\ell})|}{|Y(A)|^u |Y(H)|^u |\Aut_{\Gamma}(H)|},
		\]
		for which conditions \eqref{item:swap-2} and \eqref{item:swap-3} in Lemma~\ref{lem:swaplim&sum} clearly hold. Also, the condition Lemma~\ref{lem:swaplim&sum} \eqref{item:swap-4} holds by the definition of functions $f_n$. Thus, by Lemma~\ref{lem:swaplim&sum} and Lemma~\ref{lem: lim-moments} we have
		\begin{equation}\label{eq:long-3}
			\sum_{\substack{\text{$H$: finite $\Gamma$-group}\\ \text{of level $\ell$}}} \lim_{n\to\infty} f_n(H, \ell) =\lim_{n\to \infty} f_n(\text{trivial group}, 1) = \lim_{n\to \infty} \E(|\Sur_{\Gamma}(X_{u,n}, A)|) = \frac{1}{[A:A^{\Gamma}]^u}.
		\end{equation}
		When $\ell$ is sufficiently large such that $A$ is of level $\calC_\ell$, 
		\[
			\lim_{n\to \infty} f_n(H, \ell) = \lim_{n\to \infty} |\Sur_{\Gamma}(H, A)| \Prob((X_{u,n})^{\calC_\ell} \simeq H) = |\Sur_{\Gamma}(H, A)| \Prob((X_u)^{\calC_\ell} \simeq H),
		\]
		and hence \eqref{eq:long-3} gives the desired result in the theorem.
\end{proof}

\section{Comparison to other distributions}\label{S:other}

\subsection{Cohen--Lenstra--Martinet distribution}\label{S:CM}
First, let $\calC$ be any finite set of finite abelian $|\Gamma|'$-$\Gamma$-groups.  Then in a fundamental short exact sequence $1\ra R \ra F\ra H \ra 1$ for an admissible $\Gamma$-group $H$ of level $\calC$, we see that the $H$-action on $R$ is trivial, and thus $m(\calC, n, H,G)=0$ unless $G\in \calA_H$ with trivial $H$-action.  Moreover, by Corollary~\ref{cor:abelianizationCalF}, we see that $(\calF_n^{\textrm{ab}})^{\Gamma}=1$, and thus $F^{\Gamma}=R^{\Gamma}=H^{\Gamma}=1$ (since $F$ is a quotient of $\calF_n^{\textrm{ab}}$). 
On the other hand, assume $H$ is an abelian $\Gamma$-group of level $\calC$ such that $H^{\Gamma}=1$. The elements $h^{-1} \gamma(h)$ for all $h \in H$ and $\gamma \in \Gamma$ generate a $\Gamma$-subgroup $S$ of $H$ such that $\Gamma$ acts trivially on $H/S$.  Since $H^{\Gamma}=1$, Lemma~\ref{lem:XYproperties}\eqref{item:1} gives $(H/S)^\Gamma=1$, and hence $H=S$ and $H$ is admissible.  
So we see that a finite abelian $|\Gamma|'$-$\Gamma$-group $H$ is admissible if and only if $H^{\Gamma}=1$.

Now consider the set $\calC$ of all abelian $|\Gamma|'$-$\Gamma$-groups.  
Let $N$ be a closed  $\Gamma$-subgroup of $(\calF_n)^{\calC}$  such that the quotient $H:= (\calF_n)^{\calC}/N$ is finite. We then claim $N\simeq (\calF_n)^{\calC}$.  The claim can be checked on the maximal pro-$p$ quotients $N_p$ and $(\calF_n)^{\calC}_p$, in which case it follows (e.g.~see \cite[Sect.~16, Thm.~34, Cor.~2]{Serre1977}) because 
$N_p$ is a projective $\Z_p[\Gamma]$-module (since $\Z_p[\Gamma]$ is hereditary)  and $N_p\otimes_{\Z_p} \Q_p \simeq (\calF_n)^{\calC}_p \otimes_{\Z_p} \Q_p$ (since $N_p$ is a submodule of $(\calF_n)_p^{\calC}\isom (\Z_p[\Gamma])^n$ of finite index and is torsion-free).

Recall that by Remark~\ref{rmk:5.2}, for each $G\in \calA_H$, we have $h_{\Gamma}(G)^{\mathfrak{m}_G}=|G|$, where $\mathfrak{m}_G$ is the maximal integer such that $G^{\mathfrak{m}_G}$ can be $(H\rtimes \Gamma)$-generated by coordinates of $Y(g)$ of one element $g\in G^{\mathfrak{m}_G}$. 
Then we can use $N\simeq (\calF_n)^{\calC}$ to  see that for any $G\in \calA_H$ with trivial $H$-action, we have $m(\calC, n, H, G)=n\mathfrak{m}_G$, and hence by \eqref{eq:lambda_m} we have $\lambda(\calC, H, G)=1$ for any $G\in \calA_H$ with trivial $H$-action and $G^\Gamma=1$.

By Theorem~\ref{thm:11.3}, we then have, for $H$ a finite abelian $|\Gamma'|$-$\Gamma$-group $H$ with $H^{\Gamma}=1,$
\[
	\mu_u(V_{\calC,H}) =\frac{1}{|\Aut_{\Gamma}(H)| |H|^u}
\prod_{p\nmid |\Gamma|}	\,\,\,
	 \prod_{
	\substack{G \text{ irred. non-trivial}\\ \text{rep. of $\Gamma$ over $\F_p$}}} \,\prod_{i=1}^{\infty} ( 1- \frac{h_{\Gamma}(G)^{-i}}{|G|^u}).
\]
We obtain a similar result if we restrict $\calC$ to all abelian groups whose orders are only divisible by primes in some fixed set of primes not dividing $|\Gamma|.$  When $\Gamma=\Z/2\Z$ and $u=0,1$, 
the measures above are the distributions that Cohen and Lenstra \cite{Cohen1984} conjectured for the  (odd parts of) class groups of imaginary and real, respectively, quadratic fields.  For any $\Gamma$, when $u=1$,
the measures above are the conjectured distributions for the (prime-to-$|\Gamma|$ parts of) class groups of totally real $\Gamma$-number fields, by conjectures of Cohen and Lenstra \cite{Cohen1984} for abelian $\Gamma$ and Cohen and Martinet \cite{Cohen1990} for general $\Gamma$. (See \cite[Proposition 6.5]{WW}
 for more explanation of the fact that these conjectures agree with the measures above.)

\subsection{Boston--Bush--Hajir distribution}

Let $p$ be an odd prime. Let $\Gamma=\Z/2\Z$  and denote the non-trivial element of $\Gamma$ by $\sigma$.  

\subsubsection{For pro-$p$ completions}\label{sect:pro-p}
 Recall that we denote the pro-$p$ completion of $\calF_n$ by $(\calF_n)_p$. 
Let $H$ be an admissible $p$-$\Gamma$-group and $\varpi\colon  (\calF_n)_p \to H$ a $\Gamma$-equivariant surjection. Then we have the following short exact sequence
\[
	1 \to N \to (\calF_n)_p \overset{\varpi}{\to} H \to 1.
\]
We first consider this as a short exact sequence of pro-$p$ groups (forgetting the $\Gamma$-actions), and define $N^*:=N^p [N, (\calF_n)_p]$, and 
 $R':=N/N^*$, and $F':=(\calF_n)_p/N^*$. 
The group $N^*$ is a $\Gamma$-subgroup of $N$.
 It follows that $R'$ and $F'$ naturally obtain $\Gamma$-group structures from $N$ and $(\calF_n)_p$ respectively, and then we have a short exact sequence of $p$-$\Gamma$-groups
\begin{equation}\label{eq:fundshort}
	1\to R' \to F' \to H \to 1.
\end{equation}
By definition, $F'$ is the largest quotient of $(\calF_n)_p$ such that $\ker(F'\to H)$ is an elementary abelian $p$-group with the trivial $F'$-action.

\begin{lemma}\label{L:newfses}
Let $\Gamma=\Z/2\Z$ and $p$ be an odd prime.
Let $H$ be an admissible $p$-$\Gamma$-group. 
Assume $\varpi\colon  (\calF_n)_p \to H$ is a $\Gamma$-equivariant surjection whose kernel can be generated, as a closed normal $\Gamma$-subgroup, by the coordinates of $Y(S)$, for some finite set $S\subset (\calF_n)_p$. If $\calC$ is a finite set of $p$-$\Gamma$-groups such that $H^{\calC}=H$ then the fundamental short exact sequence associated to $n$, $H$ and $\calC$ is given by the surjection $(F')^{\calC} \to H$ induced by \eqref{eq:fundshort}. In particular, when the finite set $\calC$ is sufficiently large, the fundamental short exact sequence is \eqref{eq:fundshort}.
\end{lemma}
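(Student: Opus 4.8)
The goal is to identify the fundamental short exact sequence associated to $n$, $H$, and a finite set $\calC$ of $p$-$\Gamma$-groups with $H^\calC = H$, and to show it coincides with the surjection $(F')^\calC \to H$ induced by \eqref{eq:fundshort}. Recall that the fundamental short exact sequence $1 \to R \to F \to H \to 1$ is obtained by taking $M$ to be the intersection of all maximal proper $(\calF_n)^\calC$-normal $\Gamma$-subgroups of $N$, and setting $F = (\calF_n)^\calC/M$, $R = N/M$. So the plan is to compute this $M$ explicitly under the hypotheses given, using crucially that $H$ is admissible, that $p$ is odd, and that $\ker\varpi = N$ is generated (as a closed normal $\Gamma$-subgroup) by coordinates of $Y(S)$, which by Proposition~\ref{P:GetE} is equivalent to saying $H$ has Property E in the relevant category.

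First I would work in the pro-$p$ world: since everything in sight is pro-$p$, $(\calF_n)^\calC$ is a quotient of $(\calF_n)_p$, and we may as well compute with the pro-$p$ completion and the set $\calC$ of $p$-groups. The key point is to show that the irreducible $F\rtimes\Gamma$-group factors of $R$ are all \emph{abelian} — in fact elementary abelian $p$-groups $\Z/p\Z$ — so that $M \supseteq N^*$, and hence $R$ is a quotient of $R' = N/N^*$. To see $M \supseteq N^*$: any maximal proper $(\calF_n)^\calC$-normal $\Gamma$-subgroup $M_0$ of $N$ gives an irreducible $F_0\rtimes\Gamma$-group $N/M_0$, where $F_0 = (\calF_n)^\calC/M_0$. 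This quotient is characteristically simple, hence a power of a simple group. Because $H$ satisfies Property E (via the hypothesis on $Y(S)$ and Proposition~\ref{P:GetE}, using that $\ker\varpi = [Y(\ker\varpi)]$), and using Lemma~\ref{lem:kernel-is-simple} together with the admissibility of $(\calF_n)^\calC/M_0$, I would argue that $N/M_0$ cannot be a nonabelian simple power and cannot be an abelian group with nontrivial $F_0$-action in a way that produces a non-split extension violating Property E — forcing $N/M_0 \cong \Z/p\Z$ with trivial action. This gives $N^* \subseteq M_0$, so $N^* \subseteq M$.

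Conversely, I must show $M \subseteq N^*$, i.e.\ that $N^* $ is itself an intersection of such maximal proper $\Gamma$-subgroups of $N$ — equivalently, that the $\Gamma$-module $R' = N/N^*$ (an $\F_p[\Gamma]$-module) is semisimple and that each of its simple $\Gamma$-submodule complements corresponds to a maximal proper $(\calF_n)^\calC$-normal $\Gamma$-subgroup. Semisimplicity is automatic since $p \nmid |\Gamma| = 2$ makes $\F_p[\Gamma]$ semisimple. The content is that the conjugation action of $(\calF_n)^\calC$ (equivalently of $F' = (\calF_n)_p/N^*$) on $R' = N/N^*$ is trivial — this holds by construction of $N^*$ as $N^p[N,(\calF_n)_p]$ — so $(\calF_n)^\calC$-normal $\Gamma$-subgroups of $R'$ are exactly $\Gamma$-submodules, and maximality corresponds to corank-one simple quotients; their intersection is $0$ in $R'$, giving $M = N^*$. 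Once $M = N^*$ is established, $F = (\calF_n)^\calC/N^*$ and $R = N/N^*$; and since $(F')^\calC$ is the pro-$\calC$ completion of $F' = (\calF_n)_p/N^*$ while $(\calF_n)^\calC$ is a $\calC$-group with $(\calF_n)^\calC/N^* $ already of level $\calC$ (as $H^\calC = H$ and $R'$ is a $p$-elementary abelian $\Gamma$-group), I would check these agree, so the fundamental sequence is indeed $(F')^\calC \to H$. Finally, for $\calC$ large enough that $F'$ itself is level $\calC$ (possible since $F'$ is a finitely generated pro-$p$-$\Gamma$-group with $F'/$(any open) of bounded order), $(F')^\calC = F'$ and the fundamental sequence is literally \eqref{eq:fundshort}.

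\textbf{Main obstacle.} The delicate step is ruling out nonabelian simple factors and, more subtly, abelian factors with nontrivial action of $F$ (but trivial action of $H$): one must leverage Property E exactly right. A nonabelian irreducible factor $N/M_0$ would, via admissibility of $(\calF_n)^\calC/M_0$ and Lemma~\ref{lem:kernel-is-simple}, need to be handled by showing it forces a forbidden configuration; an abelian factor $\Z/p\Z$ on which $F$ acts through a nontrivial character would contradict the fact — provable from $N = [Y(N)]$ as in the proof of Proposition~\ref{prop:pClassGroup} — that the $\Gamma$-invariants (and here one also needs the $F$-invariance, coming from $p$ odd and the structure of $\Gamma = \Z/2\Z$) behave correctly. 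Getting this trivial-action claim cleanly, using $p$ odd and $|\Gamma| = 2$ so that $\F_p[\Gamma] \cong \F_p \times \F_p$, is where the real work lies; the rest is bookkeeping with pro-$\calC$ completions.
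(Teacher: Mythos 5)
Your high-level plan — compute the subgroup $M$ defining the fundamental short exact sequence, show every irreducible factor of $R$ is $\Z/p\Z$, and then identify $F$ with $(F')^{\calC}$ — matches the paper in outline, but the central step (determining the factor structure) is where the paper has a short, elementary argument and your proposal has a gap.

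First, the abelianness of the irreducible factors does \emph{not} come from Property E. It comes directly from the fact that $\calC$ consists of $p$-groups: $R$ is a pro-$p$ group, each irreducible $F\rtimes\Gamma$-factor is characteristically simple and hence a power of a simple group lying in the closure of $\calC$, and the only option is $(\Z/p\Z)^m$. Property E is a statement about lifting through nonsplit \emph{central} $\Z/p\Z$-extensions; it says nothing about whether a nonabelian simple power, or an abelian group with nontrivial $F$-action, can occur as a factor, so invoking Proposition~\ref{P:GetE} and Lemma~\ref{lem:kernel-is-simple} for this purpose is the wrong tool. Lemma~\ref{lem:kernel-is-simple} applies only when $\Gamma$ acts \emph{trivially} on the normal subgroup — which is precisely what does \emph{not} happen here.

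Second, you conflate two different actions. What is trivial is the $F$-action (equivalently the $H$-action): since $R$ is abelian, the $F$-action by conjugation on a factor $G$ factors through $H = F/R$, and a finite $p$-group acting on a nonzero $\F_p$-vector space has nonzero fixed points, forcing $H$ (hence $F$) to act trivially on the irreducible $G$. But $\Gamma=\Z/2\Z$ acts by $-1$, not trivially. The role of the hypothesis that $\ker\varpi$ is generated, as a closed normal $\Gamma$-subgroup, by coordinates of $Y(S)$ is exactly to force this: the generators are of the form $s^{-1}\sigma(s)$ and are inverted by $\sigma$; since $p$ is odd, a nonzero element inverted by $\sigma$ cannot be fixed, so $\sigma$ cannot act trivially on an irreducible factor, and (together with the trivial $F$-action) the only possibility is $G\cong\F_p^-$. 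If $\sigma$ acted trivially then every image of a $Y$-generator would vanish and the factor would be trivial, a contradiction. This is also why, once you know $\sigma=-1$, every $F$-subgroup is automatically $\Gamma$-stable, so irreducibility as an $F\rtimes\Gamma$-group is the same as irreducibility as an $F$-group — the bridge to the $H$-module argument.

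With those corrections the rest of your sketch (semisimplicity of $\F_p[\Gamma]$ giving $M\subseteq N^*$, and matching $(F')^\calC$ against $F$ via maximality of $F'$ and the level-$\calC$ truncation) is the right bookkeeping and is essentially what the paper does; but the "real work" you defer is actually a two-line chain of elementary facts (closure under $p$-groups, $p$-group fixed point theorem, $\sigma$ inverts $Y$-elements with $p$ odd), not an intricate use of Property E.
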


\begin{proof}
	Let $1\to R \to F \to H \to 1$ denote the fundamental short exact sequence associated to $n$, $H$ and $\calC$. By the condition on the generation of $\ker \varpi$, we have that $\Gamma$ acts by $-1$ on any irreducible factor $G$ of $R$ and hence $G$ is an irreducible $F$-group.  Since $\calC$ contains only $p$-$\Gamma$-groups, $G$ has to be abelian, so is an irreducible $H$-group, which implies that $G=\Z/p\Z$ with the trivial $H$-action.  
Therefore by the construction of \eqref{eq:fundshort}, $F'\to H$ has to factor through $F$. On the other hand, $F\to H$ factors through the largest quotient of $F'$ of level $\calC$, so we see $F=(F')^\calC$. The last statement in the lemma follows immediately if $F'$ is of level $\calC$.
\end{proof} 

\begin{remark}\label{rem:fses-inf}
	Although the definition of short exact sequence given in Section~\ref{sect:defofmu} does not work for an infinite set $\calC$, under conditions in Lemma~\ref{L:newfses} we can define the short exact sequence for any set $\calC$ of $p$-$\Gamma$-groups to be the one given by $(F')^{\calC}\to H$. It is clear that the multiplicity of factors in $R=\ker((F')^{\calC} \to H)$ agrees with the one obtained via Definition~\ref{def:infiniteC}.
\end{remark}

When $n$ is equal to the generator rank $d(H)$ of $H$, then $F'$ is called the $p$-covering group of $H$ and $R'$ is called the $p$-multiplicator rank of $H$, and $\dim_{\F_p}(R')$ is the relation rank $r(H)$ of $H$. 
Then we can conclude the following.

\begin{theorem}\label{thm:measure_pro-p}
	Let $\Gamma=\Z/2\Z$ be generated by $\sigma$ and $p$ an odd prime.   Let $u$ be a fixed integer.
Let $\calC$ be the set of all $p$-$\Gamma$-groups.	
	Given a finite $p$-$\Gamma$-group $H$, 
	if $H$ is of the form $\calF_n^{\calC}/[Y(\Rr)]$ for some subset $\Rr\sub \calF_n^{\calC}$ of size $n+u$
	for some $n$, then
	\[
		\mu_u(V_{\calC, H})=\frac{1}{|\Aut_{\Gamma}(H)| |Y(H)|^u} \prod_{i=1+u+d(H)-r(H)}^{\infty} (1- p^{-i}).
	\]
	If $H$ is not of this form for any $n$, then $\mu_u(V_{\calC, H})=0$.
\end{theorem}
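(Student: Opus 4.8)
The plan is to specialize Theorem~\ref{thm:11.3} to the case $\Gamma=\Z/2\Z$ and $\calC$ the set of all $p$-$\Gamma$-groups, and to evaluate the resulting product using the structural results in Section~\ref{sect:pro-p}. First I would record that, for $\Gamma = \Z/2\Z$ and $\calC$ the set of all $p$-$\Gamma$-groups, every admissible $H$ of level $\calC$ with $H$ expressible as $\calF_n^\calC/[Y(\Rr)]$ has its fundamental short exact sequence given by \eqref{eq:fundshort}: this is exactly Lemma~\ref{L:newfses} together with Remark~\ref{rem:fses-inf} (using that $\ker\varpi$ is generated by coordinates of $Y(S)$, which holds precisely because $H$ has Property~E via Proposition~\ref{P:GetE}, or directly because $H=\calF_n^\calC/[Y(\Rr)]$). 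The ``if $H$ is not of this form then $\mu_u(V_{\calC,H})=0$'' part should follow from the structure of the support of $\mu_{u,n}$: every group in the support of $\mu_{u,n}$ is of the form $\calF_n^{\calC}/[Y(\Rr)]$ with $|\Rr|=n+u$, and taking pro-$\calC$ completions and the limit over $n$, any $H$ with positive measure must be a pro-$\calC$ completion of such a quotient; since $H$ is itself of level $\calC$, it must literally be of the stated form for some $n$. (This mirrors the ``$P_{u,n}=0$ otherwise'' dichotomy already present in Theorem~\ref{thm:probability_u,n}.)

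Next I would identify the irreducible factors appearing in $R = \ker((F')^\calC \to H)$. By Lemma~\ref{L:newfses}, $R = R' = N/N^*$ where $N = \ker((\calF_n)_p \to H)$, and every irreducible $F\rtimes\Gamma$-factor $G$ of $R$ is forced to be $\Z/p\Z$ with trivial $H$-action and $\sigma$ acting as inversion (the argument given in the proof of Lemma~\ref{L:newfses}: $\sigma$ acts by $-1$ on any irreducible factor since $\ker\varpi$ is $Y$-generated, $G$ is abelian since $\calC$ consists of $p$-groups, hence $G$ is an irreducible $H$-group, hence $\Z/p\Z$ with trivial action, and the $\sigma$-action is inversion since the trivial $\sigma$-action is impossible for a $Y$-generated module). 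Thus $\calN$ contributes nothing (no non-abelian factors), and $\calA_H$ contributes only the single group $G_0 = \Z/p\Z$ with this $H\rtimes\Gamma$-structure. For this $G_0$ one computes $h_{H\rtimes\Gamma}(G_0) = |\Hom_{H\rtimes\Gamma}(G_0,G_0)| = p$ (the ring of $H\rtimes\Gamma$-endomorphisms is $\F_p$) and $|Y(G_0)| = |G_0|/|G_0^\Gamma| = p/1 = p$ by Lemma~\ref{lem:XYproperties}\eqref{item:2}, since $G_0^\Gamma = G_0^{\Z/2\Z}$ is trivial as $\sigma$ acts by inversion on $\Z/p\Z$ with $p$ odd.

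Then Theorem~\ref{thm:11.3} gives
\[
\mu_u(V_{\calC,H}) = \frac{1}{|\Aut_\Gamma(H)|\,|Y(H)|^u}\prod_{i=1}^\infty\Bigl(1 - \lambda(\calC,H,G_0)\,\frac{p^{-i}}{p^u}\Bigr),
\]
so the whole computation reduces to evaluating $\lambda(\calC,H,G_0)$. By \eqref{eq:lambda_m}, $\lambda(\calC,H,G_0) = \lim_{n\to\infty} h_{H\rtimes\Gamma}(G_0)^{m(\calC,n,H,G_0)}|Y(G_0)|^{-n} = \lim_{n\to\infty} p^{\,m(\calC,n,H,G_0)-n}$, where $m(\calC,n,H,G_0)$ is the multiplicity of $\Z/p\Z$ (inversion) in $R = N/N^*$ as an $H\rtimes\Gamma$-module. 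Here $m(\calC,n,H,G_0) = \dim_{\F_p}(N/N^*)$ is computed by pro-$p$ generation theory exactly as in Section~\ref{S:MakeGroup}: using the Hochschild--Serre sequence and the fact that $\calO_K$-type unit obstructions are absent in the group-theoretic setting, one gets $\dim_{\F_p}(N/N^*) = n\cdot\dim_{\F_p}(\F_p[\Gamma]) - \dim_{\F_p}(\F_p[\Gamma]$-invariants$) + \dim_{\F_p} H^2$ contributions; concretely $\dim_{\F_p}(R') = d(F'/\text{relations}) - \text{(relations)}$, which when $H = \calF_n^\calC/[Y(\Rr)]$ with $|\Rr| = n+u$ works out so that $m(\calC,n,H,G_0) - n = u + d(H) - r(H)$ in the limit. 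The main obstacle is precisely this bookkeeping: carefully relating the $\F_p[\Gamma]$-module $N/N^*$ to the generator rank $d(H)$ and relation rank $r(H)$ of $H$ as a pro-$p$ group, keeping track of the $\sigma = -1$ isotypic part (which is where the free rank $n$ lives, since $\calF_n^{ab} = (Z[\Gamma]/\sum g)^n$ is entirely in the $-1$ eigenspace by Corollary~\ref{cor:abelianizationCalF}) versus the trivial part, and confirming that the Schur-multiplier/relation-module shift contributes exactly $u + d(H) - r(H)$. Once $\lambda(\calC,H,G_0) = p^{\,u + d(H) - r(H)}$ is established, substituting into the product above and reindexing $i \mapsto i - u - d(H) + r(H)$ yields $\prod_{i = 1 + u + d(H) - r(H)}^\infty (1 - p^{-i})$, completing the proof.
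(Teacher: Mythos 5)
Your overall architecture matches the paper's proof exactly: specialize Theorem~\ref{thm:11.3}, use Lemma~\ref{L:newfses} (and the admissibility/Property-E characterization) to identify the fundamental short exact sequence as \eqref{eq:fundshort} and deduce that $\calN$ contributes nothing and $\calA_H$ has a single relevant factor $G_0=\Z/p\Z$ with trivial $H$-action and $\sigma$-action by inversion, then compute $h_{H\rtimes\Gamma}(G_0)=|Y(G_0)|=p$ and reduce everything to evaluating $\lambda(\calC,H,G_0)$. All of that is correct and is precisely the paper's route.

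The gap is in the final computation of $\lambda(\calC,H,G_0)$, and it is a genuine error, not just a sketch. You assert $m(\calC,n,H,G_0)-n = u + d(H) - r(H)$ and hence $\lambda(\calC,H,G_0)=p^{u+d(H)-r(H)}$. This cannot be right on two counts. First, $\lambda(\calC,H,G)$ is determined by \eqref{eq:lambda_m} from the multiplicities $m(\calC,n,H,G)$, which are intrinsic to the fundamental short exact sequence $1\to R\to F\to H\to 1$ attached to $\calC,n,H$ (see the construction before Proposition~\ref{prop:formula_probability}); the parameter $u$ counts random relations in $X_{u,n}$ and enters nowhere in the definition of $R$ or $m$, so $\lambda$ cannot depend on $u$. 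Second, the sign of $d(H)-r(H)$ is reversed: the correct value, which the paper obtains by citing \cite[Lem.~12.5]{LiuWood2017}, is $m(\calC,n,H,G_0)=r(H)-d(H)+n$, giving $\lambda(\calC,H,G_0)=p^{r(H)-d(H)}$. With the correct $\lambda$, the $i$-th factor in Theorem~\ref{thm:11.3} is $1-p^{r(H)-d(H)-u-i}$, and reindexing $j=i+u+d(H)-r(H)$ (with $j$ starting at $1+u+d(H)-r(H)$) gives the stated product. With your value of $\lambda$, the $p^u$ in the denominator cancels against the $p^u$ in $\lambda$, the factor becomes $1-p^{d(H)-r(H)-i}$, and no reindex produces the theorem's formula, so the proposal is also internally inconsistent with its own claimed final reindexing.

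The other soft spot is that the heart of the argument --- the equality $m(\calC,n,H,G_0)=r(H)-d(H)+n$ --- is deferred in your writeup to ``bookkeeping'' on $\F_p[\Gamma]$-modules. That identity is exactly what \cite[Lem.~12.5]{LiuWood2017} establishes, and it needs an argument, not a gesture, since getting the sign of $r(H)-d(H)$ right is the whole content of the theorem. Your remark that the free rank $n$ lives in the $\sigma=-1$ eigenspace (via Corollary~\ref{cor:abelianizationCalF}) is a correct observation and part of why the count works, but the relation to $d(H)$ and $r(H)$ must be made precise; as written, the sketch led you to the wrong sign.
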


\begin{proof}
Theorem~\ref{thm:11.3} gives the formula for $\mu_u(V_{\calC, H})$, and by the proof of Lemma~\ref{L:newfses}, we see that $\mathcal{A}_H$ has only one element $G$ with $\lambda(\mathcal{C},H,G)\ne 0$,
 and that $|Y(G)|=p$ and $h_{H\rtimes \Gamma}(G)=p$.  From \cite[Lem.~12.5]{LiuWood2017}, we know that $m(\calC, n, H, G)$ is equal to $r(H)-d(H)+n$, and thus from Equation~\eqref{eq:lambda_m} we know that 
$\lambda(\calC,H,G)=p^{r(H)-d(H)}$. Combining we obtain the theorem.
\end{proof}

\subsubsection{For pro-$p$ class $c$ completions}\label{sect:pro-p-class-c}

For a $p$-group $P$, we let $(P_i(P))_{i\geq 0}$ denote the lower $p$-central series of $P$, and $Q_i(P):=P/P_i(P)$. For each $i$, the $P_i(P)$ is a characteristic subgroup of $P$, so if $P$ is a given $\Gamma$-group structure, then $Q_i(P)$ automatically obtains a $\Gamma$-group structure from $P$.

Given a positive integer $c$, we define $\calC$ to be the set of all finite $p$-$\Gamma$-groups of $p$-class at most $c$.  The set $\calC$ is closed under taking finite direct products, $\Gamma$-subgroups and $\Gamma$-quotients. Therefore, the free admissible pro-$\calC$ $\Gamma$-group $(\calF_n)^{\calC}$ is exactly $Q_c((\calF_n)_p)$. 

Let $H$ be a finite admissible $p$-$\Gamma$-group of $p$-class at most $c$. Let $\varpi^{\calC}\colon  (\calF_n)^{\calC} \to H$ be the surjection induced by a $\Gamma$-equivariant surjection $\varpi: (\calF_n)_p \to H$.
Recall we have defined $F'$ and $R'$ in \eqref{eq:fundshort} for $\varpi$.
 Then for each $i$, we have the following commutative diagram of surjections
\[\begin{tikzcd}
	& F' \arrow[two heads]{r} \arrow[two heads]{d} & H \arrow[two heads]{d} \\
	& Q_i(F') \arrow[two heads]{r} &Q_i(H).
\end{tikzcd}\]
The kernel of the upper row, which is $R'$, is an elementary abelian $p$-group, and thus so is the kernel of the lower row.

Let $cl_p(H)$ denote the $p$-class of $H$. If $cl_p(H)<c$, i.e.,~$Q_{c-1}(H)=H$, then $P_{c-1}(F')$ is contained in $\ker(F' \to H)$. Therefore $P_c(F')$, which is defined to be $[P_{c-1}(F'), F']P_{c-1}(F')^p$, is trivial since $\ker(F' \to H)$ (and hence $P_{c-1}(F')$) is an elementary abelian $p$-group on which $F'$ acts trivially by conjugation. It follows that $F'$ is of $p$-class at most $c$, and 
for $H$ as in Lemma~\ref{L:newfses},
we have by Remark~\ref{rem:fses-inf} that $1\to R' \to F' \to H \to 1$ is the fundamental short exact sequence defined by $\varpi$, 
and hence $m(\calC, n ,H, G)=r(H)-d(H)+n$, where $G$ is the irreducible factor defined in 
the proof of Lemma~\ref{L:newfses}.

If $cl_p(H)=c$, then $P_c((\calF_n)_p)\leq N$. The nucleus of $H$ is defined to be $P_c(F')$, which is the image of $P_c((\calF_n)_p)$ under the quotient map $(\calF_n)_p \to F'$, and hence  $P_c(F')=P_c((\calF_n)_p) N^* / N^*$ and is contained in $N/N^*=R'$.
The nuclear rank of $H$ is defined to be $\nu(H):=\dim_{\F_p} (P_c(F'))$. 
Let $1\to R_\calC \to F_{\calC} \to H \to 1$ denote the fundamental short exact sequence induced by $\varpi^{\calC}$, for which we have $F_{\calC} = F'/ P_c(F')$ and $R_{\calC}=R'/P_c(F')$ by Lemma~\ref{L:newfses} and Remark~\ref{rem:fses-inf}.
Then by applying Theorem~\ref{thm:measure_pro-p}, we have $m(\calC, n, H, G)=r(H)-\nu(H)-d(H)+n$. Therefore the theorem below immediately follows.

\begin{theorem}\label{thm:measure_class_c}
	Let $p$, $H$, and $G$ be the ones defined in Theorem~\ref{thm:measure_pro-p}. Let $\calC$ be the set of all $p$-$\Gamma$-groups of $p$-class at most $c$. If $H$ is  of the form $\calF_n^{\calC}/[Y(\Rr)]$ for some subset $\Rr\sub \calF_n^{\calC}$ of size $n+u$
	for some $n$	, then 
	\begin{enumerate}
		\item when $cl_p(H)<c$ then $m(\calC, n, H, G)=r(H)+n-d(H)$ and 
		\[
			\mu_u(V_{\calC, H})=\frac{1}{|\Aut_{\Gamma}(H)| |Y(H)|^u} \prod_{i=1+u+d(H)-r(H)}^{\infty} (1-p^{-i});
		\]
		\item when $cl_p(H)=c$ then $m(\calC, n ,H, G)=r(H)-\nu(H)+n-d(H)$ and
		\[
			\mu_u(V_{\calC, H})=\frac{1}{|\Aut_{\Gamma}(H)| |Y(H)|^u} \prod_{i=1+u+d(H)+\nu(H)-r(H)}^{\infty} (1-p^{-i}).
		\]
	\end{enumerate}
	If $H$ is not of this form for any $n$, then $\mu_u(V_{\calC, H})=0$.
\end{theorem}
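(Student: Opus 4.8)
The plan is to derive the formula directly from Theorem~\ref{thm:11.3}, the only work being to pin down the single relevant irreducible factor $G$ and the multiplicity $m(\calC,n,H,G)$ in each of the two cases. First I would record the structural input assembled above. Since $\calC$ is the set of all finite $p$-$\Gamma$-groups of $p$-class at most $c$, it is closed under finite direct products, $\Gamma$-subgroups and $\Gamma$-quotients, so $(\calF_n)^{\calC}=Q_c((\calF_n)_p)$. By the analysis in the proof of Lemma~\ref{L:newfses}, every irreducible factor of the kernel of a fundamental short exact sequence attached to $H$ and $\calC$ is the group $G=\Z/p\Z$ with trivial $H$-action and $\sigma$ acting by inversion; for this $G$ one has $h_{H\rtimes\Gamma}(G)=p$ and, since $p$ is odd and hence $G^\Gamma=1$, also $|Y(G)|=p$. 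Moreover $\calN$ contributes nothing because $\calC$ contains only $p$-groups. Consequently, in the formula of Theorem~\ref{thm:11.3} the product over $\calA_H$ collapses to a single factor $\prod_{i\ge1}\bigl(1-\lambda(\calC,H,G)\,p^{-i-u}\bigr)$, and it only remains to evaluate $\lambda(\calC,H,G)$.

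Next I would treat the two cases. When $cl_p(H)<c$, the discussion preceding the theorem shows $P_c(F')=1$, so $F'$ already has $p$-class at most $c$, and by Lemma~\ref{L:newfses} and Remark~\ref{rem:fses-inf} the fundamental short exact sequence for $\varpi^{\calC}$ is $1\to R'\to F'\to H\to1$; then exactly as in the proof of Theorem~\ref{thm:measure_pro-p} (via \cite[Lem.~12.5]{LiuWood2017}) one gets $m(\calC,n,H,G)=r(H)-d(H)+n$. When $cl_p(H)=c$, the nucleus $P_c(F')$ has $\F_p$-dimension $\nu(H)$ and lies inside $R'$, and one has $F_{\calC}=F'/P_c(F')$, $R_{\calC}=R'/P_c(F')$ by Lemma~\ref{L:newfses} and Remark~\ref{rem:fses-inf}; hence the multiplicity drops by $\nu(H)$, giving $m(\calC,n,H,G)=r(H)-\nu(H)-d(H)+n$. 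In either case Equation~\eqref{eq:lambda_m} yields $\lambda(\calC,H,G)=\lim_{n\to\infty}p^{m(\calC,n,H,G)-n}$, i.e.\ $p^{r(H)-d(H)}$ in the first case and $p^{r(H)-\nu(H)-d(H)}$ in the second; well-definedness of $\lambda$ for this infinite $\calC$ is covered by Definition~\ref{def:infiniteC} and Remark~\ref{rem:fses-inf}.

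Finally I would substitute into Theorem~\ref{thm:11.3}. In the case $cl_p(H)<c$,
\[
\mu_u(V_{\calC,H})=\frac{1}{|\Aut_\Gamma(H)|\,|Y(H)|^u}\prod_{i=1}^{\infty}\bigl(1-p^{\,r(H)-d(H)-u-i}\bigr),
\]
and reindexing by $j=i+u+d(H)-r(H)$ converts the product into $\prod_{j\ge1+u+d(H)-r(H)}(1-p^{-j})$, which is the claimed formula; the case $cl_p(H)=c$ is identical with $r(H)$ replaced by $r(H)-\nu(H)$, so the lower index becomes $1+u+d(H)+\nu(H)-r(H)$. For the last assertion, if $H$ is not of the form $\calF_n^{\calC}/[Y(\Rr)]$ with $\Rr$ of size $n+u$ for any $n$, then $(X_{u,n})^{\calC}=\calF_n^{\calC}/[Y(\overline{\Rr})]$ shows $\Prob((X_{u,n})^{\calC}\isom H)=0$ for every $n$, and hence $\mu_u(V_{\calC,H})=0$ by \eqref{E:defmualgebra} together with the monotone-limit description of $\mu_u$ on sets of the form $V_{\calC,H}$, exactly as in the proof of Theorem~\ref{thm:measure_pro-p}.

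The routine parts are the reindexing of the infinite product and the collapse of the $\calA_H$-product. The step requiring the most care is the bookkeeping in the case $cl_p(H)=c$: correctly identifying $F_{\calC}$ and $R_{\calC}$ as the quotients of $F'$ and $R'$ by the nucleus $P_c(F')$, checking that $\dim_{\F_p}R_{\calC}=\dim_{\F_p}R'-\nu(H)$ so that $\lambda(\calC,H,G)$ decreases by exactly the factor $p^{\nu(H)}$, and confirming that $\lambda(\calC,H,G)$ is well defined for this non-finite $\calC$ through the limit of Definition~\ref{def:infiniteC}.
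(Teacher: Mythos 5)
Your proposal is correct and takes essentially the same route as the paper. The paper derives the multiplicities $m(\calC,n,H,G)$ in the two cases in the paragraphs immediately preceding the theorem statement (computing $P_c(F')=1$ when $cl_p(H)<c$, and identifying $F_{\calC}=F'/P_c(F')$, $R_{\calC}=R'/P_c(F')$ when $cl_p(H)=c$) and then says the theorem "immediately follows" by the same substitution into Theorem~\ref{thm:11.3} that you carry out explicitly, including the collapse to the single factor $G=\Z/p\Z$ from Lemma~\ref{L:newfses} and the reindexing of the infinite product. Your treatment of the degenerate case ($H$ not of the required form giving measure zero) is also at the same (light) level of justification as the paper's, which likewise leaves that step implicit in the proof of Theorem~\ref{thm:measure_pro-p}.
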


So, when $\Gamma=\Z/2\Z$, we see explicitly our measure above on $p$-$\Gamma$-groups of
$p$-class at most $c$.  In the case $u=0$, our measure agrees with the measure of Boston, Bush, and Hajir
\cite[Lemma 4.8]{Boston2017} predicted to be the distribution of the $p$-class $c$ quotient of $p$-class field tower groups of imaginary quadratic fields.  In the case $u=1$, our measure agrees with the measure of Boston, Bush, and Hajir
\cite[Theorem 2.22]{Boston2018} predicted to be the distribution of the $p$-class $c$ quotient of $p$-class field tower groups of real quadratic fields.  (See \cite[Lemma 2.7]{Boston2017a} and \cite[Lemma 2.8]{Boston2018} for the fact that the support of their measure is contained in the support of our measure, and then it follows from the fact that the formulas match for each $H$ that the measures have exactly the same support.)
In the work of Boston, Bush, and Hajir, they start with the distribution on generator ranks predicted by Cohen--Lenstra, and then for a given generator rank take all relations in the Frattini subgroup.  This way, they avoid taking a limit in $n$, which is the most difficult aspect of our model.  Beyond the pro-nilpotent case we cannot see how to avoid taking a limit in $n$.

\begin{center}
{\bf Part II: Counting unramified extensions of function fields}\end{center}

\section{Notation for Part II}

\begin{notation}
 For a scheme $Z$ and a ring $R$, we write $Z_R$ for $Z\times_{\Spec \Z} \Spec R$. 

For a scheme $X$ with a morphism $X\ra \Spec \F_q$, we write $F_{X,q}\colon X\ra X$ for the Frobenius map that on open affine comes from the ring endomorphism $a\mapsto a^q$.  We write $\overline{X}$ for the base change $X\times_{\Spec \F_q}\Spec \overline{\F}_q$, and we write $\Frob_X$ for the Frobenius map
$F_{X,q} \times \id_{\Spec \overline{\F}_q} \colon\overline{X} \ra \overline{X}.$

When we consider functions $f,g$ of several variables $x_1,\dots,y_1,\dots$, we write
$$f(x_1,\dots,y_1,\dots)= g(x_1,\dots,y_1,\dots)+
O_{x_1,\dots}(h(x_1,\dots,y_1,\dots))$$ to mean there exists a function $C$ of the variables $x_1,\dots$, such that for every value of the $x_1,\dots,y_1,\dots$ we consider, we have
$$|f(x_1,\dots,y_1,\dots)- g(x_1,\dots,y_1,\dots)|\leq C(x_1,\dots) h(x_1,\dots,y_1,\dots).$$

\end{notation}

\section{Relation between objects counted}\label{S:relation}

In this section we explain how the objects that our moments count, i.e.,~unramified  $H$-extensions of $\Gamma$-extensions, correspond to certain $H\rtimes \Gamma$-extensions.  That there is some kind of correspondence is clear.  However, since we wish to obtain exact constant averages, we must precisely determine the factor introduced by this correspondence, and we do that in this section.

\begin{lemma}\label{L:firstbij}
Let $\Gamma$ be a finite group, $H$ a $\Gamma$-group, and $G$ a profinite group.
We have a bijection from
$$
\Sur(G,H\rtimes \Gamma)
$$
to
\begin{equation}\label{E:all4}
\left\{(\rho,N,s,\phi) \,\Bigg|\, 
	\begin{aligned} & \rho\in \Sur(G,\Gamma), N \textrm{ a closed normal subgroup of $G$ with }N\sub \ker \rho, \\
	& s\in \Hom(\Gamma, G/N) \textrm{ with }\rho \circ s=\operatorname{id}, \phi\in \operatorname{Isom}_\Gamma(\ker \rho/N, H) 
	\end{aligned}\right\}.
\end{equation}
(The action of $\Gamma$ on $\ker \rho/N$ is given by $s$ and then conjugation.
Since $N\sub \ker \rho$,  by a slight abuse of notation we use $\rho$ to also denote the map $G/N\ra \Gamma.$)
 In the bijection, an element  $\psi\in \Sur(G,H\rtimes \Gamma)$ is mapped to $(\rho,N,s,\phi)$, where
\begin{itemize}
\item $\rho$ is the projection of $\psi$ to $\Gamma$
\item $N=\ker\psi$
\item $s=\psi^{-1} \colon \Gamma\ra G/N$, and
\item $\phi=\psi|_{\ker \rho/N}$.
\end{itemize}
\end{lemma}

\begin{proof}
We construct an inverse as follows.  Let $(\rho,N,s,\phi)$ be given.  From $\rho$ and $N\leq \ker\rho$ we obtain an exact sequence
$$
1\ra \ker \rho/N \ra G/N \stackrel{\rho}{\ra} \Gamma \ra 1.
$$
The map $s$ defines a section of $\rho:G/N \ra \Gamma$, giving an isomorphism 
$G/N\isom  \ker \rho/N \rtimes \Gamma.$
As a $\Gamma$-isomorphism, the map $\phi$ extends to $\ker \rho/N \rtimes \Gamma \isom H\rtimes \Gamma$.  
Composing the projection $G\ra G/N$ with these two maps gives a surjection $\psi: G \ra H\rtimes \Gamma$.
It is straightforward to check that this indeed defines an inverse.
\end{proof}

The following will apply Lemma~\ref{L:firstbij} to exactly the extensions we want to count with the required parameters.  Recall our notation $E_\Gamma(D,Q)$ for the set of isomorphism classes of totally real $\Gamma$-extensions of $Q$ with $\rD K=D$.
Let $K^{\operatorname{un},\infty}$ be the maximal extension of $K$ that is unramified everywhere and split completely at all places of $K$ above $\infty$.

\begin{lemma}\label{L:gettoN}
Let $\Gamma$ be a finite group, and $H$ a finite $|\Gamma|'$-$\Gamma$-group.
Let $N(H,\Gamma,D,Q)$ be the number of surjections $\Gal(\overline{Q}/Q)\ra H\rtimes \Gamma$ such that the corresponding $H\rtimes \Gamma$-extension $K$ has $\rD K=D$, the associated $H$-extension $K/K^H$ is unramified everywhere and split completely at all places above $\infty$, and the associated $\Gamma$-extension $K^{H}/Q$ is split completely at $\infty$.
Then
$$
\sum_{K\in E_\Gamma(D,Q)} |\Sur_\Gamma(\Gal(K^{\operatorname{un},\infty}/K),H)|=\frac{1}{[H:H^{\Gamma}]}N(H,\Gamma,D,Q).
$$
\end{lemma}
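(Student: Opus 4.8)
The plan is to count pairs consisting of a totally real $\Gamma$-extension $K/Q$ together with a $\Gamma$-equivariant surjection $\Gal(K^{\#}/K) \to H$, and to match these bijectively (up to a controlled multiplicity) with surjections $\Gal(\overline{Q}/Q) \to H\rtimes\Gamma$ of the prescribed ramification type. First I would apply Lemma~\ref{L:firstbij} with $G = \Gal(\overline{Q}/Q)$: a surjection $\psi\colon G \to H\rtimes\Gamma$ corresponds to a quadruple $(\rho, N, s, \phi)$, where $\rho\colon G\to\Gamma$ is the projection, $N = \ker\psi$, $s$ is a section of $\rho$ on $G/N$, and $\phi \in \operatorname{Isom}_\Gamma(\ker\rho/N, H)$. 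Here $\ker\rho$ cuts out $K^H$ (the fixed field of $H$), and $N$ cuts out the full $H\rtimes\Gamma$-extension $K$, so $\ker\rho/N \isom \Gal(K/K^H)$ as a group, with the $\Gamma$-action coming from the section $s$ exactly as in Definition~\ref{D:defG}. The conditions imposed in the definition of $N(H,\Gamma,D,Q)$ — that $K/K^H$ is unramified everywhere and split completely at $\infty$, and that $K^H/Q$ is split completely at $\infty$ — are precisely the conditions that (a) $K^H/Q$ is a totally real $\Gamma$-extension, and (b) the extension $K/K^H$ is a quotient of $\Gal((K^H)^{\#}/K^H)$; the discriminant condition $\Nm\rad(\Disc(K/Q)) = D$ matches $\rD(K^H) = D$ because, $H$ being unramified over $K^H$, the ramified primes of $K/Q$ are exactly those of $K^H/Q$.

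The key bookkeeping step is to account for how many quadruples $(\rho, N, s, \phi)$ give rise to the same isomorphism class of pair $(K^H, \text{surjection } \Gal(K^{\#}/K^H)\to H)$. Fixing $\rho$ determines $K^H$ (as a $\Gamma$-extension, via the identification $\Gal(K^H/Q)\isom\Gamma$ furnished by $\rho$), and then $N$ is determined by the surjection $\Gal(K^{\#}/K^H) \to \ker\rho/N$ together with $\phi$; the point is that $\Gal(K^{\#}/K^H)$ already carries a canonical $\Gamma$-action up to conjugacy (Definition~\ref{D:defG}), so the choice of section $s$ and the choice of $\phi$ are redundant data. Precisely, for a fixed $\rho$ (equivalently a fixed totally real $\Gamma$-extification $L := K^H$ of $Q$) and a fixed $\Gamma$-equivariant surjection $f\colon \Gal(L^{\#}/L) \to H$, the set of quadruples $(\rho, N, s, \phi)$ inducing this pair is a torsor, and I would show its size is $[H:H^\Gamma]$. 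This comes from the following: once $\rho$ and the subfield $N$ are fixed, the remaining freedom is the choice of section $s\colon\Gamma\to G/N$, and by Schur--Zassenhaus (using $\gcd(|H|,|\Gamma|)=1$, so that $G/N \isom \Gal(K/K^H)\rtimes\Gamma$ with $\Gal(K/K^H) \isom H$) the sections form a single $H$-conjugacy orbit, of size $[H : H^\Gamma] = |H|/|H^\Gamma|$ by the orbit-stabilizer theorem (the stabilizer of a section under conjugation is exactly $H^\Gamma$). Meanwhile $\phi$ is then forced up to $\Aut_\Gamma(H)$, but changing $\phi$ by an element of $\Aut_\Gamma(H)$ while simultaneously adjusting nothing else changes the resulting $\psi$ but not the pair $(L, f)$ up to the identification we are making — so I must be careful to set up the equivalence so that $\phi$ is pinned down, leaving exactly the $s$-freedom. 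The cleanest way: count surjections $\psi$ with a \emph{fixed} identification of $\ker\rho/N$ with $H$, i.e.\ fix $\phi$ and let only $s$ vary; then each pair $(L, f)$ arises from exactly $[H:H^\Gamma]$ surjections $\psi$, and summing over all such pairs gives $N(H,\Gamma,D,Q) = [H:H^\Gamma]\sum_{K\in E_\Gamma(D,Q)} |\Sur_\Gamma(\Gal(K^{\#}/K), H)|$, which rearranges to the claimed identity.

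The main obstacle I anticipate is precisely this multiplicity computation: one must verify that the $\Gamma$-action on $\Gal(K^{\#}/K^H)$ used in the definition of $\Sur_\Gamma(\Gal(K^{\#}/K), H)$ (namely the one from Definition~\ref{D:defG}, well-defined up to conjugacy by Schur--Zassenhaus) is compatible with the action induced by the section $s$ in Lemma~\ref{L:firstbij}, and that the overcounting factor is uniformly $[H:H^\Gamma]$ and not something that depends on $K$ or on the particular surjection $f$. Here the uniformity is guaranteed because the stabilizer of \emph{any} splitting of $H\rtimes\Gamma\to\Gamma$ under $H$-conjugation is an $H$-conjugate of $H^\Gamma$, hence has constant order $|H^\Gamma|$. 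I would also need to confirm the minor points that every $\psi$ counted by $N(H,\Gamma,D,Q)$ does factor through $\Gal(K^{\#}/Q)$-type data — i.e.\ that "unramified and split at $\infty$" for $K/K^H$ together with $\gcd(|H|,|\Gamma|)=1$ forces $K/K^H$ to be a sub-extension of $K^H{}^{\#}/K^H$ with matching radical discriminant — but these are immediate from the definition of $K^{\#}$ and the fact that $H$ has order prime to $|\mu_Q||\Gamma|$ and to $\operatorname{char} Q$, which is part of the standing hypothesis that makes $E_\Gamma(D,Q)$ and $\Gal(K^{\#}/K)$ interact correctly.
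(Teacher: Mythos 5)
Your proposal is correct and follows essentially the same route as the paper: both apply Lemma~\ref{L:firstbij} with $G = \Gal(\overline{Q}/Q)$, translate the ramification/splitting/discriminant conditions, and extract the overcounting factor $[H:H^\Gamma]$ by a Schur--Zassenhaus count of sections of $G/N \to \Gamma$. The one small wrinkle is that ``fix $\phi$ and let only $s$ vary'' is not quite coherent as stated, since the $\Gamma$-equivariance constraint on $\phi$ itself depends on $s$; the paper instead fixes a distinguished section $s_\rho$ for each $\rho$, establishes the bijection with the left-hand sum for quadruples using $\overline{s_\rho}$, and then observes that the $|Y(H)| = [H:H^\Gamma]$ sections are mutually conjugate and hence yield equinumerous sets of $\phi$'s --- a compatibility point you correctly flag as the main thing to verify.
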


\begin{proof}
We consider the bijection of Lemma~\ref{L:firstbij} with $G=\Gal(\overline{Q}/Q)$.
The elements of \eqref{E:all4} fiber over $\rho$.  For each $\rho$, we will now pick out certain distinguished elements of \eqref{E:all4} and relate them to the left-hand side of the lemma.
Given  $\rho \in \Sur(G, \Gamma)$, let $M$ be the normal subgroup of $\ker\rho$ such that $\ker\rho/M$ is the maximal pro-$|\Gamma|'$ quotient of $\ker \rho$, and by Definition~\ref{D:defG} we have a distinguished section $s_{\rho}: \Gamma \to G/M$ of $\rho$ (the subgroup $M$ is characteristic in $\ker\rho$, and hence is normal in $G$). Then the section $s_{\rho}$ defines a $\Gamma$-action on $\ker\rho/M$, and maps to a section $\overline{s_{\rho}}: \Gamma \to G/N$ for every normal subgroup $N$ of $G$ with $M\subset N \subset \ker \rho$. Each element $(\rho, N, \overline{s_{\rho}}, \phi)$ in \eqref{E:all4} with the fixed $\rho$ and $s_{\rho}$ defines a surjection $\varphi \in \Sur_{\Gamma}(\ker \rho/M ,H)$. On the other hand, given $\varphi \in \Sur_{\Gamma}(\ker\rho/M, H)$, we let $N:=\ker \varphi$ and $\phi: \ker\rho /N\to H$ be the isomorphism induced by $\varphi$, which gives an element $(\rho, N, \overline{s_{\rho}}, \phi)$ in \eqref{E:all4}. So, we have a bijection between the elements 
in \eqref{E:all4} with $s= \overline{s_{\rho}}$ and
\begin{equation}\label{E:renorm}
\{(\rho,\phi) \,|\, \rho\in \Sur(G,\Gamma),   \phi\in \operatorname{Sur}_{\Gamma,s_\rho} (\ker \rho/M, H)
\}.
\end{equation}

The set \eqref{E:renorm} is close to the left-hand side in the lemma,  but to obtain the left-hand side precisely, 
 we additionally must require that $\rho$ is split completely at $\infty$, and $\phi$ is unramified everywhere and split completely at all places above $\infty$.  These are conditions that can be detected from $\rho$ and $N$, and we can see in the bijection of Lemma~\ref{L:firstbij} that they correspond to the condition on $\psi\in \Sur(G,H\rtimes \Gamma)$ that the associated $\Gamma$-extension is split completely at $\infty$, and the associated $H$-extension is unramified everywhere and split completely everywhere above $\infty$.  Similarly, with this ramification restriction, we can see that the condition that $\rD K=D$ corresponds in the bijection to the condition that $\psi\in \Sur(G,H\rtimes \Gamma)$ corresponds to a field with $\rD=D$.  So
the subset of \eqref{E:renorm}
which satisfy our local and discriminant conditions  is the set counted by the left-hand side expression in the lemma.  

Now, we consider how the distinguished elements in \eqref{E:all4} relate to the other elements in \eqref{E:all4}.
Given $\rho$ and $N$ that occur in \eqref{E:all4} when $(|H|,|\Gamma|)=1$, 
the Schur--Zassenhaus theorem says that all sections 
$s\colon\Gamma \ra G/N$ of $\rho\colon G/N\ra\Gamma$ differ by conjugation of elements  
of $\ker \rho/N$.  Since the stabilizer of the action of $\ker \rho/N$ on these sections is $(\ker \rho/N)^\Gamma$, we have that the number of sections
is $[\ker \rho/N:(\ker \rho/N)^\Gamma]=[H:H^\Gamma].$  Further, each choice of section leads to an equal number of choices for $\phi$.
This observation combined with the bijection above and the bijection of Lemma~\ref{L:firstbij} shows that the left-hand side of the lemma is $[H:H^\Gamma]^{-1}$ times the size of  $N(H,\Gamma,D,Q)$.
\end{proof}

\section{Function field counting}\label{S:FFC}

In this section, we will show how the existence and certain properties of Hurwitz schemes (to be proven later) allow us to prove Theorem~\ref{T:MainFF}.

\begin{notation}\label{N:Gc}
Let $G$ be a finite group and $c$ a subset of $G\setminus \{1\}$ closed under conjugation by elements of $G$ and taking invertible powers (e.g.~if $g\in c$ and $\gcd(m,\operatorname{ord}(g))=1$, then $g^m\in c$), such that $c$ generates $G$. Let $n$ be a non-negative integer. Let $R:=\Z[|G|^{-1}]$. 
\end{notation}

\subsection{Properties of $\cHur_{ G,c}^n$} \label{S:cons}

 In Section~\ref{S:Construction}, we will define a scheme $\cHur_{ G,c}^n$ over $\Spec R$ for each non-negative integer $n$.
We will show the following three results about $\cHur_{ G,c}^n$.

\begin{lemma}\label{L:what}
Let $\Gamma$ be a finite group, and $H$ a finite $|\Gamma|'$-$\Gamma$-group.
Let $G:=H\rtimes \Gamma$ and $c$ be the set of elements of $G$ that have the same order as their image in $\Gamma$. For all prime powers $q$ relatively prime to $|G|$ and $n\geq 0$, we have 
$$|\cHur_{ G,c}^n(\F_q)|=N(H,\Gamma,q^n,\F_q(t)),$$
where $ N(H,\Gamma,q^n,\F_q(t))$ is defined in Lemma~\ref{L:gettoN}.
\end{lemma}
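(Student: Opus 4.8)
The plan is to identify $\cHur_{G,c}^n(\F_q)$ with a suitable set of connected $G$-covers of $\mathbb{P}^1_{\F_q}$ with $n$ branch points and local monodromy in $c$, and then to massage that identification into the exact description of $N(H,\Gamma,q^n,\F_q(t))$ given in Lemma~\ref{L:gettoN}. First I would recall the defining property of the Hurwitz scheme $\cHur_{G,c}^n$ over $\Spec R$ to be established in Section~\ref{S:Construction}: its $\F_q$-points parametrize (geometrically connected) $G$-Galois branched covers of $\mathbb{P}^1_{\F_q}$, étale away from $n$ geometric points forming a single $\Gal(\overline{\F_q}/\F_q)$-orbit structure consistent with being defined over $\F_q$, with all inertia generators lying in $c$, and with a chosen rational point above a chosen base point (or, depending on the precise construction, a ``marked'' structure rigidifying automorphisms). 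The key dictionary is: such $G$-covers of $\mathbb{P}^1_{\F_q}$ correspond to surjections $\rho\colon \Gal(\overline{Q}/Q)\to G$, $Q=\F_q(t)$, unramified outside a finite set of closed points and with all inertia in $c$, up to conjugacy; the marking kills the conjugacy ambiguity so the count is of surjections, not of covers.

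Next I would unwind what the condition ``inertia in $c$'' means for $G=H\rtimes\Gamma$ with $c$ the elements whose order equals that of their image in $\Gamma$. A generator $g$ of an inertia group at a place $v$ projects to a generator of the corresponding inertia in $\Gamma$; the condition $\operatorname{ord}(g)=\operatorname{ord}(\bar g)$ forces the $H$-part to be trivial along inertia, i.e.\ the fixed field $K^H$ is unramified at $v$ exactly when $\bar g=1$, and more to the point the $H$-extension $K/K^H$ is everywhere unramified, while the ramification of $K/Q$ coincides with that of $K^H/Q$. This is precisely the ``associated $H$-extension $K/K^H$ is unramified everywhere'' clause in Lemma~\ref{L:gettoN}. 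The ``split completely at $\infty$'' conditions on both $\Gamma$-extension and $H$-extension correspond, on the Hurwitz side, to the requirement that $\infty\in\mathbb{P}^1$ is not a branch point and that the chosen base point lies over it with the prescribed splitting — this is part of how $\cHur_{G,c}^n$ is normalized, and I would quote that normalization. Since $|H|$ is prime to $q$ and to $|\Gamma|$, and $q$ is prime to $|G|$, there are no wild ramification issues and Schur--Zassenhaus applies as needed.

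Finally I would match the discriminant bookkeeping. A point of $\cHur_{G,c}^n(\F_q)$ records $n$ branch points on $\mathbb{A}^1_{\F_q}=\mathbb{P}^1_{\F_q}\setminus\{\infty\}$; grouped into closed points this is an effective divisor $\mathfrak{d}$ on $\mathbb{A}^1_{\F_q}$ of degree $n$, which is the reduced branch divisor of $K/Q$, i.e.\ $\rad(\Disc(K/Q))$ up to the conventions, so $\Nm\rad(\Disc(K/Q))=q^{\deg\mathfrak d}=q^n$. (Here I must check the construction indexes $\cHur_{G,c}^n$ by the \emph{degree} of the reduced branch divisor rather than by the number of geometric branch points; if it is the latter, these agree because $c$ is closed under invertible powers, so Frobenius-conjugate branch points carry compatible local monodromy and no further constraint is lost.) Putting these together, $|\cHur_{G,c}^n(\F_q)|$ counts exactly the surjections $\Gal(\overline Q/Q)\to H\rtimes\Gamma$ with $\Nm\rad(\Disc(K/Q))=q^n$, $K/K^H$ unramified and split at $\infty$, and $K^H/Q$ split at $\infty$ — which is the definition of $N(H,\Gamma,q^n,\F_q(t))$. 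The main obstacle I anticipate is not conceptual but precisely one of bookkeeping: pinning down, from the construction in Section~\ref{S:Construction}, the exact rigidification (marked point versus quotient stack, and the role of $\infty$) so that the Hurwitz points biject with \emph{surjections} rather than with covers or with covers-with-automorphisms, and confirming the degree-$n$ normalization of the branch locus; once that is fixed the lemma is essentially a translation.
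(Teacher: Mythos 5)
Your proposal is correct and follows essentially the same route as the paper's proof: both identify $\F_q$-points of $\cHur_{G,c}^n$ with marked $G$-Galois covers of $\P^1_{\F_q}$ (equivalently, triples $(M,\pi,m)$ of a Galois extension, an isomorphism $\Gal(M/\F_q(t))\isom H\rtimes\Gamma$, and a place above $\infty$), observe that the inertia-in-$c$, split-at-$\infty$, and degree-$n$ branch-divisor conditions on the Hurwitz side translate exactly to the unramified, split-at-$\infty$, and $\Nm\rad\Disc = q^n$ conditions defining $N(H,\Gamma,q^n,\F_q(t))$, and use the marked point to rigidify so that one counts surjections rather than covers-up-to-conjugacy. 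The bookkeeping you flagged as the remaining obstacle is handled in the paper by fixing once and for all an embedding $\overline{\F_q(t)}\hookrightarrow\overline{\F_q(t)}_\infty$ and choosing the distinguished representative of each isomorphism class with $M\subset\overline{\F_q(t)}$ and $m$ the place pulled back from $\infty$; this is exactly the kind of argument you anticipated.
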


\begin{lemma}\label{L:C}
Let $G,c,n$ be as in Notation~\ref{N:Gc}.  Let $Y$ be a connected component of $\cHur_{G,c}^n$.
For all integers $i\geq 0$, prime powers $q$ relatively prime to $|G|$, and primes $\ell \nmid q$ with $\ell>n$, we have
that $Y_{\overline{\F}_q}$ is smooth of dimension $n$, and
$$
\dim_{\Q_{\ell}} H^i_{\textup{c,\'et}} (Y_{\overline{\F}_q},\Q_\ell )= \dim_{\Q} H^{2n-i} (Y_{\C},\Q ),
$$
where on the left we have compactly supported \'{e}tale cohomology, and on the right we have singular cohomology.
\end{lemma}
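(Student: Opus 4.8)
The plan is to deduce both assertions from the description of $\cHur_{G,c}^n$ as a finite \'etale cover of configuration space, together with Poincar\'e duality and standard comparison theorems. Recall from the construction in Section~\ref{S:Construction} (built on Wewers \cite{Wewers1998}) that $\cHur_{G,c}^n$ is finite \'etale over the configuration space $\Conf^n$ of $n$ unordered points of $\A^1$, and that $\Conf^n$ is smooth affine over $\Spec R$ of relative dimension $n$ (it is the complement of the discriminant hypersurface in $\A^n_R$). Hence any connected component $Y$ is smooth over $\Spec R$ of relative dimension $n$, so in particular $Y_{\overline{\F}_q}$ is smooth of dimension $n$ for every prime power $q$ coprime to $|G|$; this gives the first statement.

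For the cohomological identity I would first reduce to the claim that $\dim H^{2n-i}_{\textrm{\'et}}(Y_{\overline{\F}_q},\Q_\ell)=\dim H^{2n-i}_{\mathrm{sing}}(Y_\C,\Q)$ for all $i$. Indeed, since $Y_{\overline{\F}_q}$ is smooth of pure dimension $n$, Poincar\'e duality gives a perfect pairing of $H^i_{\textrm{c,\'et}}(Y_{\overline{\F}_q},\Q_\ell)$ against $H^{2n-i}_{\textrm{\'et}}(Y_{\overline{\F}_q},\Q_\ell)$ valued in $H^{2n}_{\textrm{c,\'et}}(Y_{\overline{\F}_q},\Q_\ell)\isom\Q_\ell(-n)$, whence $\dim H^i_{\textrm{c,\'et}}(Y_{\overline{\F}_q},\Q_\ell)=\dim H^{2n-i}_{\textrm{\'et}}(Y_{\overline{\F}_q},\Q_\ell)$; and since $Y$ is defined over $R\subset\overline{\Q}\subset\C$, the Artin comparison theorem together with base change from $\C$ to $\overline{\Q}$ gives $\dim H^{2n-i}_{\textrm{\'et}}(Y_{\overline{\Q}},\Q_\ell)=\dim H^{2n-i}_{\mathrm{sing}}(Y_\C,\Q)$.

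The remaining point is to compare $H^*_{\textrm{\'et}}(Y_{\overline{\F}_q},\Q_\ell)$ with $H^*_{\textrm{\'et}}(Y_{\overline{\Q}},\Q_\ell)$. Writing $f\colon Y\to\Conf^n_R$ for the covering map, the lisse sheaf $f_*\Q_\ell$ has monodromy given by the Hurwitz braid action on tuples from $c$, which factors through a finite quotient of order prime to $q$ since $q\nmid|G|$, and one has $H^*_{\textrm{\'et}}(Y_{\overline{k}},\Q_\ell)=H^*_{\textrm{\'et}}(\Conf^n_{\overline{k}},f_*\Q_\ell)$ for $\overline{k}$ equal to $\overline{\F}_q$ or $\overline{\Q}$. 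The equality of dimensions then follows because $\Conf^n$ is a $K(\pi,1)$ whose geometric fundamental group, an appropriate completion of the braid group $B_n$, specializes isomorphically from characteristic $0$ to any characteristic $p\nmid|G|$, and the two monodromy representations correspond; concretely, I would spread $Y$ out over $\Spec R$, pass to Wewers' smooth proper admissible-cover compactification, and compute $H^*$ and $H^*_c$ of $Y$ from the spectral sequence of the normal-crossings boundary, applying smooth proper base change stratum by stratum. The hard part is precisely this last comparison with $\Q_\ell$-coefficients: the integral cohomology of $\Conf^n$, and of the covers and boundary strata that appear, can carry torsion at primes bounded in terms of $n$, and it is to prevent such torsion from affecting the $\Q_\ell$-Betti numbers that one needs $\ell>n$. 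This step follows the circle of ideas of Ellenberg--Venkatesh--Westerland \cite{Ellenberg2016}; the work is in applying their comparison to an arbitrary component $Y$ of $\cHur_{G,c}^n$ while keeping exact track of the dimension $n$.
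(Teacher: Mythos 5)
Your proposal follows essentially the same route as the paper's proof: smoothness and relative dimension $n$ come from the finite \'etale map to $\Conf^n(\A^1)_R$ (Proposition~\ref{prop:etale}); the displayed identity is reduced to a comparison of ordinary \'etale Betti numbers across characteristics using Poincar\'e duality and Artin's comparison theorem; and the characteristic-$0$ versus characteristic-$p$ comparison is attributed to the Ellenberg--Venkatesh--Westerland normal-crossings compactification argument. The paper's implementation differs from yours in two small points worth noting. It does not compactify the Hurwitz space itself: instead it pulls $Y$ back along the degree-$n!$ cover $\operatorname{PConf}^n(\A^1)\to\Conf^n(\A^1)$ (the labeled configuration space, which by \cite[Lemma 7.6]{Ellenberg2016} is the complement of a relative normal-crossings divisor in a smooth proper scheme), applies \cite[Proposition 7.7]{Ellenberg2016} there, and recovers $Y$'s cohomology by taking $S_n$-invariants. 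And it upgrades EVW's $\Z/\ell\Z$-coefficient statement to $\Q_\ell$ by running the argument with $\Z/\ell^k\Z$ and passing to the inverse limit, rather than reasoning about torsion directly.

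Two of your heuristics along the way are off, although neither affects the overall architecture. The geometric fundamental group of $\Conf^n(\A^1)$ does \emph{not} specialize isomorphically from characteristic $0$ to characteristic $p$---in characteristic $p$ it acquires a large pro-$p$ part from wild ramification, and only the tame (prime-to-$p$) quotient compares; this is exactly why the $K(\pi,1)$ observation is not enough by itself and why one must pass through the compactification and smooth proper base change argument you then sketch. And the condition $\ell>n$ is not imposed to keep $\Z$-torsion from ``affecting $\Q_\ell$-Betti numbers'' (integral torsion can never change $\Q_\ell$-cohomology); it is there so that the degree-$n!$ cover by $\operatorname{PConf}^n(\A^1)$ falls within the hypotheses of \cite[Proposition 7.7]{Ellenberg2016}, which is the tool the paper actually cites.
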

Lemmas~\ref{L:what} and \ref{L:C} will be proven in Section~\ref{S:Construction}.

We define $\pi_{G,c}(q,n)$ to be the number of $\Frob_{(\cHur_{ G,c}^n)_{\F_q}}$ fixed 
components of $(\cHur_{ G,c}^n)_{\overline{\F}_q}$.
We define $d_{G,c}(q)$ to be the number of orbits of $q$th powering on the set of non-trivial $G$-conjugacy classes in $c$.
If we drop $c$ from either subscript, $c$ is taken to be $G\setminus 1$.

\begin{theorem}\label{T:comp}
Let $\Gamma$ be a finite group and $H$ be a finite admissible $\Gamma$-group. 
 Let $G:=H\rtimes \Gamma$ and $c$ be the set of elements of $G$ that have the same order as their image in $\Gamma$.  
Then, there is a real $C_G>0$, and for every $a\in (\Z/|G|^2\Z)^\times$, there is a positive integer $M_a$ and a non-empty set $E_a$ of residues mod $M_a$ such that the following holds:

For every
  positive integer $n$,  and
every prime power $q$ such that $q\equiv a \pmod{|G|^2}$ and $\gcd(q(q-1),|H|)=1$,
we have $$
 \pi_{G,c}(q,n) =\pi_{\Gamma}(q,n)+O_G( n^{d_{\Gamma}(q)-2});
$$
moreover,
\begin{itemize}
\item if $n\pmod{M_a} \not\in E_a$, then  $\pi_{G,c}(q,n)=\pi_{{\Gamma}}(q,n)=0$, and 
\item if $n\pmod{M_a} \in E_a$
and $n$ is sufficiently large (given $G$),  then
$\pi_{\Gamma}(q,n) \geq C_G n^{d_{\Gamma}(q)-1}.$
\end{itemize}
\end{theorem}

\begin{remark}
The complicated dependency we see in Theorem~\ref{T:comp} on the residue classes of $q$ and $n$ is simply the truth.  Even for quadratic extensions, we know that the discriminant is always an even power of $q$ (i.e.,~$n$ is even).  Some geometric components of the Hurwitz scheme are not defined over $\F_q,$ but then necessarily are defined over $\F_{q^r}$ for some $r$.
\end{remark}

\subsection{Proof of Theorem~\ref{T:MainFF}}

Now we show how Theorem~\ref{T:MainFF} follows from Lemmas~\ref{L:what} and \ref{L:C} and Theorem~\ref{T:comp}.

\begin{proof}[Proof of Theorem~\ref{T:MainFF}]
We first consider $q$ in a single residue class mod $|G|^2$.
From Lemma~\ref{L:what} we have
$$
\sum_{n=0}^b N(H,\Gamma,q^n,\F_q(t)) =\sum_{n=0}^{b}  \#\cHur_{ G,c}^n(\F_q).
$$
By taking $H$ the trivial group in Lemma~\ref{L:what}, we have
$$
\sum_{n=0}^b |E_\Gamma(q^n,\F_q(t))|=\sum_{n=0}^{b}  \#\cHur_{ \Gamma,\Gamma\setminus \{1\}}^n(\F_q).
$$
By Theorem~\ref{T:comp},  for $n$ such that $n\pmod{M_a}\not\in E_a$,
 the Hurwitz schemes above have
no Frobenius fixed components and thus no $\F_q$-points.
Thus we can round down $b$ to the nearest integer 
 whose residue mod $M_a$ is in $E_a$, without changing the sums above.

For a scheme $Z$ over $\F_q$, by the Grothendieck--Lefschetz trace formula (see, e.g.~\cite[Theorem 29.8]{MilneECNotes}) we have
$$
\#Z({\F}_q) =\sum_{j\geq 0} (-1)^j\Tr(\Frob| { H^j_{\textrm{c,\'et}}(Z_{\overline{\F}_q},\Q_\ell)})
$$
(for any prime $\ell\nmid q$).
Also, if $Z$ is $n$-dimensional, then $\Tr(\Frob| { H^{2n}_{\textrm{c,\'et}}(Z_{\overline{\F}_q},\Q_\ell)})$
is $q^n$ times the number of Frobenius fixed $n$-dimensional components of $Z_{\overline{\F}_q}$.
Let 
$$B(b)=\max_{0\leq j \leq 2b,1\leq n\leq b} \max(\dim H^{j}( (\cHur_{ G,c}^n)_{\C},\Q),\dim H^{j}( (\cHur_{\Gamma,\Gamma\setminus\{1\}}^n)_{\C},\Q)).
$$
We have that the absolute value of any eigenvalue of $\Frob$ on $H^j_{\textrm{c,\'et}}((\cHur_{ G,c}^n)_{\overline{\F}_q},\Q_\ell)$ is at most
$q^{j/2}$ \cite[Th\'eor\`eme 1]{Deligne1980}, and similarly for $\cHur_{\Gamma,\Gamma\setminus\{1\}}^n$.

Thus from the trace formula above and the comparison in Lemma~\ref{L:C}, 
\begin{align*}
|\#\cHur_{ G,c}^b(\F_q) - \pi_{G,c}(q,b) q^b|  \leq B(b) \sum_{0\leq j< 2b} q^{j/2}.
\end{align*}
Similarly,
\begin{align*}
|\#\cHur_{ \Gamma,\Gamma\setminus\{1\}}^b(\F_q) - \pi_\Gamma(q,b) q^b|  \leq B(b) \sum_{0\leq j< 2b} q^{j/2}.
\end{align*}
For $1\leq n<b$, we have
\begin{align*}
\#\cHur_{ G,c}^n(\F_q), \#\cHur_{ \Gamma,\Gamma\setminus\{1\}}^n(\F_q)   \leq B(b) \sum_{0\leq j \leq 2n} q^{j/2}.
\end{align*}
So we have
\begin{align*}
\frac{\sum_{0\leq n \leq b} N(E,\Gamma,q^n,\F_q(t))}{\sum_{0\leq n \leq b} |E_\Gamma(q^n,\F_q(t))|}=\frac{\pi_{G,c}(q,b) q^b+E_1(b,q,G,c)q^{b-1/2}}{\pi_\Gamma(q,b) q^b+E_2(b,q,\Gamma)q^{b-1/2}},
\end{align*}
where $E_1(b,q,G,c), E_2(b,q,\Gamma)=O_{b,G}(1)$. 
By the first statement of Theorem~\ref{T:comp}, we conclude that 
\begin{align*}
\frac{\sum_{0\leq n \leq b} N(E,\Gamma,q^n,\F_q(t))}{\sum_{0\leq n \leq b} |E_\Gamma(q^n,\F_q(t))|}
&=1+\frac{E_3(b,q,G,c,\Gamma)b^{d_\Gamma(q)-2}
+E_1(b,q,G,c)q^{-1/2}}{\pi_\Gamma(q,b)+E_2(b,q,\Gamma)q^{-1/2}},
\end{align*}
where $E_3(b,q,G,c,\Gamma)=O_G(1).$ 
The second statement of Theorem~\ref{T:comp} says that $\pi_\Gamma(q,b) \geq C_G b^{d_\Gamma(q)-1} $ (recall our rounding down $b$ above).
Thus we conclude that
$$
\lim_{\substack{q\ra\infty\\q\equiv a\pmod{|G|^2}}}\frac{\sum_{0\leq n \leq b} N(E,\Gamma,q^n,\F_q(t))}{\sum_{0\leq n \leq b} |E_\Gamma(q^n,\F_q(t))|} =
1+O_G(b^{-1}).
$$
Since the above is true for each choice of $a$, it is also true without the restriction to $q$ in a single residue class.
With Lemma~\ref{L:gettoN} and Theorem~\ref{thm:moments}, this proves Theorem~\ref{T:MainFF}.
\end{proof}

\section{Construction of $\cHur_{ G,c}^n$}\label{S:Construction}
  In this section, we give a construction of the scheme $\cHur_{ G,c}^n$ for integers $n\geq 0$, and prove its basic properties including Lemmas~\ref{L:what} and \ref{L:C}. We refer to \cite{Bertin2011} for general facts about Hurwitz spaces.

\subsection{Algebraic Hurwitz spaces}

Let $S$ be a scheme. A \defi{curve} over $S$ is a smooth and proper map $X \to S$ whose geometric fibers are connected and 1-dimensional. A \defi{cover} of a curve $X$ over $S$ is a finite, flat, and surjective morphism $f\colon Y \to X$ of $S$-schemes, where $Y$ is also a curve over $S$. A cover is \defi{tame} if the ramification index at every point is prime to the characteristic of that point. We define $\underline{\Aut} f$ to be the automorphism group scheme of $S$ and $\Aut f := (\underline{\Aut} f)(S)$ to be the group of automorphisms of $f$. A cover $f\colon Y \to X$ is \defi{Galois} if $f$ is separable and if $\Aut f$ acts transitively on fibers of geometric points of $X$. Associated to a tame Galois cover of curves $Y \to X$ is its \defi{branch locus} $D \subset X$, which has the properties that $D \to S$ is \'etale \cite[Proposition 3.1.1]{Bertin2011}, the restriction of $f$ to $X - D$ is \'etale, and $X-D$ is maximal with respect to this property. If there exists a constant $n$ such that the degree of each geometric fiber of $D \to S$ is equal to $n$ (which is automatic if $S$ is connected), then we say that $f$ has $n$ \defi{branch points}. 

Let $G$ be a finite group. We define the \defi{Hurwitz stack} $\Hur_{G}^n$ to be the fibered category of tame Galois covers of $\P^1$ with $n$ branch points, together with a choice of identification of $G$ with the automorphism group of the cover. More precisely, an object of $\Hur_{G}^n$ in the fiber $\Hur_{G}^n(S)$ over a scheme $S$ is a pair $(f,\iota)$, where 
$
f \colon X \to \P^1_S
$
is a tame Galois cover and
$
\iota\colon G \to \Aut f
$
is an isomorphism. Morphisms are commutative diagrams which commute with $\iota$; i.e., given a morphism $S' \to S$ of schemes and objects $(f,\iota) \in \Hur_{G}^n(S)$ and $(f',\iota') \in \Hur_{G}^n(S')$, a morphism is a commutative diagram
\[
\xymatrix{
 X' \ar[d]_{f'} \ar[r]& X\ar[d]_{f} \\
\P^1_{S'} \ar[r]& \P^1_{S}
}
\]
equivariant for the $G$-actions on $X'$ and $X$.
Let $\Conf^n(\P^1)$ be the quotient $\left(\left(\P^1_{\Z}\right)^n-\boldsymbol\Delta\right) / S_n$, where $\boldsymbol \Delta$ is the big diagonal (i.e., the subscheme of tuples such that some pair of entries agree). 

\begin{theorem}[{\cite[Theorem 4 and Theorem 1.4.3]{Wewers1998}}]\label{T:Wewers}
  The fibered category $\Hur_{G}^n$ is a separated Deligne--Mumford stack, smooth and finite type over $\Spec \Z$; its coarse space exists and is a scheme. 
The map
\[
\psi^n_{G}\colon \Hur_{G}^n \to \Conf^n(\P^1),
\]
which sends $(f,\iota)$ to the branch locus of $f$, is \'etale 
and is proper  over $\Conf^n(\P^1)_{\Z[|G|^{-1}]}$. 
\end{theorem}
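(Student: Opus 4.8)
The plan is to follow the standard construction of algebraic Hurwitz spaces (in the spirit of Wewers, Bertin--Romagny, and Romagny--Wewers), establishing the four assertions — that $\Hur_G^n$ is a separated Deligne--Mumford stack, that it is smooth and of finite type over $\Spec\Z$, that $\psi_G^n$ is \'etale and proper over $\Conf^n(\P^1)_{\Z[|G|^{-1}]}$, and that the coarse space is a scheme — roughly in that order, since each assertion feeds the next.

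First I would show $\Hur_G^n$ is an algebraic stack: the moduli problem of tame Galois covers of $\P^1_S$ with $n$ branch points together with a $G$-rigidification satisfies fppf descent, and one produces a smooth atlas by rigidifying the auxiliary data (choosing the branch divisor $D\subset \P^1_S$, marking a section of $\P^1_S\setminus D$, and trivializing the cover \'etale-locally near $D$), which makes the resulting functor representable; alternatively one verifies Artin's criteria using the deformation theory of tame covers. The stack is Deligne--Mumford because the automorphism group of an object $(f,\iota)$ consists of the automorphisms of $X$ over $\P^1_S$ commuting with all of $\iota(G)=\Aut f$, i.e.\ the center $Z(G)$, a finite constant group scheme; hence the diagonal is finite and unramified. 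Separatedness follows from the valuative criterion for the diagonal, and finite type over $\Z$ is visible from the presentation (the cover has degree $|G|$ and the branch data lies in a scheme of finite type).

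Next, smoothness over $\Spec\Z$ and \'etaleness of $\psi_G^n$ are both deformation-theoretic, resting on the rigidity of tame covers relative to their branch locus: for an infinitesimal thickening $S\hookrightarrow S'$ and a deformation of the configuration of branch points, the restriction $X\setminus f^{-1}(D)\to \P^1\setminus D$ is finite \'etale and therefore lifts uniquely by topological invariance of the \'etale site, while tameness (Abhyankar's lemma along $D$) forces a unique filling-in across $D$. This shows $\psi_G^n$ is formally \'etale, hence \'etale, and since $\Conf^n(\P^1)$ is smooth over $\Z$ so is $\Hur_G^n$; note the "tame" condition is built into the moduli problem, so no characteristic restriction is needed here. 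For properness of $\psi_G^n$ over $\Conf^n(\P^1)_{\Z[|G|^{-1}]}$, since $\psi_G^n$ is already \'etale (hence separated, quasi-finite, finite type) it suffices to check the valuative criterion: given a DVR $R$ with fraction field $K$, a relative configuration $D\subset\P^1_R$ of $n$ disjoint sections, and a tame $G$-cover of $\P^1_K$ branched along $D_K$, one must extend it uniquely to a tame $G$-cover of $\P^1_R$ branched along $D$. This is precisely the extension theorem for tamely ramified covers along the relative normal-crossings divisor $D\cup(\text{special fiber})$, which I would deduce from the Grothendieck--Murre logarithmic purity theorem (equivalently, Grothendieck's specialization theory of the tame fundamental group); its hypotheses are met exactly because every ramification index divides $|G|$ and $|G|$ is invertible on the base. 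Combined with \'etaleness and universal closedness, $\psi_G^n$ is then finite over $\Conf^n(\P^1)_{\Z[|G|^{-1}]}$.

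Finally, for the coarse space: $\Conf^n(\P^1)=((\P^1_\Z)^n-\Delta)/S_n$ is a scheme, being the quotient of a quasi-projective scheme by a finite group, and since $\psi_G^n$ is representable and quasi-finite, the coarse moduli space (which exists as a separated algebraic space by Keel--Mori) is separated and quasi-finite over the scheme $\Conf^n(\P^1)$; by the theorem that a separated, quasi-finite algebraic space over a scheme is a scheme, the coarse space is a scheme, and $\psi_G^n$ descends to a quasi-finite map of schemes, finite over $\Z[|G|^{-1}]$. I expect the main obstacle to be the properness step — correctly invoking the tame extension/purity theorem and matching its normal-crossings and tameness hypotheses to the family at hand — since everything else is fairly formal once the stack has been set up and the rigidity of tame covers is in place.
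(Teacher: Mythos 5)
The paper does not prove Theorem~\ref{T:Wewers}; it is stated as a citation to Wewers' thesis (``Theorem 4 and Theorem 1.4.3''), so there is no in-paper argument to compare against. What you have written is a plausible reconstruction of the standard proof strategy from the Hurwitz-space literature (Wewers, Bertin--Romagny, Romagny--Wewers), and the overall architecture is sound: rigidity of tame covers relative to the branch locus gives formal \'etaleness of $\psi_G^n$ and hence smoothness of $\Hur_G^n$; the Grothendieck--Murre / Abhyankar tame extension theorem over a trait gives the valuative criterion for properness once $|G|$ is inverted; and the Deligne--Mumford property comes from the automorphism group being the finite constant group $Z(G)$.

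One small imprecision to flag: you invoke ``$\psi_G^n$ is representable'' when deducing that the coarse space is a scheme. Because $\Hur_G^n$ generally has nontrivial inertia (namely $Z(G)$ at every point), the morphism $\psi_G^n$ to the scheme $\Conf^n(\P^1)$ is \emph{not} representable. This does not break your conclusion, but the correct route is: $\psi_G^n$ is separated and quasi-finite as a morphism of stacks, hence the induced map from the coarse algebraic space (which exists by Keel--Mori) to $\Conf^n(\P^1)$ is separated and locally quasi-finite, and then a separated, locally quasi-finite algebraic space over a scheme is a scheme. You reached the right statement, but via a premise that is false as written.
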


We define $\Hur_{G,1}^n$ to be the fibered category of tame Galois $G$-covers with $n$ branch points together with a choice of point over $\infty$. In other words, an object of $\Hur_{G,1}^n$ in the fiber $\Hur_{G,1}^n(S)$ over a scheme $S$ is a pair $(f,\iota; P)$, where $(f,\iota) \in \Hur_{G}^n(S)$, and $P \in X(S)$ is a point lying over $\infty$, i.e., a map $P\colon S \to X$ making the diagram 
\[
\label{eqn:lying-over}
\xymatrix{
 S \ar[d]_{\id} \ar[r]^P& X\ar[d]_{f} \\
S\ar[r]^{\infty}\ar[dr]_{\id}& \P^1_S\ar[d]\\
& S
}
\]
commute. 

\begin{lemma}
  The stack $\Hur_{G,1}^n$ is algebraic and separated, and it admits a coarse space which is a scheme. 
\end{lemma}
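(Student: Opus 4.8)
The plan is to realize $\Hur_{G,1}^n$ as a finite cover of $\Hur_{G}^n$ and then transfer across this cover the good properties of $\Hur_{G}^n$ established by Wewers (Theorem~\ref{T:Wewers}). Let $\mathcal{X}\to \P^1_{\Hur_{G}^n}$ be the tautological (universal) tame Galois $G$-cover over $\Hur_{G}^n$, and let $\mathcal{X}_\infty := \mathcal{X}\times_{\P^1_{\Hur_{G}^n},\,\infty}\Hur_{G}^n$ be its restriction over the section $\infty$. Unwinding the definitions, an object of $\Hur_{G,1}^n$ over a scheme $S$ --- a cover $(f,\iota)\in \Hur_{G}^n(S)$ together with a point $P\in X(S)$ satisfying $f\circ P=\infty$ --- is exactly a point of $\mathcal{X}_\infty$ lying over the corresponding point of $\Hur_{G}^n$. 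Thus the forgetful $1$-morphism $\Hur_{G,1}^n\to \Hur_{G}^n$ is canonically identified with $\mathcal{X}_\infty\to \Hur_{G}^n$. Since $\mathcal X\to\P^1_{\Hur_G^n}$ is representable (for each $S$-point it is the morphism of algebraic spaces $X\to\P^1_S$) and finite and flat of degree $|G|$, so is its base change $\mathcal{X}_\infty\to \Hur_{G}^n$; in particular $\Hur_{G,1}^n\to \Hur_{G}^n$ is finite and representable (note $\mathcal{X}_\infty$ is the scheme-theoretic fibre $f^{-1}(\infty)$ of the universal cover, including over branch points, where it is non-reduced).

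First I would deduce algebraicity and separatedness. A stack finite over a Deligne--Mumford stack is again Deligne--Mumford, and a stack finite --- hence separated --- over a separated stack of finite type over $\Spec\Z$ is itself separated and of finite type over $\Spec\Z$; by Theorem~\ref{T:Wewers} the target $\Hur_{G}^n$ has these properties, so $\Hur_{G,1}^n$ is a separated Deligne--Mumford stack of finite type over $\Spec\Z$. By the Keel--Mori theorem in the form valid for separated Deligne--Mumford stacks of finite type over a Noetherian base, a coarse moduli space $M$ of $\Hur_{G,1}^n$ exists as a separated algebraic space of finite type over $\Spec\Z$, with $\Hur_{G,1}^n\to M$ proper and surjective.

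It remains to see that $M$ is a scheme, and here I would use the composite $\psi:=\psi_{G}^n\circ(\Hur_{G,1}^n\to\Hur_{G}^n)\colon \Hur_{G,1}^n\to \Conf^n(\P^1)$. By Theorem~\ref{T:Wewers} the map $\psi_{G}^n$ is \'etale, hence quasi-finite, and $\Hur_{G,1}^n\to\Hur_{G}^n$ is finite, so $\psi$ is quasi-finite and separated with target the scheme $\Conf^n(\P^1)$. Since $\Hur_{G,1}^n\to M$ is proper and surjective while $\psi$ has finite fibres, the induced morphism $M\to\Conf^n(\P^1)$ is of finite type with finite fibres and separated, i.e.\ quasi-finite and separated. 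By Zariski's main theorem for algebraic spaces this morphism factors as an open immersion $M\hookrightarrow \overline{M}$ followed by a finite morphism $\overline{M}\to\Conf^n(\P^1)$; as $\Conf^n(\P^1)$ is a scheme and $\overline{M}$ is finite over it, $\overline{M}$ is a scheme, and therefore the open subspace $M$ is a scheme. (Alternatively, one can avoid $\Conf^n(\P^1)$: the finite representable morphism $\Hur_{G,1}^n\to\Hur_{G}^n$ induces a finite morphism $M\to\overline{H}$ of coarse spaces, where $\overline{H}$ is the scheme coarse space of $\Hur_{G}^n$ from Theorem~\ref{T:Wewers}, and a space finite over a scheme is a scheme.)

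I expect the only real friction to be bookkeeping rather than mathematics: verifying that the forgetful morphism is genuinely represented by the relative fibre $\mathcal{X}_\infty$ so that ``finite'' literally means the finiteness of $f$ over $\infty$, and invoking the precise statements of Keel--Mori and of Zariski's main theorem for algebraic spaces (and checking, for the quasi-finiteness of $M\to\Conf^n(\P^1)$, that a proper surjective map onto $M$ with zero-dimensional fibres over $\Conf^n(\P^1)$ forces $M$ to have zero-dimensional, hence finite, fibres there). None of these steps requires inverting $|G|$, since we use only \'etaleness of $\psi_{G}^n$ and not its properness.
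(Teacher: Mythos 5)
Your proof is correct, and it follows essentially the same architecture as the paper's: realize the forgetful map as the universal $f^{-1}(\infty)$ to get that it is representable and finite, deduce algebraicity (via \cite[05UM]{stacks-project}) and separatedness, apply Keel--Mori to get a coarse algebraic space, and then upgrade the coarse space to a scheme. Where you differ is in the last step: your primary route goes through the branch-locus map $\psi\colon\Hur_{G,1}^n\to\Conf^n(\P^1)$, observing that the induced map $M\to\Conf^n(\P^1)$ is quasi-finite and separated and then invoking Zariski's main theorem for algebraic spaces to embed $M$ as an open subspace of a scheme finite over $\Conf^n(\P^1)$. The paper instead stays within the tower $\Hur_{G,1}^n\to\Hur_G^n$: it shows the induced map $\phi'$ of coarse spaces is proper and quasi-finite (using \cite{olsson:stacks-book} for quasi-finiteness and \cite{rydh:quotients} for properness), then concludes $\phi'$ is representable by \cite[082J]{stacks-project}, so $M$ is a scheme because the coarse space of $\Hur_G^n$ is. Your ``alternative'' parenthetical is precisely this second argument. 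Both routes are valid and rest on the same inputs from Wewers; the $\Conf^n(\P^1)$ route is slightly more self-contained in that it only needs ZMT, while the paper's route needs the fact that properness and quasi-finiteness descend to coarse spaces, but avoids passing to $\Conf^n(\P^1)$ at all.

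One small bookkeeping point worth making explicit: when you pass from ``$\Hur_{G,1}^n\to M$ is proper surjective and $\psi$ is quasi-finite'' to ``$M\to\Conf^n(\P^1)$ is quasi-finite'', you are using that the coarse map is a bijection on geometric points (over algebraically closed fields), which is precisely \cite[Theorem 11.1.2]{olsson:stacks-book} that the paper cites; without this, properness and surjectivity of $\Hur_{G,1}^n\to M$ alone would not give that fibres of $M\to\Conf^n(\P^1)$ are zero-dimensional.
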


\begin{proof}
First, the forgetful map $\phi \colon \Hur_{G,1}^n \to \Hur_{G}^n$ is representable, quasi-finite, and proper. Indeed, given an $S$-point $S \to \Hur_{G}^n$ corresponding to the data $(f,\iota)$, the fiber product 
\[
S \times_{\Hur_{G}^n} \Hur_{G,1}^n
\]
is isomorphic to $f^{-1}(\infty) := S \times_{\P^1_S} X$. In particular, since the fibered category $\Hur_{G,1}^n$ admits a representable map to an algebraic stack, it is also algebraic \cite[05UM]{stacks-project}. Since $S \times_{\P^1_S} X$ is finite if $S$ is finite, $\phi$ is also quasi-finite. Since properness is local on the target and the map $S \times_{\P^1_S} X \to S$ is proper, $\phi$ is proper.

Next, since $ \Hur_{G,1}^n \to \Hur_{G}^n$ is proper and  $\Hur_{G}^n$ is separated,  $\Hur_{G,1}^n$ is separated. By \cite[Corollary 1.3 (1)]{KeelM:Quotients}, $\Hur_{G,1}^n$  thus admits a coarse (algebraic) space. 

Finally, since $\phi$ is proper and quasi-finite, the same is true of the induced map $\phi'$ of coarse spaces. Indeed,  $\phi'$ is quasi-finite since, by \cite[Theorem 11.1.2]{olsson:stacks-book}, it is a bijection on geometric points. For properness, by \cite[Theorem 6.12]{rydh:quotients} the map $q$ from $\Hur_{G}^n$ to its coarse space is proper, so the map $q \circ \phi$ is proper, and properness of $\phi'$ now also follows from \cite[Theorem 6.12]{rydh:quotients} (noting that $\Hur_{G,1}^n$ is locally noetherian since $\Hur_{G}^n$ is locally noetherian and $\phi$ is proper). By \cite[082J]{stacks-project}, $\phi'$ is representable; since the coarse space of $\Hur_{G}^n$ is a scheme, we conclude that the coarse space of $\Hur_{G,1}^n$ is a scheme.
\end{proof}

\begin{remark}
  The stack $\Hur_{G,1}^n$ contains a dense open subscheme (which we will see next), but is in general not a scheme (e.g., it is not a scheme if $G \cong \Z/2\Z$ and the fiber of $f$ over $\infty$ contains a double point). 
\end{remark}

We define $\Hur_{G,*}^n$ to be the open substack of $\Hur_{G,1}^n$ such that the marked point is unramified. In other words, $\Hur_{G,*}^n$ is the subcategory of $(f,\iota; P) \in \Hur_{G,1}^n$ such that the image $P(S) \subset f^{-1}(\infty) := S \times_{\P^1_S} X$ of $S$ under $P$ is contained in the open subscheme $(f^{-1}(\infty))^{\sm}$. The branch locus map $\psi^n_{G,*}\colon \Hur_{G,*}^n \to \Conf^n(\P^1)$  has image in $\Conf^n(\A^1)$.

 \begin{proposition}\label{prop:etale}
     The stack $\Hur_{G,*}^n$ is representable by a scheme, and the branch locus map $\psi^n_{G,*}\colon \Hur_{G,*}^n \to \Conf^n(\A^1)$ is finite \'etale over $\Conf^n(\A^1)_{\Z[|G|^{-1}]}$.
   \end{proposition}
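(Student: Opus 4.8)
The plan is to bootstrap representability and étaleness of $\Hur_{G,*}^n$ from the corresponding facts about $\Hur_{G}^n$ proved by Wewers (Theorem~\ref{T:Wewers}), together with the structural results on $\Hur_{G,1}^n$ just established. First I would observe that the forgetful map $\phi\colon \Hur_{G,*}^n \to \Hur_{G}^n$ factors through $\Hur_{G,1}^n$, and that the fiber of $\Hur_{G,1}^n$ over an $S$-point $(f,\iota)$ of $\Hur_{G}^n$ is $f^{-1}(\infty) = S\times_{\P^1_S} X$, a finite flat $S$-scheme of degree $|G|$ (since $f$ is Galois of degree $|G|$). Passing to the open locus where the marked point is unramified replaces $f^{-1}(\infty)$ by its smooth locus $(f^{-1}(\infty))^{\sm}$, which is finite \emph{\'etale} over $S$: over the subscheme $\Conf^n(\A^1)$ the point $\infty$ is not a branch point, so $f$ is unramified over $\infty$, hence $f^{-1}(\infty)$ is already \'etale of degree $|G|$ over $S$ there, and in general the smooth locus of a finite flat morphism is the maximal open subscheme that is \'etale. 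Thus $\Hur_{G,*}^n \to \Hur_{G}^n$ is a representable finite \'etale morphism (of degree $|G|$) over the preimage of $\Conf^n(\A^1)$.

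Next I would upgrade "representable over an algebraic stack" to "is a scheme." The key point is that a finite \'etale cover of an algebraic stack with trivial generic stabilizer can fail to be a scheme in general, so one genuinely uses that the marked point kills automorphisms: an automorphism of $(f,\iota;P)$ in $\Hur_{G,*}^n$ is an automorphism of $(f,\iota)$ in $\Hur_{G}^n$, i.e.\ an element of the center-like group acting through $\Aut f \cong G$, that additionally fixes the marked point $P\in (f^{-1}(\infty))^{\sm}(S)$. Since $G$ acts simply transitively on the geometric fibers of $f^{-1}(\infty)$ over the unramified locus, the only such automorphism is the identity; hence $\Hur_{G,*}^n$ is a Deligne--Mumford stack with no nontrivial automorphisms, i.e.\ an algebraic space. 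Being finite over the separated scheme-valued coarse space of $\Hur_{G}^n$ (equivalently, quasi-finite and separated over a scheme), it is a scheme by \cite[Theorem 8.1 or similar]{olsson:stacks-book} — concretely, a separated algebraic space quasi-finite over a scheme is a scheme, or one can cite \cite[03XX]{stacks-project}; alternatively it coincides with its own coarse space, which the previous lemma already shows to be a scheme, and the coarse-space map is an isomorphism because the stack has no automorphisms.

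For the second assertion, étaleness of the branch locus map $\psi^n_{G,*}\colon \Hur_{G,*}^n \to \Conf^n(\A^1)_{\Z[|G|^{-1}]}$: I would factor $\psi^n_{G,*}$ as the composite $\Hur_{G,*}^n \to \Hur_{G}^n \xrightarrow{\psi^n_G} \Conf^n(\P^1)$, then restrict to $\Conf^n(\A^1)$. By Theorem~\ref{T:Wewers}, $\psi^n_G$ is \'etale and is proper over $\Conf^n(\P^1)_{\Z[|G|^{-1}]}$, hence finite over that locus (proper + quasi-finite = finite). The first map $\Hur_{G,*}^n \to \Hur_{G}^n$ is finite \'etale over the preimage of $\Conf^n(\A^1)$ by the discussion above. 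A composite of two finite \'etale morphisms is finite \'etale, so $\psi^n_{G,*}$ is finite \'etale over $\Conf^n(\A^1)_{\Z[|G|^{-1}]}$, as claimed.

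The main obstacle, and the step requiring the most care, is the passage from "representable" to "scheme": one must check that inverting $|G|$ and passing to the unramified-marked-point locus genuinely rigidifies the stack — that is, that $(f^{-1}(\infty))^{\sm}$ is nonempty and that $G$ acts freely on it — and then invoke the correct descent/algebraic-space result to conclude schemeness rather than merely algebraic-spaceness. The tameness hypothesis and the restriction to $\Z[|G|^{-1}]$ are exactly what guarantee $f$ is finite \'etale over a neighborhood of $\infty$ once $\infty$ is not a branch point, so the remaining work is bookkeeping with the universal properties already set up. I expect the verification that the smooth locus of $f^{-1}(\infty)$ is étale of the right degree, and that this forces trivial automorphisms, to be the crux; everything else is assembling \cite{Wewers1998} with standard facts about finite \'etale morphisms.
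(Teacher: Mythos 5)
Your proof is correct and follows essentially the same strategy as the paper: kill automorphisms with the unramified marked point so $\Hur_{G,*}^n$ is an algebraic space, promote it to a scheme via a finiteness/coarse-space argument, and deduce \'etaleness by computing the fiber product $S\times_{\Hur_G^n}\Hur_{G,*}^n \cong (f^{-1}(\infty))^{\mathrm{ur}}$. The paper packages the properness and quasi-finiteness of $\psi^n_{G,*}$ more cleanly by noting the cartesian square $\Hur_{G,*}^n=\Hur_{G,1}^n\times_{\Conf^n(\P^1)}\Conf^n(\A^1)$ (cartesian precisely because for a Galois cover the marked point over $\infty$ is unramified iff $\infty$ is not a branch point), and only converts proper$+$quasi-finite to ``finite'' after $\Hur_{G,*}^n$ is known to be a scheme, so that $\psi^n_{G,*}$ is representable. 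Your composition argument has a small wrinkle at exactly this point: $\psi^n_G\colon\Hur_G^n\to\Conf^n(\P^1)$ is not representable when $Z(G)\neq 1$, so calling it ``finite'' and invoking ``a composite of finite \'etale morphisms is finite \'etale'' is an abuse; one should instead note the composite $\psi^n_{G,*}$ is representable (source a scheme), proper, quasi-finite and \'etale, hence finite \'etale. This is routine to patch and does not affect the substance of your argument.
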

   
\begin{proof}
The square
\[
\xymatrix{
\Hur_{G,*}^n \ar[r]\ar[d]_{\psi^n_{G,*}} &  \Hur_{G,1}^n \ar[d]_{\psi^n_{G}} \\
\Conf^n(\A^1) \ar[r] & \Conf^n(\P^1)
}
\]
is cartesian (by definition of $\Hur_{G,*}^n$ and $\Conf^n(\A^1)$); 
in particular, the map  $\Hur_{G,*}^n \to \Hur_{G,1}^n$ is representable and open, and $\psi^n_{G,*}$ is proper and quasi-finite (since the same was true of $\Hur_{G,1}^n \to \Conf^n(\P^1)$).

Let $(f,\iota; P) \in \Hur_{G,*}^n$. Since $P$ is unramified and $f$ is Galois, $G$ acts simply transitively on $f^{-1}(\infty)$, and any element of $G$ fixing $P$ thus fixes every point of $f^{-1}(\infty)$. The tuple $(f,\iota; P)$ thus has no automorphisms; by \cite[04SZ]{stacks-project} $\Hur_{G,*}^n$ is thus an algebraic space. Since formation of coarse spaces commutes with Zariski opens \cite[Theorem 11.1.2]{olsson:stacks-book}, representability follows since the coarse space of $\Hur_{G,1}^n$ is a scheme.  Since $\Hur_{G,*}^n$ is a scheme, $\psi^n_{G,*}$ is representable, and since it is also proper and quasi-finite, it is finite.

Finally, to check that $\psi^n_{G,*}$ is \'etale, it is enough to check that  $\Hur_{G,*}^n \to \Hur_{G}^n$ is \'etale (since $\psi^n_{G}$ is \'etale by Theorem \ref{T:Wewers}); indeed, given an $S$-point $S \to \Hur_{G}^n$ corresponding to the data $(f,\iota)$, the fiber product 
\[
S \times_{\Hur_{G}^n} \Hur_{G,*}^n
\]
is isomorphic to $f^{-1}(\infty)^{\sm}$, where  $f^{-1}(\infty) := S \times_{\P^1_S} X$, which a scheme and is \'etale over $S$.
\end{proof}

\subsection{Analytic Hurwitz spaces}

 All stacks in this section are Deligne--Mumford. See \cite[Section 2]{hall:relative-gaga-principle} for background on analytic Deligne--Mumford stacks.
 For a stack  $X$ over $\mathbb{C}$, denote by $X_{\an}$ its complex analytification; we will reserve superscripts for analytic stacks that were not constructed as the analytification of an algebraic stack (so for example, $(\Hur_{G}^n)_{\an}$ is the complex analytification of the algebraic stack $\Hur_{G}^n$, while $\Hur_{G}^{n, \text{an}}$ is the ``analytic'' Hurwitz stack).

\begin{proposition}[{\cite[Lemma 7.1]{hall:relative-gaga-principle}}]
\label{P:stacky-relative-gaga}
  Let $f \colon X \to X'$ be a morphism of analytic stacks. Suppose that for every local Artinian $\mathbb{C}$-scheme $S$, the functor 
\[
X(S_{\an}) \to X'(S_{\an})
\]
is an equivalence of categories. Then $f$ is an equivalence. 
\end{proposition}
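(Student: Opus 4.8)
The plan is to deduce that $f$ is an equivalence from two facts, both extracted from the hypothesis: that $f$ is \'etale, and that $f$ is a monomorphism which is surjective on $\C$-points. Recall that an \'etale monomorphism of analytic Deligne--Mumford stacks which is surjective on $\C$-points is an equivalence: this may be checked on \'etale atlases, where it becomes the statement that a surjective open immersion of analytic spaces is an isomorphism. So there are two things to establish: (i) $f$ is \'etale, and (ii) $\Delta_f\colon X\to X\times_{X'}X$ is an equivalence (equivalently $f$ is a monomorphism) and $f$ is surjective on $\C$-points.

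For (i) I would verify the infinitesimal lifting criterion. Let $S_0\hookrightarrow S$ be a square-zero closed immersion of local Artinian $\C$-schemes; since $S$ and $S_0$ are Artinian we have $S_{\text{an}}=S$ and $(S_0)_{\text{an}}=S_0$, so by hypothesis $X(S)\to X'(S)$ and $X(S_0)\to X'(S_0)$ are equivalences of categories. Given a $2$-commutative square with $S_0\to X$ and $S\to X'$, essential surjectivity of $X(S)\to X'(S)$ produces a lift $S\to X$ of $S\to X'$, and full faithfulness (applied over both $S$ and $S_0$) shows this lift is unique up to unique isomorphism and may be chosen compatible with $S_0\to X$. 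Hence $f$ is formally \'etale, and since analytic Deligne--Mumford stacks are locally of finite presentation, $f$ is \'etale.

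For (ii) I would first observe that $\Delta_f$ also satisfies the hypothesis of the proposition: for Artinian $S$ one has $(X\times_{X'}X)(S)=X(S)\times_{X'(S)}X(S)$, and since $X(S)\to X'(S)$ is fully faithful, $X(S)\to X(S)\times_{X'(S)}X(S)$ is an equivalence. Applying (i) to $\Delta_f$ shows $\Delta_f$ is \'etale, while full faithfulness of $X(\C)\to X'(\C)$ (the case $S=\Spec\C$) shows $\Delta_f$ is injective on $\C$-points and trivializes the relative inertia; an \'etale morphism that is universally injective is an open immersion, and $\Delta_f$ is surjective on $\C$-points, again by full faithfulness of $X(\C)\to X'(\C)$. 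Since every nonempty open substack of a stack locally of finite type over $\C$ contains a $\C$-point, a surjective-on-$\C$-points open immersion is an equivalence, so $\Delta_f$ is an equivalence and $f$ is a monomorphism. Finally, essential surjectivity of $X(\C)\to X'(\C)$ makes $f$ surjective on $\C$-points, so $f$ is an \'etale surjective monomorphism and therefore an equivalence. The part requiring the most care is the passage from the infinitesimal input to the global statement, namely invoking in the \emph{analytic} category the facts that formally \'etale plus locally of finite presentation implies \'etale and that an \'etale universally bijective morphism is an isomorphism; the needed foundational results are in \cite[Section 2]{hall:relative-gaga-principle} and \cite{olsson:stacks-book}, whereas the lifting step itself involves no analytic subtlety because every test scheme is Artinian.
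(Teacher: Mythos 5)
There is no proof of this statement in the paper to compare against: the proposition is stated with the bracketed attribution to \cite{hall:relative-gaga-principle}, Lemma~7.1, and is used as an imported tool, so the paper itself gives no argument. Your proposal should therefore be measured against Hall's original proof rather than anything in this paper.

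That said, your reconstruction along algebraization/GAGA lines is logically sound, and its shape (infinitesimal criterion gives \'etaleness; fullness/faithfulness on $\C$-points gives bijectivity; \'etale plus bijective gives equivalence) is the standard one. Two comments. First, in step (ii) you work slightly harder than needed: $\Delta_f\colon X\to X\times_{X'}X$ is a section of the first projection, hence is automatically a monomorphism, and in particular representable and universally injective. So once you have applied step (i) to $\Delta_f$ and concluded it is \'etale, it is already an open immersion; the only place you need $\C$-points for $\Delta_f$ is to see that this open immersion is surjective. There is no need to argue separately that it trivializes the relative inertia. Second, the real content of the argument is exactly where you flag it: the passage from ``formally \'etale, tested only against Artinian thickenings'' to \'etale in the analytic category, and the fact that an \'etale universally bijective morphism of analytic Deligne--Mumford stacks is an equivalence. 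Both rest on the local finiteness of analytic stacks and the analytic \'etale/open-immersion formalism, for which the appropriate references are the foundational section of \cite{hall:relative-gaga-principle} together with \cite{olsson:stacks-book}, as you indicate.
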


\begin{remark}
\label{R:stacky-gaga}
By stacky GAGA \cite[Theorem C]{hall:relative-gaga-principle}, for any proper $\mathbb{C}$-scheme $S$, the analytification functor 
\[
  X(S) \to   X_{\an}(S_{\an})
\]
is an equivalence.
\end{remark}

We define $\Hur_{G}^{n, \text{an}}$ to be the analytic Hurwitz stack: an object of $\Hur_{G}^{n, \text{an}}$ in the fiber $\Hur_{G}^{n, \text{an}} (S)$ over an analytic space $S$ is a pair 
$
(f\colon X \to \P^1_S, \iota\colon G \cong \Aut f)
$
as above, but where $X$ is now an analytic space and $f$ is an analytic morphism. We define $\Hur_{G,*}^{n,\text{an}}$ similarly. These are indeed stacks: \'etale descent follows from descent in the complex topology (since analytic \'etale maps are locally isomorphisms in the complex topology). That these are analytic stacks follows incidentally from the following theorem.

\begin{theorem}
  The natural maps $\left(\Hur_{G}^n\right)_{\an} \to \Hur_{G}^{n, \text{an}}$ and $\left(\Hur_{G,*}^n\right)_{\an} \to \Hur_{G,*}^{n,\text{an}}$ are equivalences of analytic stacks.
\end{theorem}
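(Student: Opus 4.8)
The plan is to deduce this from the previous proposition (the analytic Artin approximation / infinitesimal criterion for equivalences of analytic stacks, quoted as \cite[Lemma 7.1]{hall:relative-gaga-principle}), exactly as in the cited reference. Concretely, write $f\colon (\Hur_{G}^n)_{\text{an}} \to \Hur_{G}^{n,\text{an}}$ for the natural comparison map. To apply the proposition it suffices to check that for every local Artinian $\C$-scheme $S$, the induced functor on $S_{\text{an}}$-points is an equivalence of categories. But a local Artinian $\C$-scheme is proper over $\C$, so by stacky GAGA (Remark~\ref{R:stacky-gaga}, i.e.~\cite[Theorem C]{hall:relative-gaga-principle}) we have $(\Hur_{G}^n)_{\text{an}}(S_{\text{an}}) \simeq \Hur_{G}^n(S)$. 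Thus the claim reduces to showing $\Hur_{G}^n(S) \simeq \Hur_{G}^{n,\text{an}}(S_{\text{an}})$ for such $S$; that is, that a tame Galois $G$-cover of $\P^1_S$ with $n$ branch points in the algebraic sense is the same as one in the analytic sense, when $S$ is Artinian local over $\C$.

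The key step is therefore this last comparison over an Artinian base. Here I would argue as follows. First, $\P^1_S$ is a proper $\C$-scheme (as $S$ is), so by GAGA for coherent sheaves and the existence of Hilbert schemes (or directly by Grothendieck's algebraization/GAGA over an Artinian base, where ``analytic'' and ``formal'' coincide since the underlying topological space is a point), finite flat $S$-schemes $Y \to \P^1_S$ correspond to finite flat analytic covers $Y_{\text{an}} \to \P^1_{S,\text{an}}$. Second, the extra structure — that $Y$ is a curve over $S$ (smooth, proper, geometrically connected fibers of dimension one), that $f$ is Galois, that $\iota\colon G \cong \Aut f$ is an isomorphism, and that $f$ is tamely ramified with exactly $n$ branch points — are all conditions that can be checked on geometric fibers or are preserved under the GAGA dictionary; since $S$ has a single (reduced) geometric point, smoothness/properness/tameness/the branch count all match between the algebraic and analytic pictures, and $\Aut f$ is the same finite group computed either way. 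So the functor is essentially surjective and fully faithful. Then the same argument, restricting to the open locus where the chosen point over $\infty$ is unramified, handles $\Hur_{G,*}^n$: the open immersion $\Hur_{G,*}^n \hookrightarrow \Hur_{G,1}^n$ and its analytic counterpart are compatible with analytification (analytification commutes with open immersions), and the data of a section $P\colon S \to X$ over $\infty$ again transports across GAGA since $X$ is proper over $\C$.

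I would then conclude by invoking \cite[Lemma 7.1]{hall:relative-gaga-principle}: since the functor $X(S_{\text{an}}) \to X'(S_{\text{an}})$ is an equivalence for all local Artinian $\C$-schemes $S$, the map $f$ is an equivalence of analytic stacks, and likewise for the starred version.

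The main obstacle I anticipate is not conceptual but bookkeeping: one must be careful that ``tame Galois $G$-cover with $n$ branch points'' is a condition whose formation is compatible with analytification over a non-reduced Artinian base — in particular that tameness (a condition only visible in positive characteristic, hence automatic here) and the branch-point count (which over an Artinian base is just the degree of the étale branch divisor $D \to S$, a finite flat scheme whose analytification has the same degree) really do match up, and that $\Aut f$ — a priori a group scheme — is finite and unchanged by analytification, which holds because $f$ is separated, finite type, and Galois so that $\Aut f$ is a constant finite group scheme isomorphic to $G$ via $\iota$. Once these compatibilities are spelled out the rest is formal from GAGA plus the quoted infinitesimal criterion.
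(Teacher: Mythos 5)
Your proposal follows the same overall strategy as the paper: invoke \cite[Lemma~7.1]{hall:relative-gaga-principle} to reduce to local Artinian $\C$-schemes $S$, use stacky GAGA (Remark~\ref{R:stacky-gaga}) to identify $(\Hur_G^n)_{\text{an}}(S_{\text{an}})$ with $\Hur_G^n(S)$, and then compare algebraic and analytic covers directly over $S$. The one place you genuinely diverge is in how the analytic cover is algebraized. The paper's proof algebraizes the \emph{abstract} analytic curve $C^{\text{an}}\to S_{\text{an}}$: it cites Artin's algebraization theorem to produce an algebraic space $C$, then stacky GAGA to algebraize the map to $\P^1_S$ and to show $C$ is a scheme (a proper $1$-dimensional algebraic space is a curve). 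You instead algebraize the \emph{finite flat cover} $Y^{\text{an}}\to\P^1_{S,\text{an}}$ all at once, using GAGA for coherent sheaves of algebras on $\P^1_S$ (proper over $\C$), and then transfer the remaining conditions (smoothness of $Y/S$, Galois, $\Aut f\cong G$, tameness, branch count) by noting they are detected on the unique geometric fiber and preserved by the equivalence. This is a legitimate and arguably more economical route for this particular problem, since it never needs to leave the category of schemes or invoke Artin's algebraization; what you give up is the modularity of the paper's argument, which isolates a general-purpose algebraization statement for analytic curves that could be reused elsewhere. One minor point: the Hilbert-scheme reference in your argument is unnecessary — GAGA for coherent $\O_{\P^1_S}$-algebras already does the work; and when transferring smoothness of $Y\to S$ you should say explicitly that you are using flatness plus smoothness of the closed fiber (so the conclusion holds over the non-reduced Artinian base), which is the standard fiberwise criterion you are implicitly invoking.
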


\begin{proof}
  By by Proposition \ref{P:stacky-relative-gaga} and Remark \ref{R:stacky-gaga}, it suffices to check that for every local Artinian $\mathbb{C}$-scheme $S$, the functors
\[
\Hur_{G}^n(S) \to \Hur_{G}^{n, \text{an}}(S_{\an})
\]
and 
\[
\Hur_{G,*}^n(S) \to \Hur_{G,*}^{n,\text{an}}(S_{\an})
\]
are equivalences of categories.

First, for a smooth proper analytic curve $C^{\an} \to S_{\an}$, there is a smooth proper curve $C \to S$ and an isomorphism $C_{\an} \to C^{\an}$ over $S_{\an}$. Indeed, $S \to \Spec \mathbb{C}$ is proper (since $S$ is local and Artinian), so a smooth proper analytic curve $C^{\an} \to S_{\an}$ is also proper over $\left(\Spec \C\right)_{\an}$; by \cite[Theorem 7.3]{artin:algebraization-of-formal-moduil-II}, there exists an algebraic space $C$ and an isomorphism $C_{\an} \cong C^{\an}$. By stacky GAGA, the composition $C_{\an} \to C^{\an} \to S_{\an}$ algebrizes to a map $f\colon C \to S$, which is smooth and proper (since the same is true of $f_{\an}$). Finally, a proper 1-dimensional algebraic space over $\mathbb{C}$ is a curve \cite[Theorem V.4.9 and Corollary III.3.6]{Knutson:algebraicSpaces}.   

To finish, we invoke stacky GAGA  (Remark \ref {R:stacky-gaga}): noting again that $S \to \Spec \mathbb{C}$ is proper, an analytic section of $X_{\an} \to S_{\an}$ algebrizes, and an analytic map $Y_{\an} \to X_{\an}$ of curves over $S_{\an}$ is proper and algebrizes (and in particular automorphisms algebrize).
\end{proof}

\subsection{Topological Hurwitz spaces}\label{S:top}

Next, we show that $\Hur_{G,*}^{n, \text{an}}$ agrees with its purely ``topological'' analogue; we model our arguments on \cite[Section 3]{Romagny2006}, with minor simplifications since $\Hur_{G,*}^{n, \text{an}}$ is a scheme.
\\

 We define a topological space $\Hur_{G,*}^{n, \text{top}}$ as follows. As a set, $\Hur_{G,*}^{n, \text{top}} := \Hur_{G,*}^{n, \text{an}}(\mathbb{C})$. We define a basis of neighborhoods around a given $(f,\iota; P) \in \Hur_{G,*}^{n, \text{top}}$ with branch locus $D =\{t_1, \ldots, t_n\} \subset \A^1(\C)$ as follows. The choice of $\iota$ and $P$ determines a homomorphism $\rho_f\colon \pi_1(\P^1(\C) \setminus D,\infty) \to G$ (rather than conjugacy class of homomorphisms). Let $\underline{C} = \{C_1, \ldots, C_n\}$ be disjoint disk-like neighborhoods of the points $t_1,\ldots,t_n$ and let $\Conf^n(\A^1) (\underline{C})$ denote the subset of $\Conf^n(\A^1) (\C)$ consisting of $D' = \{t_1',\ldots,t_n'\}$ such that $t_i' \in C_i$. We define $\mathcal{H}((f,\iota; P), \underline{C})$ to be the subset of $\Hur_{G,*}^{n, \text{top}}$ of $(f', \iota'; P')$ whose associated $D'$ is an element of $\Conf^n(\A^1) (\underline{C})$ and such that $\rho_{f'} = \rho_f$ under the natural identification
\[
\pi_1(\P^1(\C)\setminus D, \infty) \cong \pi_1(\P^1(\C)\setminus\cup C_i, \infty) \cong \pi_1(\P^1(\C)\setminus D', \infty).
\]
We take $\mathcal{H}((f,\iota; P), \underline{C})$ to be a basis of open neighborhoods of the point $(f,\iota; P)$.

The natural map $\Hur_{G,*}^{n, \text{top}} \to \Conf^n(\A^1) (\mathbb{C})$ is a covering space morphism; indeed, for $\Conf^n(\A^1) (\underline{C}) \subset \Conf^n(\A^1) (\mathbb{C})$ a neighborhood of $D$,
\[
\Psi^{-1}(\Conf^n(\A^1) (\underline{C})) = \coprod_{(f,\iota; P)} \mathcal{H}((f, \iota; P), \underline{C})
\] 
where the coproduct ranges over all isomorphism classes of $(f,\iota; P)$ with branch locus $D$. In particular, this gives $\Hur_{G,*}^{n, \text{top}}$ the structure of an analytic space.

There is a natural map $\Phi \colon \Hur_{G,*}^{n, \text{an}} \to \Hur_{G,*}^{n, \text{top}}$ of analytic spaces, which we describe via Yoneda's lemma. Let $S$ be an analytic space and let $(f,\iota;P) \in \Hur_{G,*}^{n, \text{an}}(S)$. Let $\phi_f\colon S \to \Hur_{G,*}^{n, \text{top}}$ be the map which sends $s \in S$ to the fiber of $(f,\iota;P)$ over $s$; the local isomorphism $\Hur_{G,*}^{n, \text{top}} \to \Conf^n(\A^1)$ endows $\phi_f$ with an analytic structure.
The map $\Phi$ is thus a bijective map of smooth analytic spaces, and by \cite[p.~19]{griffiths-and-Harris} is an isomorphism of analytic spaces.

\subsection{Definition of $\cHur_{ G,c}^n$}

Let $G,c,R$ be as in Notation~\ref{N:Gc} throughout this section.
Given an algebraically closed field $k$, for a $k$-point of $\Hur_{G,*}^n$, we can consider the $n$ branch points in $\P^1_k$, and at each branch point, the inertia groups are a conjugacy class of cyclic subgroups.  Thus from the $k$-point of $\Hur_{G,*}^n$, we obtain a multiset of conjugacy classes of non-trivial cyclic subgroups of $G$.  By \cite[Thm 3.1]{Components} and \cite[Remark 5.3]{Components}, we see that this multiset is constant on components of $\Hur_{G,*}^n$.   We define $\cHur_{ G,c}^n$ to be 
the union of components of $\left(\Hur_{G,*}^n\right)_R$ for which all of the inertia groups are generated by elements of $c$. 

For the rest of the paper, we work over $R = \Z[|G|^{-1}]$. As we add the subscript $c$ to our notation, we are removing the $*$; nonetheless, we really do mean to continue working with a choice of unramified point over $\infty$ throughout the rest of the paper.

Now we prove Lemma~\ref{L:what}, to show that the ${\F}_q$-points of $\cHur_{ G,c}^n$ are our objects of interest.
\begin{proof}[Proof of Lemma~\ref{L:what}]
This lemma is about translating between the subfields of $\overline{\F_q(t)}$ counted by $N(H,\Gamma,q^n,\F_q(t))$ and the abstract extensions of
$\F_q(t)$ that correspond to $\F_q$-points of $\cHur_{ G,c}^n$.

Recall that $N(H,\Gamma,q^n,\F_q(t))$ counts certain surjections $\Gal(\overline{\F_q(t)}/\F_q(t))\ra H\rtimes \Gamma$.
A surjection 
$\Gal(\overline{\F_q(t)}/\F_q(t))\ra H\rtimes \Gamma$
is equivalent to a subfield $L$ of $\overline{\F_q(t)}$ such that $\F_q(t)\sub L$, and an isomorphism $\Gal(L/\F_q(t))\isom H \rtimes \Gamma$.   Using the definition of $c$, we can see that the associated $H$-extension is unramified everywhere if and only if all inertia groups are generated by elements of $c$.  Also for any extension $K/\F_q(t)$, we have

$\rD K=q^n$, where $n$ is degree of the ramification divisor (i.e.,~branch locus of the cover or curves).

By definition of $\cHur_{ G,c}^n$ and the correspondence between smooth projective curves over a field and their function fields, we have that ${\F}_q$-points of $\cHur_{ G,c}^n$ correspond to
 isomorphism classes of $(M,\pi,m)$, where $M/\F_q(t)$ is a Galois  extension   (split completely over $\infty$ and with 
 $ \rD M =q^n$),
 $\pi\colon \Gal(M/\F_q(t)) \ra H \rtimes \Gamma$  is an isomorphism 
 (taking all inertia groups to subgroups generated by elements of $c$), and $m$ is a choice of place of $M$ over $\infty$ (for the marked point).
   We choose an embedding $\overline{\F_q(t)} \ra\overline{\F_q(t)}_\infty$.  Then we see that each isomorphism class of $(M,\pi,m)$ has a distinguished element  where
$M$ is a subfield of $\overline{\F_q(t)}$, Galois over $\F_q(t)$, and $m$ chooses the place whose valuation pulls back from the valuation on
$\overline{\F_q(t)}_\infty$. (In an isomorphism class, we have a unique subfield of $\overline{\F_q(t)}$  isomorphic to a given Galois extension, and we have a number of choices for an isomorphism from a field in the isomorphism class to that subfield, given by elements of the Galois group, and exactly one of those choices takes the chosen place over $\infty$ to the one that pulls back from $\overline{\F_q(t)}_\infty$, and this determines what $\pi$ must be.)  
These Galois subfields $L$ of $\overline{\F_q(t)}$  with choices of isomorphism $\Gal(L/\F_q(t))\isom H \rtimes \Gamma$ (i.e., the distinguished elements above)
are exactly what is counted by $N(H,\Gamma,n,\F_q(t))$, as described in the first paragraph of the proof.
\end{proof}

Now we prove Lemma~\ref{L:C}.
\begin{proof}[Proof of Lemma~\ref{L:C}]
\
By Proposition~\ref{prop:etale}, we have that $Y$ is smooth of relative dimension $n$ over $\Spec R$.  Thus we have $\dim H^i_{\textrm{c,\'et}}(Y_{\overline{\F}_q},\Q_\ell)=
 \dim H^{2n-i}_{\textrm{\'et}}(Y_{\overline{\F}_q},\Q_\ell)$ by Poincar\'{e} Duality.
Next, we will relate $\dim H^{j}_{\textrm{\'et}}(Y_{\overline{\F}_q},\Q_\ell)$ to 
 $\dim H^{j}( Y_\C,\Q_\ell)$.  
 The result \cite[Proposition 7.7]{Ellenberg2016} gives an isomorphism between \'{e}tale cohomology in characteristic $0$ and in positive characteristic in the case of a finite cover of a complement of a reduced normal crossing divisor in a smooth proper scheme.  Though \cite[Proposition 7.7]{Ellenberg2016} is only stated	 for \'{e}tale cohomology with coefficients in $\Z/\ell\Z$, the argument goes through identically for coefficients in $\Z/\ell^k\Z$, and then we can take the inverse limit and tensor with $\Q_{\ell}$ to obtain the result of \cite[Proposition 7.7]{Ellenberg2016} with $\Z/\ell\Z$ coefficients replaced by $\Q_\ell$ coefficients. 
 We apply this strengthened comparison to $ Y \times_{\Conf^n(\A^1)} \operatorname{PConf}^n(\A^1),$
 where $\operatorname{PConf}^n(\A^1)$ is the moduli space of $n$ labelled points on $\A^1$, and is the complement of a relative normal crossings divisor in a smooth proper scheme \cite[Lemma 7.6]{Ellenberg2016}, as well as an $S_n$ cover of $\Conf^n(\A^1)$.
 Then we take $S_n$-invariants to conclude that 
 $ \dim H^{j}_{\textrm{\'et}}(Y_{\overline{\F}_q},\Q_\ell)=\dim H^{j}_{\textrm{\'et}}(Y_{\C},\Q_\ell)$.
By the comparison of \'{e}tale and analytic cohomology \cite[Expos\'e XI, Theorem 4.4]{1973}, we have $\dim H^{j}( Y_{\C},\Q_\ell)=\dim H^{j}_{\textrm{\'et}}(Y_{\C},\Q_\ell)$. 
\end{proof}

The following lemma spells out explicitly how Frobenius acts on the points of our Hurwitz schemes over $\F_q$, which will be necessary for the proof of Theorem~\ref{T:comp}.

\begin{lemma}\label{L:HowFrob}
For any $q$ relatively prime to $|G|$, the $\overline{\F}_q$-points of $(\cHur_{ G,c}^n)_{\overline{\F}_q}$
correspond to certain $(f,\iota,P)$, where $f\colon X \ra \P^1_{\overline{\F}_q}$ is a map of smooth proper curves over $\overline{\F}_q$.  The action of the Frobenius $\Frob_{(\cHur_{ G,c}^n)_{{\F}_q}}$ on these 
$\overline{\F}_q$-points takes $(f,\iota,P)$ to $(f^F,\iota^F,P^F)$, where $F\colon\Spec \overline{\F}_q \ra \Spec  \overline{\F}_q$ is the morphism corresponding to the map $x\mapsto x^q$  on fields, and 
$X^F$ is the base change $X\times_{\Spec \overline{\F}_q}\Spec \overline{\F}_q$ along $F$, and similarly for  $f^F,\iota^F,P^F$.

\end{lemma}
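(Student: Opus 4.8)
The plan is to identify $\Frob_{(\cHur_{ G,c}^n)_{\F_q}}$ through the standard dictionary between the geometric Frobenius on a moduli scheme defined over $\F_q$ and base change of the parametrized object along the $q$-power Frobenius of the ground field. First I would record the moduli interpretation of $\cHur_{ G,c}^n$ over a general base. By Proposition~\ref{prop:etale} the stack $\Hur_{G,*}^n$ is a scheme, so for every $R$-scheme $S$ the set $(\Hur_{G,*}^n)_R(S)$ is the set of isomorphism classes of tuples $(f\colon X\ra\P^1_S,\iota,P)$, where $f$ is a tame Galois $G$-cover with $n$ branch points and $P$ is an unramified point of $X$ over $\infty$. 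The multiset of conjugacy classes of cyclic inertia subgroups attached to a geometric point of $\Hur_{G,*}^n$ is locally constant by \cite[Thm 3.1]{Components} and \cite[Remark 5.3]{Components}, hence constant on connected components; consequently the union of components cutting out $\cHur_{ G,c}^n$ represents exactly the subfunctor of those $(f,\iota,P)$ all of whose geometric inertia subgroups are generated by elements of $c$. Taking $S=\Spec\overline{\F}_q$ gives the first assertion: the $\overline{\F}_q$-points of $(\cHur_{ G,c}^n)_{\overline{\F}_q}$ are exactly the tuples $(f,\iota,P)$ of this kind with $X$ a curve over $\overline{\F}_q$.

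Next I would compute the Frobenius action. An $\overline{\F}_q$-point of $(\cHur_{ G,c}^n)_{\overline{\F}_q}$, the base change of the $\F_q$-scheme $Z:=(\cHur_{ G,c}^n)_{\F_q}$, is the same datum as an $\F_q$-morphism $P_0\colon\Spec\overline{\F}_q\ra Z$, and since $\Frob_{Z}=F_{Z,q}\times\id_{\Spec\overline{\F}_q}$ has second factor the identity, it sends the point with underlying morphism $P_0$ to the point with underlying morphism $F_{Z,q}\circ P_0$. Now the $q$-power Frobenius is a natural transformation from the identity functor to itself on the category of $\F_q$-schemes, so $F_{Z,q}\circ P_0 = P_0\circ F_{\Spec\overline{\F}_q,q}=P_0\circ F$, where $F$ is precisely the morphism $\Spec\overline{\F}_q\ra\Spec\overline{\F}_q$ induced by $x\mapsto x^q$. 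Translating back through the moduli interpretation, if $P_0$ classifies the tuple $(f,\iota,P)$ over $\Spec\overline{\F}_q$, then $P_0\circ F$ classifies its base change along $F$, which is $(f^F,\iota^F,P^F)$ in the notation of the statement. Combining these two observations proves $\Frob_{(\cHur_{ G,c}^n)_{\F_q}}(f,\iota,P)=(f^F,\iota^F,P^F)$.

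I do not expect a genuine obstacle here: the whole argument is an unwinding of definitions together with the naturality of the $q$-power Frobenius. The one point that deserves attention, and the only place the argument could go wrong if handled sloppily, is that the Yoneda/naturality step requires the moduli description of $\cHur_{ G,c}^n$ to be available over the base $\Spec\overline{\F}_q$ (equivalently over $\Spec\F_q$) rather than merely over algebraically closed fields; once the representability statement of Proposition~\ref{prop:etale} and the local constancy of the inertia multiset are in hand, this is automatic and the computation is forced.
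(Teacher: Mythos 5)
Your argument is correct and takes essentially the same route as the paper's one-sentence proof: both reduce to the naturality of the $q$-power (absolute) Frobenius on the category of $\F_q$-schemes, which gives $F_{Z,q}\circ P_0 = P_0\circ F$ for the $\F_q$-morphism $P_0\colon\Spec\overline{\F}_q\to Z$ underlying the given $\overline{\F}_q$-point, after which the Yoneda/moduli interpretation of $\cHur_{G,c}^n$ translates this into base change of $(f,\iota,P)$ along $F$. Your write-up is in fact a bit more careful than the paper's, since it keeps straight the distinction between the $\overline{\F}_q$-point $s$ of $Z_{\overline{\F}_q}$ and its underlying $\F_q$-morphism $P_0$ to $Z$, whereas the paper's displayed identity conflates the two by an abuse of notation.
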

\begin{proof}
For any scheme $Z$ over $\F_q$, and any $\overline{\F}_q$-point (i.e.,~morphism of $\Spec \overline{\F}_q$-schemes) $s\colon\Spec \overline{\F}_q\ra Z_{\overline{\F}_q}$, we have that $\Frob_{Z} \circ s=s\circ F.$
Using this fact, the lemma follows from the definition of $\cHur_{ G,c}^n$.
\end{proof}

\section{Proof of Theorem~\ref{T:comp}}\label{S:countcomp}
We start with the notation from Notation~\ref{N:Gc}, and not yet with the specific $G$ and $c$ of Theorem~\ref{T:comp}.
Theorem~\ref{T:comp} is about counting Frobenius fixed components in $\cHur_{ G,c}^n$.
  First, we will give some definitions necessary to describe a component invariant of $\cHur_{ G,c}^n$.
The invariant is constructed in \cite{Components}, and more details about the following definitions and statements can be found there.

For an algebraically closed field $k$, we define $\widehat{\Z}(1)_k=\varprojlim \mu_m(k)$ and $\widehat{\Z}_k=\varprojlim \Z/m\Z$, where $m$ ranges over all positive integers if $\chr k=0$ and over all positive integers relatively prime to the characteristic of $k$ when $\chr(k)>0$.  The subset of topological generators of $\widehat{\Z}(1)_k$ will be denoted $\widehat{\Z}(1)^{\times}_k$.  It is a torsor for the units $(\widehat{\Z}_k)^\times$ of $\widehat{\Z}_{k}.$  For a set $X$ with an action of $(\widehat{\Z}_k)^\times$, we define
$$
X\langle -1 \rangle_k :=\operatorname{Map}_{(\widehat{\Z}_{k})^\times }(\widehat{\Z}(1)_{k}^{\times},X )
$$
to be the set of functions $\widehat{\Z}(1)_k^{\times}\ra X$ equivariant for the $(\widehat{\Z}_k)^\times$-actions.  If we choose an element $\mu \in \widehat{\Z}(1)_k^{\times}$, then elements of 
$X\langle -1 \rangle_k$ are specified  by their values on $\mu$
by the $(\widehat{\Z}_{k})^\times$  action.  We omit the subscripts $k$ from the notation when we are working over a fixed field.

We write $c/G$ for the set of non-trivial conjugacy classes in $c$ (the quotient of the set $c$ under the conjugation action of $G$).  If $\chr k=0$, or $\chr k>0$ and $\chr k\nmid |G|$, then  $(\widehat{\Z}_k)^\times$ has a \emph{powering} action on the set of elements of $G$,  where $\{\alpha_m\}\in (\widehat{\Z}_{k})^\times$ takes $g$ to $g^{\alpha_{\operatorname{ord}(g)}}$.  This gives an induced action of $(\widehat{\Z}_k)^\times$ on $c/G$ and thus $\Z^{c/G}$ (by permuting the coordinates via the action on $c/G$).  
Let $\mathfrak{S}^{c,G}$ be the set of $(\widehat{\Z}_\C)^\times$-orbits of elements of $\Z^{c/G}$.

For every $x,y\in G$ that commute, we have a homomorphism $\Z^2\ra G$ that sends $(i,j)$ to $x^iy^j$,
and an induced homomorphism $\Z=H_2(\Z^2,\Z)\ra H_2(G,\Z)$.  Let $\langle x, y\rangle$ be the image of $1$ under this homomorphism (which is $[x|y]-[y|x]\in H_2(G,\Z)$ in the non-homogeneous chain notation for group homology).
Let $Q_c$ be the subgroup of $H_2(G,\Z)$ generated by $\langle x,y\rangle$ for $x\in c$ and $y\in G$ that commute with $x$.  We define $H_2(G, c):=H_2(G,\Z)/Q_c$.

We now  will define groups $K(G,c)$ and $U(G,c)$.
Every central extension $\widetilde{G}\ra G$ gives rise to a natural map $\tau_{\widetilde{G}}\colon H_2(G,\Z)\ra \ker (\widetilde{G} \to G)$ 
(given by the Universal Coefficients Theorem; of course $\tau_{\widetilde{G}}$ depends on the map $\widetilde{G}\ra G$ but we leave that dependence implicit).
Let $S\ra G$ be a Schur covering group of $G$, which is a central extension such that  $\tau_S \colon
H_2(G,\Z)\ra \ker(S\ra G)$ is an isomorphism.  
Let $\overline{S}:=S/\tau_S(Q_c)$, which is  a central extension of $G$, and we can check that 
$\tau_{\overline{S}}\colon H_2(G,\Z)\ra \ker(\overline{S}\ra G)$ factors through $H_2(G, c)$ and gives an isomorphism 
$\overline{\tau}_{\overline{S}}\colon H_2(G, c)\isom \ker(\overline{S}\ra G)$.
Then we define $U(G,c):= \overline{S} \times_{G^{\operatorname{ab}}}  \Z^{c/G}$, where the map $\Z^{c/G}\ra G^{\operatorname{ab}}$ sends the generator corresponding to a conjugacy class to the image of an element in that class.
We define $K(G,c):=\ker(U(G,c)\ra G)$, where the map $U(G,c) \to G$ is the composition of surjections $U(G,c) \to \overline{S}$ and $\overline{S}\to G$.
See \cite[Section 2]{Components} for more detailed definitions of $U(G,c)$ and $K(G,c)$ and proof that they depend only on $G,c$ and not the choice of Schur cover.  

We will now describe an action of $(\widehat{\Z}_k)^\times$ on the \emph{set} of elements of $K(G,c)$, when $\chr k=0$, or $\chr k>0$ and $\chr k\nmid |G|$.
(Even though $K(G,c)$ has a group structure, this action does not respect the group structure.)
In each conjugacy class $\gamma$ in $c/G$, we pick one element $x_\gamma$ in the class $\gamma$, and then one preimage $\widehat{x}_\gamma$ of $x_\gamma$ in $\overline{S}$. 
Then if $y=gx_\gamma g^{-1}$ for some $g\in G$, we can easily check the element $\widehat{y}:= \widetilde{g} \widehat{x}_\gamma \widetilde{g}^{-1}$ does not depend on the choice of $g$ or preimage $\widetilde{g}$ of $g$ in $\overline{S}$.
For $x\in c$, we define $[x]=(\widehat{x},e_x)\in U(G,c)$, where $\widehat{x}$ is as above, and $e_x$ is the generator $\Z^{c/G}$ corresponding to the conjugacy class of $x$.
For $\alpha\in\widehat{\Z}_k^\times$ and $\gamma\in c/G$, we define
$
w_\alpha(\gamma)=\widehat{x_\gamma}^{-\alpha} \widehat{x_\gamma^{\alpha}}\in \ker(\overline{S}\ra G).
$
We have a group homomorphism
$$
W_{\alpha}\colon \Z^{c/G} \ra \ker(\overline{S}\ra G)
$$
sending the generator for the conjugacy class $\gamma\in c/G$ to 
 $w_\alpha(\gamma).$
We have an action of $\widehat{\Z}_k^\times$ on the \emph{set} of elements of $K(G,c)$ given by
$$
\alpha * (g,\underline{m}  )=(g^{\alpha}W_\alpha({\underline{m})},\underline{m}^\alpha)
$$
for $\alpha\in \widehat{\Z}_k^\times$, where we write $(g, \underline{m}) \in \ker(\overline{S}\to G) \times_{G^{\operatorname{ab}}} \Z^{c/G}=K(G,c)$.

For a positive integer $M$, we write $\Z^{c/G}_{\geq M} \one$ for the elements of $\Z^{c/G} \one$ that take any element of $\widehat{\Z}(1)_k^{\times}$ to an element all of whose coordinates are at least $M$.
We write $K(G,c)_{\geq M} \one$ for the preimage of $\Z^{c/G}_{\geq M} \one$ in $K(G,c) \one$ and $\mathfrak{S}^{c,G}_{\geq M}$ for the image of $\Z^{c/G}_{\geq M} \one$ in $\mathfrak{S}^{c,G}$. We write $\Z^{c/G}_{n,\geq M} \one$ for the elements of $\Z^{c/G}_{\geq M} \one$ that take any element of $\widehat{\Z}(1)_k^{\times}$ to an element whose coordinates sum to $n$, and the same subscripts for $K(G,c)$ and $\mathfrak{S}^{c,G}$ analogously.

We can now explain the existence of a component invariant of $\cHur_{ G,c}^n$ and the properties
of this invariant that we will apply.  The idea for this  invariant is due to Ellenberg, Venkatesh, and Westerland, and the below theorem can be found in \cite[Theorem 5.2, Theorem 2.5,  Theorem 6.1, and Section 6.1]{Components}.

\begin{theorem}\label{T:LI}
Let $G,c,n$ be as in Notation~\ref{N:Gc}.
For every algebraically closed field $k$ of characteristic relatively prime to $|G|$, and
every $k$-point $\overline{s}$ of $\cHur_{ G,c}^n$, we can associate 
an element $\mathfrak{z}_{\overline{s}}\in K(G,c)_{n,\geq 0} \one_k $ called the \emph{lifting invariant} such that the following properties hold.

\begin{enumerate}
\item \emph{(Comparison of the lifting invariant in families)}  Let $S$ be a scheme, and $\phi\colon S \ra \cHur_{ G,c}^n$ a morphism.
Let $\overline{s}_1$ and $\overline{s}_2$ be geometric points of $S$ such that the image of $\overline{s}_2$ is in the closure of the image of $\overline{s}_1$. 
\begin{enumerate}
\item \emph{(Across different fields)}
For every $m$ relatively prime to the characteristic of the residue field $k(\overline{s}_2),$
there is a group isomorphism $\mu_m(k(\overline{s}_1))\ra \mu_m(k(\overline{s}_2))$ such that these isomorphisms are compatible with the powering maps $\mu_{m_1m_2}(k)\ra \mu_{m_1}(k)$ that take $\zeta\mapsto \zeta^{m_2},$ giving a map $\sigma: \widehat{\Z}(1)_{k(\overline{s}_1)}\ra 
\widehat{\Z}(1)_{k(\overline{s}_2)}$ taking topological generators to topological generators.
Composing with this map, we have that $\mathfrak{z}_{{\overline{s}_2}}\mapsto \mathfrak{z}_{{\overline{s}_1}}.$
 \item \emph{(Over the same field)}
 If 
 $S$ is a $k$-scheme (and the points $\overline{s}_i$ above respect the $k$-scheme structure of $S$), then we can 
naturally identify $\widehat{\Z}(1)_{k(\overline{s}_i)}$ with $\widehat{\Z}(1)_{k}$, and choose $\sigma$ above so that 
it corresponds to the identity on   $\widehat{\Z}(1)_{k}$ via these identifications.
\end{enumerate}
\item \emph{(Action of  Frobenius on the lifting invariant)} If $\overline{s}$ is a $\overline{\F}_q$-point of $(\cHur_{ G,c}^n)_{\overline{\F}_q}$ (respecting the 
$\overline{\F}_q$-scheme structure), and $\Frob:=\Frob_{(\cHur_{ G,c}^n)_{{\F}_q}}$  is the Frobenius map, then for a topological generator
$\zeta$ of $\widehat{\Z}(1)_{\overline{\F}_q}$ with $\mathfrak{z}_{\overline{s}}(\zeta)=g$ , we have
$\mathfrak{z}_{\Frob(\overline{s})}(\zeta)=q^{-1} *g.$ 
\end{enumerate}
\end{theorem}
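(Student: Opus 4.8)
The plan is to compute both lifting invariants directly from the explicit monodromy-theoretic construction of \cite{Components} and to isolate the one ingredient in that construction that is not invariant under base change of the ground field: the Kummer identification of tame inertia with $\widehat{\Z}(1)_{\overline{\F}_q}$. By Lemma~\ref{L:HowFrob}, if $\overline{s}$ corresponds to the tame $G$-cover $(f\colon X\to\P^1_{\overline{\F}_q},\iota,P)$ with branch locus $D\subset\A^1_{\overline{\F}_q}$, then $\Frob(\overline{s})$ corresponds to $(f^F,\iota^F,P^F)$, where $f^F$ is the base change of $f$ along the isomorphism $F\colon\Spec\overline{\F}_q\to\Spec\overline{\F}_q$ induced by the $q$-power field automorphism $\varphi$ of $\overline{\F}_q$. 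First I would recall that $\mathfrak{z}_{\overline{s}}(\zeta)$ is assembled, after choosing a geometric basepoint near $\infty$ and, at each branch point $t_i$, the topological generator of the tame inertia group determined by $\zeta\in\widehat{\Z}(1)^{\times}_{\overline{\F}_q}$ via Kummer theory, from the images under the monodromy representation $\rho_f\colon\pi_1(\P^1_{\overline{\F}_q}\setminus D,\infty)\to G$ of the resulting standard loops, lifted to the Schur-type cover $\overline{S}$ and paired with the tuple of conjugacy classes in $\Z^{c/G}$; it lies in $K(G,c)$ because $f$ is split and unramified over $\infty$, and the equivariance $\mathfrak{z}_{\overline{s}}(\zeta^{\alpha})=\alpha*\mathfrak{z}_{\overline{s}}(\zeta)$ is exactly the assertion, already in the statement, that $\mathfrak{z}_{\overline{s}}\in K(G,c)\one_{\overline{\F}_q}$.

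The second step is the base-change comparison. Since $F$ is an isomorphism of schemes, base change along it is an equivalence on finite étale (tame) covers of $\P^1$ and induces canonical isomorphisms of tame fundamental groups carrying $\rho_{f^F}$ to $\rho_f$, the branch data to the branch data, and the cyclic-order relation among the local monodromies to that of $f$; thus every purely geometric ingredient of $\mathfrak{z}_{\Frob(\overline{s})}$ matches that of $\mathfrak{z}_{\overline{s}}$. The one exception is the Kummer identification: base change along $F$ twists the canonical isomorphism $(\text{tame inertia at }t_i)\cong\varprojlim_m\mu_m(\overline{\F}_q)=\widehat{\Z}(1)_{\overline{\F}_q}$ by the automorphism of $\widehat{\Z}(1)_{\overline{\F}_q}$ induced by $\varphi$ on roots of unity, i.e.\ by multiplication by $q$. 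Unwinding which generator of the (common) inertia subgroup the element $\zeta$ now selects, one finds that the generators used to compute $\mathfrak{z}_{\Frob(\overline{s})}(\zeta)$ are precisely those used to compute $\mathfrak{z}_{\overline{s}}(\zeta^{q^{-1}})$ (here $q^{-1}\in(\widehat{\Z}_{\overline{\F}_q})^{\times}$ because $q$ is a power of the characteristic and hence a unit in $\widehat{\Z}_{\overline{\F}_q}$). Combining with the equivariance from the first step gives $\mathfrak{z}_{\Frob(\overline{s})}(\zeta)=\mathfrak{z}_{\overline{s}}(\zeta^{q^{-1}})=q^{-1}*\mathfrak{z}_{\overline{s}}(\zeta)$, which is the claim with $g=\mathfrak{z}_{\overline{s}}(\zeta)$.

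The main obstacle is getting the direction of the twist right: one must check that $f^F=X\times_{\Spec\overline{\F}_q,F}\Spec\overline{\F}_q$ twists the \emph{target} copy of $\overline{\F}_q$ by $\varphi$ (so that $\zeta_m\otimes 1$ equals $\varphi(\zeta_m)$, not $\varphi^{-1}(\zeta_m)$, in the new $\overline{\F}_q$-structure), which is what ultimately produces $q^{-1}$ rather than $q$ and is consistent with the geometric (rather than arithmetic) Frobenius appearing elsewhere in the paper. As a sanity check I would verify the identity on the already-understood case of an $\F_q$-rational point $\overline{s}$, which is exactly a $\Frob$-fixed point: there it forces $\mathfrak{z}_{\overline{s}}=q^{-1}*\mathfrak{z}_{\overline{s}}$, in agreement with the description of $\F_q$-components of $\cHur_{G,c}^n$ in terms of $q$-powering orbits in $\mathfrak{S}^{c,G}$ used in Theorem~\ref{T:comp}. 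The remaining work is a routine transcription at the level of the construction in \cite{Components}, following Theorem~5.2, Theorem~2.5, Section~6.1, and Theorem~6.2 there, once the Kummer twist is tracked correctly.
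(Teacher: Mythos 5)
The paper does not actually prove Theorem~\ref{T:LI}: it is stated as a direct import from the reference \cite{Components}, with the preamble explicitly attributing it to Theorem~5.2, Theorem~2.5, Section~6.1, and Theorem~6.2 of that source. So there is no in-paper argument to compare against. Your proposal, then, is a reconstruction of the argument that the paper outsources, and as such it is a correct sketch of the right approach: identify the lifting invariant as a monodromy datum assembled from tame inertia generators chosen via Kummer theory, observe via Lemma~\ref{L:HowFrob} that $\Frob(\overline{s})$ is the base change of $\overline{s}$ along $F$, note that base change along $F$ is an equivalence on tame covers that twists only the Kummer identification of tame inertia with $\widehat{\Z}(1)$, and unwind the twist to get the $q^{-1}*$ action. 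The $(\widehat{\Z})^\times$-equivariance built into $K(G,c)\one$ absorbs the twist exactly as you describe.

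Two small remarks on rigor. First, the step you flag as the main obstacle (the direction of the Kummer twist) is indeed where the real content lies, and a self-contained proof would have to pin this down explicitly rather than appeal to a sanity check; your consistency check against $\F_q$-rational points is good evidence but not itself a proof of the sign, since both signs would render the set of Frobenius-fixed invariants a union of $q$-powering orbits. Second, the theorem also asserts functoriality under specialization ($\overline{s}_2$ in the closure of the image of $\overline{s}_1$), not just the Frobenius compatibility; your sketch focuses almost entirely on the Frobenius clause and would need to be fleshed out to cover the specialization statement, which in \cite{Components} comes from the compatibility of the tame fundamental group with specialization maps. Neither of these is a wrong step, but they would need filling in for a complete proof. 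Given that the paper itself treats Theorem~\ref{T:LI} as a black box, your level of detail is appropriate for the role the statement plays here.
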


\begin{corollary}\label{C:comp}
For $G,c,n,k$ as in Theorem~\ref{T:LI}, we have that $k$-points of $(\cHur_{ G,c}^n)_{k}$ in the same component have the same lifting invariant (which we now call the lifting invariant of the component).  Geometric points of the same component of $\cHur_{ G,c}^n$ have lifting invariants with the same image in $\mathfrak{S}^{c,G}$ (which we now call the $\mathfrak{S}^{c,G}$-invariant of the component).
\end{corollary}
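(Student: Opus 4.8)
The plan is to deduce both assertions from the specialization‑compatibility built into Theorem~\ref{T:LI}, after using smoothness of the Hurwitz scheme to pin down the geometry of its components. First I would record the geometric input: by Proposition~\ref{prop:etale} the scheme $\cHur_{G,c}^n$ is finite \'etale over $\Conf^n(\A^1)_R$, and the latter is smooth over $R=\Z[|G|^{-1}]$, so $\cHur_{G,c}^n$ is smooth over $R$; hence for every algebraically closed field $k$ the base change $(\cHur_{G,c}^n)_k$ is regular, and therefore its connected components coincide with its irreducible components. I would also note the elementary remark that if $h\colon \widehat{\Z}(1)_{k}\to\widehat{\Z}(1)_{k'}$ is any isomorphism carrying topological generators to topological generators, then two lifting invariants $\mathfrak{z}_{\overline{s}}$, $\mathfrak{z}_{\overline{s}'}$ related by $\mathfrak{z}_{\overline{s}}=\mathfrak{z}_{\overline{s}'}\circ h$ have, after projecting $K(G,c)\to\Z^{c/G}$, literally the same $(\widehat{\Z}^\times)$‑orbit of values, hence the same image in the fixed finite set $\mathfrak{S}^{c,G}$.

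For the statement about geometric points, let $Y$ be a connected component of $\cHur_{G,c}^n$ and let $\overline{s}_1,\overline{s}_2$ be geometric points lying in $Y$. Since $Y$ is a connected Noetherian scheme, I can join $\overline{s}_1$ to $\overline{s}_2$ by a finite chain of geometric points of $Y$ in which each consecutive pair is related by a specialization (in one direction or the other), taking the intermediate points alternately above generic points of irreducible components of $Y$ and above points of their pairwise intersections. Applying Theorem~\ref{T:LI} to $S=Y\hookrightarrow \cHur_{G,c}^n$ at each step produces, for consecutive points, an isomorphism of the relevant copies of $\widehat{\Z}(1)$ taking topological generators to topological generators and intertwining the two lifting invariants; by the remark above, the image in $\mathfrak{S}^{c,G}$ is constant along the chain, so $\mathfrak{z}_{\overline{s}_1}$ and $\mathfrak{z}_{\overline{s}_2}$ have the same $\mathfrak{S}^{c,G}$‑image. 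This gives the second assertion.

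For the statement about $k$‑points, let $Y$ be a connected — hence, by smoothness, irreducible — component of $(\cHur_{G,c}^n)_k$, with generic point $\eta$ and a chosen geometric point $\overline{\eta}$ above $\eta$, and let $\overline{s}_1,\overline{s}_2\in Y(k)$. Each $\overline{s}_i$ has image a closed point of $Y=\overline{\{\eta\}}$, hence is a specialization of $\overline{\eta}$, so Theorem~\ref{T:LI} yields isomorphisms $g_i\colon \widehat{\Z}(1)_{k(\overline{\eta})}\to\widehat{\Z}(1)_k$ with $\mathfrak{z}_{\overline{\eta}}=\mathfrak{z}_{\overline{s}_i}\circ g_i$. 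Because $Y$ is a $k$‑scheme and each $\overline{s}_i$ is $k$‑rational, the $k$‑scheme refinement of Theorem~\ref{T:LI} forces $g_i$ to be the canonical isomorphism induced by $k\subseteq k(\overline{\eta})$, independently of $i$; thus $g_1=g_2$ and so $\mathfrak{z}_{\overline{s}_1}=\mathfrak{z}_{\overline{s}_2}$, giving the first assertion. (If one prefers to use only the identity‑map clause of Theorem~\ref{T:LI} in the form literally stated, one first reduces to $\dim Y\le 1$ by choosing an irreducible curve through $\overline{s}_1$ and $\overline{s}_2$ — which exists since $\cHur_{G,c}^n$ is affine over $\Conf^n(\A^1)_R$ and $k$ is infinite — normalizing it, and invoking functoriality of the lifting invariant; the same point‑independence of the specialization map is still needed on that curve.)

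The hard part will be exactly this point‑independence: the assertion that the root‑of‑unity specialization isomorphism from a geometric generic point of a $k$‑variety to a $k$‑rational point is the canonical one, independent of the point. Theorem~\ref{T:LI} states the identity‑map property only when both endpoints are $k$‑rational, so making this step rigorous requires either unwinding the construction of the lifting invariant in \cite{Components}, which is \'etale‑local and $\pi_1$‑theoretic and whose specialization maps are therefore canonical and transitive over a fixed base field, or an approximation argument along curves. Everything else, including the geometric‑points statement, is a formal consequence of Theorem~\ref{T:LI} once the component geometry and the chain of specializations are in place.
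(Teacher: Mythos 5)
Your proposal is correct and supplies what the paper leaves implicit: Corollary~\ref{C:comp} is stated with no proof, as an immediate consequence of Theorem~\ref{T:LI}, and your argument --- chaining specializations within a connected component, observing that any map $\widehat{\Z}(1)^{\times}_{k} \ra \widehat{\Z}(1)^{\times}_{k'}$ carrying generators to generators and $(\widehat{\Z}^\times)$-equivariantly cannot change the $\mathfrak{S}^{c,G}$-class, and (using smoothness to get irreducibility of components) specializing from the geometric generic point to compare $k$-points --- is precisely how one would spell that out.

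The worry you flag in your last paragraph is not a genuine obstruction. It comes from reading ``$k$-points'' in the $k$-scheme clause of Theorem~\ref{T:LI} as geometric points with residue field literally equal to $k$; for algebraically closed $k$ those are closed points, the specialization hypothesis would then force $\overline{s}_1 = \overline{s}_2$, and the clause would be content-free. The intended reading is that the $\overline{s}_i$ are geometric points compatible with the $k$-scheme structure, so each $k(\overline{s}_i)$ is an algebraically closed extension field of $k$. Since $k$ itself is algebraically closed, the inclusion $k \hookrightarrow k(\overline{s}_i)$ already induces a canonical isomorphism $\widehat{\Z}(1)_{k} \isom \widehat{\Z}(1)_{k(\overline{s}_i)}$ (both are $\varprojlim \mu_m$ over $m$ prime to $\chr k$, and $\mu_m(k)=\mu_m(k(\overline{s}_i))$), so ``we can take the map to be the identity'' means the identity under these canonical identifications. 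Under that reading, the specialization maps $g_1,g_2$ in your argument are automatically both the canonical one, hence equal, with no appeal to the internal construction of the lifting invariant in \cite{Components} or to auxiliary curves. With this clarification of Theorem~\ref{T:LI}, your proof is complete as written, and it is the argument the paper has in mind.
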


We now give another description of the components over $\cHur_{ G,c}^n$ over $\C$ coming from topology.
We recall the definition of the braid group 
$$B_n:=\langle \sigma_1, \dots, \sigma_{n-1} | 
\sigma_i\sigma_{i+1}\sigma_i=\sigma_{i+1}\sigma_i\sigma_{i+1} (1\leq i\leq n-2), \sigma_i\sigma_j=\sigma_j\sigma_i (i-j\geq 2)\rangle.$$
The braid group has a natural action on $G^n$, where for $\sigma\in B_n$, we have
\begin{equation}\label{E:braidact}
\sigma_i(g_1,\dots,g_n)\mapsto (g_1,\dots,g_{i-1},g_ig_{i+1}g_i^{-1},g_i,g_{i+2},\dots,g_n),
\end{equation}
where only coordinates $i$ and $i+1$ are changed.

\begin{theorem}\label{T:CcompLong}
Let $G,c,n$ be as in Notation~\ref{N:Gc}.
Consider a point $\underline{P}\in \Conf^n \A^1(\C)$ corresponding to $\C$-points $P_1,\dots,P_n$ of $\A^1$, and choices  $\gamma_1,\dots \gamma_n\in \pi_1(\P^1(\C) \setminus \{P_1,\dots,P_n \},\infty)$, such that $\gamma_i$ generates an inertia group of $P_i$ for all $i$, and $\pi_1(\P^1(\C) \setminus \{P_1,\dots,P_n \},\infty)$ is the free group generated by the $\gamma_i$ for $1\leq i\leq n-1$, and $\gamma_1\cdots\gamma_n=1$. 

Using the $\gamma_i's$, the fiber of $\cHur_{ G,c}^n$ over $\underline{P}$ is  identified with the set of tuples $(g_1,\dots,g_n)\in c^n$ such that $g_1\cdots g_n=1$ and the $g_i$ generate $G$.

Moreover, there is a $\zeta\in \widehat{\Z}(1)_\C^\times$, such that, for every point in the  fiber of $\cHur_{ G,c}^n$ over $\underline{P}$, if the point is identified with $(g_1,\dots,g_n)$, then it has lifting invariant that sends $\zeta$ to $[g_1]\cdots [g_n]$ (where $[g_i]$ is defined above).

 Also, $\pi_1(\Conf^n \A^1(\C),\underline{P})$ is naturally identified with $B_n$ 
  in such that way that the action of $B_n$ on the fiber above $\underline{P}$ in the covering space $\cHur_{ G,c}^n(\C)\ra \Conf^n \A^1(\C)$ 
  is given by \eqref{E:braidact}.

Via these correspondences, the components of $\cHur_{ G,c}^n(\C)$ are in bijection with $B_n$-orbits of $(g_1,\dots,g_n)\in c^n$ with  $g_1\cdots g_n=1$ and with the $g_i$ generating $G$.
\end{theorem}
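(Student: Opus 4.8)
The plan is to assemble the statement from three ingredients: (i) the general identification of Hurwitz spaces with spaces of branched covers and hence with tuples, (ii) the description of $\cHur_{G,c}^n$ as a component of $\Hur_{G,*}^n$ together with the topological model $\Hur_{G,*}^{n,\text{top}}$ built in Section~\ref{S:top}, and (iii) Theorem~\ref{T:LI} and Corollary~\ref{C:comp}, which give the lifting invariant and its topological characterization. First I would recall the classical dictionary: for the fixed configuration $\underline P$, a connected tame $G$-Galois cover of $\P^1_\C$ with branch locus contained in $\{P_1,\dots,P_n\}$, equipped with the base point over $\infty$, is the same as a continuous surjection $\rho\colon \pi_1(\P^1(\C)\setminus\{P_1,\dots,P_n\},\infty)\to G$ together with a point in the fiber over $\infty$; the choice of the $\gamma_i$ (a ``standard generating system'' with $\gamma_1\cdots\gamma_n=1$) turns $\rho$ into the tuple $(g_1,\dots,g_n)=(\rho(\gamma_1),\dots,\rho(\gamma_n))$. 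Since the marked point over $\infty$ is unramified and $\rho$ is a genuine homomorphism (not a conjugacy class), the tuple is well-defined; requiring the cover to be connected is exactly requiring the $g_i$ to generate $G$, and requiring all inertia groups to be generated by elements of $c$ is exactly $g_i\in c$ for all $i$ (using that $c$ is closed under invertible powers, so ``$\gamma_i$ generates an inertia subgroup lying in $c$'' translates to $g_i\in c$). This identifies the fiber of $\cHur_{G,c}^n$ over $\underline P$ with $\{(g_1,\dots,g_n)\in c^n \mid g_1\cdots g_n=1,\ \langle g_1,\dots,g_n\rangle=G\}$.

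Next I would treat the monodromy. By Proposition~\ref{prop:etale} and the chain of equivalences $\bigl(\cHur_{G,c}^n\bigr)_\C\cong (\cHur_{G,c}^n)_{\text{an}}\cong \cHur_{G,c}^{n,\text{an}}\cong \cHur_{G,c}^{n,\text{top}}$ (the analytic/topological comparisons of Sections~\ref{S:top}, restricted to the relevant components), the map $\cHur_{G,c}^n(\C)\to\Conf^n\A^1(\C)$ is a covering space, so its fibers carry an action of $\pi_1(\Conf^n\A^1(\C),\underline P)$. I would then invoke the standard identification of $\pi_1(\Conf^n\A^1(\C),\underline P)$ with the Artin braid group $B_n$ on the stated generators $\sigma_i$, and compute the action of $\sigma_i$ on $\rho$ by tracking how the loop $\sigma_i$ transports the standard generating system $(\gamma_1,\dots,\gamma_n)$: moving $P_i$ counterclockwise around $P_{i+1}$ replaces $\gamma_{i+1}$ by $\gamma_i\gamma_{i+1}\gamma_i^{-1}$ and $\gamma_i$ by $\gamma_{i+1}$ (with the convention matching the braid relations as written), whence $\sigma_i\cdot(g_1,\dots,g_n)=(g_1,\dots,g_{i-1},g_ig_{i+1}g_i^{-1},g_i,g_{i+2},\dots,g_n)$. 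Because the cover is connected and the $\sigma_i$ generate $B_n$, the orbits of $B_n$ on the set of admissible tuples are exactly the path components of $\cHur_{G,c}^n(\C)$; since $\cHur_{G,c}^n$ is smooth over $R$ (so over $\C$), path components and connected components agree, giving the final bijection.

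For the lifting invariant claim I would use Theorem~\ref{T:CcompLong}'s topological model together with Theorem~\ref{T:LI}: the lifting invariant is a locally constant function on $\cHur_{G,c}^n$ with values in $K(G,c)_{n,\geq0}\one$, so it is constant on the fiber over $\underline P$ only up to the monodromy; but the claim is that there is a single $\zeta\in\widehat\Z(1)_\C^\times$ so that on \emph{every} point of the fiber over $\underline P$ the invariant sends $\zeta$ to $[g_1]\cdots[g_n]$. This is precisely the content of the Ellenberg--Venkatesh--Westerland construction as recorded in \cite{Components} (Theorem~5.2 there): the lifting invariant of a tuple, evaluated at the appropriate generator of $\widehat\Z(1)$ determined by the chosen generating system $(\gamma_1,\dots,\gamma_n)$, is the product $[g_1]\cdots[g_n]\in U(G,c)$, which lies in $K(G,c)$ because $g_1\cdots g_n=1$. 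I would cite this directly rather than reprove it, noting compatibility with the powering/$\widehat\Z^\times$-equivariance built into the $\one$-notation.

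The main obstacle, and the part requiring care rather than citation, is pinning down the \emph{conventions} so that the three descriptions are mutually consistent: the orientation convention for loops in $\Conf^n\A^1$, the resulting direction of the Artin generators $\sigma_i$ and the exact form of the Hurwitz move (left vs.\ right conjugation, which of $g_i,g_{i+1}$ moves), and the normalization of the base point over $\infty$ that makes $\rho$ a homomorphism rather than a conjugacy class. I would fix these once at the start and verify that the braid relations as written in the statement are satisfied by the stated action (a short direct check: $\sigma_i\sigma_{i+1}\sigma_i$ and $\sigma_{i+1}\sigma_i\sigma_{i+1}$ both send $(g_i,g_{i+1},g_{i+2})$ to the same triple, and far-apart $\sigma$'s obviously commute), and that the lifting-invariant formula is compatible with this same choice. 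Everything else is the assembly of results already available in the excerpt (Proposition~\ref{prop:etale}, the analytic/topological equivalences, Theorem~\ref{T:LI}, Corollary~\ref{C:comp}) plus the classical Riemann existence theorem dictionary.
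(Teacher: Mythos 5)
Your proposal is correct and follows essentially the same route as the paper: the Riemann-existence dictionary for the fiber identification, the analytic/topological comparison from Section~\ref{S:top} to see the covering-space structure, a direct citation of \cite[Theorem~5.2]{Components} for the lifting-invariant formula, and the standard braid action on tuples for the component description. The only cosmetic difference is that where you propose to recompute the braid monodromy directly by tracking loops, the paper instead delegates that explicit computation to Fried--V\"olklein's \cite[Section~1.4]{Fried}; both are fine, though citing avoids exactly the orientation/convention bookkeeping you flag as the delicate point.
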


\begin{proof}
Given a Galois cover $X\ra \P^1$ over $\C$, with automorphism group identified with $G$ and
choice of point above $\infty$,  and with branch locus $\{P_1,\dots,P_n\}$, 
 if we let $U$ be the complement in $X$ of the preimages of the $P_i$, then $U \ra \P^1(\C) \setminus \{P_1,\dots,P_n \}$ is a normal covering space map with automorphism group identified with $G$ and choice of point above $\infty$, which is equivalent to a surjective group homomorphism $\pi_1(\P^1(\C) \setminus \{P_1,\dots,P_n \},\infty)\ra G.$
Given a  normal covering space map $U \ra \P^1(\C) \setminus \{P_1,\dots,P_n \}$, there is a unique cover of curves $X\ra \P^1$ from which it arises by deleting the branch locus, and the covering space map and cover of curves have the same automorphism group.  
 By taking the images of the $\gamma_i$, surjective group homomorphisms $\pi_1(\P^1(\C) \setminus \{P_1,\dots,P_n \},\infty)\ra G$ are equivalent to tuples $(g_1,\dots,g_n)$ with  $g_i\in G$, and $g_1\cdots g_n=1$, and with the $g_i$ generating $G$.  In this correspondence, our points of $\cHur_{ G,c}^n$ correspond to those tuples with all $g_i\in c$.  
The statement on the lifting invariant follows directly from the definition of the lifting invariant \cite[Theorem 5.2]{Components} (see also \cite[Theorem 2.5]{Components} for an isomorphism taking our $[g_i]$ to those in \cite{Components}). 

Finally, the action of $\pi_1(\Conf^n \A^1(\C),\underline{P})$ on the covering space $\cHur_{ G,c}^n(\C)$ follows from two inputs: (1) our proof in Section~\ref{S:top} of a homeomorphism between 
$\cHur_{ G,c}^n(\C)$ with the analytic topology and a topologically defined Hurwitz space, (2)
Fried and V\"{o}lklein's computation of the action of the braid group on this topologically defined Hurwitz space \cite[Section 1.4]{Fried}.  In \cite{Fried} the authors work with a quotient of $\Hur_{G,*}^{n, \text{top}}$, but their proof works equally well on $\Hur_{G,*}^{n, \text{top}}$ (and in fact their computation naturally occurs on $\Hur_{G,*}^{n, \text{top}}$ and they just pass to a quotient).
\end{proof}

We let $c^{n,G}_{1,\geq M}$ be the subset of $c^n$ of tuples $(g_1,\dots,g_n)$ such that $g_1\cdots g_n=1$, each conjugacy class of $c$ has at least $M$ indices $i$ with $g_i$ in that class, and the $g_1,\dots,g_n$ generate $G$.  
The following theorem and corollary show that, at least for components including every inertia type at least $M$ times, the component invariant described in Theorem~\ref{T:LI} exactly cuts out the components over $\C$ we have described topologically in Theorem~\ref{T:CcompLong} as corresponding to braid group orbits, and that further these also exactly correspond to the components over $\overline{\F}_q$.

\begin{theorem}[{\cite[Theorem 3.1]{Components}}]\label{T:Ccomp}
Let $G,c$ be as in Notation~\ref{N:Gc}, such that $c$ generates $G$. There is $M$ sufficiently large such that the following holds for all $n\geq M$:
The map of sets $\Phi\colon c^{n,G}_{1,\geq M} \ra K(G,c)_{n,\geq M}$ given by $(g_1,\dots,g_n)\mapsto [g_1]\cdots [g_n]$ is surjective, and its fibers are exactly the braid group $B_n$-orbits on 
$c^n_{1,\geq M}$.
\end{theorem}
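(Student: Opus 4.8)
The statement is \cite[Theorem 3.1]{Components}, a form of the Conway--Parker theorem due to Ellenberg--Venkatesh--Westerland, and the plan is to reproduce that argument. The first step is to check that $\Phi$ is constant on $B_n$-orbits, so that it descends to the set of orbits; it is enough to treat a single generator $\sigma_i$, which replaces the pair $(g_i,g_{i+1})$ by $(g_ig_{i+1}g_i^{-1},g_i)$. Using the defining property of the symbols $[g]=(\widehat g,e_g)\in U(G,c)$ — that conjugate elements lie in the same conjugacy class, so $e_{g_ig_{i+1}g_i^{-1}}=e_{g_{i+1}}$, and that $\widehat{\,hxh^{-1}}=\widetilde h\,\widehat x\,\widetilde h^{-1}$ for any lift $\widetilde h$ of $h$ (in particular for $\widetilde h=\widehat h$) — one computes $[g_ig_{i+1}g_i^{-1}][g_i]=[g_i][g_{i+1}]$, so the product $[g_1]\cdots[g_n]$ is unchanged. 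This already gives the inclusion ``same $B_n$-orbit $\Rightarrow$ same $\Phi$-value.''

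Next I would prove surjectivity onto $K(G,c)_{n,\geq M}$. The key point, established in \cite[Section 2]{Components}, is that $K(G,c)$ is generated by products $[h_1]\cdots[h_k]$ with $h_i\in c$ and $h_1\cdots h_k=1$: since $c$ generates $G$ the lifts $\widehat h$ generate $\overline{S}$, the kernel $\ker(\overline{S}\to G)\cong H_2(G,c)$ is generated by the classes $\langle x,y\rangle$ with $x\in c$ (each of which is visibly such a product), and the relevant relations in the $\Z^{c/G}$-factor are realized the same way. Given a target $\kappa\in K(G,c)_{n,\geq M}$, I would realize it as such a product and then repeatedly insert adjacent pairs $(g,g^{-1})$, $g\in c$, and apply braid moves to redistribute, until every class in $c/G$ occurs at least $M$ times and there are exactly $n$ entries; the count grading is preserved because inserting $(g,g^{-1})$ is compatible with the grading on $K(G,c)_{n,\geq M}$, so one lands in the prescribed fibre.

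The substantive part is the converse: if $\mathbf g,\mathbf h\in c^n_{1,\geq M}$ and $\Phi(\mathbf g)=\Phi(\mathbf h)$, then $\mathbf g$ and $\mathbf h$ lie in a single $B_n$-orbit. Following Conway--Parker, I would put an arbitrary element of $c^n_{1,\geq M}$ into a normal form depending only on its count vector in $\Z^{c/G}$ and on its $\Phi$-value, by braid moves that cluster together entries in a common conjugacy class, freely create or destroy adjacent $(g,g^{-1})$ pairs in a class of high multiplicity, and push the ``lifting-invariant defect'' into one controlled block. The engine for all of these reductions is the cancellation lemma: a consecutive subtuple $(a_1,\dots,a_k)$ with $a_1\cdots a_k=1$ and $[a_1]\cdots[a_k]=1$ in $K(G,c)$ can be braided, using the spare copies of each class guaranteed by the $\geq M$ hypothesis, into a concatenation of adjacent inverse pairs. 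This is proved by induction on $|G|$ and on $k$: triviality in the $\Z^{c/G}$-factor forces the classes to pair up; triviality in $\overline{S}$ is dismantled one commutator $\langle x,y\rangle$ at a time, these being exactly the elements killed by $H_2(G,\Z)\to H_2(G,c)$; and each elementary move consumes and returns a bounded number of auxiliary class-$c$ entries, which is why a single threshold $M$, independent of $n$, suffices. Carrying both $\mathbf g$ and $\mathbf h$ to the same normal form and chaining the equivalences finishes the proof.

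\textbf{Main obstacle.} The cancellation lemma is the heart of the matter and is where essentially all of the work lies; beyond the combinatorial bookkeeping of braid moves, the delicate point is to make the multiplicity threshold $M$ uniform in $n$, which forces each inductive step to borrow only boundedly many spare entries. By contrast, braid-invariance, surjectivity, and the assembly of normal forms from the cancellation lemma are formal. For the details I would follow \cite[Section 3]{Components}, which in turn builds on the braid-group computation of Fried--V\"olklein \cite{Fried}.
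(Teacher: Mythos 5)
The paper does not prove this statement; it is quoted verbatim as Theorem 3.1 of \cite{Components} and used as a black box, so there is no internal proof to compare yours against. Your sketch is therefore being judged only as a reconstruction of the argument in that reference, and as such it is broadly faithful: the braid-invariance computation $[g_ig_{i+1}g_i^{-1}][g_i]=[g_i][g_{i+1}]$ is correct (taking the lift $\widetilde g_i=\widehat g_i$ makes the two sides literally equal in $U(G,c)$), the identification of the Conway--Parker-style cancellation lemma as the bottleneck is right, and you correctly single out uniformity of the threshold $M$ in $n$ as the delicate point.

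One step is stated too glibly. In the surjectivity argument you say that inserting an adjacent pair $(g,g^{-1})$ ``is compatible with the grading \ldots\ so one lands in the prescribed fibre.'' This is not automatic: since $K(G,c)$ is central in $U(G,c)$, inserting $(g,g^{-1})$ multiplies the value of $\Phi$ by $[g][g^{-1}]\in K(G,c)$, whose $\Z^{c/G}$-coordinate is $e_g+e_{g^{-1}}\neq 0$ and whose $\overline S$-coordinate $\widehat g\,\widehat{g^{-1}}$ is a generally nontrivial element of $\ker(\overline S\to G)$ (the lifts $\widehat g$ and $\widehat{g^{-1}}$ are chosen class-by-class and need not be group inverses in $\overline S$). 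So padding by inverse pairs does change the target class; the actual argument has to first pin down the $\Z^{c/G}$-coordinate and then show that the resulting ambiguity in the $\overline S$-coordinate sweeps out the whole kernel, which is itself a consequence of the cancellation lemma rather than something independent of it. Since you explicitly defer to \cite[Section 3]{Components} for all details, this does not invalidate the proposal as a pointer, but it should not be read as a self-contained surjectivity proof; the paper itself makes the same choice to cite rather than prove.
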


\begin{corollary}\label{C:LIworks}
Let $G,c$ be as in Notation~\ref{N:Gc}, $M$ sufficiently large for Theorem~\ref{T:Ccomp}, and  $q$ a prime power relatively prime to $G$.
Let $n\geq 0$.  For $s\in\mathfrak{S}^{c,G}_{n,\geq M},$
let $Y$ be the union of components of $\cHur_{ G,c}^n$ with $\mathfrak{S}^{c,G}$-invariant $s$.  
Then the lifting invariant gives a bijection between components of $Y_\C$ and elements of $K(G,c)\one_\C$ with image $s$, and a bijection 
between components of $Y_{\overline{\F}_q}$ and elements of $K(G,c)\one_{\overline{\F}_q}$ with image $s$.
\end{corollary}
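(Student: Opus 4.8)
The plan is to deduce Corollary~\ref{C:LIworks} from the combination of Theorem~\ref{T:Ccomp} (the purely combinatorial statement that braid orbits on $c^n_{1,\geq M}$ are exactly the fibers of the lifting-invariant map $\Phi$), Theorem~\ref{T:CcompLong} (the identification over $\C$ of components with braid orbits, and the compatibility of the topological lifting invariant with $\Phi$), Theorem~\ref{T:LI} and Corollary~\ref{C:comp} (existence and specialization properties of the algebraic lifting invariant), and Lemma~\ref{L:C} (smoothness, hence irreducibility of connected components, and the cohomological comparison which gives equality of \emph{numbers} of components over $\C$ and over $\overline{\F}_q$). First I would fix $s\in\mathfrak{S}^{c,G}_{n,\geq M}$ and let $Y$ be the union of components of $\cHur_{G,c}^n$ with $\mathfrak{S}^{c,G}$-invariant equal to $s$; by Corollary~\ref{C:comp} this is a well-defined union of components of the $R$-scheme $\cHur_{G,c}^n$, and $Y_\C$, $Y_{\overline{\F}_q}$ are unions of components of the respective base changes.

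The heart of the argument is the complex case. Over $\C$, Theorem~\ref{T:CcompLong} identifies $\pi_0(\cHur_{G,c}^n(\C))$ with $B_n$-orbits on the set of tuples $(g_1,\dots,g_n)\in c^n$ with $g_1\cdots g_n=1$ generating $G$, and says each point identified with $(g_1,\dots,g_n)$ has lifting invariant sending a fixed $\zeta\in\widehat\Z(1)_\C^\times$ to $[g_1]\cdots[g_n]=\Phi(g_1,\dots,g_n)$. Restricting to the part with $\mathfrak{S}^{c,G}$-invariant $s$: since $s$ lies in $\mathfrak{S}^{c,G}_{n,\geq M}$, any tuple contributing to $Y_\C$ has each conjugacy class appearing at least $M$ times (the number of times each class of $c$ occurs is constant on a $B_n$-orbit and is exactly what the image in $\mathfrak{S}^{c,G}$ records up to the $(\widehat\Z_\C)^\times$-action, and multiplicities $\geq M$ are forced once \emph{some} representative of the orbit in $\mathfrak S^{c,G}$ has all coordinates $\geq M$ — here I should be slightly careful and phrase it via $c^n_{1,\geq M}$ directly). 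Thus the components of $Y_\C$ are the $B_n$-orbits on $c^n_{1,\geq M}$ whose $\Phi$-image has image $s$ in $\mathfrak{S}^{c,G}$. By Theorem~\ref{T:Ccomp}, $\Phi$ is surjective onto $K(G,c)_{n,\geq M}$ with fibers exactly the $B_n$-orbits, so these orbits are in canonical bijection with the elements of $K(G,c)_{n,\geq M}$ mapping to $s$; and since we evaluate $\one_\C$-valued invariants at the chosen $\zeta$, this matches ``elements of $K(G,c)\one_\C$ with image $s$.'' This gives the first bijection, with the map being precisely the lifting invariant.

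For the $\overline{\F}_q$ case, I would argue that the lifting invariant, via Theorem~\ref{T:LI} and Corollary~\ref{C:comp}, gives a well-defined \emph{injective} map from components of $Y_{\overline{\F}_q}$ to $\{x\in K(G,c)\one_{\overline{\F}_q}: \text{image}(x)=s\}$: injectivity because two components with the same $\overline{\F}_q$-lifting invariant can be compared by lifting a point of one into a common specialization — more precisely one uses that $\cHur_{G,c}^n$ is defined over $R$, picks a geometric point of $\cHur_{G,c}^n$ over $\Spec R$ specializing to points in characteristic $0$ and in characteristic $p$, and uses the specialization-compatibility of the lifting invariant in Theorem~\ref{T:LI} together with the $\C$-bijection to transfer the bijection. (This is the standard ``the lifting invariant is a locally constant function that separates $\overline{\F}_q$-components because it already separates $\overline{\Q}$-components and components specialize'' argument.) Finally, counting: the number of components of $Y_{\overline{\F}_q}$ equals $\dim H^{2n}(Y_{\overline{\F}_q},\Q_\ell)$, which by Lemma~\ref{L:C} equals $\dim H^0(Y_\C,\Q)=$ number of components of $Y_\C$, which we have just shown equals the number of target elements; so the injection is a bijection. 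The main obstacle I expect is making the specialization/injectivity step for $\overline{\F}_q$ fully rigorous: one must produce, for any two $\overline{\F}_q$-components, a suitable $R$-scheme mapping in (an $R$-point or a trait) whose special fiber hits both components' generic points and whose generic fiber lands in characteristic zero, so that Theorem~\ref{T:LI}'s specialization statement applies and the $\C$-side bijection can be invoked — together with checking that the identification $\widehat\Z(1)_{\overline{\F}_q}\to\widehat\Z(1)_\C$ used in the specialization is compatible with how we phrase ``image $s$'' on both sides. Once that is set up, the cohomological count from Lemma~\ref{L:C} upgrades the injection to a bijection with no further work.
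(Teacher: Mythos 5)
Your treatment of the complex case is essentially the same as the paper's: combine Theorems~\ref{T:LI}, \ref{T:CcompLong}, and \ref{T:Ccomp} to identify $\C$-components with braid orbits, then with fibers of $\Phi$, then with elements of $K(G,c)_{n,\geq M}$. That part is fine.

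For the $\overline{\F}_q$ case, however, you argue the lifting-invariant map from components of $Y_{\overline{\F}_q}$ is \emph{injective} and then promote it to a bijection by the count from Lemma~\ref{L:C}. The paper instead shows it is \emph{surjective}, using the fact (from \cite[Remark 5.4]{Components}) that every element of $K(G,c)$ lying in the image of $c^n_{1,\geq 0}\to K(G,c)$ actually occurs as a lifting invariant of a point over $\overline{\F}_q$, and then applies the count. Your injectivity route has a real gap that you yourself sense but do not close. Suppose $C_1\ne C_2$ are $\overline{\F}_q$-components with the same lifting invariant. The specialization step gives you, for each $C_i$, a characteristic-$0$ geometric component $D_i$ with matching invariant, and the $\C$-bijection then forces $D_1=D_2$. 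But this only tells you that $C_1$ and $C_2$ are both irreducible components of the special fiber of the $R$-closure of a \emph{single} generic component; it does not tell you $C_1=C_2$. Smoothness of $\psi^n_{G,*}$ over $\Spec R$ does not prevent a connected $R$-component from having a disconnected $\overline{\F}_q$-fiber (the scheme is finite \'etale over an affine base, not proper over $R$; cyclotomic covers already illustrate that a connected scheme over $\Z[1/N]$ can split into several components mod $q$). So the specialization argument, as stated, does not establish injectivity, and without it the count from Lemma~\ref{L:C} only gives that a map with the right domain/codomain sizes exists, not that the lifting-invariant map itself is a bijection. The clean fix is exactly the paper's: prove surjectivity via realizability of lifting invariants over $\overline{\F}_q$ (which is what \cite[Remark 5.4]{Components} provides), and then the equality of counts from Lemma~\ref{L:C} forces bijectivity.
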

\begin{proof}
The statement for components over $\C$ follows directly from Theorems~\ref{T:LI}, \ref{T:CcompLong}, and \ref{T:Ccomp}.
For $\overline{\F}_q$ components, we first claim that all  elements $K(G,c)\one_{\overline{\F}_q}$ with image $s\in\mathfrak{S}^{c,G}$ arise as lifting invariants.  
From \cite[Remark 5.4]{Components},
we see that there is some $\zeta\in \widehat{\Z}(1)_{\overline{\F}_q}^\times$ such that  for
every element $h$ in the image of the map $c^n_{1,\geq 0} \ra K(G,c)$, we have that $\zeta\mapsto h$ occurs as a lifting invariant over
$\overline{\F}_q$.  From Theorem~\ref{T:Ccomp}, we see this includes all lifting invariants in $K(G,c)_{n,\geq M} \one_{\overline{\F}_q}$.  
Thus each possible lifting invariant occurs from at least one component over $\overline{\F}_q$.  Then Lemma~\ref{L:C} with $i=2n$ tells us that the number of 
components of $Y_{\overline{\F}_q}$ equals the number of components of $Y_\C$, which we already saw was the number of lifting invariants with image $s$.
\end{proof}

Next, we are going to prove a result explicitly expressing a count of components in terms of the group theory of $K(G,c)$.  Proposition~\ref{P:countcomp} will take into account components described by Corollary~\ref{C:LIworks} as well as those not described by Corollary~\ref{C:LIworks} because they do not involve each inertia type $M$ times (and thus the result will include an error term).
We write $\Z^{c/G}_{\equiv q}$ for the set of  elements of $\Z^{c/G}$ whose coordinates are constant on each set of conjugacy classes of ${c/G}$ that can be obtained from one another by taking $q$th powers, and
$\Z^{c/G}_{\equiv q,n,\geq M}$ when we also add the conditions that all coordinates are at least $M$ and the coordinates sum to $n$. For an element $a$ in an abelian group $A$, we write $\nr_{k}(a)$ for the number of $x\in A$ such that $x^{k}=a$.

\begin{proposition}\label{P:countcomp}
Let $G,c,n$ be as in Notation~\ref{N:Gc}.
Let $q$ be a prime power such that $(q,|G|)=1$.
Let 
$$b(G,c,q,n)
=\sum_{\underline{m}\in \ker\left(\Z^{c/G}_{\equiv q,n\geq 0}\ra G^{\operatorname{ab}}\right) } \nr_{q-1}( W_{q^{-1}}(\underline{m}) ).
$$ 
(For applying $\nr_{q-1}$, we are considering $W_{q^{-1}}(\underline{m})$ as an element of $\ker(\overline{S}\ra G)$.)
Then
 $$\pi_{G,c}(q,n)=
b(G,c,q,n)
 +O_G\left(n^{d_{G,c}(q)-2}\right)
 $$
 and if $b(G,c,q,n)=0$ then $\pi_{G,c}(q,n)=0$.
\end{proposition}
Of course if $(|G|,q-1)=1$, then $\nr_{q-1}( W_{q^{-1}}(\underline{m})^q )=1$ for all $\underline{m}$.
However, we will be interested, even for understanding unramified extensions of quadratic fields, in the case when $|G|$ and $(q-1)$ may share factors.

\begin{proof}

By the last statement of Theorem~\ref{T:LI}, we have that 
 for a component of $(\cHur_{ G,c}^n)_{\overline{\F}_q}$ to be fixed by $\Frob:=\Frob_{(\cHur_{ G,c}^n)_{\F_q}}$, its lifting invariant  must take some topological generator $\zeta$ to $g$ where $g=q^{-1} * g$.  
  In particular, we have that the image of $g$ in $\Z^{c/G}$ lies in $\Z^{c/G}_{\equiv q}$.
 
If an element of $K(G,c)\one$ sends a $\zeta\in (\widehat{\Z} (1))^\times$ to an element with image in $\Z^{c/G}_{\equiv q}$, then it sends any other element 
 $\zeta'\in (\widehat{\Z} (1))^\times$ to an element with image in $\Z^{c/G}_{\equiv q}$.   Thus by Theorem~\ref{T:LI} there is a union $Z_q$ of components of $\cHur_{ G,c}^n$ all of whose geometric points have lifting invariants that send a topological generator to elements with image in $\Z^{c/G}_{\equiv q}$, and such that all geometric points of $\cHur_{ G,c}^n$ with this property are in $Z_q$.
 We will take an integer $M$ that is sufficiently large  for Theorem~\ref{T:Ccomp}. If an element of $K(G,c)\one$ sends a $\zeta\in (\widehat{\Z} (1))^\times$ to an element with image in $\Z^{c/G}$ that has some coordinate $<M$, then it sends any other element 
 $\zeta'\in (\widehat{\Z} (1))^\times$ to an element with the same property.   Thus by Theorem~\ref{T:LI} there is a union $Z'_q$ of components of $Z_q$ all of whose geometric points have lifting invariants that send a topological generator to an element with image in $\Z^{c/G}$ that has some coordinate $<M$, and such that all geometric points of $Z_q$ with this property are in $Z'_q$.

First, we consider components of $(\cHur_{ G,c}^n)_{\overline{\F}_q}$ in $Z'_q$.
Note there are 
$O_G(n^{d_G(q)-2})$  choices of $\underline{m}\in \Z^{c/G}_{\equiv q}$ with some component $<M$. 
 By Theorem~\ref{T:CcompLong} and \cite[Lemma 3.3]{Ellenberg2005}  there are $O_G(1)$ components of $(\cHur_{ G,c}^n)_{\C}$ corresponding to each $\underline{m}$.  
Thus there are $O_G(n^{d_G(q)-2})$ components of $(Z'_q)_{\C}$, and thus by Lemma~\ref{L:C}, $O_G(n^{d_G(q)-2})$ components of $(Z'_q)_{\overline{\F}_q}$.

Now, we consider $Z_q\setminus Z'_q$.
We fix an element of $\zeta \in (\widehat{\Z}(1)_{\overline{\F}_q})^\times$, and thereby associate any $(\widehat{\Z}_{\overline{\F}_q})^\times$-set  $X$ with $X\one_{\overline{\F}_q}$.
By Corollary~\ref{C:LIworks}, components of $(Z_q\setminus Z'_q)_{\overline{\F}_q}$ exactly correspond to elements of $K(G,c)_{n,\geq M}$
whose image in $\Z^{c/G}$ lies in $\Z^{c/G}_{\equiv q}$. 
Since components correspond exactly to lifting invariants in this case, a component is $\Frob$-fixed if and only if its lifting invariant is, which by Theorem~\ref{T:LI}
are exactly those  $g\in K(G,c)$ such that $g=q^{-1}*g$.  
We write  $g=(h,\underline{m})\in \ker(\overline{S}\ra G )\times_{G^{\operatorname{ab}}}\Z^{c/G}=K(G,c)$.
We have $g=q^{-1}*g$ if and only if
$$
(h,\underline{m})=(h^{q^{-1}}W_{q^{-1}}({\underline{m}}),\underline{m}^{q^{-1}}).
$$
Since $\underline{m}\in \Z^{c/G}_{\equiv q}$, we have $\underline{m}=\underline{m}^{q^{-1}}$.
We have that
$h=h^{q^{-1}}W_{q^{-1}}({\underline{m}})$ if and only if $h^{q-1}=W_{q^{-1}}({\underline{m}})^q$.
So  $\underline{m}\in \Z^{c/G}_{\equiv q}$ with trivial image in $G^{\operatorname{ab}}$ has 
$\nr_{q-1}( W_{q^{-1}}(\underline{m})^q )$
 elements $g\in K(G,c)$ mapping to $\underline{m}$  such that $g=q^{-1}*g$.  
Note since $\ker(\overline{S}\ra G)$ is isomorphic to $H_2(G,c)$ and $q$ is relatively prime to $G$ and hence
$H_2(G,c)$, we have that 
$\nr_{q-1}( W_{q^{-1}}(\underline{m})^q )=\nr_{q-1}( W_{q^{-1}}(\underline{m})).$
So we see that $b(G,c,q,n)$ is exactly the number of $\Frob$-fixed component invariants in 
$K(G,c)_{n,\geq 0}\one_{\overline{\F}_q}.$  We immediately conclude the final statement of the proposition because if there are no $\Frob$-fixed component invariants then there are no $\Frob$-fixed components. 
Since in this case we are only counting $\underline{m}$ all of whose coordinates are $\geq M$, there are
 $$
b(G,c,q,n)
 +O_G(n^{d_G(q)-2})
 $$
$\Frob$ fixed components in this case.
Combining the two cases, we prove the proposition.
\end{proof}

Now we turn to the question of estimating the $b(G,c,q,n)$.  For this we will use the following lemma on counting lattice points, which is an application of Davenport's theorem
\cite{Davenport1951} after projecting everything onto a codimension $1$ space by forgetting the last coordinate. 
\begin{lemma}\label{L:lattice}

Let $L\sub \Z^d$ be a translate of a full lattice in $\R^d$ by an integer vector. Let $a_i$ be non-negative integers for $i=1,\dots,d$, not all $0$. Then there exists a positive integer $k$, a residue $s$ mod $k$, and a real $r>0$ such that the following holds.
 For every positive integer $n$, let $L'(n)=L\cap \{\underline{x}\in\R^d \,\mid\, \sum_{i=1}^d a_ix_i =n  \}$, and $L''(n)=L'(n)\cap \{\underline{x}\in\R^d\, \mid\, \forall i, x_i\geq 0 \}$.   If $n \not\equiv s \pmod k$, then
  $L'(n)=\emptyset$.  If $n \equiv s \pmod k$, then
  $$
  \#L''(n)= rn^{d-1}+O_{d,L,\underline{a}}(n^{d-2}).
  $$
\end{lemma}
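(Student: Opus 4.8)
The plan is to separate the congruence/periodicity assertion from the asymptotic count, handling the latter as a lattice-point count in a simplex that dilates linearly with $n$, exactly as the statement suggests. Write $L=v+\Lambda$ with $v\in\Z^d$ and $\Lambda\subset\Z^d$ a lattice of rank $d$, and set $\ell(\underline x)=\sum_i a_ix_i$. Since $\Lambda$ spans $\R^d$ and $\underline a\ne 0$, the form $\ell$ is not identically zero on $\Lambda$ and takes integer values there, so $\ell(\Lambda)=k\Z$ for a unique integer $k\ge 1$, whence $\ell(L)=\ell(v)+k\Z$. Put $r\equiv\ell(v)\pmod k$. Then $L'(n)=\emptyset$ when $n\not\equiv r\pmod k$, while for $n\equiv r\pmod k$ the set $L'(n)$ is nonempty and equals $w_n+\Lambda_0$ for any chosen $w_n\in L'(n)$, where $\Lambda_0:=\Lambda\cap\ker\ell$ is a lattice of rank $d-1$, being the intersection of a full lattice with the rational hyperplane $\ker\ell$.

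\textbf{Projecting to $\R^{d-1}$.} Fix $n\equiv r\pmod k$; we may assume every $a_i>0$ (this is the only case used, and the one guaranteeing $L''(n)$ is finite, since a componentwise nonnegative element of $\Lambda_0$ must vanish). After relabeling we may assume $a_d>0$, and take the projection $\pi\colon\R^d\to\R^{d-1}$ forgetting the last coordinate. On the hyperplane $\{\ell=n\}$ the map $\pi$ is an affine isomorphism onto $\R^{d-1}$ (if $\pi(\underline x)=0$ and $\ell(\underline x)=n$ then $a_dx_d=n$), so it carries $\Lambda_0$ isomorphically onto a rank-$(d-1)$ lattice $\Lambda_0'\subset\Z^{d-1}$, sends $w_n$ to an integer point $w_n'$, and identifies $L''(n)$ with $(w_n'+\Lambda_0')\cap P_n'$, where
\[
P_n':=\{\underline y\in\R^{d-1}\mid y_i\ge 0\ (1\le i\le d-1),\ \textstyle\sum_{i=1}^{d-1}a_iy_i\le n\},
\]
the last inequality encoding $x_d=(n-\sum_{i<d}a_ix_i)/a_d\ge 0$. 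Here $P_n'=nP_1'$ is the $n$-fold dilate of a fixed bounded polytope $P_1'$, a nondegenerate simplex since all $a_i>0$.

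\textbf{Davenport's theorem and conclusion.} A choice of $\Z$-basis of $\Lambda_0'$ gives a fixed $B\in\GL_{d-1}(\Q)$ with $B\Lambda_0'=\Z^{d-1}$, and then $\#L''(n)=\#\big((\Z^{d-1}+Bw_n')\cap BP_n'\big)$. The region $BP_n'=n\,(BP_1')$ is the dilate of a fixed bounded polytope cut out by at most $d$ affine inequalities, and these hypotheses (a bounded number of algebraic boundary pieces of bounded degree) are stable under translation and dilation; so Davenport's theorem \cite{Davenport1951} gives, uniformly in the translation vector,
\[
\#\big((\Z^{d-1}+Bw_n')\cap BP_n'\big)=\vol(BP_n')+O_{d,L,\underline a}(n^{d-2})=|\det B|\,\vol(P_1')\,n^{d-1}+O_{d,L,\underline a}(n^{d-2}),
\]
the error coming from the $(d-2)$-dimensional coordinate projections of the boundary, of measure $O(n^{d-2})$. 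Setting $\rho:=|\det B|\,\vol(P_1')$, equivalently $\vol(P_1')$ divided by the covolume of $\Lambda_0'$, which is positive and depends only on $d$, $L$, $\underline a$ (and not on $n$ or the auxiliary choices, since it is the leading coefficient of an intrinsic count), yields the stated asymptotic.

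\textbf{Main obstacle.} Nothing here is deep; the work is bookkeeping, and the two points demanding care are: (i) checking that the two linear reductions — the projection $\pi$ on the slicing hyperplane and the change of variables $B$ — carry the relevant lattice \emph{and} the relevant region compatibly, so that the honest point count is unchanged, and that Davenport's error term survives dilation and translation with a constant depending only on the allowed data; and (ii) pinning down $k$ and $r$ correctly, namely $\ell(\Lambda)=k\Z$ and $r\equiv\ell(v)$, so that off the progression $L'(n)$ is genuinely empty and on it $L'(n)$ is a full coset of $\Lambda_0$ rather than a proper sublattice coset.
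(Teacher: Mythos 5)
Your proof is correct and follows exactly the route the paper sketches in one sentence (project onto a codimension-one coordinate hyperplane, then apply Davenport's theorem); the paper gives no further detail, and your write-up is a faithful and complete expansion of that sketch. The identification $k\Z=\ell(\Lambda)$, $r\equiv\ell(v)$, the observation that $L'(n)$ is a full coset of $\Lambda_0=\Lambda\cap\ker\ell$ when nonempty, the affine bijection $\pi|_{\{\ell=n\}}$, and the uniform-in-translation use of Davenport's error bound are all right. One small but worthwhile point you make implicitly: the lemma as literally stated with ``non-negative $a_i$, not all zero'' is false for $d\ge 2$ if some $a_i=0$ (then $L''(n)$ can be infinite, as your ``componentwise nonnegative element of $\Lambda_0$ must vanish'' argument needs strict positivity); you correctly restrict to all $a_i>0$, which is the only case the paper uses in Corollary~\ref{C:countB} (where the statement explicitly says ``positive linear combination''). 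So your restriction is a necessary correction to the hypothesis rather than a loss of generality.
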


\begin{corollary}\label{C:countB}
Given $G,c$ as in Notation~\ref{N:Gc}, and an element $a\in (\Z/|G|^2\Z)^\times$, there is a positive integer $M_a$,  a non-empty set $E_a$ of residues mod $M_a$, and positive reals $r_{a,b}$ for $b\in E_a$,
such that for any prime power $q\equiv a \pmod{|G|^2}$ and any non-negative integer $n$ the following holds:
\begin{itemize}
\item If $n\pmod{M_a} \not\in E_a$ then  
$\pi_{G,c}(q,n)=0$, and 
\item if $n\pmod{M_a} =b\in E_a$ then
$
\pi_{G,c}(q,n)=r_{a,b}n^{d_{G,c}(q)-1}+O_{G}(n^{d_{G,c}(q)-2}).
$
\end{itemize}
\end{corollary}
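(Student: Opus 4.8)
The plan is to feed Proposition~\ref{P:countcomp} into the lattice-point estimate Lemma~\ref{L:lattice}. By Proposition~\ref{P:countcomp} it is enough to analyze $b(G,c,q,n)$ and show that, for $q$ in a fixed class $a\in(\Z/|G|^2\Z)^\times$, it is ``quasi-polynomial of degree $d_{G,c}(q)-1$'' in $n$: supported on finitely many residue classes modulo some $M_a$, and on each such class $b$ equal to $r_{a,b}n^{d_{G,c}(q)-1}+O_G(n^{d_{G,c}(q)-2})$ with $r_{a,b}>0$. The corollary then follows because Proposition~\ref{P:countcomp} also gives $\pi_{G,c}(q,n)=b(G,c,q,n)+O_G(n^{d_{G,c}(q)-2})$ and the implication $b(G,c,q,n)=0\Rightarrow\pi_{G,c}(q,n)=0$; note $(q,|G|)=1$ is automatic since $a$ is a unit, so Proposition~\ref{P:countcomp} applies.

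First I would check that every group-theoretic ingredient of $b(G,c,q,n)$ depends only on $a$, not on $q$. The $q$-th powering action on $c/G$ depends only on $q\bmod\exp(G)$, hence only on $a$; this fixes $d:=d_{G,c}(q)$, identifies $\Z^{c/G}_{\equiv q}$ with $\Z^{d}$, identifies the ``total degree'' map with $\underline{x}\mapsto\sum_j a_j x_j$ where $a_j\ge 1$ is the size of the $j$-th powering orbit, and identifies $\Lambda_0:=\ker(\Z^{c/G}_{\equiv q}\to G^{ab})$ with a full-rank sublattice of $\Z^{d}$. Next, since $|G|$ kills $H_2(G,\Z)$ we have $\exp(H_2(G,c))\mid|G|$, and $\overline S$ is an extension of $G$ by $H_2(G,c)$, so $\exp(\overline S)\mid|G|^2$; therefore the homomorphism $W_{q^{-1}}\colon\Z^{c/G}_{\equiv q}\to\ker(\overline S\to G)$ depends only on $q^{-1}\bmod|G|^2$, i.e.\ only on $a$, and $\nr_{q-1}$ on the group $\ker(\overline S\to G)\cong H_2(G,c)$ (of exponent dividing $|G|$) depends only on $q-1\bmod|G|$. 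Hence $f_a:=\nr_{q-1}\circ W_{q^{-1}}\colon\Z^{d}\to\Z_{\ge 0}$ depends only on $a$ and is periodic: it factors through $\Z^{d}/\Lambda_a$ with $\Lambda_a:=\ker W_{q^{-1}}$, a finite-index sublattice.

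Writing $\Lambda_a':=\Lambda_a\cap\Lambda_0$, still of finite index in $\Z^{d}$, I would split
\[
b(G,c,q,n)=\sum_{v+\Lambda_a'\in\Lambda_0/\Lambda_a'}f_a(v)\cdot\#\{\underline{m}\in v+\Lambda_a'\mid \underline{m}\ge 0,\ {\textstyle\sum_j a_j m_j}=n\},
\]
and apply Lemma~\ref{L:lattice} to each translated full lattice $v+\Lambda_a'\subset\Z^{d}$ with the linear form $\sum_j a_j x_j$; all coefficients $a_j$ are positive, so the positivity slice is a genuine bounded $(d-1)$-simplex. This yields $k_v\in\Z_{>0}$, a residue $\rho_v\bmod k_v$, and a real $\kappa_v>0$ such that the $v$-term vanishes unless $n\equiv\rho_v\bmod k_v$, in which case it equals $\kappa_v n^{d-1}+O_G(n^{d-2})$. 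Taking $M_a:=\operatorname{lcm}_v k_v$, setting $r_{a,b}:=\sum_{v:\,b\equiv\rho_v\bmod k_v}f_a(v)\kappa_v$ for each residue $b\bmod M_a$, and $E_a:=\{b\mid r_{a,b}>0\}$, one gets exactly the claimed dichotomy: when $b\notin E_a$ every nonzero $f_a(v)$-term is identically zero on the class $b$ (since $k_v\mid M_a$), so $b(G,c,q,n)=0$ there; and $E_a$ is nonempty because the coset $v=0$ contributes $f_a(0)=\nr_{q-1}(\mathrm{id})\ge 1$ together with $\kappa_0>0$. Finally, the number of cosets $v$ is $[\Lambda_0:\Lambda_a']=O_G(1)$, the coefficients $f_a(v)$ are $O_G(1)$, $d\le|G|$, and $a$ ranges over the finitely many units mod $|G|^2$, so all implied constants collapse to $O_G(\cdot)$; combining with Proposition~\ref{P:countcomp} finishes the proof.

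The only genuinely delicate point — everything else is bookkeeping with quasi-polynomials — is the uniformity in $q$: that $W_{q^{-1}}$, $\nr_{q-1}$, and the powering orbits on $c/G$ are functions of $q\bmod|G|^2$ alone. This is what makes the quasi-polynomial data independent of $q$ within a class $a$, legitimizes the ``finitely many cases'' reduction for the $O_G$-constants, and is precisely why the modulus $|G|^2$ (rather than $|G|$) appears, through $\exp(\overline S)\mid|G|^2$.
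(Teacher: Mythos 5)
Your proof is correct and takes essentially the same route as the paper's: reduce $b(G,c,q,n)$ to a finite sum of lattice-translate counts in $\Z^{d_{G,c}(q)}$ indexed by the value of $W_{q^{-1}}$ (equivalently, by cosets of $\ker W_{q^{-1}}$ inside $\ker(\Z^{c/G}_{\equiv q}\to G^{\mathrm{ab}})$), observe via $\exp(\overline S)\mid|G|^2$ that everything depends only on $a=q\bmod|G|^2$, apply Lemma~\ref{L:lattice} to each translate, and set $M_a$ to be the lcm of the resulting periods. The only cosmetic difference is that the paper sums over $h\in\ker(\overline S\to G)$ while you sum over cosets of $\Lambda_a'$, but these index the same decomposition.
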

\begin{proof}
The set $\Z^{c/G}_{\equiv q}$, as a subset of $\Z^{c/G}$, only depends on $q \pmod{|G|}$.  Further, since every element of $H_2(G,\Z)$ has order dividing $|G|$, we also see that $\nr_{q-1}$ on $\ker(\overline{S}\ra G)$ only depends on $q \pmod{|G|}$.  Now since $W_{q^{-1}}$ involves elements in $\overline{S}$, and elements in $\overline{S}$ all have order dividing $|G|^2$, we conclude 
 $b(G,C,q,n)$ only depends on $q \pmod{|G|^2}$.  We consider a single residue, $a \pmod{|G|^2}$, and now consider $q\equiv a \pmod{|G|^2}$.  We will write $a$ instead of $q$ in all the places where we have $q$-dependence that only depends on $a$.
 
 We have
$$
b(G,c,a,n)=\sum_{h\in \ker(\overline{S}\ra G)} \nr_{a-1}(h) \#\{ \underline{m}\in \ker(\Z^{c/G}_{\equiv a,n\geq 0}\ra G^{\operatorname{ab}}) \,\mid\, W_{a^{-1}}(m)=h  \}.
$$
The elements of $\Z^{c/G}_{\equiv a}$ with trivial image in $G^{\operatorname{ab}}$ form a sublattice.  The elements of $\Z^{c/G}_{\equiv a}$ with trivial image under $W_{a^{-1}}$ and in $G^{\operatorname{ab}}$ form a sublattice, and for any $h\in \ker(\overline{S}\ra G)$, the elements of 
$\Z^{c/G}_{\equiv a}$ with image $h$ under $W_{a^{-1}}$ and trivial image in $G^{\operatorname{ab}}$ are a translate of a sublattice.  When $\nr_{a-1}(h)=0$, the lattice translate obtained is not relevant
for the calculation of $b(G,c,a,n)$.  However, there are certainly some $h$ that are $(a-1)$th powers, and we now consider their corresponding lattice translates.

We consider the natural isomorphism $\Z^{c/G}_{\equiv a}\isom \Z^{d_{G,c}(a)}$ where we take one coordinate from each $q$-powering class.  All of the lattices above, viewed in $\Z^{d_{G,c}(a)}$, are full lattices.  
The condition that the coordinates are positive remains the same under $\Z^{c/G}_{\equiv a}\isom \Z^{d_{G,c}(a)}$. 
The condition that the coordinates sum to $n$ in $\Z^{c/G}_{\equiv a}$ becomes that, for certain positive integers $a_i$ (the sizes of the $q$th powering orbits),
the linear combination $\phi(\underline{x}):=\sum_i a_ix_i$ of the coordinates $(x_1,\dots, x_{d_{G,c}(a)})\in \Z^{d_{G,c}(a)}$ is equal to $n$.  

We then apply Lemma~\ref{L:lattice} to each lattice translate under consideration.  For each lattice translate (and $\phi$), we obtain the values $k$ and $s$ from Lemma~\ref{L:lattice}.  Taking the least common multiple of the $k$'s to be $M_a$ and let $E_a$ be the set of residues mod $M$ such that for $n \pmod{M_a} \in E_a$, at least one lattice translate has points where $\phi$ takes value $n$.  
Then the corollary with $b(G,c,q,n)$ in place of $\pi_{G,c}(q,n)$ follows from the application of Lemma~\ref{L:lattice}.  The corollary as written then follows from Proposition~\ref{P:countcomp}.
\end{proof}

Now we have counted to components for each $G,c$, somewhat inexplicitly.  In order to prove 
 Theorem~\ref{T:comp}, we will need to compare these counts for $G,c$ and $\Gamma,\Gamma\setminus 1$.
 First, this requires a a group theory lemma on the compatibility of various maps related to $H\rtimes \Gamma$ and $\Gamma$.

	\begin{lemma}\label{L:Schur}
		Let $\Gamma$ be a finite group, $H$ an admissible $|\Gamma|'$-$\Gamma$-group, and $G:=H\rtimes \Gamma$. 
Then there are Schur covering groups $S,S_\Gamma$, of $G, \Gamma$ respectively, with a commutative diagram
		$$
\xymatrix{
S \ar[d] \ar[r]& G \ar[d] \\
S_\Gamma \ar[r]& \Gamma.
}
$$
		Also, the quotient map $G\to \Gamma$ induces $\rho\colon H_2(G, \Z) \to H_2(\Gamma,\Z)$, and
		 $\gcd(|\ker \rho|,|\Gamma|)=1$. 
	\end{lemma}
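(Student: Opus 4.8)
The plan is to read off both assertions from the structure of $H_2(G,\Z)$. Consider the Lyndon--Hochschild--Serre spectral sequence $E^2_{p,q}=H_p(\Gamma,H_q(H,\Z))\Rightarrow H_{p+q}(G,\Z)$ of the split extension $1\to H\to G\to\Gamma\to 1$, where $\Gamma$ acts on the $H_q(H,\Z)$ by conjugation. Two vanishing facts drive the computation. First, for every $q\ge 1$ the group $H_q(H,\Z)$ is finite with all prime divisors dividing $|H|$, and $H_1(H,\Z)=H^{ab}$ has order prime to $|\Gamma|$; since any finite $\Z[\Gamma]$-module of order prime to $|\Gamma|$ has $H_p(\Gamma,-)=0$ for $p\ge 1$, this gives $E^2_{p,1}=0$ for all $p\ge 1$. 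Second, by admissibility of $H$, exactly as in the proof of Proposition~\ref{prop:Galois_admissible} one has $H=[H,\Gamma]$, whence $(H^{ab})_\Gamma=0$, i.e. $E^2_{0,1}=0$. Because the extension is split, the section $\Gamma\hookrightarrow G$ makes each edge map $H_n(G,\Z)\to H_n(\Gamma,\Z)$ split surjective, so $E^\infty_{n,0}=E^2_{n,0}=H_n(\Gamma,\Z)$ for all $n$; combined with $E^2_{0,1}=0$ and $E^2_{2,1}=0$, no nonzero differential leaves $E_{2,0}$ or hits $E_{0,2}$. Hence in the degree-two filtration $0\subseteq F_0\subseteq F_1\subseteq F_2=H_2(G,\Z)$ we get $F_1=F_0=E^2_{0,2}=H_2(H,\Z)_\Gamma$ and $F_2/F_1=H_2(\Gamma,\Z)$, and the induced map $\rho\colon H_2(G,\Z)\to H_2(\Gamma,\Z)$ is split surjective with $\ker\rho=F_0\isom H_2(H,\Z)_\Gamma$. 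Since $|H_2(H,\Z)|$ has all prime divisors dividing $|H|$, we get $\gcd(|\ker\rho|,|\Gamma|)=1$, which is the second assertion.

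For the first assertion, fix any Schur covering group $p\colon S\to G$, so that $\ker p\isom H_2(G,\Z)$ is central in $S$ and contained in $[S,S]$; let $K\sub\ker p$ be the subgroup corresponding to $\ker\rho$, of order prime to $|\Gamma|$. The quotient $S/K$ is a central extension of $G$ whose kernel $M':=\ker(S/K\to G)$ is the image of $\ker p$, so $M'\isom H_2(G,\Z)/\ker\rho\isom H_2(\Gamma,\Z)$, with $M'\sub Z(S/K)\cap[S/K,S/K]$. Let $\bar H$ be the preimage of $H$ in $S/K$; then $1\to M'\to\bar H\to H\to 1$ is a central extension, and since $\gcd(|M'|,|H|)=\gcd(|H_2(\Gamma,\Z)|,|H|)=1$, the Schur--Zassenhaus theorem gives $\bar H=M'\times H'$ with $H'\isom H$. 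As $H'$ is precisely the set of elements of $\bar H$ of order prime to $|\Gamma|$, it is characteristic in $\bar H$, hence normal in $S/K$. Set $S_\Gamma:=(S/K)/H'$. Then $\ker(S_\Gamma\to\Gamma)=\bar H/H'\isom M'\isom H_2(\Gamma,\Z)$ is central in $S_\Gamma$ and, because $M'\cap H'=1$, injects into $[S_\Gamma,S_\Gamma]$; as its order is the maximal possible, $S_\Gamma\to\Gamma$ is a Schur covering group. The composite $S\twoheadrightarrow S/K\twoheadrightarrow S_\Gamma$ is the desired map: the square commutes because both $S_\Gamma\to\Gamma$ and the original $G\to\Gamma$, precomposed with the relevant surjections from $S/K$, are the quotient of $S/K$ by $\bar H$, and we use the same identification of this quotient with $\Gamma$ on both sides.

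I expect the main obstacle to be the spectral-sequence bookkeeping: one must verify that in total degrees $\le 2$ every differential $d_r$ ($r\ge 2$) vanishes, and this is exactly what the two vanishing inputs ($E^2_{0,1}=0$ from admissibility, $E^2_{p,1}=0$ from coprimality) together with the split surjectivity of the edge maps deliver; the $d_3\colon E^3_{3,0}\to E^3_{0,2}$ is the only slightly delicate one, and it is killed by $E^\infty_{3,0}=E^2_{3,0}$. A minor point is arranging the identification $(S/K)/\bar H\isom\Gamma$ compatibly in the commuting square, which is automatic once fixed. The remaining ingredients---that $H_q(H,\Z)$ and $H_2(\Gamma,\Z)$ are $|H|$- and $|\Gamma|$-torsion respectively, Schur--Zassenhaus, and the basic theory of Schur covering groups---are standard and already used in the paper.
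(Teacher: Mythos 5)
Your proof is correct, but the route to the second assertion is genuinely different from the paper's. The paper constructs the Schur-cover square first—decomposing $M := \ker(S\to G) \cong H_2(G,\Z)$ into Hall pieces $M_H\times M_\Gamma$, splitting $S'/M_H \to H$ by Schur--Zassenhaus, taking $D$ to be the normal Hall subgroup of $S$ obtained by pulling back the complement, and verifying via the induced diagram on $H_2$ that $S/D$ is a Schur cover of $\Gamma$—and then reads off $\ker\rho \cong M_H$ (the $|H|$-primary part) from that commutative square. You instead compute $\ker\rho$ directly from the Lyndon--Hochschild--Serre spectral sequence of $1\to H\to G\to \Gamma\to 1$, identifying it as $H_2(H,\Z)_\Gamma$, and only afterwards build the Schur-cover square; your construction of $S_\Gamma$ is, underneath, the same as the paper's (your $K$ corresponds to $M_H$, and quotienting by $K$ then by $H'$ is the same as quotienting by $D$), just organized as a two-step quotient through $S/K$. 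The spectral-sequence route costs more machinery but buys a cleaner structural identification $\ker\rho\cong H_2(H,\Z)_\Gamma$, which the paper's Hall-subgroup description ($\ker\rho\cong M_H$) does not give as directly.

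One small remark: your invocation of admissibility to force $E^2_{0,1}=(H^{ab})_\Gamma=0$ is redundant. The split surjectivity of the edge maps—which you also use—already forces $E^\infty_{n,0}=E^2_{n,0}$ for all $n$, hence every differential out of any $E^r_{n,0}$ vanishes (in particular $d_2\colon E^2_{2,0}\to E^2_{0,1}$ and $d_3\colon E^3_{3,0}\to E^3_{0,2}$). Combined with $E^2_{2,1}=0$ from coprimality, this alone gives $E^\infty_{1,1}=0$ and $E^\infty_{0,2}=E^2_{0,2}$, hence $\ker\rho=F_1=F_0=H_2(H,\Z)_\Gamma$. So, just as in the paper's proof, the lemma (and your argument) really only requires $\gcd(|H|,|\Gamma|)=1$; the admissibility hypothesis is not used.
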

	
	\begin{proof}
	We choose a Schur covering group of $G$, and denote it by
	\begin{equation}\label{eq:StemCover}
		1\to M \to S \overset{\tau}{\to} G \to 1.
	\end{equation}
	Then as a group $M$ is isomorphic to $H_2(G,\Z)$. As $M$ is an abelian group, we can write $M$ as $M_H \times  M_\Gamma$, where $M_H$ is the product of Sylow $p$-subgroups of $M$ for all $p \mid {H}$ and $M_{\Gamma}$ is the complement of $M_H$.

	The group $S$ has a normal subgroup $S':=\tau^{-1}(H)$.  If we take the quotient $S'/M_H$, we obtain an exact sequence
	$$
	1\ra M_\Gamma \ra S'/M_H \ra H \ra 1.
	$$
	Since $\gcd(|M_\Gamma|,|H|)=1$, we have that this sequence splits, and since $M_\Gamma$ is central, 
	there is a unique splitting $H\ra S'/M_H$ and projection $\phi: S'/M_H \ra M_\Gamma$ (that is the identity on $M_\Gamma$).
Then we let $D$ be the preimage of $\ker(\phi)$ in $S'$.  We have that $|D|=|H||M_H|=|S'|/|M_\Gamma|$
and $\gcd(|D|,|M_{\Gamma}|)=1$.  Since $M_H$ is central, it follows that $S'\isom M_\Gamma \times D$.	
	 The group $D$ is a normal Hall  subgroup of $S'$ and hence characteristic, which implies that $D$ is a normal subgroup of $S$, so we have the following commutative diagram:
	\[\begin{tikzcd}
		1 \arrow{r} & M_\Gamma \times M_H \arrow{r} \arrow{d}{/M_H}& S \arrow{r}\arrow{d}{/D}  & H\rtimes \Gamma \arrow{r} \arrow{d}{/H} &1  \\
		1 \arrow{r} & M_\Gamma \arrow{r} & {S_\Gamma}  \arrow{r} & \Gamma \arrow{r} &1.
	\end{tikzcd}\]
	The bottom row is a central extension of $\Gamma$ by $M_\Gamma$.  
	 For any commutative diagram of central extensions, the induced maps from $H_2$ commute with the maps between the kernels (one can check from definitions).  In this case this gives 
a commutative diagram
$$
\xymatrix{
{H_2(G,\Z)} \ar[d]_{\rho} \ar[r]_{\pi_{S}}^{\isom}& M_{\Gamma}\times M_H \ar[d] \\
{H_2(\Gamma,\Z)} \ar[r]_{\pi_{S_\Gamma}}& M_\Gamma,
}
$$
which gives an isomorphism $\ker \rho \isom M_H$.
Since $|H_2(\Gamma,\Z)|$ is only divisible by primes dividing $|\Gamma|$, and $M_\Gamma \times M_H\ra M_\Gamma$ is the quotient by all the Sylow $p$-subgroups for primes $p\nmid |\Gamma|$ (and $\rho$ is surjective since $G\ra\Gamma$ is split), it follows that $\pi_{S_\Gamma}$ is an isomorphism, and thus $S_\Gamma$ is a Schur covering group of $\Gamma$.   
We have that $|H_2(\Gamma,\Z)|=|H_2(G,\Z)|/|M_H|,$ and hence the lemma follows.
	\end{proof}
	
Now finally we are able to prove Theorem~\ref{T:comp} by showing agreement of $b(G,c,q,n)$ and $b(\Gamma,\Gamma\setminus 1,q,n)$.  We remark that we do not use Corollary~\ref{C:countB} in order to see this agreement, but we do use it for the lower bound on $b(\Gamma,\Gamma\setminus 1,q,n)$ in order to see that our main term is larger than our error term.

\begin{proof}[Proof of Theorem~\ref{T:comp}]  
We apply Proposition~\ref{P:countcomp} in two cases: first with $G_2=\Gamma$ and $c_2$ all non-trivial elements of $\Gamma$, and second with $G_1=H\rtimes \Gamma$ and $c_1$ the set of elements of $G$ that have the same order as their image in $\Gamma$. 
To see that $c_1$ generates $H\rtimes \Gamma$, 
first we observe that all non-trivial elements of $\Gamma$ are in $c_1$.  An element $(h,\gamma)$ is in $c_1$ if and only if it gives a splitting of the cyclic subgroup generated by $\gamma$, which by Schur--Zassenhaus is if and only if $h=g^{-1}\gamma(g)$ for some $g\in H$.  Since $H$ is an admissible $\Gamma$-group, it is generated by elements of the form $g^{-1}\gamma(g)$ and so $c_1$ generates $H\rtimes \Gamma$.
 Now we will compare the terms $b(G_1,c_1,q,n)$ and
$b(G_2,c_2,q,n)$.

Above we saw that the elements over $\gamma$ in $c_1$ are all conjugate.
This gives a bijection from $c_1/G_1 \ra c_2/G_2$ inducing an isomorphism  $\Z^{c_1/G_1}_{\equiv q}\isom \Z^{c_2/G_2}_{\equiv q}$.
We then have $d_{G_1,c_1}(q)=d_{G_2,c_2}(q)$.

Next we check that $G_1^{\operatorname{ab}} \ra G_2^{\operatorname{ab}}$ is an isomorphism.  Consider an element of the form 
$(g^{-1}\sigma(g),1)$ in $G_1$, for some $\sigma\in\Gamma$.  Since 
$(g^{-1}\sigma(g),1)=(g^{-1},1)(1,\sigma)(g,1)(1,\sigma^{-1}),$ it has trivial image in $G_1^{\operatorname{ab}}$.
Since $H$ is admissible, it is generated by elements of the form $g^{-1}\sigma(g)$, and thus we see that $H$ is in the kernel of the map $G_1\ra G_1^{\operatorname{ab}}$, from which it follows that $G_1^{\operatorname{ab}} \ra G_2^{\operatorname{ab}}$ is an isomorphism.

This gives a natural isomorphism $\ker(\Z^{c_1/G_1}_{\equiv q}\ra G_1^{\operatorname{ab}})\isom \ker(\Z^{c_2/G_2}_{\equiv q}\ra G_2^{\operatorname{ab}})$.  Now we have to consider the functions $W^i_{q^{-1}}$ on these lattices, which we denote $W^1_{q^{-1}}$ and $W^2_{q^{-1}}$ for the version corresponding to $G_1,c_1$ and $G_2,c_2$ respectively.  
We have defined $W^i_{q^{-1}}\colon \Z^{c_i/G_i} \ra \ker(\overline{S}^i\ra G_i),$ where $\overline{S}^i$ is the quotient, constructed in the start of this section, of the chosen Schur covering group in each case. 
We use Lemma~\ref{L:Schur} to choose compatible Schur coverings $S^1$ and $S^2$.
Since $G_1\ra G_2$ is split, we have that $Q_{c_1}\ra Q_{c_2}$ is a a surjection, and then it follows from Lemma~\ref{L:Schur} and basic facts on finite abelian groups that $H_2(G_1,c_1)\ra H_2(G_2,c_2)$ is a surjection with kernel of order relatively prime to $|\Gamma|$ (and so only divisible by primes dividing $|H|$), and we have a commutative diagram
$$
\xymatrix{
{H_2(G_1,c_1)} \ar[d] \ar[r]^<<<<\isom_<<<<{\tau_{\overline{S}^1}}& \ker(\overline{S}^1\ra G_1) \ar[d] \\
{H_2(G_2,c_2)} \ar[r]^<<<<\isom_<<<<{\tau_{\overline{S}^2}}& \ker(\overline{S}^2\ra G_2),
}
$$
where $\overline{S}^i:=H_2(G_i,\Z)/Q_{c_i}$.  
Then, for each conjugacy class in $c_2/G_2,$ we can pick one element $x$ in that class, and one preimage $\widetilde{x}$ of the element in $G_1$, and then pick $\widehat{\widetilde{x}}$ in the fiber of $\widehat{x}$.  
By our choice of compatible Schur coverings, we have
that
  \[
\xymatrix{
 \Z^{c_1/G_1} \ar[d] \ar[r]_<<<<<{ { W^1_{q^{-1}}}}& \ker(\overline{S}^1\ra G_1)\ar[d]_f \\
\Z^{c_2/G_2} \ar[r]_<<<<<{W^2_{q^{-1}}}& \ker(\overline{S}^2\ra G_2)
}
\]
is a commutative diagram.  Since the kernel of the map $f$ on the right has order relatively prime to $q-1$, any element of $\ker(\overline{S}^2\ra G_2)$ has the same number of $(q^{-1}-1)$th roots as 
any preimage in $\ker(\overline{S}^1\ra G_1)$.

We then have
$$
b(G_2,c_2,q,n)=\sum_{h\in \ker(\overline{S}^2\ra G_2)} \nr_{q^{-1}-1}(h) \#\{ \underline{m}\in \ker(\Z^{c_2/G_2}_{\equiv q,n\geq 0}\ra G_2^{\operatorname{ab}})\, \mid\, W^2_{q^{-1}}(m)=h  \}.
$$
and 
\begin{align*}
b(G_1,c_1,q,n)&=\sum_{\widetilde{h}\in \ker(\overline{S}^1\ra G_1)} \nr_{q^{-1}-1}(f(\widetilde{h})) \#\{ \underline{m}\in \ker(\Z^{c_1/G_1}_{\equiv q,n\geq 0}\ra G_1^{\operatorname{ab}}) \,\mid\, W^1_{q^{-1}}(m)=\widetilde{h}  \}\\
&=\sum_{h\in \ker(\overline{S}^2\ra G_2)} \nr_{q^{-1}-1}(h) \#\{ \underline{m}\in \ker(\Z^{c_2/G_2}_{\equiv q,n\geq 0}\ra G_2^{\operatorname{ab}}) \,\mid\, W^2_{q^{-1}}(m)=h  \}.
\end{align*}
It follows that $b(G_1,c_1,q,n)=b(G_2,c_2,q,n)$.
From this and Proposition~\ref{P:countcomp}, we conclude the first statement of the theorem.
 By Corollary \ref{C:countB}, we have the $M_a$ and $E_a$ so that the remaining statements on
$\pi_\Gamma(q,n)$ hold.  Our observations above and the proof of Corollary \ref{C:countB} show that when $n \pmod{M_a} \not\in E_a$, then $\pi_G(q,n)=0.$
\end{proof}

\subsection*{Acknowledgements} 
The authors would like to thank Brandon Alberts, Nigel Boston, Michael Bush, Brian Conrad, Jordan Ellenberg, Joseph Gunther, Jack Hall, Aaron Landesman, Jonah Leshin, Akshay Venkatesh, and  Weitong Wang for important and fruitful conversations regarding the work in this paper, and Alex Bartel and Michael Bush for comments on an earlier draft.
The first author was partially supported by NSF grants DMS-1301690 and DMS-1652116.
The second author was partially supported by an
American Institute of
Mathematics Five-Year Fellowship, a Packard Fellowship for Science and Engineering, a
Sloan Research Fellowship,  NSF grants DMS-1301690 and DMS-2052036,  and NSF Waterman Award DMS-2140043.
The third author was partially supported by NSF grant DMS-1555048.

\newcommand{\etalchar}[1]{$^{#1}$}
\def\cprime{$'$}

\end{document}